\documentclass[pdflatex,sn-mathphys-num]{sn-jnl}% Math and Physical Sciences Numbered Reference Style
\usepackage[numbers,sort&compress]{natbib}

\usepackage{graphicx}%
\usepackage{multirow}%
\usepackage{amsmath,amssymb,amsfonts}%
\usepackage{amsthm}%
\usepackage{mathrsfs}%
\usepackage[title]{appendix}%
\usepackage{xcolor}%
\usepackage{float}
\usepackage{textcomp}%
\usepackage{manyfoot}%
\usepackage{booktabs}%
\usepackage{algorithm}%
\usepackage{algorithmicx}%
\usepackage{algpseudocode}%
\usepackage{listings}%
\usepackage{lineno}
\usepackage{placeins}
\usepackage{moresize}
\usepackage[bb=boondox]{mathalfa}

\usepackage[symbol]{footmisc}

\usepackage[thinlines]{easytable}
\usepackage{needspace}

\newtheorem{theor}{Theorem}
\newtheorem{lemma}{Lemma}
\newtheorem{prop}{Proposition}
\theoremstyle{definition}
\newtheorem{defin}{Definition}
\newtheorem{rem}{Remark}

\newcommand{\rev}[1]{{\color{black} #1}}

\begin{document}

\title[High-order nonstandard multistep multistage methods]{Some properties of high-order nonstandard multistep multistage methods}

%%=============================================================%%
%% GivenName	-> \fnm{Joergen W.}
%% Particle	-> \spfx{van der} -> surname prefix
%% FamilyName	-> \sur{Ploeg}
%% Suffix	-> \sfx{IV}
%% \author*[1,2]{\fnm{Joergen W.} \spfx{van der} \sur{Ploeg} 
%%  \sfx{IV}}\email{iauthor@gmail.com}
%%=============================================================%%

\author[1,2,*]{\fnm{Balint M.} \sur{Takacs}\footnote[0]{*Corresponding author. E-mail: takacs.balint.mate@ttk.bme.hu}}%\email{takacs.balint.mate@ttk.bme.hu}

% \author{\fnm{ } \sur{ }}
% \equalcont{These authors contributed equally to this work.}

% \author[1,2]{\fnm{Third} \sur{Author}}\email{iiiauthor@gmail.com}
% \equalcont{These authors contributed equally to this work.}

\affil[1]{\orgdiv{Department of Analysis and Operations Research, Institute of Mathematics}, \orgname{Budapest University of Technology and Economics}, \orgaddress{\street{Muegyetem rakpart 3.}, \city{Budapest}, \postcode{H-1111},  \country{Hungary}}}

\affil[2]{\orgname{HUN-REN-ELTE Numerical Analysis and Large Networks Research Group}, \orgaddress{\street{Pázmány Péter sétány 1/C}, \city{Budapest}, \postcode{H-1117}, \country{Budapest}}}

% \affil[3]{\orgdiv{Department}, \orgname{Organization}, \orgaddress{\street{Street}, \city{City}, \postcode{610101}, \state{State}, \country{Country}}}

%%==================================%%
%% Sample for unstructured abstract %%
%%==================================%%

\abstract{In this paper, we introduce nonstandard versions of multistep multistage methods. While proving the convergence of these schemes, we also define nonstandard general linear methods. We show that the nonstandard methods can attain the same order as their standard counterparts while preserving certain qualitative properties (e.g., boundedness) for all positive step sizes. These results are also demonstrated by some numerical experiments.}

\keywords{nonstandard finite difference, multistep multistage method, positivity preservation, SSP methods}

%%\pacs[JEL Classification]{D8, H51}

\pacs[MSC Classification]{65L06, 92D25, 92D30}

\maketitle

%\linenumbers

\section{Introduction}\label{sec:intro}
Apart from the aim to design numerical methods that approximate a given differential equation in a time-efficient manner (i.e., the order of a given method is high), one of the key goals of numerical modeling is the construction of numerical schemes that behave in a qualitatively reasonable way, meaning that they preserve some properties of the original, continuous model. Although the application of adaptive step sizes is a leading area in this field \cite{hadjimichael,nusslein,arevalo,charous}, another approach is the use of nonstandard finite differences. In the last year alone, the latter methods were used in epidemiology \cite{morani, zinihi, marime, tasse}, cybersecurity \cite{wacker, hoang26}, ecology \cite{faheem, eskandari, ozdogan}, among others \cite{mohye, fazayel, rehman}. The main advantage of these methods, originally developed by Mickens \cite{mickens}, is that they typically preserve certain properties of the original continuous model for all positive step sizes.

One disadvantage of nonstandard methods is that in most cases, they only attain a convergence of order one. However, in recent years, there has been some work in the construction of higher-order methods. One early example was the investigation of higher-order nonstandard Runge-Kutta methods \cite{dimitrov, dimitrov2}, while more recent approaches include:
\begin{itemize}
    \item the employment of Theta methods \cite{ gupta2, kojouharov, dimitrov3} with several applications \cite{anguelov03, anguelov2, lubuma},
    \item the use of Richardson extrapolation \cite{wacker2,bassenne,gonzalez},
    \item the change of the denominator function into a more general form \cite{alal, alal23, gupta, hoang24, hoang24new, conte25},
    \item or a clever split of the right-hand side of the equation \cite{anguelov22, hoang22},
\end{itemize}
among others \cite{chen, kojo04, martin, ejere}. One usual idea of these methods is to restrict ourselves to a set of equations in some special form, and therefore construct a method for that given type of equations.

Another possibility is to start from well-known 'standard' methods, and by changing the timestep $\Delta t$ to $\varphi(\Delta t)$, we can construct nonstandard forms of these schemes. This transformation has been applied to Runge-Kutta \cite{dang, mosleh26} and linear multistep methods \cite{anguelov, takacs26}. The core idea of this approach is that by rewriting the 'standard' schemes into a strong-stability preserving (SSP) form (see \cite{sspbook}), we can guarantee the preservation of some linear properties. Note that the ideas of SSP methods have also been applied in the nonstandard framework before, but in a different context \cite{anguelov3, khal17}.

The main aim of the present paper is to generalize nonstandard Runge-Kutta and nonstandard linear multistep methods by introducing nonstandard versions of multistep multistage methods. In the process, we also define nonstandard versions of general linear methods. By changing $\Delta t$ to $\varphi(\Delta t)$, these new, nonstandard schemes can preserve some properties (e.g., boundedness) for every possible $\Delta t >0$, while also attaining the same orders as the 'standard' methods. It is worth noting that the work relies heavily on the paper of Constantinescu and Sandu \cite{const}, who constructed the standard forms of these multistep multistage methods.

The paper is structured as follows. In Section \ref{sec:prelim}, we detail the form of the numerical method, along with some core ideas. The main results about the convergence, the order, and the preservation of qualitative properties are stated in Section \ref{sec:main}, with their proofs presented in \ref{sec:proofs}. The performance of the methods is demonstrated in Section \ref{sec:num} via several numerical experiments. Some closing remarks and possible extensions are highlighted in Section \ref{sec:conc}.

\section{Preliminaries}\label{sec:prelim}
In this paper, we consider an initial problem of an autonomous ordinary differential equation given in the form
\begin{equation}\label{eq:ODE}
    \begin{cases}
        x'(t) & = f(x(t)), \\
        x(t_0) & = \tilde{x},
    \end{cases}  
\end{equation}
where $x: [t_0,\;T] \rightarrow \mathbb{R}^N$ ($[t_0,\;T] \subset \mathbb{R}$, $N \in \mathbb{Z}^+$), $x(t) = \left( x_1(t),\; x_2(t), \; \dots, \; x_N(t) \right)^T$, $f: \mathbb{R}^N \rightarrow \mathbb{R}^N$ and $\tilde{x} \in \mathbb{R}^N$. If $f$ has some nice properties, then the equation has a unique solution (for a comprehensive list of sufficient conditions, see \cite{agarwal}). 

Later in Section \ref{sec:main}, we also require some additional properties to hold for equation \eqref{eq:ODE}. Namely,
\begin{itemize}
    \item[(P1)] we assume that there exists a closed set $S \subset \mathbb{R}^N$ such that if $\tilde{x} \in S$, then $x(t) \in S$ for every $t \in [t_0, T]$.
    \item[(P2)] Assume that for every $z \in S$, $\Vert z \Vert \leq M$ and $\Vert f (z) \Vert \leq L M$ hold for some $M, L \in \mathbb{R}^+$.
    \item[(P3)] Moreover, for every $z_1, z_2 \in S$, $\Vert f(z_1) - f(z_2)\Vert \leq L \Vert z_1 - z_2 \Vert$ holds for $L \in \mathbb{R}^+$.
\end{itemize}

In order to acquire an approximate solution of \eqref{eq:ODE}, we split our time-interval $[t_0, \;T]$ with equidistant points \linebreak $\{ t_n := t_0 + n \Delta t \}_{n = 0,\; 1, \; \dots, \; \mathcal{N}}$, where $t_{\mathcal{N}}=T$ and $\Delta t > 0$ is the step-size. For the approximations of $x(t)$ at point $t=t_n$, we use the notation $y_n \approx x(t_n)$. 

Our main objective of this work is to extend explicit multistep multistage methods given in the Shu-Osher form (\cite{const, sspbook})
\begin{equation}\label{eq:SMMM}
\begin{aligned}
    y_{[n-1]}^{(1)} = & \; y_{n-1},\\%[10pt]
   y_{[n-1]}^{(i)} = & \; \sum_{\ell=2}^k \left(\sum_{j=1}^{s} \alpha_{[n-\ell]}^{i,j} y_{[n-\ell]}^{(j)} + \beta_{[n-\ell]}^{i,j} \; \Delta t \; f\left(y_{[n-\ell]}^{(j)}\right) \right) + \\%[10pt]
  & + \sum_{j=1}^{i-1} \alpha_{[n-1]}^{i,j} y_{[n-1]}^{(j)} + \beta_{[n-1]}^{i,j} \; \Delta t \; f\left(y_{[n-1]}^{(j)}\right) \quad (i=2,\; 3, \; \dots,\; s+1),\\%[10pt]
  y_n \quad \;\; = & \; y_{[n-1]}^{(s+1)},
  \end{aligned}
\end{equation}
where $\alpha_{[n]}^{i,j}, \beta_{[n]}^{i,j} \geq 0$ $(i=2,\; 3, \; \dots,\; s+1, \; n=0, \; 1, \; \dots, \mathcal{N}-1)$. Later, the values of $\alpha_{[n-\ell]}^{i,j}$ and $\beta_{[n-\ell]}^{i,j}$ not present in scheme \eqref{eq:SMMM} (e.g., $\alpha_{[n-1]}^{1,1}$) will be chosen to be zeros. Also, it is assumed that $\mathcal{N}>k$ (so the method can take more than one step).

From now on, we refer to values $y_{[n]}^{(i)}$ as inner stage values, where it is assumed that $y_{[n]}^{(i)} \approx x\left(t_n + c_i \Delta t\right)$ for some $c_i \in [0,\; 1]$. The coefficients of the multistep multistage methods observed in this paper (calculated in \cite{const}) can be found in Appendix A. It is easy to see that Runge-Kutta methods and multistep methods are both special cases of \eqref{eq:SMMM}: namely, the former corresponds to the case $k=1$, while the latter is involved in the case $s=1$.

In accordance with the nonstandard modeling rules of Mickens \cite{mickens}, by replacing the term $\Delta t$ in \eqref{eq:SMMM} with $\varphi(\Delta t)$, we get the nonstandard forms of these methods. Namely,
\begin{equation}\label{eq:NSMMM}
\begin{aligned}
    y_{[n-1]}^{(1)} = & \; y_{n-1},\\%[10pt]
   y_{[n-1]}^{(i)} = & \; \sum_{\ell=2}^k \left(\sum_{j=1}^{s} \alpha_{[n-\ell]}^{i,j} y_{[n-\ell]}^{(j)} + \beta_{[n-\ell]}^{i,j} \; \varphi(\Delta t) \; f\left(y_{[n-\ell]}^{(j)}\right) \right) + \\%[10pt]
  & + \sum_{j=1}^{i-1} \alpha_{[n-1]}^{i,j} y_{[n-1]}^{(j)} + \beta_{[n-1]}^{i,j} \; \varphi(\Delta t) \; f\left(y_{[n-1]}^{(j)}\right) \quad (i=2,\; 3, \; \dots,\; s+1),\\%[10pt]
  y_n \quad \;\; = & \; y_{[n-1]}^{(s+1)}.
  \end{aligned}
\end{equation}
Here $\varphi: \mathbb{R} \rightarrow \mathbb{R}$ is assumed to be at least continuously differentiable, and 
\begin{equation}\label{eq:phicond}
    \varphi(h) = h + O(h^2) \qquad \text{for sufficiently small values of } h.
\end{equation}
As we will see later in Section \ref{sec:order}, to attain the high orders of the standard multistep multistage methods \eqref{eq:SMMM}, some further conditions should also be fulfilled by $\varphi$. Moreover, to preserve the qualitative properties of the continuous model, the boundedness of such functions is also needed: see Section \ref{sec:preserv} for details.

The main reason for the replacement of $\Delta t$ by $\varphi(\Delta t)$ is the idea that by doing so, we can construct a method that preserves the qualitative properties of the original model for any choice of the stepsize $\Delta t$. The exact condition is given in Section \ref{sec:preserv}.

\begin{rem}\label{rem:initial}
    It is clear that to be able to start methods \eqref{eq:SMMM} and \eqref{eq:NSMMM}, the values of $y_{[n]}^{(i)}$, $n=0, \dots, k-1$, $i=1, \dots, s+1$ are needed. Later, we will use the notation $\psi(\Delta t) : (0, \infty) \rightarrow \mathbb{R}^{Nk(s+1)}$ for this method (sometimes referred to as 'starting procedure') that calculates these values for a given choice of $\Delta t$. The exact construction of such a starting method is mentioned in Remark \ref{rem:initialRK}.
\end{rem}

In the following section, we state the main properties of these methods: the convergence and the order are analyzed, and it is also shown that for a sufficiently chosen function $\varphi$, these new nonstandard methods preserve some of the properties of the continuous solutions for every choice of $\Delta t>0$.

\section{Main results}\label{sec:main}
In this section, the main results of the paper are stated, while the proofs not presented here can be found in Section \ref{sec:proofs}. First, we show that the convergence of the standard scheme is equivalent to the convergence of the corresponding nonstandard one. Later, the orders of the methods are analyzed. Finally, the preservation of qualitative properties also holds under some conditions on the function $\varphi$.

\subsection{Convergence of the method}\label{sec:conv}
The usual way the convergence of a standard multistep multistage method \eqref{eq:SMMM} is proven is to rewrite this method as a general linear method of the form
\begin{equation}\label{eq:GLM}
    \begin{aligned}
        G & = \Delta t \; A f(G) + U g_{[n-1]},\\
    g_{[n]} & = \Delta t \; B f(G) + V g_{[n-1]},
    \end{aligned}
\end{equation}
where $g_{[n]} = \left( (g_{[n]}^{1})^T, \; (g_{[n]}^{2})^T, \dots , \; (g_{[n]}^{r})^T   \right)^T$, $G = \left( (G^1)^T, \; (G^2)^T, \dots , \; (G^{s+1})^T \right)^T$, (here $g_{[n]}^{i}, G^{j} \in \mathbb{R}^N$ for every $i=1, 2, \dots r, \; j=1, \dots s+1$), $f(G)$ is the vector collecting all the different $f(G^i)$ values, $A \in \mathbb{R}^{N(s+1) \times N(s+1)}$, $U \in \mathbb{R}^{N(s+1) \times N r}$, $B \in \mathbb{R}^{Nr \times N(s+1)}$, $V \in \mathbb{R}^{Nr \times Nr}$. 

Note that matrices $A$, $U$, $B$ and $V$ are all results of a Kronecker product of some matrix with the $N \times N$ identity matrix $I_N$, namely, $A= \widehat{A} \otimes I_N$, $U= \widehat{U} \otimes I_N$, $B= \widehat{B} \otimes I_N$ and $V= \widehat{V} \otimes I_N$ for some matrices $\widehat{A} \in \mathbb{R}^{(s+1)\times (s+1)}$, $\widehat{U} \in \mathbb{R}^{(s+1)\times r}$, $\widehat{B} \in \mathbb{R}^{r\times (s+1)}$, $\widehat{V} \in \mathbb{R}^{r\times r}$. Since the matrices in \eqref{eq:GLM} usually have a lot of repeating elements because of the structure just mentioned, later we will use the notation $a_{ij}$, $U_{ij}$, $b_{ij}$ and $V_{ij}$ for the elements of matrices $\widehat{A}$, $\widehat{U}$, $\widehat{B}$ and $\widehat{V}$, respectively, instead of the elements of the matrices in \eqref{eq:GLM}.

Let us introduce the notations $Y_{[n]} = \left( (y_{[n]}^{(1)})^T, \dots (y_{[n]}^{(s+1)})^T \right)^T$, $\Lambda_{[n]} = \left[ \alpha_{[n]}^{i,j} \right] \in \mathbb{R}^{(s+1)\times (s+1)}$, $\Gamma_{[n]} = \left[ \beta_{[n]}^{i,j} \right] \in \mathbb{R}^{(s+1)\times (s+1)}$, $\widetilde{\Lambda}_{[n]} = \Lambda_{[n]} \otimes I_N$, $\widetilde{\Gamma}_{[n]} = \Gamma_{[n]} \otimes I_N$, and let $e_1$ be a vector of length $N(s+1)$ with ones in the first $N$-many places and zeros for all the others. Then, method \eqref{eq:SMMM} can be expressed as
\begin{equation}\label{eq:Yequation1}
\begin{aligned}
    Y_{[n-1]} = \sum_{\ell=2}^k & \left( \widetilde{\Lambda}_{[n-\ell]} Y_{[n-\ell]} + \Delta t \; \widetilde{\Gamma}_{[n-\ell]} f(Y_{[n-\ell]}) \right) + \\
    & \hspace{20pt} + e_1 y_{n-1} + \widetilde{\Lambda}_{[n-1]} Y_{[n-1]} + \Delta t \; \widetilde{\Gamma}_{[n-1]} f(Y_{[n-1]}).
\end{aligned}
\end{equation}
Since those elements of $\widetilde{\Lambda}$ and $\widetilde{\Gamma}$ not present in scheme \eqref{eq:SMMM} can be thought of as zeros, the matrix $(I-\widetilde{\Lambda}_{[n-1]})$ is a lower-triangular matrix with ones in its main diagonal. Therefore, it is an invertible matrix, and equation \eqref{eq:Yequation1} can be rewritten as
\begin{equation}\label{eq:Yequation2}
    Y_{[n-1]} = \sum_{\ell=2}^k \left( \overline{\Lambda}_{[n-\ell]} Y_{[n-\ell]} + \Delta t \; \overline{\Gamma}_{[n-\ell]} f(Y_{[n-\ell]}) \right) + \overline{e_1} y_{n-1} + \Delta t \; \overline{\Gamma}_{[n-1]} f(Y_{[n-1]}),
\end{equation}
where
\begin{align*}
    \overline{\Lambda}_{[n-\ell]} & = \left( I - \widetilde{\Lambda}_{[n-1]}\right)^{-1} \widetilde{\Lambda}_{[n-\ell]},  \qquad  (2 \leq \ell \leq k), \\
    \overline{\Gamma}_{[n-\ell]} & = \left( I - \widetilde{\Lambda}_{[n-1]}\right)^{-1} \widetilde{\Gamma}_{[n-\ell]}, \qquad (1 \leq \ell \leq k), \\
    \overline{e_1} & = \left( I - \widetilde{\Lambda}_{[n-1]}\right)^{-1} e_1. 
\end{align*}
Consequently, method \eqref{eq:SMMM} can be rewritten as a general linear method of form \eqref{eq:GLM} with the following choices:
\begin{align}\label{eq:GLM_forms}
\scriptsize
     \notag & g_{[n-1]} = \\
     & = \left( Y_{[n-k+1]}^T, \; Y_{[n-k+2]}^T, \; \dots, \; Y_{[n-2]}^T, \; Y_{[n-1]}^T, \right. \\ 
     \notag & \hspace{50pt} \left. \Delta t \left(f(Y_{[n-k+1]})\right)^T, \; \dots, \; \Delta t \left(f(Y_{[n-1]})\right)^T \right)^T, \\
     \normalsize
     A & =\overline{\Gamma}_{[n-1]},\\
     B & = (0,\;  \dots, \; 0, A^T, 0,\;  \dots, \; 0, \; \overline{I})^T,\\
     U & = (\overline{\Lambda}_{[n-k]},\; \overline{\Lambda}_{[n-k+1]},  \dots, \; \overline{\Lambda}_{[n-2]}, \overline{\Gamma}_{[n-k]}, \dots, \overline{\Gamma}_{[n-2]}),\\
    V & =\begin{pmatrix}
    0 & \overline{I} & 0 & \dots & 0 & 0 & \dots & 0 \\
    0 & 0 & \overline{I} & \dots & 0 & 0 & \dots & 0 \\
    \vdots & \vdots & \vdots & \ddots & \vdots & \vdots & \ddots & \vdots \\
    0 & 0 & 0 & \dots & \overline{I} & 0 & \dots & 0 \\
    \overline{\Lambda}_{[n-k]} & \overline{\Lambda}_{[n-k+1]} & \overline{\Lambda}_{[n-k+2]} & \dots & \overline{\Lambda}_{[n-2]} & \overline{\Gamma}_{[n-k]} & \dots & \overline{\Gamma}_{[n-2]} \\
    0 & 0 & 0 & \dots & 0 & \overline{I} & \dots & 0 \\
    \vdots & \vdots & \vdots & \ddots & \vdots & \vdots & \ddots & \vdots \\
    0 & 0 & 0 & \dots & 0 & 0 & \dots & \overline{I} \\
    0 & 0 & 0 & \dots & 0 & 0 & \dots & 0 \\ \label{eq:GLM_forms2}
\end{pmatrix},
\end{align}
where $\overline{I}$ is an identity matrix of size $N(s+1) \times N(s+1)$. Moreover, $G = Y_{[n-1]}$.

To prove the convergence of nonstandard method \eqref{eq:NSMMM}, we apply a similar line of thought as in the case of the standard scheme. Namely, let us introduce the nonstandard version of a general linear method, defined as
\begin{equation}\label{eq:NSGLM}
    \begin{aligned}
        G & = \varphi(\Delta t) \; A f(G) + U g_{[n-1]},\\
    g_{[n]} & = \varphi(\Delta t) \; B f(G) + V g_{[n-1]},
    \end{aligned}
\end{equation}
where $\varphi$ has the same properties as discussed in Section \ref{sec:prelim}. Since the only difference between the standard and nonstandard methods is the form of the timestep, the transformation discussed above can also be applied to nonstandard multistep multistage methods. Particularly, by the choices \eqref{eq:GLM_forms}--\eqref{eq:GLM_forms2}, the nonstandard multistep multistage method \eqref{eq:NSMMM} can be reformulated as a nonstandard general linear method \eqref{eq:NSGLM}.

The main question now is the convergence of method \eqref{eq:NSGLM}. For this, we define the consistency and stability of these methods.
\begin{defin}[See e.g. \cite{butcherbook}]
    The 'standard' general linear method \eqref{eq:GLM} is said to be consistent, if there exists vectors $u$ and $v$ such that
    \begin{align*}
        Vu &= u, \qquad Uu=\mathbb{1},\qquad   B \mathbb{1} + Vv = u + v,
    \end{align*}
    where $\mathbb{1}$ is the all-one vector.
\end{defin}
In the standard case, we assume that the elements of vector $g$ approximate a linear combination of the values of the solution and its derivative, and vectors $u$ and $v$ collect the corresponding coefficients. More precisely,
\begin{equation}\label{eq:constmeaning}
    g_{[n]}^{(i)} = u_i x(t_n) + v_i \Delta t f(x(t_n)) + O((\Delta t)^2). 
\end{equation}
It can also be shown that consistent general linear methods solve the equation $x'(t)=1$, $x(0)=a$ exactly, meaning that $g_{[n]}^{(i)} = u_i (nt + a) + v_i \Delta t$.

Now, let us define a similar concept for the nonstandard methods.
\begin{defin}
    The nonstandard general linear method \eqref{eq:NSGLM} is said to be consistent, if there exists vectors $u$ and $v$ such that
    \begin{align*}
        Vu &= u, \qquad Uu=\mathbb{1},\qquad   B \mathbb{1} + Vv = u + v,
    \end{align*}
    where $\mathbb{1}$ is the all-one vector.
\end{defin}
Since it was assumed that $\varphi (h) = h + O(h^2)$ for small values of $h$, for consistent nonstandard general linear methods \eqref{eq:constmeaning} also holds. Moreover, for a given $\Delta t$, these methods solve the equation $x'(t)=\kappa$, $x(0)=a$ exactly, where $\kappa = \frac{\varphi(\Delta t)}{\Delta t}$, in the sense that $g_{[n]}^{(i)} = u_i \left(n\varphi(\Delta t) + a\right) + v_i \varphi(\Delta t)$. 

As a result, we can state the connection between the two notions of consistency, which is a trivial consequence of the previous two definitions.
\begin{prop}\label{th:cons}
    A nonstandard general linear method is consistent if and only if its standard counterpart is also consistent.
\end{prop}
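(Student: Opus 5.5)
The plan is to compare the two definitions of consistency directly. Both the standard method \eqref{eq:GLM} and its nonstandard counterpart \eqref{eq:NSGLM} are specified by the same quadruple of matrices $(A,U,B,V)$. The only difference between them is that the scalar $\Delta t$ multiplying $Af(G)$ and $Bf(G)$ is replaced by $\varphi(\Delta t)$.

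The key observation is that the consistency conditions
\[
Vu=u,\qquad Uu=\mathbb{1},\qquad B\mathbb{1}+Vv=u+v
\]
involve only the matrices $U$, $V$, $B$ and the unknown vectors $u$, $v$. They involve neither the step size $\Delta t$ nor the function $\varphi$. Hence the set of admissible pairs $(u,v)$ is literally the same set for the standard and the nonstandard method.

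The argument has two directions. First, suppose the standard method is consistent with witnessing vectors $u,v$. Since the nonstandard method uses the same $U$, $V$, $B$, the same $u,v$ satisfy the three equalities, so the nonstandard method is consistent. Second, conversely, any witnesses $u,v$ for the nonstandard method serve verbatim for the standard one.

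The only point needing care, which I expect to be the sole (mild) obstacle, is to confirm that the matrices really coincide. One must check that passing from \eqref{eq:NSMMM} to \eqref{eq:NSGLM} via \eqref{eq:GLM_forms}--\eqref{eq:GLM_forms2} produces the same $A,U,B,V$ as passing from \eqref{eq:SMMM} to \eqref{eq:GLM}. In particular, neither factor may be absorbed into the matrices, and $\varphi(\Delta t)$ appears only as the scalar in front of $f$.

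This holds because the rewriting multiplies by $(I-\widetilde{\Lambda}_{[n-1]})^{-1}$, which depends only on the $\alpha$ coefficients. Moreover, the $f$-components of $g_{[n-1]}$ carry the same scalar factor ($\Delta t$ or $\varphi(\Delta t)$) throughout, so the same $\overline{I}$ block appears in $B$. With this identification made, the proposition follows immediately.
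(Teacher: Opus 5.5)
Your proposal is correct and matches the paper, which treats the proposition as a trivial consequence of the two definitions: the conditions $Vu=u$, $Uu=\mathbb{1}$, $B\mathbb{1}+Vv=u+v$ are literally the same in both and involve neither $\Delta t$ nor $\varphi$. Your extra check that the MMM-to-GLM rewriting yields the same $(A,U,B,V)$ is not needed for the proposition itself, since the standard counterpart of a nonstandard GLM is by definition the one with the same matrices; it is, however, what the later transfer to Theorem~\ref{th:MMMconv} relies on.
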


Next, let us consider stability, which is similar to the previous case.
\begin{defin}[See e.g. \cite{butcherbook}]
    The 'standard' general linear method \eqref{eq:GLM} is stable if there exists a positive constant $C$ such that $\Vert V^n \Vert \leq C$ holds for every $n \in \mathbb{Z}^+$.
\end{defin}
\begin{defin}
    The nonstandard general linear method \eqref{eq:NSGLM} is stable if there exists a positive constant $C$ such that $\Vert V^n \Vert \leq C$ holds for every $n \in \mathbb{Z}^+$.
\end{defin}
The following statement is a trivial consequence of these previous definitions.
\begin{prop}\label{th:stable}
    A nonstandard general linear method is stable if and only if its standard counterpart is also stable.
\end{prop}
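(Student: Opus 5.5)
The plan is to argue directly from the two stability definitions, since the statement is essentially a comparison of identical conditions. The first step is to make precise what ``standard counterpart'' means. The nonstandard general linear method \eqref{eq:NSGLM} and the standard method \eqref{eq:GLM} share the same coefficient matrices $A$, $U$, $B$ and $V$. The only difference is that the scalar factor $\Delta t$ multiplying $Af(G)$ and $Bf(G)$ is replaced by $\varphi(\Delta t)$. In particular, the matrix $V$ appearing in both schemes is literally the same object. For methods arising from multistep multistage schemes via \eqref{eq:GLM_forms}--\eqref{eq:GLM_forms2}, this is visible directly: $V$ is built only from the $\overline{I}$ blocks and from $\overline{\Lambda}_{[n-\ell]}$, $\overline{\Gamma}_{[n-\ell]}$. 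These depend only on the coefficients $\alpha^{i,j}_{[n]}$ and $\beta^{i,j}_{[n]}$, not on the step size or on $\varphi$.

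The second step is to note that both stability definitions impose exactly the same condition on $V$: there must exist $C>0$ with $\Vert V^n\Vert\le C$ for all $n\in\mathbb{Z}^+$. Neither $A$, $U$, $B$ nor the factor $\Delta t$ or $\varphi(\Delta t)$ enters this condition. Hence, if the standard method is stable with constant $C$, the same $C$ works for the nonstandard method, and conversely. This gives both implications simultaneously.

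I do not expect a genuine obstacle here. The only point requiring care is the first step: stating explicitly that passing to the nonstandard form leaves $V$ unchanged, and checking that the rewriting in \eqref{eq:GLM_forms} does not place any $\Delta t$ inside $V$. The factors $\Delta t$ appear only in the components $\Delta t\, f(Y_{[\cdot]})$ of the vector $g_{[n-1]}$, not in the matrix itself. Once that is recorded, the equivalence follows immediately, exactly as for Proposition~\ref{th:cons}.
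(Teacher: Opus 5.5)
Your proposal is correct and follows the paper's approach: the paper calls this a trivial consequence of the definitions. Both stability notions impose the same bounded-powers condition $\Vert V^n\Vert \le C$ on the same matrix $V$, and $V$ is unchanged when $\Delta t$ is replaced by $\varphi(\Delta t)$. Your explicit check that the step-size factor sits only in the components $\Delta t\, f(Y_{[\cdot]})$ (respectively $\varphi(\Delta t)\, f(Y_{[\cdot]})$) of $g_{[n-1]}$, and not inside $V$, is a reasonable detail that the paper leaves implicit.
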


Finally, the convergence of the method can be established by these two previous notions.
\begin{defin}[See e.g. \cite{butcherbook}]
    A 'standard' general linear method \eqref{eq:GLM} is convergent if for any initial value problem \eqref{eq:ODE} (subject to some condition ensuring the existence of a unique solution, see \cite{agarwal}), there exists a non-zero vector $u$ and a starting procedure $\psi: (0,\infty)\rightarrow\mathbb{R}^r$ such that $\displaystyle\lim_{\Delta t \rightarrow 0} \psi_i(\Delta t) = u_i x(t_0)$ and such that for any $\hat{t}>t_0$ ($\hat{t}\leq T$), the sequence of vectors $g_{[n]}$ (computed using $n$ steps with stepsize $\Delta t = (\hat{t}-t_0)/n$ and $g_{[0]} = \psi(\Delta t)$) converges to $u x(\hat{t})$.
\end{defin}
The convergence of the nonstandard method \eqref{eq:NSGLM} can be defined the same way.

From now on, we only consider covariant methods. Covariance means that when a general linear method is applied to solve the following two ordinary differential equations:
\begin{align*}
    x_1'(t) & = f(x_1(t)),\qquad \quad \;\; x_1(t_0) = x_0,\\
    x_2'(t) & = f(x_2(t) - \eta),\qquad x_2(t_0) = x_0 + \eta,
\end{align*}
then the two numerical schemes should produce the same numerical solution, only shifted by $\eta$. In other words, if we shift arbitrary component $g_{[0]}^i$ by $u_i \eta$, then $g_{[1]}^i$ should also be shifted by the same amount. This should also hold for internal stages too.

For the next theorems, we also assume that properties (P1)--(P3) (defined in Section \ref{sec:prelim}) hold.

\begin{theor}[\cite{butcherbook} Th. 513A, 514A and 515D]\label{th:Lax_S}
    A 'standard' covariant general linear method \eqref{eq:GLM} is convergent if and only if it is consistent and stable.
\end{theor}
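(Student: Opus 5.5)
This theorem is Butcher's classical Lax-type equivalence, quoted from \cite{butcherbook}. The plan is therefore to reconstruct the standard argument in three parts: necessity of stability, necessity of consistency, and sufficiency. Throughout, properties (P1)--(P3) supply the Lipschitz and boundedness estimates.

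\emph{Necessity of stability.} Apply the method to the trivial problem $x'=0$ (i.e.\ $f\equiv 0$). The recursion then reduces to $g_{[n]}=V^n g_{[0]}$. By covariance we may shift the starting vector by $u\eta$ and perturb it by an arbitrary vector that vanishes as $\Delta t\to 0$. Convergence to $ux(\hat t)$ for every admissible starting procedure then forces $V^n g_{[0]}$ to stay bounded for all such perturbations. A uniform boundedness (Banach--Steinhaus) argument upgrades this to $\sup_n\Vert V^n\Vert<\infty$.

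\emph{Necessity of consistency.} Use the same problem $x'=0$. Convergence with $\psi\to ux(t_0)$ gives $Vu=u$, since $g_{[n]}=V^n g_{[0]}$ must tend to $ux(t_0)$. The stage equation $G=Ug_{[n-1]}$ must reproduce $x(t_0)$ in every stage, which yields $Uu=\mathbb 1$. Next apply the method to $x'=1$. Make the ansatz $g_{[n]}=u\,x(t_n)+\Delta t\, w_n$ and use convergence to show that $w_n$ stabilises to some $v$. Substituting into $g_{[n]}=\Delta t\,B\mathbb 1+Vg_{[n-1]}$ and comparing $O(\Delta t)$ terms gives $B\mathbb 1+Vv=u+v$.

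\emph{Sufficiency.} Assume consistency and stability. Define the exact-value vector $\hat g_{[n]}=u\,x(t_n)+v\,\Delta t\,x'(t_n)$ and stage values $\hat G=\mathbb 1 x(t_{n-1})+O(\Delta t)$. The consistency relations $Vu=u$, $Uu=\mathbb 1$, $B\mathbb 1+Vv=u+v$ show, via a Taylor expansion using (P2)--(P3), that the local truncation error satisfies $\hat g_{[n]}-\Delta t Bf(\hat G)-V\hat g_{[n-1]}=o(\Delta t)$ uniformly in $n$. Let $e_n=g_{[n]}-\hat g_{[n]}$. The Lipschitz bound on $f$, together with the invertibility of the stage system for small $\Delta t$, gives
\begin{equation*}
e_n=Ve_{n-1}+\Delta t\,E_n e_{n-1}+\tau_n,\qquad \Vert E_n\Vert\le C_1,\ \Vert\tau_n\Vert=o(\Delta t).
\end{equation*}
Unroll this recursion: $e_n=V^ne_0+\sum_j V^{n-j}(\Delta t E_j e_{j-1}+\tau_j)$. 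Stability bounds every $\Vert V^{m}\Vert$ by $C$. A discrete Gronwall inequality then gives $\Vert e_n\Vert\le Ce^{CC_1(\hat t-t_0)}\bigl(\Vert e_0\Vert+n\,o(\Delta t)\bigr)\to 0$.

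The main obstacle is the sufficiency step, where the error is only $o(\Delta t)$ rather than $O(\Delta t^2)$ and stages involve $f$ evaluated at approximate points. The truncation estimate must be made uniform, using (P3) and the uniform continuity of $x'$ on $[t_0,T]$, and covariance is needed to handle the component of the starting error along $u$. Since the statement is Theorems 513A, 514A and 515D of \cite{butcherbook}, the plan is to follow those proofs step by step rather than reprove them in full.
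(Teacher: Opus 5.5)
Your three-part structure is the same as Butcher's, which the paper mirrors in Lemmas~\ref{lemma1}--\ref{lemma5}, and the sufficiency part is sound. The construction of $v$ in the consistency part, however, fails.

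From $x'=1$, your ansatz together with $Vu=u$ gives the $\Delta t$-independent recursion $w_n=(B\mathbb{1}-u)+Vw_{n-1}$. Convergence only tells you that $\Delta t\,w_n\to 0$ for $\Delta t=\hat t/n$, i.e.\ $w_n=o(n)$. It does not make $w_n$ converge, because stability allows eigenvalues of $V$ on the unit circle other than $1$. For example, take the one-stage method with $A=0$, $U=(1,\ 0)$, $B=(1,\ 2)^T$, $V=\mathrm{diag}(1,-1)$ and $u=(1,0)^T$. It is covariant, stable, consistent (with $v=(0,1)^T$) and convergent. Yet for $x'=1$ with zero starting values, the second component of $w_n$ is $2,0,2,0,\dots$. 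So ``$w_n$ stabilises to some $v$'' is false in general, and the substitution step has no limit to work with.

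The missing idea is the one in Lemma~\ref{lemma2} (Butcher's 514A). Power-boundedness makes the eigenvalue $1$ semisimple, so $V=\mathcal{T}^{-1}\mathrm{diag}(I,W)\mathcal{T}$ with $1\notin\sigma(W)$. Hence $\frac1n\sum_{j<n}V^j(B\mathbb{1}-u)\to\mathcal{T}^{-1}\mathrm{diag}(I,0)\mathcal{T}(B\mathbb{1}-u)$, and convergence forces this limit to vanish. That says exactly that $B\mathbb{1}-u$ lies in the range of $I-V$, i.e.\ that $(I-V)v=B\mathbb{1}-u$ is solvable. A Ces\`aro version also works without the decomposition: $\bar w_n=\frac1n\sum_{m<n}w_m$ satisfies $(I-V)\bar w_n=B\mathbb{1}-u+(w_0-w_n)/n\to B\mathbb{1}-u$, and the range of $I-V$ is closed.

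Several other justifications are misplaced.

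\begin{itemize}
\item $Uu=\mathbb{1}$ cannot be read off from $x'=0$. There $f(G)=0$, so the stages never enter $g_{[n]}$, and convergence says nothing about them. In the paper (and in Butcher, Sec.~511), $Vu=u$ and $Uu=\mathbb{1}$ come directly from covariance: an input shift $u\eta$ must shift the stages by $\mathbb{1}\eta$ and the output by $u\eta$. This is why the theorem is stated for covariant methods. Your route to $Vu=u$ via $x'=0$ is fine once stability is known.
\item Covariance plays no role where you do invoke it. In the stability part it does not supply arbitrary vanishing perturbations of the starting vector; that freedom has to come from applying the convergence hypothesis to every starting procedure with the right limit.
\item In the sufficiency part, the component of $e_0$ along $u$ is already controlled by $\Vert V^n\Vert\le C$, so covariance is not needed there either.
\item Under (P1)--(P3), $x'$ is Lipschitz with constant $L^2M$. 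The local error is therefore actually $O(\Delta t^2)$, as in Lemma~\ref{lemma3}. Your $o(\Delta t)$ bound is weaker but still suffices.
\end{itemize}
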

A similar connection can also be stated in the nonstandard case.
\begin{theor}\label{th:Lax_NS}
    A nonstandard covariant general linear method \eqref{eq:NSGLM} is convergent if and only if it is consistent and stable.
\end{theor}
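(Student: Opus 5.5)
The plan is to reduce Theorem \ref{th:Lax_NS} to Theorem \ref{th:Lax_S} by absorbing the factor $\varphi(\Delta t)/\Delta t$ into the right-hand side of the differential equation. For a fixed step size $\Delta t$, the nonstandard method \eqref{eq:NSGLM} applied to $x'=f(x)$ produces exactly the same iterates as the standard method \eqref{eq:GLM}, with the same $A,U,B,V$, applied to the rescaled problem $x'=\kappa f(x)$, where $\kappa=\kappa(\Delta t)=\varphi(\Delta t)/\Delta t$. This holds because $\varphi(\Delta t) f = \Delta t\,(\kappa f)$. By \eqref{eq:phicond}, $\kappa(\Delta t)=1+O(\Delta t)\to 1$ as $\Delta t\to 0$, so for $\Delta t\le h_0$ we have $\kappa\in[1/2,2]$, say. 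Consequently, the rescaled right-hand sides $\kappa f$ satisfy (P1)--(P3) uniformly, with constants $2L$ in place of $L$; note that $S$ is unchanged, since $x'=\kappa f(x)$ is a time reparametrization of $x'=f(x)$.

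For the direction ``consistent and stable $\Rightarrow$ convergent'', I use Propositions \ref{th:cons} and \ref{th:stable}, which say that the standard counterpart is consistent and stable. I would then go into the proof of Theorem \ref{th:Lax_S} (Butcher 515D) rather than just cite it, because the ODE now depends on $\Delta t$. That proof gives a global error bound of the form
\[
\Vert g_{[n]} - u\,x_\kappa(t_n) - \Delta t\, v\, x_\kappa'(t_n)\Vert \le C_1\Vert g_{[0]}-\psi^{*}\Vert + C_2\,\tau(\Delta t),
\]
where $\tau(\Delta t)\to 0$. The constants $C_1,C_2$ depend only on $C$ from stability, on the Lipschitz constant and on the bound $M$, so by the uniformity above they can be taken independent of $\kappa\in[1/2,2]$. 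Here $x_\kappa$ is the exact solution of the rescaled problem, namely $x_\kappa(t)=x(t_0+\kappa(t-t_0))$.

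The remaining step is the triangle inequality
\[
\Vert u\,x_\kappa(\hat t)-u\,x(\hat t)\Vert \le \Vert u\Vert\, LM\,|\kappa-1|\,(\hat t-t_0)\to 0,
\]
which uses (P2). Together with the error bound, this gives $g_{[n]}\to u\,x(\hat t)$. The same starting procedure $\psi$ as in the standard case can be used, since $\psi_i(\Delta t)\to u_i x(t_0)$ is unaffected.

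For the converse, ``convergent $\Rightarrow$ consistent and stable'', I would again follow Butcher's arguments (513A, 514A), which use test problems. For stability, apply the method to $x'=0$. Then $\varphi$ disappears completely, the iteration is $g_{[n]}=V^n g_{[0]}$, and convergence forces $V^n$ to be bounded, exactly as in the standard case. For consistency, $Vu=u$ and $Uu=\mathbb{1}$ follow from $x'=0$ together with covariance, again independently of $\varphi$. The relation $B\mathbb{1}+Vv=u+v$ comes from the problem $x'=1$. There the nonstandard method computes $g_{[n]}^{(i)}=u_i(n\varphi(\Delta t)+a)+v_i\varphi(\Delta t)$-type quantities; since $n\varphi(\Delta t)=(\hat t-t_0)\kappa\to \hat t-t_0$, the standard argument goes through with $\Delta t$ replaced by $\varphi(\Delta t)$.

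The main obstacle is the uniformity in the first direction. Theorem \ref{th:Lax_S} is stated for a fixed ODE, while here the effective ODE changes with $\Delta t$. So I must check that the constants in the standard convergence proof depend on $f$ only through the Lipschitz and bound constants in (P2)--(P3), and that the local truncation errors are $o(\Delta t)$ uniformly in $\kappa$. I would handle this by explicitly redoing the local error estimate with $\varphi(\Delta t)$ and using the $C^1$ regularity of $\varphi$.
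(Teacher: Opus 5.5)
Your proposal is correct. The necessity half matches the paper's Lemmas~1 and~2: stability comes from $x'=0$, the relations $Vu=u$ and $Uu=\mathbb{1}$ come from covariance, and $B\mathbb{1}+Vv=u+v$ comes from $x'=1$ via $n\varphi(\Delta t)\to\hat t-t_0$.

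One wording slip in that part: the formula $g^{(i)}_{[n]}=u_i(n\varphi(\Delta t)+a)+v_i\varphi(\Delta t)$ already presupposes consistency, so you cannot use it there. Start instead from $g_{[n]}=\varphi(\Delta t)\sum_{j=0}^{n-1}V^jB\mathbb{1}+V^n\psi(\Delta t)$. By the stability you have just proved, $(\kappa-1)\Delta t\sum_{j<n}V^jB\mathbb{1}\to 0$, so the limit is the same as in Butcher's standard argument.

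For sufficiency your route differs from the paper's. The paper re-runs Butcher's Lemmas 515A--515D with $\varphi(\Delta t)\le\tau_0$ in place of $\Delta t$ and compares against the true solution through $\widehat{g}=u\,x(t_n)+v\,\varphi(\Delta t)x'(t_n)$ and $\widehat{G}^i=x(t_{n-1}+c_i\Delta t)$. The mismatch between $\varphi(\Delta t)$ and $\Delta t$ in the first-order terms, e.g. $(\Delta t-\varphi(\Delta t))c_i\,x'$, is absorbed into the $O((\Delta t)^2)$ local errors.

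You instead identify the nonstandard iterates with the standard iterates for $x'=\kappa f(x)$. Butcher's estimates then apply verbatim with $L$ replaced by $\kappa L\le 2L$ and $M$ unchanged. The whole effect of $\varphi$ is isolated in the single reparametrization error $\Vert u\Vert LM|\kappa-1|(\hat t-t_0)$. This is cleaner, and it shows that convergence needs only $\varphi(h)/h\to 1$. The $C^1$ regularity of $\varphi$ and your plan to redo the local error estimate with $\varphi(\Delta t)$ are therefore unnecessary: uniformity in $\kappa\in[1/2,2]$ is automatic once the constants depend on $f$ only through $L$ and $M$.

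The price of your route is that $x_\kappa(\hat t)=x(t_0+\kappa(\hat t-t_0))$ lies beyond $T$ when $\kappa>1$ and $\hat t=T$. You must add that, because the problem is autonomous, (P1) applied to the initial value $x(T)\in S$ extends the solution and keeps it in $S$ on a longer interval, so (P2)--(P3) remain available there. The paper's comparison at grid points $t_n\le T$ avoids this issue.
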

%The proof of this statement can be found in Section \ref{sec:proofs}.

Now, we can state the main theorems of this section.
\begin{theor}
    A nonstandard general linear method \eqref{eq:NSGLM} is convergent if and only if the standard counterpart \eqref{eq:GLM} is also convergent.
\end{theor}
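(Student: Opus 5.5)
The plan is to chain the equivalences already established in this section rather than argue about convergence directly. Both the nonstandard method \eqref{eq:NSGLM} and its standard counterpart \eqref{eq:GLM} are built from the same quadruple $(A,U,B,V)$. They differ only in the scalar factor multiplying $f(G)$, namely $\varphi(\Delta t)$ versus $\Delta t$. The definitions of consistency and stability involve only the matrices $U$, $B$, $V$ and the vectors $u$, $v$, and never the step size. Convergence is therefore reduced to these two step-size-independent notions by Theorem \ref{th:Lax_S} in the standard case and by Theorem \ref{th:Lax_NS} in the nonstandard case. Both results are stated for covariant methods under (P1)--(P3), which are standing assumptions at this point.

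Concretely, I would carry out the following chain of equivalences.

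First, by Theorem \ref{th:Lax_NS}, the nonstandard method \eqref{eq:NSGLM} is convergent if and only if it is consistent and stable.

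Second, by Propositions \ref{th:cons} and \ref{th:stable}, this holds if and only if the standard method \eqref{eq:GLM} is consistent and stable.

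Third, by Theorem \ref{th:Lax_S}, this holds if and only if \eqref{eq:GLM} is convergent.

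To justify the middle step without leaning only on the word ``trivial'', I would note explicitly that the conditions $Vu=u$, $Uu=\mathbb{1}$, $B\mathbb{1}+Vv=u+v$ and $\sup_n\Vert V^n\Vert\le C$ are literally identical in the two definitions.

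One point needs attention: covariance must be the same property for both methods, since it is a hypothesis of both theorems being invoked. I would check this briefly. Shifting $g_{[0]}^i$ by $u_i\eta$ and the solution by $\eta$ leaves $f$ evaluated at the same shifted arguments. Covariance therefore depends only on the linear parts, namely $Uu=\mathbb{1}$ and $Vu=u$ acting on the shift. The scalar in front of $Af(G)$ and $Bf(G)$ is irrelevant, so a method is covariant in the nonstandard form exactly when it is covariant in the standard form.

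The only genuine obstacle is Theorem \ref{th:Lax_NS} itself, which I am assuming here. If its proof had to be supplied, the plan would be to mimic Butcher's argument with $\Delta t$ replaced by $\varphi(\Delta t)$. The key estimate would be that the local truncation error is $O(\Delta t^2)$, which follows from $\varphi(h)=h+O(h^2)$ and consistency. The Lipschitz bound (P3) combined with stability would then control error propagation with the factor $\varphi(\Delta t)L\le (1+O(\Delta t))\Delta t L$. For the present statement, however, the result follows directly by composing the four cited equivalences.
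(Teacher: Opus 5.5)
Your proposal is correct and is essentially the paper's own proof. The paper likewise chains Theorem~\ref{th:Lax_NS}, Propositions~\ref{th:cons} and~\ref{th:stable}, and Theorem~\ref{th:Lax_S}, then notes that the argument reverses. Your check that covariance reduces to $Uu=\mathbb{1}$, $Vu=u$, and so does not depend on whether $\Delta t$ or $\varphi(\Delta t)$ multiplies $f$, is right; the paper leaves this implicit in its standing covariance assumption.
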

Because of the simplicity of the proof, we present it here.
\begin{proof}
    If a nonstandard general linear method is convergent, then by Proposition \ref{th:Lax_NS}, it is consistent and stable. By Propositions \ref{th:cons} and \ref{th:stable}, the standard counterpart is consistent and stable too. Therefore, by Theorem \ref{th:Lax_S}, the standard method is convergent. The same argument also holds in the other direction.
\end{proof}
An easy consequence of the previous theorem is the following, which establishes the convergence of nonstandard multistep multistage methods.
\begin{theor}\label{th:MMMconv}
    A nonstandard general multistep multistage method \eqref{eq:NSMMM} is convergent if and only if the standard counterpart \eqref{eq:SMMM} is also convergent.
\end{theor}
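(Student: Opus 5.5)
The plan is to reduce Theorem \ref{th:MMMconv} to the preceding theorem on nonstandard general linear methods. We use the fact, established above, that both \eqref{eq:SMMM} and \eqref{eq:NSMMM} can be rewritten as general linear methods with the \emph{same} matrices $A$, $B$, $U$, $V$ given in \eqref{eq:GLM_forms}--\eqref{eq:GLM_forms2}. The only difference is that $\Delta t$ is replaced by $\varphi(\Delta t)$.

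First, we make this correspondence precise. We take the reformulation of \eqref{eq:Yequation1} into \eqref{eq:Yequation2}, which only used the invertibility of $I-\widetilde{\Lambda}_{[n-1]}$. We observe that it is purely algebraic in the step-size factor, so it goes through verbatim with $\varphi(\Delta t)$ in place of $\Delta t$. A subtle point is the storage vector $g_{[n-1]}$. Its second block of components contains $\Delta t\, f(Y_{[\cdot]})$, and in the nonstandard version these become $\varphi(\Delta t) f(Y_{[\cdot]})$. We will check that, with this choice of $g$, the nonstandard multistep multistage method is exactly \eqref{eq:NSGLM} with the same $A,U,B,V$.

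Next, we match the notions of convergence. Convergence of the multistep multistage method means that the approximations $y_n$ (equivalently $Y_{[n]}$) converge to the exact solution. Convergence of the general linear method means $g_{[n]} \to u\,x(\hat t)$, where $u$ has ones in the $Y$-blocks and zeros in the $\Delta t f$-blocks. Since $\varphi(\Delta t)\to 0$ and $f$ is bounded on $S$ by (P2), the $\varphi(\Delta t) f$-components tend to zero automatically. Hence convergence of $g_{[n]}$ to $u\,x(\hat t)$ is equivalent to convergence of the stage values. The starting procedure $\psi$ of Remark \ref{rem:initial} supplies $g_{[0]}$ in both cases, with $\psi_i(\Delta t)\to u_i x(t_0)$.

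Finally, we apply the previous theorem. The nonstandard general linear method is convergent if and only if the standard one is. Combined with the two equivalences above, this gives the claim. To use that theorem through Theorems \ref{th:Lax_S} and \ref{th:Lax_NS}, covariance must hold. It does, because \eqref{eq:Yequation1} is consistent in the $Y$-blocks, i.e.\ the relevant row sums of $\Lambda$ equal one together with the $e_1$ term. Shifting all stage values by $\eta$ then shifts the new stages by $\eta$, while the $f$-blocks are unaffected.

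The main obstacle is the bookkeeping in the first step. The general linear method matrices $U$, $V$ depend on $n$ through $\overline{\Lambda}_{[n-\ell]}$, and the $g$-vector scales with $\varphi(\Delta t)$ rather than $\Delta t$. I would handle this by noting that the coefficients are assumed fixed (step-independent) for the methods of \cite{const}, so the matrices are constant. The $\varphi(\Delta t)$ scaling is exactly what \eqref{eq:NSGLM} prescribes, so no rescaling argument is needed.
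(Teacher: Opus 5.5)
Your proposal is correct and follows the paper's route. The paper treats Theorem~\ref{th:MMMconv} as an immediate consequence of the preceding general-linear-method equivalence, using the observation that \eqref{eq:NSMMM} is exactly \eqref{eq:NSGLM} with the same $A,U,B,V$ from \eqref{eq:GLM_forms}--\eqref{eq:GLM_forms2}; your extra bookkeeping (matching the two notions of convergence, step-independent coefficients, the $\varphi(\Delta t)f$-blocks of $g$) makes explicit what the paper leaves implicit. One small caveat: covariance, i.e.\ $\sum_{\ell=1}^{k}\sum_{j}\alpha^{i,j}_{[n-\ell]}=1$ for every stage $i$, is not automatic for an arbitrary scheme of the form \eqref{eq:SMMM}, so rather than asserting that it holds you should take it, as the paper does, as a standing hypothesis (``we only consider covariant methods'').
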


\begin{rem}
    It is worth noting that Theorem \ref{th:MMMconv} can also be proven without the introduction of consistency and stability. However, because of the pivotal role of these notions in numerical analysis, their analysis in the nonstandard context seemed to be natural.
\end{rem}

\subsection{Order of the method}\label{sec:order}
Before we analyze the order of method \eqref{eq:NSMMM}, we state a statement about the nonstandard form of the Taylor series. Note that a similar result was already discussed in \cite{takacs26}, but there only the one-variable case was discussed: the present result deals with the general, multivariable case.

\begin{lemma}[Nonstandard form of Taylor's theorem]\label{lem:taylor}
Let $f: \mathbb{R}^{N_1} \rightarrow \mathbb{R}^{N_2}$ ($N_1, N_2 \in \mathbb{Z}^+$) be a function which is continuously differentiable at least $p+1$-many times in a closed neighborhood of $a \in \mathbb{R}^{N_1}$ denoted by $U_a$, $\delta \in \mathbb{R}^{N_1}$, for every $\xi \in [0,1]$ $a+\xi \delta \in U_a$ holds, and $\Vert f^{(n+1)} \Vert$ is bounded inside $U_a$. Moreover, let us assume that $\varphi: \mathbb{R} \rightarrow \mathbb{R}$ is $p+1$-many times differentiable for values sufficiently close to zero. Then, the following are equivalent:
\begin{itemize}
    \item[(C1)] $\varphi(h) = h + O(h^{p+1})$ for sufficiently small values of $h$,
    \item[(C2)] $f(a+\delta) = f(a) + f'(a)\varphi(\delta) + \dfrac{1}{2} f''(a) (\varphi(\delta),\varphi(\delta)) +\dots + \dfrac{1}{p!} f^{(p)}(a) (\varphi(\delta),\varphi(\delta),\dots, \varphi(\delta)) + R_{p+1},$
\end{itemize}
where
$$ R_{p+1} = \dfrac{1}{p!} \int_0^1 f^{(p+1)} (a + \xi \varphi(\delta)) (\varphi(\delta),\varphi(\delta),\dots, \varphi(\delta)) (1-\xi)^p d \xi. $$
\end{lemma}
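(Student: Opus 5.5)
The plan is to reduce the nonstandard expansion to the classical multivariable Taylor formula with integral remainder, applied at the base point $a$ with increment $\varphi(\delta)$. Throughout, $\varphi(\delta)$ for $\delta\in\mathbb{R}^{N_1}$ is read componentwise, $\varphi(\delta)=(\varphi(\delta_1),\dots,\varphi(\delta_{N_1}))^T$.

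\textbf{Interpretation of (C2).} The classical theorem, for $f\in C^{p+1}(U_a)$ and any increment $\eta$ with $a+\xi\eta\in U_a$ for $\xi\in[0,1]$, gives
\begin{equation*}
f(a+\eta)=\sum_{m=0}^{p}\frac{1}{m!}f^{(m)}(a)(\eta,\dots,\eta)+\frac{1}{p!}\int_0^1 f^{(p+1)}(a+\xi\eta)(\eta,\dots,\eta)(1-\xi)^p\,d\xi .
\end{equation*}
With $\eta=\varphi(\delta)$, the right-hand side is exactly the polynomial part of (C2) plus $R_{p+1}$. Read literally as an exact identity, (C2) would therefore force $f(a+\delta)=f(a+\varphi(\delta))$, which fails for $\varphi(h)=h+h^{p+1}$ even though (C1) holds. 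So the literal reading cannot be the one under which the stated equivalence is true. I would instead read (C2) in the sense in which it is used later for the order analysis: the identity holds up to an additional term of size $O(\Vert\delta\Vert^{p+1})$, uniformly for admissible $f$. This is the same size as $R_{p+1}$ itself, because $\varphi(\delta)=O(\Vert\delta\Vert)$ by \eqref{eq:phicond}. With this reading, the classical formula shows that (C2) is equivalent to
\begin{equation*}
f(a+\delta)-f(a+\varphi(\delta))=O(\Vert\delta\Vert^{p+1}) .
\end{equation*}
We also need $a+\xi\varphi(\delta)\in U_a$ for $\xi\in[0,1]$; this holds for small $\delta$ since $U_a$ is a closed neighborhood of $a$ and $\varphi(\delta)\to 0$.

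\textbf{(C1) $\Rightarrow$ (C2).} Apply the mean value inequality on the segment joining $a+\varphi(\delta)$ and $a+\delta$, which lies in $U_a$ for $\delta$ small:
\begin{equation*}
\Vert f(a+\delta)-f(a+\varphi(\delta))\Vert\le \sup_{U_a}\Vert f'\Vert\,\Vert\delta-\varphi(\delta)\Vert .
\end{equation*}
By (C1), each component satisfies $\delta_i-\varphi(\delta_i)=O(\delta_i^{p+1})$, hence $\Vert\delta-\varphi(\delta)\Vert=O(\Vert\delta\Vert^{p+1})$. Here $f'$ is bounded on the compact set $U_a$ by continuity.

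\textbf{(C2) $\Rightarrow$ (C1).} Since (C2) is assumed for the admissible class of functions, it suffices to test it on one function. Choose $f$ to be a coordinate projection, for example $f(z)=z_1$ (embedded into $\mathbb{R}^{N_2}$ if needed). This $f$ is linear, so all its derivatives of order $\ge 2$ vanish and the boundedness hypotheses hold trivially. For it, the expansion gives $\delta_1-\varphi(\delta_1)=O(\vert\delta_1\vert^{p+1})$ along $\delta=(\delta_1,0,\dots,0)$, which is exactly (C1). If instead the lemma is meant for a fixed $f$, this direction requires $f'(a)$ to be injective on the relevant directions, and I would add that as a hypothesis.

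\textbf{Main obstacle.} The analysis itself is routine; the difficulty is making the statement precise. One must fix the componentwise meaning of $\varphi(\delta)$, recognise that (C2) has to be read modulo $O(\Vert\delta\Vert^{p+1})$, and make the quantifier over $f$ explicit for the converse. I would follow the one-variable argument of \cite{takacs26}, making only these modifications.
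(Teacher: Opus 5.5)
Your argument is correct for the reading you adopt, and your counterexample shows that this reading is forced. Taken literally, (C2) is just the classical Taylor formula with increment $\varphi(\delta)$, i.e.\ it asserts $f(a+\delta)=f(a+\varphi(\delta))$. Your route differs from the paper's in both directions.

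For (C1)$\Rightarrow$(C2), the paper argues by induction on $p$, integrating the remainder by parts to peel off the term $\frac{1}{n!}f^{(n)}(a)(\varphi(\delta),\dots,\varphi(\delta))$. This rebuilds the classical formula at the increment $\varphi(\delta)$, and (C1) is never actually invoked. The passage from $f(a+\delta)$ to an expansion in $\varphi(\delta)$ is absorbed into the assertion that the case $p=0$ is trivial. You instead take the classical formula as given and put the work into the estimate $\Vert f(a+\delta)-f(a+\varphi(\delta))\Vert\le\sup\Vert f'\Vert\,\Vert\delta-\varphi(\delta)\Vert$. That is precisely where (C1) enters, so your version supplies the step the paper's induction leaves implicit.

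For (C2)$\Rightarrow$(C1), the paper substitutes the Taylor expansion of $\varphi$ into (C2) and compares it term by term with the ordinary expansion of $f$. This keeps the ``series in $\varphi(\Delta t)$'' viewpoint used later in Theorem~\ref{th:order}. You test (C2) on a single coordinate projection, where the terms of order at least two and $R_{p+1}$ vanish, so (C2) collapses to (C1). Your version is shorter, and it makes explicit what the coefficient comparison silently needs. For a fixed $f$ the converse can fail: a constant $f$ satisfies (C2) for every $\varphi$. So one must either quantify over $f$ or assume $f'(a)$ is nondegenerate in the relevant direction, as you note.

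One small repair: a closed neighbourhood $U_a$ need not be compact. Bound $f'$ on a closed ball around $a$ contained in $U_a$ rather than on $U_a$ itself.
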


%The proof can be found in Section \ref{sec:proofs}.
Note that in (C2), the application of $\varphi$ in $\varphi(\delta)$ is meant element-wise. Moreover, $R_{p+1} = O(\Vert \delta\Vert^{p+1})$ by the assumption \eqref{eq:phicond}.

Next, we state the main theorem about the order of nonstandard multistep multistage methods.
\begin{theor}\label{th:order}
    Let us assume that the standard multistep multistage method \eqref{eq:SMMM} has an order of $p$. Moreover, assume that the starting procedure $\psi$ also approximates the corresponding values with order $p$. Then, the following are equivalent for the corresponding nonstandard method \eqref{eq:NSMMM}:
\begin{itemize}
    \item[(C1)] $\varphi(h) = h + O(h^{p+1})$ for sufficiently small values of $h$,
    \item[(C3)] method \eqref{eq:NSMMM} attains an order of $p$.
\end{itemize}    
\end{theor}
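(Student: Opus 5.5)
The plan is to compare the nonstandard scheme \eqref{eq:NSMMM} with step $\Delta t$ to the standard scheme \eqref{eq:SMMM} run with the modified step $h:=\varphi(\Delta t)$. Since $\varphi$ enters \eqref{eq:NSMMM} only as the factor multiplying $f$, one step of \eqref{eq:NSMMM} from exact data coincides with one step of \eqref{eq:SMMM} with step size $\varphi(\Delta t)$. Hence the local error splits as
\begin{equation*}
x(t_n+\Delta t) - \Phi_{\varphi(\Delta t)} = \bigl(x(t_n+\Delta t) - x(t_n+\varphi(\Delta t))\bigr) + \bigl(x(t_n+\varphi(\Delta t)) - \Phi_{\varphi(\Delta t)}\bigr),
\end{equation*}
where $\Phi_h$ denotes the standard step with step size $h$. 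The same splitting is applied to the inner stages, with $t_n+c_i\Delta t$ versus $t_n+c_i\varphi(\Delta t)$.

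The second bracket is $O(\varphi(\Delta t)^{p+1})=O(\Delta t^{p+1})$, because the standard method has order $p$ and $\varphi(\Delta t)=\Delta t+O(\Delta t^2)$. The first bracket is where Lemma \ref{lem:taylor} enters. Expand $x$ at $t_n$ in powers of $\varphi(\Delta t)$ and compare with the usual Taylor expansion in $\Delta t$. Under (C1) the two expansions agree up to order $p$, so the difference is $O(\Delta t^{p+1})$. Since (C1) is equivalent to (C2), one can equivalently say that the true solution admits the expansion in $\varphi(\Delta t)$.

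With local error $O(\Delta t^{p+1})$ in hand, the global order $p$ follows the standard route. Rewrite the method as the nonstandard general linear method \eqref{eq:NSGLM}. Use stability, i.e.\ $\Vert V^n\Vert\le C$ from Proposition \ref{th:stable}, together with the Lipschitz bound (P3) and $\varphi(\Delta t)\le C'\Delta t$ for small $\Delta t$. A discrete Gronwall argument then accumulates $\mathcal{N}=O(1/\Delta t)$ local errors into an $O(\Delta t^p)$ global error. The starting values are handled by the assumption on $\psi$.

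For the converse (C3)$\Rightarrow$(C1), suppose $\varphi(h)=h+ch^{q+1}+O(h^{q+2})$ with $c\neq0$ and $q<p$. Test on $x'=1$, $x(0)=0$, which satisfies (P1)--(P3) on a bounded interval after restricting $S$. By the remark after the consistency definition, the nonstandard method solves $x'=\varphi(\Delta t)/\Delta t$ exactly, so $y_n=n\varphi(\Delta t)$ up to the exactly computed starting values. The global error at $T$ is then $(T-t_0)(\varphi(\Delta t)/\Delta t-1)\sim c(T-t_0)\Delta t^{q}$, which is not $O(\Delta t^p)$, a contradiction. If $\varphi-h$ vanishes to infinite order, (C1) holds trivially.

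I expect the main obstacle to be the first bracket for the inner stages in the multistep part. Past stages $y_{[n-\ell]}^{(j)}$ approximate $x(t_{n-\ell}+c_j\Delta t)$, not $x(t_n-\ell\varphi(\Delta t)+c_j\varphi(\Delta t))$. So the reduction to a standard step with step size $\varphi(\Delta t)$ is not literal, and the shifted times differ by $O(\ell\,|\Delta t-\varphi(\Delta t)|)=O(\Delta t^{p+1})$. I would handle this with Lemma \ref{lem:taylor} and the Lipschitz bound on $f$. Each such mismatch contributes $O(\Delta t^{p+1})$ via the bounded coefficients, so it is absorbed into the local truncation error.
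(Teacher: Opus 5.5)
Your proof is correct, but it takes a different route from the paper's. The paper argues through expansions. It posits an error series $\sum_k d_k(\Delta t)^k$ for the standard method with $d_1=\dots=d_{p-1}=0$, and claims via Lemma \ref{lem:taylor} that the nonstandard error is the same series in $\varphi(\Delta t)$ up to $O((\Delta t)^{p+1})$. For the converse it only asserts that the error must contain a term $\varphi(\Delta t)d_0$.

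You instead do the following:
\begin{itemize}
\item identify a nonstandard step with a standard step of size $\varphi(\Delta t)$;
\item isolate the time-shift defect $x(t_n+\Delta t)-x(t_n+\varphi(\Delta t))$;
\item reuse the stability/Gronwall argument of Lemmas \ref{lemma3}--\ref{lemma5};
\item prove the converse by an explicit computation on $x'=1$.
\end{itemize}

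The paper's framing stresses that Butcher-series theory carries over with $\Delta t$ replaced by $\varphi(\Delta t)$. However, it hides where (C1) is actually used inside ``by the same steps'': the displayed bound $\sum_{k\le p}d_k\varphi(\Delta t)^k=O((\Delta t)^p)$ already holds under \eqref{eq:phicond} alone. Its converse is also only a sketch. Your splitting shows exactly where (C1) enters. The per-step shift $\Delta t-\varphi(\Delta t)$ accumulates to $(T-t_0)(\varphi(\Delta t)/\Delta t-1)$, and this is also the leading error on your test problem, so both directions rest on the same quantity.

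Two simplifications are available.
\begin{itemize}
\item The first bracket needs no Taylor lemma, since $\Vert x(t_n+\Delta t)-x(t_n+\varphi(\Delta t))\Vert\le\sup\Vert x'\Vert\,|\Delta t-\varphi(\Delta t)|$.
\item Your anticipated obstacle with past stages disappears if you argue globally. The nonstandard iterates are literally the standard iterates with step $h=\varphi(\Delta t)$, so they approximate $x(t_0+nh)$ to $O(h^p)$. The starting values on the $\Delta t$-grid differ from those on the $h$-grid by $O(k|\Delta t-h|)$, which stability absorbs. Meanwhile $\Vert x(t_0+nh)-x(t_n)\Vert\le\sup\Vert x'\Vert\, n|h-\Delta t|$.
\end{itemize}

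In the converse, two small points. First, $y_n=n\varphi(\Delta t)$ holds only up to a bounded $O(|\varphi(\Delta t)-\Delta t|)+O((\Delta t)^p)$ perturbation coming from the starting values; this is harmless. Second, you do not need the expansion $\varphi(h)=h+ch^{q+1}+\dots$ at all: the order-$p$ error bound on $x'=1$ directly yields $|\varphi(\Delta t)-\Delta t|\le C(\Delta t)^{p+1}$.
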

%The proof can be found in Section \ref{sec:proofs}.

\begin{rem}
    Let us observe that Theorem \ref{th:order} can also be proved without the use of Lemma \ref{lem:taylor}. However, the use of the nonstandard form of Taylor's theorem highlights the similarities between the standard and nonstandard methods, i.e., their errors can be rewritten as the same Taylor series, the only difference being that in the nonstandard case, the terms include $\varphi(\Delta t)$ instead of $\Delta t$. Therefore, the theory of Butcher trees (see e.g. \cite[Sec. 30, 31 and 53]{butcherbook}) can also be applied in the nonstandard setting.
\end{rem}

\begin{rem}
    Note that since the only difference between the standard multistep multistage method \eqref{eq:SMMM} and the nonstandard counterpart \eqref{eq:NSMMM} is the change of $\Delta t$ to $\varphi(\Delta t)$, the stability functions and the region of absolute stability also coincide. The only difference is that, while in the standard case, the variable in the stability function is $z = \lambda \Delta t$ (if the considered test problem is $x'(t) = \lambda x(t)$), in the nonstandard case it is $z=\lambda \varphi(\Delta t)$. Even if $\varphi$ is bounded (see \ref{sec:preserv}), since we have no restrictions on $\lambda$, $z$ can take any value on the complex plane. Therefore, all the classic results about the connection between the orders of general linear methods and the absolute stability  (see, e.g., \cite[Sec. 52, 53]{butcherbook}) also hold in the nonstandard context. 
\end{rem}

\subsection{Preservation of qualitative properties}\label{sec:preserv}
Let $\mathcal{P}$ be an $N \times N$ matrix with nonnegative elements. In this section, we show that the methods (under some conditions) preserve either the value of $\mathcal{P}(y_n)$, or its boundedness property for every choice of the stepsize.

\begin{theor}\label{th:preserv}
    Let $\mathcal{R}$ be a given relation $\mathcal{R} \in  \{ $ '<', '$\leq$', '=', '$\geq$', '>' $\}$. Let us assume the following:
    \begin{itemize}
        \item There exists a set $\mathcal{H}\subset \mathbb{R}^N$ and a constant vector $\mu\in \mathbb{R}^N$ such that if $\tilde{x} \in \mathcal{H}$, then $\mathcal{P}x(t) \, \mathcal{R} \, \mu$ for every $t \in [t_0, T]$.
        \item For the starting values of the method, $\mathcal{P} y_{[n]}^{(i)}\,\mathcal{R}\, \mu$ holds if $y_{[n]}^{(i)} \in \mathcal{H}$ for every $n=0, 1, \dots, k-1$, $i=1, \dots, s+1$.
        \item There exists a constant $B_{FE} \in \mathbb{R}^+$ in a way that if $y_n \in \mathcal{H}$ and $\mathcal{P} y_n \,\mathcal{R}\,\mu$, then for every $0 \leq \Delta t \leq B_{FE}$, $\mathcal{P}\left(y_n + \Delta t f(y_n)\right) \,\mathcal{R} \, \mu$ also holds. 
        \item Function $\varphi$ is bounded in a way that $\varphi(x) \leq \mathcal{C} B_{FE}$ holds for every $x\geq 0$, where $\mathcal{C}$ is the SSP coefficient \cite{const, sspbook} of method \eqref{eq:NSMMM}, defined as
        $$ \mathcal{C} = \min \left\{ \alpha_{[n-\ell]}^{i,j} / \beta_{[n-\ell]}^{i,j}: 1 \leq i \leq s+1, 1 \leq j \leq i-1, 1 \leq \ell \leq k, \beta_{[n-\ell]}^{i,j} \neq 0\right\}. $$
    \end{itemize}
    Under these assumptions, method \eqref{eq:NSMMM} preserves the property of the continuous system, namely, $\mathcal{P} y_n \,\mathcal{R} \, \mu$ holds for $n=1, 2, \dots, \mathcal{N}$.
\end{theor}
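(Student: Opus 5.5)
Our plan is the standard SSP argument: prove by strong induction that every stage value satisfies $\mathcal{P} y_{[m]}^{(i)} \,\mathcal{R}\, \mu$. The relation will transfer because each stage of \eqref{eq:NSMMM} is a convex combination of forward Euler steps with step size $\varphi(\Delta t)\beta/\alpha \le \mathcal{C}B_{FE}\cdot\frac{1}{\mathcal{C}} = B_{FE}$.

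\textbf{Base case and ordering.} The base case is the second assumption: all stage values for $n=0,\dots,k-1$ satisfy the relation. For the inductive step, fix $n$ and $i\ge 2$, and assume the relation holds for all $y_{[n-\ell]}^{(j)}$ with $2\le\ell\le k$, and for $y_{[n-1]}^{(j)}$ with $j<i$. We take as a standing hypothesis, as is implicit in the third assumption, that all these values lie in $\mathcal{H}$ (or that $\mathcal{H}$ is invariant under the scheme), so the forward Euler assumption applies to them.

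\textbf{Convex-combination rewriting.} Using consistency, we will use the standard normalization of the Shu--Osher form, $\sum_{\ell}\sum_j \alpha_{[n-\ell]}^{i,j} = 1$; this holds for the methods of \cite{const}, since preserving constants forces it. We then write
$$ y_{[n-1]}^{(i)} = \sum_{\ell,j} \alpha_{[n-\ell]}^{i,j}\left( y_{[n-\ell]}^{(j)} + \frac{\beta_{[n-\ell]}^{i,j}}{\alpha_{[n-\ell]}^{i,j}}\,\varphi(\Delta t)\, f\left(y_{[n-\ell]}^{(j)}\right)\right). $$
Two kinds of terms need care. Terms with $\alpha=0$ must have $\beta=0$; otherwise $\mathcal{C}=0$ and the fourth hypothesis forces $\varphi\le 0$, a degenerate case to be excluded or treated separately. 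Terms with $\beta=0$ are just the stage values themselves. For $\beta\neq0$, the effective step is $\tau=\varphi(\Delta t)\beta/\alpha$. We have $\tau\le \varphi(\Delta t)/\mathcal{C}\le B_{FE}$, and $\tau\ge 0$ provided $\varphi\ge0$ on $[0,\infty)$, which we will note as needed (it follows near $0$ from \eqref{eq:phicond}). The forward Euler assumption then gives $\mathcal{P}\left(y+\tau f(y)\right)\,\mathcal{R}\,\mu$ for each summand.

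\textbf{Linearity step.} Apply $\mathcal{P}$ to the convex combination. Since $\mathcal{P}$ is linear with nonnegative entries and the weights $\alpha\ge0$ sum to one, $\mathcal{P} y_{[n-1]}^{(i)} = \sum \alpha\, \mathcal{P}(\cdot)$. Each of $<,\le,=,\ge,>$ componentwise is preserved under convex combinations with $\mu = \sum\alpha\mu$. For the strict relations we need at least one positive weight, which the sum-to-one condition provides. Taking $i=s+1$ gives $\mathcal{P}y_n\,\mathcal{R}\,\mu$, which closes the induction.

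\textbf{Main obstacle.} The bookkeeping is routine; the delicate point is justifying the hypotheses implicit in the statement. These are the row-sum normalization $\sum\alpha=1$ (via consistency, e.g.\ applying the method to $f\equiv0$ and using the exactness on constants from the consistency definitions), the nonnegativity of $\varphi$, and that the intermediate stages remain in $\mathcal{H}$. We would state these explicitly as part of the standing setup before running the induction.
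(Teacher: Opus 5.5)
Your proposal is correct and follows essentially the same route as the paper: rewrite each stage of \eqref{eq:NSMMM} as a convex combination of forward Euler steps with step sizes $\frac{\beta}{\alpha}\varphi(\Delta t)\le B_{FE}$, using $\sum_{\ell}\sum_j\alpha_{[n-\ell]}^{i,j}=1$ from consistency, and induct over steps and inner stages. The paper's proof leaves implicit the side conditions you make explicit ($\varphi\ge 0$, stage values lying in $\mathcal{H}$, $\alpha=0\Rightarrow\beta=0$, and a positive weight for strict relations), so stating them only tightens the argument.
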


%The theorem is proved in Section \ref{sec:proofs}.

Here, we highlight two special cases that are essential in applications:
\begin{itemize}
    \item If $\mathcal{P}$ is the identity matrix, $\mathcal{R}$ is '$\geq$' and $\mu$ is the all-zero vector, then the previous result is about the preservation of non-negativity for all of the components of $y_n$. (By changing $\mathcal{R}$ and $\mu$, this can also mean boundedness from below or from above by any other value.)
    \item If $\mathcal{P}$ is the all-one matrix, $\mathcal{R}$ is '$=$' and $\mu$ is a vector having the same value for all of its elements, then the theorem is about the preservation of the sum of all the components of $y_n$. For example, in chemical applications, this can mean mass preservation, or in epidemiological settings, the preservation of the total population (if there are no vital dynamics present).
\end{itemize}

\begin{rem}\label{rem:initialRK}
    A natural question is the way the initial values $y_{[n]}^{(i)}$ for every $n=0, 1, \dots, k-1$, $i=1, \dots, s+1$ can be calculated in a way that $\mathcal{P} y_{[n]}^{(i)}\,\mathcal{R}\, \mu$ also holds. One possible way is the application of nonstandard Runge-Kutta methods \cite{dang, mosleh26}. These can be thought of as special cases of multistep multistage methods \eqref{eq:NSMMM} with $k=1$. Therefore, if we consider a standard Runge-Kutta method, and then change the timestep $\Delta t$ to $\varphi(\Delta t)$ (where $\varphi$ satisfies the conditions of Theorems \ref{th:order} and \ref{th:preserv}), then this method attains the high order of the standard counterpart, while also preserving the desired properties of the continuous system \eqref{eq:ODE}. 
\end{rem}

\begin{rem}
   In the theory of nonstandard methods, one usual question is the elementary stability of methods, i.e., the behavior of the solution as $t \rightarrow \infty$ (in the continuous case) and $n \rightarrow \infty$ (in the discrete case). By a proof similar to the one of Theorem \ref{th:preserv}, one can show that the nonstandard methods (under conditions that can be similarly constructed to the ones in Theorem \ref{th:preserv}) preserve the equilibria of the continuous model for any choice of the stepsize. However, the determination of the stability of these equilibria might involve similar observations in the case of standard multistep multistage methods, which exceeds the scope of this paper, but might be an interesting topic of further research.
\end{rem}

\section{Proofs of the statements in Section \ref{sec:main}.}\label{sec:proofs}

In this section, we detail the proofs of the theorems of Section \ref{sec:main}.
\begin{proof}[Proof of Theorem \ref{th:Lax_NS}]
Similarly to \cite{butcherbook}, we prove the statement in two parts. First, we show that stability and consistency are necessary for convergence (Lemmas \ref{lemma1} and \ref{lemma2}), and then the sufficiency is proven (Lemmas \ref{lemma3}, \ref{lemma4} and \ref{lemma5}).

\begin{lemma}\label{lemma1}
    If a nonstandard general linear method is convergent, then it is stable.
\end{lemma}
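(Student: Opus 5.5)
We follow the classical argument of \cite{butcherbook} (Th.~513A) and apply the method to the trivial problem $x'(t)=0$, i.e.\ $f\equiv 0$. For this problem the factor $\varphi(\Delta t)$ disappears from \eqref{eq:NSGLM}, so the nonstandard method reduces to
\[
g_{[n]} = V g_{[n-1]}, \qquad\text{hence}\qquad g_{[n]} = V^n g_{[0]} .
\]
From here on the nonstandard and standard cases coincide, and all that remains is to turn convergence into a uniform bound on $\Vert V^n\Vert$.

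First, fix an arbitrary initial value $\tilde{x}$ and a vector $w$ in the image space of $\psi$. By linearity and covariance, the starting procedure may be perturbed by a term vanishing as $\Delta t\to 0$ without affecting convergence. This lets us take starting vectors of the form $g_{[0]} = \psi(\Delta t) + \sqrt{\Delta t}\, w$, or more simply $\Delta t$-dependent starting values converging to $u\,x(t_0)$. The exact solution is constant, so convergence gives
\[
V^n \bigl(\psi(\Delta t) + \varepsilon(\Delta t) w\bigr) \to u\, x(t_0) \quad\text{as } n\to\infty,\ \Delta t=(\hat t - t_0)/n,
\]
whenever $\varepsilon(\Delta t)\to 0$. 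Subtracting the unperturbed sequence, we obtain $\varepsilon(\Delta t_n) V^n w \to 0$ for every choice of $\varepsilon$ tending to zero.

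Next, we argue by contradiction. Suppose $\Vert V^n\Vert$ were unbounded. Then for some $w$ (choose it along a basis vector, since in finite dimensions unboundedness of $\Vert V^n\Vert$ forces unboundedness of $\Vert V^n e_j\Vert$ for some coordinate vector $e_j$) there is a subsequence $n_m$ with $\Vert V^{n_m} w\Vert\to\infty$. Define
\[
\varepsilon_n = \Vert V^n w\Vert^{-1/2} \ \text{ on the subsequence}, \qquad \varepsilon_n = 1/n \ \text{ otherwise}.
\]
Then $\varepsilon_n\to 0$, yet $\varepsilon_{n_m}\Vert V^{n_m}w\Vert = \Vert V^{n_m}w\Vert^{1/2}\to\infty$. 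This contradicts the limit obtained above, so $\Vert V^n\Vert\le C$, which is stability in the nonstandard sense. By Proposition~\ref{th:stable}, this is equivalent to stability of the standard counterpart.

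The main obstacle is justifying that convergence must tolerate such $o(1)$ perturbations of the starting procedure. The definition only asserts the existence of some $\psi$. We would handle this as in \cite{butcherbook}: the perturbed $\psi+\varepsilon w$ still satisfies $\lim\psi_i = u_i x(t_0)$, and we interpret convergence as holding for every admissible starting procedure with this limit. If the definition is read as existence only, we would instead exploit covariance together with linearity of the $f\equiv 0$ iteration, varying $x(t_0)$ so that the differences of starting vectors span the required directions.
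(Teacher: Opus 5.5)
Your reduction matches the paper's. For $f\equiv 0$ the factor $\varphi(\Delta t)$ disappears, and both the standard and nonstandard iterations become $g_{[n]}=V^n g_{[0]}$. The paper then simply invokes the contradiction argument of Butcher's Th.~513A, which you write out explicitly. Your choice $\varepsilon_n=\Vert V^{n}w\Vert^{-1/2}$ along a subsequence on which $\Vert V^{n_m}w\Vert\to\infty$ does give the contradiction, \emph{provided} convergence is guaranteed for the perturbed starting procedure $\psi+\varepsilon w$. Your justification of that proviso needs correcting. The sentence ``by linearity and covariance, the starting procedure may be perturbed by a term vanishing as $\Delta t\to 0$ without affecting convergence'' does not justify it. Linearity only says the perturbed iterates differ from the unperturbed ones by $\varepsilon(\Delta t)V^n w$, and whether this tends to zero is exactly the stability question. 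Covariance only constrains behaviour under shifts along $u$.

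Your fallback for the ``existence only'' reading cannot be repaired, because under that reading the lemma is false. Take one stage, $r=2$, $\widehat{A}=(0)$, $\widehat{U}=(1,\,0)$, $\widehat{B}=(1,\,0)^T$, $\widehat{V}=\mathrm{diag}(1,2)$ and $u=(1,0)^T$. The first component is the nonstandard explicit Euler method, and the second evolves independently by $g^{2}_{[n]}=2g^{2}_{[n-1]}$. This method is covariant ($\widehat{V}u=u$, $\widehat{U}u=1$) and consistent (with $v=0$). With $\psi(\Delta t)=(\tilde{x},0)^T$ it converges for every Lipschitz problem, yet $\Vert V^n\Vert=2^n$. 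Varying $x(t_0)$ only yields starting vectors whose limits lie along $u$, with $o(1)$ parts you do not control, so it can never generate the direction $(0,1)^T$. Hence your reading (a) is not optional: convergence must hold for every starting procedure with $\psi_i(\Delta t)\to u_i x(t_0)$, or at least survive $o(1)$ perturbations of $\psi$. You must state this as the definition you are using rather than derive it, and the paper's one-line deferral to Butcher tacitly needs the same reading. With that definition made explicit, and the covariance remark and fallback deleted, your argument is complete.
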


\begin{proof}[Proof of Lemma \ref{lemma1}]
    The proof is the same as the proof of the standard case (see \cite[Th. 513A]{butcherbook}), since both of the methods produce the same numerical scheme for the differential equation
    $$ x'(t)=0, \qquad x(0) = 0.  $$
    Consequently, the same argument using proof by contradiction can be used.
\end{proof}

\begin{lemma}\label{lemma2}
    If a covariant nonstandard general linear method is convergent, then it is consistent.
\end{lemma}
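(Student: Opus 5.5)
Following the standard argument of \cite[Th. 514A]{butcherbook}, I would obtain the three consistency relations $Vu=u$, $Uu=\mathbb{1}$ and $B\mathbb{1}+Vv=u+v$ by applying the convergent method to two very simple problems. I would use the nonzero vector $u$ from the definition of convergence, and covariance to move between problems that differ only by a constant shift.

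\emph{Step 1 ($Vu=u$ and $Uu=\mathbb{1}$).} Apply the method to $x'(t)=0$, $x(t_0)=1$ (scalar, $N=1$ without loss of generality since all matrices are Kronecker products with $I_N$). Then $f\equiv 0$ and the scheme reduces to $g_{[n]}=V g_{[n-1]}$, $G=U g_{[n-1]}$, exactly as in the standard case, independently of $\varphi$. Convergence gives $g_{[n]}\to u$ and $g_{[n-1]}\to u$ as $\Delta t\to 0$ with $n\Delta t$ fixed. Passing to the limit in $g_{[n]}=Vg_{[n-1]}$ yields $Vu=u$. For $Uu=\mathbb{1}$, I would invoke covariance. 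The stage vector for the zero problem started at $0$ is $G=0$. Shifting the starting vector by $u\eta$ must shift every internal stage by $\eta$, so $U(u\eta)=\mathbb{1}\eta$, that is, $Uu=\mathbb{1}$.

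\emph{Step 2 ($B\mathbb{1}+Vv=u+v$).} Apply the method to $x'(t)=1$, $x(t_0)=0$. Then $f(G)=\mathbb{1}$ and $g_{[n]}=\varphi(\Delta t)B\mathbb{1}+Vg_{[n-1]}$, so
\[
g_{[n]}=V^n g_{[0]}+\varphi(\Delta t)\sum_{m=0}^{n-1}V^m B\mathbb{1}.
\]
Convergence gives $g_{[n]}\to u(\hat t-t_0)$ with $\Delta t=(\hat t-t_0)/n$. By Lemma \ref{lemma1} the method is stable, so the powers $V^m$ are bounded, and the Cesàro means $\frac1n\sum_{m<n}V^m$ converge to the spectral projector $P$ onto the eigenspace of eigenvalue $1$ (power-boundedness means that eigenvalue is semisimple). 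Since $\varphi(\Delta t)/\Delta t\to 1$ by \eqref{eq:phicond}, the sum term tends to $(\hat t-t_0)PB\mathbb{1}$. The term $V^ng_{[0]}$ tends to $Pu\cdot 0$-type limits controlled by $\psi$. Comparing limits gives $PB\mathbb{1}=u$, up to normalization of $u$ within the eigenspace, which I would fix as in Butcher. From $(I-P)$ applied to $B\mathbb{1}-u$ and the invertibility of $I-V$ on the range of $I-P$, there is a $v$ with $(I-V)v=B\mathbb{1}-u$, which is exactly $B\mathbb{1}+Vv=u+v$.

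The main obstacle is this last step: carefully justifying the Cesàro limit and the handling of $V^n g_{[0]}$. The route I would try first is to reduce it entirely to the standard proof. Set $\kappa=\varphi(\Delta t)/\Delta t$. The nonstandard method applied to $x'=1$ is then identical to the standard method applied to $x'=\kappa$, which by covariance and linearity is $\kappa$ times the standard computation for $x'=1$ (up to the starting data). Since $\kappa\to1$, the standard limiting argument of \cite[Th. 514A]{butcherbook} transfers verbatim.
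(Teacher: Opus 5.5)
Your proposal is correct and follows essentially the paper's route, which is itself Butcher's Theorem 514A. Preconsistency comes from covariance. You then apply the method to $x'=1$ with $\varphi(\Delta t)/\Delta t\to 1$; your Cesàro limit $P$ of the powers of $V$ is exactly the paper's $\mathcal{T}^{-1}\mathrm{diag}(I,0)\,\mathcal{T}$, so $B\mathbb{1}-u$ lies in the range of $I-V$ and yields $v$.

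The differences are cosmetic. You obtain $Vu=u$ from convergence on $x'=0$, whereas covariance already gives it directly; your claim $g_{[n-1]}\to u$ at fixed $n\Delta t$ also needs the stability bound from Lemma~\ref{lemma1}. You keep a general starting procedure and remove $V^n\psi(\Delta t)$ by stability instead of taking $\psi=0$. Finally, the ``up to normalization'' caveat is unnecessary, since comparing limits gives $PB\mathbb{1}=u$ exactly.
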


\begin{proof}[Proof of Lemma \ref{lemma2}]
Covariance implies that $Vu=u$ and $Uu=\mathbb{1}$ - for the standard case, see \cite[Sec. 511]{butcherbook}, the nonstandard one is the same since \eqref{eq:phicond} holds. Therefore, the only thing that should be shown is that there exists a vector $v$ such that $B \mathbb{1} + Vv = u+v$.

Consider the differential equation
$$ x'(t)=1, \qquad x(0)=0, $$
with starting process $\psi(\Delta t)=0$ and $\hat{t}=1$. The approximation given by the nonstandard general linear method with timestep $\Delta t = \frac{1}{n}$ is
$$ g_{[k]} = \varphi\left( \dfrac{1}{n} \right) B \mathbb{1} + V g_{[k-1]}, \qquad k = 1, 2, \dots, n. $$
Therefore, the error vector, after $n$-many steps, has the form
$$ g_{[n]} - u = \varphi  \left( \dfrac{1}{n} \right) \left( I + V + V^2 + \dots + V^{n-1} \right) B \mathbb{1} - u. $$
%Let us use the fact that $\varphi \left( \frac{1}{n} \right) = \frac{1}{n} + O\left( \frac{1}{n^2} \right)$, meaning that
%$$ g_{[n]} - u = \dfrac{1}{n} \left( I + V + V^2 + \dots + V^{n-1} \right) B \mathbb{1} - u + O\left( \frac{1}{n^2} \right). $$
%Since, by covariance, $\dfrac{1}{n}\left( I + V + V^2 + \dots + V^{n-1} \right) u = \dfrac{1}{n} n u = u$ holds, then
%$$ g_{[n]} - u = \dfrac{1}{n} \left( I + V + V^2 + \dots + V^{n-1} \right) (B \mathbb{1} - u) + O\left( \frac{1}{n} \right) u. $$
Since, by covariance, $Vu=u$ and also $\varphi \left( \frac{1}{n} \right) = \frac{1}{n} + O\left( \frac{1}{n^2} \right)$,
$$ \varphi \left( \frac{1}{n} \right) \left( I + V + V^2 + \dots + V^{n-1} \right) u = \varphi \left( \frac{1}{n} \right) n u =  \left( \frac{1}{n} + O\left( \frac{1}{n^2} \right) \right) n u = u + O\left( \frac{1}{n} \right) u. $$
Consequently,
$$ g_{[n]} - u = \varphi\left(\dfrac{1}{n}\right) \left( I + V + V^2 + \dots + V^{n-1} \right) (B \mathbb{1} - u) + O\left( \frac{1}{n} \right) u. $$
Since $V$ has bounded powers, it can be formulated as
$$ V = \mathcal{T}^{-1} \begin{pmatrix}
    I & 0 \\ 0 & W
\end{pmatrix} \mathcal{T},$$
where $I$ is and $\tilde{r} \times \tilde{r}$ matrix ($\tilde{r}\leq r$) and $W$ has bounded powers, and $1 \notin \sigma(W)$. Therefore,
$$ g_{[n]} - u = \mathcal{T}^{-1} \begin{pmatrix}
    I & 0 \\ 0 & \varphi\left(\frac{1}{n}\right) (I-W)^{-1}(I-W^n)
\end{pmatrix} \mathcal{T} (B \mathbb{1} - u) + O\left( \frac{1}{n} \right) u. $$
If $n \rightarrow \infty$, then the right-hand side tends to
$$ \mathcal{T}^{-1} \begin{pmatrix}
    I & 0 \\ 0 & 0
\end{pmatrix} \mathcal{T} (B \mathbb{1} - u), $$
which is the same as the limit in the standard case (see \cite[Th. 514A]{butcherbook}), so the last steps of the proof are the same.
\end{proof}

To prove the sufficiency of consistency and stability, we follow the same line of thought as \cite[Th. 515D]{butcherbook}. Namely, we bound the global error of the method in three steps.

\begin{lemma}\label{lemma3}
    Assume that conditions (P1)--(P3) hold. Moreover, also assume the following:
    \begin{itemize}
        \item $\varphi(\Delta t) \leq \tau_0$, chosen in a way that $\tau_0 L \Vert A \Vert_{\infty} < 1$. For sufficiently small values of $\Delta t$, this is true. Also note that the boundedness of function $\varphi$ is also needed for the preservation of the qualitative properties of the continuous model, see Section \ref{sec:preserv}. 
        \item $\tilde{x} \in S$, where $S \subset \mathbb{R}^N$ is the same as defined in Section \ref{sec:prelim}.
    \end{itemize}
    Let us also introduce the following notations:
    \begin{itemize}
        \item Let $\varepsilon \in \mathbb{R}^{s+1}$ be a vector satisfying
        $$ \sum_{j=1}^{s+1} \left( \delta_{ij} - \tau_0 L |a_{ij}| \right) \varepsilon_j = \dfrac{1}{2} c_i^2 + \sum_{j=1}^{s+1} |a_{ij} c_j|, $$
        where $\delta_{ij}$ is the Kronecker-delta. 
        \item Let $\hat{g}_{[n-1]}^i = u_i x(t_{n-1}) + v_i \varphi(\Delta t) x'(t_{n-1})$ (the 'exact solution') for every $n = 1, 2, \dots, \mathcal{N}$, $i=1, 2, \dots, r$.
        \item Let $\widehat{G}^i = x(t_{n-1} + \Delta t c_i)$ (the 'exact stage values') for $i=1,2, \dots, s$, where $c = A \mathbb{1} + Uv$.
        \item Let $\widetilde{G}^i$ be the value of $G^i$ that would be calculated if we used $\hat{g}_{[n-1]}$ as an input vector $g_{[n-1]}$.
    \end{itemize}
    Under these conditions, the following inequalities hold:
    \begin{equation}\label{eq:lemma3ineq1}
        \left\Vert \widehat{G}^i - \varphi(\Delta t) \sum_{j=1}^{s+1} a_{ij} f(\widehat{G}^j) - \sum_{j=1}^r U_{ij} \widehat{g}_{[n-1]}^j\right\Vert \leq (\Delta t)^2 L^2 M \left( \dfrac{1}{2} c_i^2 + \sum_{j=1}^{s+1} |a_{ij} c_j| \right) + O\left((\Delta t)^2\right),
    \end{equation}
    \begin{equation}\label{eq:lemma3ineq2}
    \begin{aligned}    
        &\left\Vert \widehat{g}_{[n]}^i - \varphi(\Delta t) \sum_{j=1}^{s+1} b_{ij} f(\widehat{G}^j) - \sum_{j=1}^r V_{ij} \widehat{g}_{[n-1]}^j\right\Vert \\
        & \hspace{50pt} \leq (\Delta t)^2 L^2 M \left( \dfrac{1}{2} |u_i| + |v_i| + \sum_{j=1}^{s+1} |b_{ij} c_j| \right) + O\left((\Delta t)^2\right),
    \end{aligned}
    \end{equation}
    \begin{equation}\label{eq:lemma3ineq3}
    \begin{aligned}
        & \left\Vert \widehat{g}_{[n]}^i - \varphi(\Delta t) \sum_{j=1}^{s+1} b_{ij} f(\widetilde{G}^j) - \sum_{j=1}^r V_{ij} \widehat{g}_{[n-1]}^j\right\Vert \\
        & \hspace{50pt} \leq (\Delta t)^2 L^2 M \left( \dfrac{1}{2} |u_i| + |v_i| + \sum_{j=1}^{s+1} |b_{ij} c_j| + \tau_0 L \sum_{j=1}^{s+1} |b_j| \varepsilon_j \right) + O\left((\Delta t)^2\right).
    \end{aligned}
    \end{equation}
\end{lemma}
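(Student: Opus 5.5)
Following Butcher's proof of Theorem 515D, I would prove the three inequalities in order. Each one reduces to Taylor expansion of the exact solution with integral remainders, and the factors $\varphi(\Delta t)$ are handled through \eqref{eq:phicond}, that is, $\varphi(\Delta t) = \Delta t + O((\Delta t)^2)$. The basic tools come from (P1)--(P3). Since $\tilde{x}\in S$, the exact solution stays in $S$, so $\Vert x'(t)\Vert = \Vert f(x(t))\Vert \le LM$. Moreover $x''(t) = f'(x(t))x'(t)$ and $f$ is $L$-Lipschitz on $S$, so $\Vert x''(t)\Vert \le L^2M$. With these bounds, for $0\le \theta\le 1$,
\begin{equation*}
x(t_{n-1}+\theta\Delta t) = x(t_{n-1}) + \theta\Delta t\, x'(t_{n-1}) + R(\theta), \qquad \Vert R(\theta)\Vert\le \tfrac12 \theta^2(\Delta t)^2L^2M,
\end{equation*}
and
\begin{equation*}
\Vert x'(t_{n-1}+\theta\Delta t) - x'(t_{n-1})\Vert \le \theta\Delta t\, L^2M .
\end{equation*}

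For \eqref{eq:lemma3ineq1}, I substitute $\widehat{G}^i = x(t_{n-1}) + c_i\Delta t\, x'(t_{n-1}) + R(c_i)$. I also write
\begin{equation*}
f(\widehat{G}^j) = x'(t_{n-1}+c_j\Delta t) = x'(t_{n-1}) + \bigl(x'(t_{n-1}+c_j\Delta t)-x'(t_{n-1})\bigr),
\end{equation*}
and expand $\sum_j U_{ij}\hat g^j_{[n-1]} = (Uu)_i x(t_{n-1}) + (Uv)_i\varphi(\Delta t)x'(t_{n-1})$. Consistency gives $(Uu)_i = 1$, which cancels the $x(t_{n-1})$ terms. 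The $x'(t_{n-1})$ terms combine to
\begin{equation*}
\bigl(c_i\Delta t - \varphi(\Delta t)(A\mathbb{1}+Uv)_i\bigr)x'(t_{n-1}).
\end{equation*}
With $c = A\mathbb{1}+Uv$ this equals $c_i(\Delta t-\varphi(\Delta t))x'(t_{n-1})$, which is $O((\Delta t)^2)$ by \eqref{eq:phicond}. What remains is the remainder $\frac12 c_i^2(\Delta t)^2L^2M$ and the terms $\varphi(\Delta t)\,|a_{ij}|\, c_j\Delta t L^2M$. Replacing $\varphi(\Delta t)$ by $\Delta t+O((\Delta t)^2)$ in the latter gives $(\Delta t)^2L^2M|a_{ij}c_j|$ plus a higher-order term, which is absorbed into the $O((\Delta t)^2)$ slack allowed in the statement.

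For \eqref{eq:lemma3ineq2}, I do the same with $\hat g^i_{[n]} = u_i x(t_n) + v_i\varphi(\Delta t)x'(t_n)$. Expanding $x(t_n)$ to second order around $t_{n-1}$ gives the term $\frac12|u_i|$. Writing $x'(t_n) = x'(t_{n-1}) + O(\Delta t L^2 M)$ gives the term $|v_i|$ after $\varphi$ is again replaced by $\Delta t$. The $b_{ij}$ terms contribute $|b_{ij}c_j|$ exactly as above. The zeroth-order terms cancel by $Vu=u$, and the first-order coefficients satisfy $u_i\Delta t + v_i\varphi(\Delta t) - \varphi(\Delta t)\bigl((B\mathbb{1})_i+(Vv)_i\bigr) = (\Delta t-\varphi(\Delta t))u_i$ by $B\mathbb{1}+Vv=u+v$, which is again $O((\Delta t)^2)$.

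The step I expect to be hardest is \eqref{eq:lemma3ineq3}: it requires bounding $\Vert\widetilde{G}^j-\widehat{G}^j\Vert$, and this is where $\tau_0$ and $\varepsilon$ enter. By definition
\begin{equation*}
\widetilde{G}^i = \varphi(\Delta t)\sum_j a_{ij}f(\widetilde{G}^j)+\sum_j U_{ij}\hat g^j_{[n-1]}.
\end{equation*}
Subtracting this from the quantity bounded in \eqref{eq:lemma3ineq1} and using the Lipschitz bound (P3) gives a componentwise inequality for $\eta_i = \Vert\widetilde{G}^i-\widehat{G}^i\Vert$:
\begin{equation*}
\eta_i \le \tau_0 L\sum_j|a_{ij}|\,\eta_j + (\Delta t)^2L^2M\Bigl(\tfrac12c_i^2+\sum_j|a_{ij}c_j|\Bigr)+O((\Delta t)^2).
\end{equation*}
Because $\tau_0L\Vert A\Vert_\infty<1$, the matrix $I-\tau_0L|A|$ is an M-matrix, so its inverse is given by a nonnegative Neumann series. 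Hence this componentwise inequality implies $\eta\le (\Delta t)^2L^2M\,\varepsilon + O((\Delta t)^2)$ entrywise, where $\varepsilon$ solves the defining linear system in the statement. One subtlety is that the stage inequality only makes sense if $\widetilde{G}^j\in S$, or at least if the Lipschitz estimate applies along the relevant points. I would handle this either by assuming $f$ Lipschitz on a neighbourhood of $S$, or, in the explicit case, by noting that $A$ is strictly lower triangular, so the recursion can be carried out stage by stage. Finally, \eqref{eq:lemma3ineq3} follows from \eqref{eq:lemma3ineq2} and the triangle inequality: the extra term $\varphi(\Delta t)\sum_j|b_{ij}|\,L\,\eta_j$ is at most $\tau_0 L\sum_j|b_{ij}|\varepsilon_j(\Delta t)^2L^2M$ up to $O((\Delta t)^2)$, using $\varphi(\Delta t)\le\tau_0$.
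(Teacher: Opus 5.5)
Your proposal is correct and follows essentially the same route as the paper. You use Butcher's Taylor-expansion argument for the first two inequalities, absorbing the first-order mismatches $c_i(\Delta t-\varphi(\Delta t))x'(t_{n-1})$ and $u_i(\Delta t-\varphi(\Delta t))x'(t_{n-1})$ via $\varphi(h)=h+O(h^2)$. You then derive a componentwise inequality for $\eta_i=\Vert\widetilde{G}^i-\widehat{G}^i\Vert$ by subtracting the defining equation of $\widetilde{G}$, invert it to get $\eta\le(\Delta t)^2L^2M\varepsilon+O((\Delta t)^2)$, and combine this with the second inequality using the triangle inequality and $\varphi(\Delta t)\le\tau_0$, exactly as the paper does.

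Your Neumann-series argument for the nonnegativity of $(I-\tau_0L|A|)^{-1}$ is more careful than the paper, which writes $I-L\tau_0\Vert A\Vert_\infty$ and multiplies by its inverse without justifying that the inequality is preserved. Your caveat that (P3) holds only on $S$ also flags a point the paper passes over; of your two fixes, only a Lipschitz bound on a neighbourhood of $S$ resolves it, since triangularity of $A$ alone does not place $\widetilde{G}^j$ in $S$.
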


\begin{proof}[Proof of Lemma \ref{lemma3}]
    The proofs of inequalities \eqref{eq:lemma3ineq1} and \eqref{eq:lemma3ineq2} are almost the same as the proof of the standard case (see \cite[Lemma 515A]{butcherbook}), where we only have to use the fact that \eqref{eq:phicond} holds. For the last inequality, let us bound $\left\Vert\widetilde{G} - \widehat{G}\right\Vert$: consider
    $$ \left\Vert \widetilde{G}^i - \widehat{G}^i - \varphi(\Delta t) \sum_{j=1}^{s+1} a_{ij} \left(f(\widetilde{G}^j) - f(\widehat{G}^j) \right)\right\Vert \leq $$
    $$\leq \left\Vert \widetilde{G}^i - \varphi(\Delta t) \sum_{j=1}^{s+1} a_{ij} f(\widetilde{G}^j) -  \sum_{j=1}^r U_{ij} \widehat{g}_{[n-1]}^j - \left( \widehat{G}^i - \varphi(\Delta t) \sum_{j=1}^{s+1} a_{ij} f(\widehat{G}^j) - \sum_{j=1}^r U_{ij} \widehat{g}_{[n-1]}^j  \right) \right\Vert \leq $$
    $$ \leq (\Delta t)^2 L^2 M \left( \dfrac{1}{2} c_i^2 + \sum_{j=1}^{s+1} |a_{ij} c_j| \right) + O\left((\Delta t)^2\right), $$
    where we used the fact that the first terms in the second line are zero (by the definition of $\widetilde{G}^i$), and then inequality \eqref{eq:lemma3ineq1}. To simplify the argument, let us introduce the notation $\eta_i = \left\Vert \widetilde{G}^i - \widehat{G}^i \right\Vert$. Now, by the definition of vector $\varepsilon$,
    $$ \eta_i \leq L \tau_0 \sum_{j=1}^{s+1} |a_{ij}| \eta_j + (\Delta t)^2 L^2 M \sum_{j=1}^{s+1} \left( \delta_{ij} - L \tau_0 |a_{ij}| \right) \varepsilon_j + O\left((\Delta t)^2\right). $$
    By rearranging the terms,
    $$ (I - L \tau_0 \Vert A\Vert_{\infty}) \eta \leq (\Delta t)^2 L^2 M  (I - L \tau_0 \Vert A\Vert_{\infty}) \varepsilon + O\left((\Delta t)^2\right). $$
    Let us multiply both sides by $(I - L \tau_0 \Vert A\Vert_{\infty})^{-1}$ (it exists by the definition of $\tau_0$):
    $$ \eta \leq (\Delta t)^2 L^2 M \varepsilon  + O\left((\Delta t)^2\right), $$
    meaning that $ \left\Vert \widetilde{G}^i - \widehat{G}^i \right\Vert \leq (\Delta t)^2 L^2 M \varepsilon_i  + O\left((\Delta t)^2\right) $. Consequently,
    \begin{equation}\label{eq:lemma3proof}
    \begin{aligned}
        &\left\Vert \varphi(\Delta t) \sum_{j=1}^{s+1} b_{ij} \left( f\left( \widetilde{G}^j \right) - f\left( \widehat{G}^j \right) \right) \right\Vert \leq \tau_0 L \sum_{j=1}^{s+1} |b_{ij}| \left\Vert \widetilde{G}^i - \widehat{G}^i \right\Vert  \\
        & \hspace{120pt} \leq \tau_0 (\Delta t)^2 L^3 M \sum_{j=1}^{s+1} |b_{ij}| \varepsilon_j + O\left((\Delta t)^2\right).
    \end{aligned}
    \end{equation}
    Therefore, from inequalities \eqref{eq:lemma3ineq2} and \eqref{eq:lemma3proof}, we get inequality \eqref{eq:lemma3ineq3}.
\end{proof}
Now we give a bound for the local truncation error.
\begin{lemma}\label{lemma4}
    Under the conditions of Lemma \ref{lemma3}, the local truncation error is given by
    $$ \widehat{g}_{[n]}^i - g_{[n]}^i = \sum_{j=1}^r V_{ij} \left( \widehat{g}_{[n]}^j - g_{[n]}^j\right) + K_{[n]}^i, \qquad i=1, 2, \dots, r, $$
    where
    $$ \Vert K_{[n]} \Vert \leq \Delta t \zeta \max_{i=1}^r \left\Vert \widehat{g}_{[n]}^i - g_{[n]}^i\right\Vert + (\Delta t)^2 \nu + O\left((\Delta t)^2\right), $$
    and $\zeta$ and $\nu$ are given by
    $$ \zeta = L \max_{i=1}^r \sum_{j=1}^{s+1} |b_{ij}| \overline{\varepsilon_j}, \qquad \nu = L^2 M \max_{i=1}^{s+1} \left( \dfrac{1}{2} |u_i| + |v_i| + \sum_{j=1}^{s+1} |b_{ij} c_j| + \tau_0 L \sum_{j=1}^{s+1} |b_{ij}| \varepsilon_j \right), $$
    and $\overline{\varepsilon}$ is defined as
    $$ \sum_{j=1}^{s+1} \left( \delta_{ij} - \tau_0 L |a_{ij}| \right) \overline{\varepsilon}_j = \sum_{j=1}^{s+1} |U_{ij}|, \qquad i=1, 2, \dots, s+1. $$
\end{lemma}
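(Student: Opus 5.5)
We will follow the structure of \cite[Lemma 515B]{butcherbook}, replacing $\Delta t$ by $\varphi(\Delta t)$ wherever it multiplies $f$. The starting point is to subtract the computed method from the exact quantities. The computed values satisfy
\[
G^i = \varphi(\Delta t)\sum_{j} a_{ij} f(G^j) + \sum_j U_{ij} g^j_{[n-1]}, \qquad g^i_{[n]} = \varphi(\Delta t)\sum_j b_{ij} f(G^j) + \sum_j V_{ij} g^j_{[n-1]}.
\]
Subtracting the second relation from $\widehat{g}^i_{[n]}$ gives
\[
\widehat{g}^i_{[n]} - g^i_{[n]} = \sum_j V_{ij}\bigl(\widehat{g}^j_{[n-1]} - g^j_{[n-1]}\bigr) + K^i_{[n]},
\]
where
\[
K^i_{[n]} = \Bigl(\widehat{g}^i_{[n]} - \varphi(\Delta t)\sum_j b_{ij} f(\widetilde{G}^j) - \sum_j V_{ij}\widehat{g}^j_{[n-1]}\Bigr) + \varphi(\Delta t)\sum_j b_{ij}\bigl(f(\widetilde{G}^j) - f(G^j)\bigr).
\]
We read the indices on the right-hand side of the lemma as the previous step's error, as in Butcher's version. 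The first bracket is bounded directly by inequality \eqref{eq:lemma3ineq3} of Lemma \ref{lemma3}. Taking the maximum over $i$ produces the term $(\Delta t)^2\nu + O((\Delta t)^2)$.

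For the second bracket we need a bound on $\Vert \widetilde{G}^j - G^j\Vert$ in terms of the input error $\max_i \Vert \widehat{g}^i_{[n-1]} - g^i_{[n-1]}\Vert$. Subtracting the stage equations defining $\widetilde{G}$ and $G$ gives
\[
\widetilde{G}^i - G^i = \varphi(\Delta t)\sum_j a_{ij}\bigl(f(\widetilde{G}^j) - f(G^j)\bigr) + \sum_j U_{ij}\bigl(\widehat{g}^j_{[n-1]} - g^j_{[n-1]}\bigr).
\]
With $\theta_i = \Vert \widetilde{G}^i - G^i\Vert$, the Lipschitz condition (P3) and $\varphi(\Delta t)\le\tau_0$ give
\[
\theta_i \le \tau_0 L\sum_j |a_{ij}|\theta_j + \Bigl(\sum_j |U_{ij}|\Bigr)E, \qquad E = \max_j \Vert \widehat{g}^j_{[n-1]} - g^j_{[n-1]}\Vert.
\]
We then compare with the solution of the defining system for $\overline{\varepsilon}$, which is $\overline{\varepsilon}E$. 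The matrix $I - \tau_0 L|\widehat{A}|$ is an M-matrix, since $\tau_0 L\Vert A\Vert_\infty < 1$. Its inverse is therefore given by a Neumann series with nonnegative terms, and it preserves componentwise inequalities. This yields $\theta \le \overline{\varepsilon}E$ componentwise.

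Applying (P3) once more,
\[
\Bigl\Vert \varphi(\Delta t)\sum_j b_{ij}\bigl(f(\widetilde{G}^j) - f(G^j)\bigr)\Bigr\Vert \le \varphi(\Delta t)\, L\sum_j |b_{ij}|\,\overline{\varepsilon}_j E.
\]
Since $\varphi(\Delta t) = \Delta t + O((\Delta t)^2)$ by \eqref{eq:phicond}, this is bounded by $\Delta t\,\zeta E$ plus a higher-order remainder. That remainder is absorbed into the $O((\Delta t)^2)$ term, using that $E$ is bounded because all iterates stay in the bounded set $S$ by (P1)--(P2). Combining the two brackets gives the stated bound on $\Vert K_{[n]}\Vert$.

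The main obstacle is justifying the componentwise comparison through $(I - \tau_0 L|\widehat{A}|)^{-1}$; the Lemma \ref{lemma3} proof treats this step loosely. The fix is the nonnegativity of the Neumann series $\sum_m (\tau_0 L|\widehat{A}|)^m$. A secondary point needing care is that the stage values $G^j$ must lie where (P3) applies. If necessary we would extend $f$ to a globally Lipschitz function, as is standard in Butcher's argument.
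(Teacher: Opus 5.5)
Your argument follows the paper's proof of Lemma \ref{lemma4} step for step. You use the same splitting of $K^i_{[n]}$ into the residual bounded by \eqref{eq:lemma3ineq3} plus $\varphi(\Delta t)\sum_j b_{ij}\bigl(f(\widetilde G^j)-f(G^j)\bigr)$. You use the same stage-difference inequality, inverted through $I-\tau_0L|\widehat A|$ to give $\Vert\widetilde G^j-G^j\Vert\le\overline{\varepsilon}_jE$, and the same final substitution. Your reading of the right-hand side as the step-$(n-1)$ error is also what the paper's proof actually uses. Your Neumann-series justification is an improvement: the paper writes the scalar $\Vert A\Vert_\infty$ where the matrix $|\widehat A|$ belongs, and it inverts a componentwise inequality without saying why that is allowed.

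One step is not justified as you state it. You absorb $(\varphi(\Delta t)-\Delta t)\zeta E$ into the additive $O((\Delta t)^2)$ term by claiming $E$ is bounded via (P1)--(P2). Those hypotheses constrain only the exact solution. They say nothing about the numerical iterates $g_{[n-1]}$ or the stages $G$, so they give no uniform bound on $E$. The paper sidesteps this by silently using $\varphi(\Delta t)\le\Delta t$. That inequality holds for the $\varphi_p$ of Remark \ref{rem:phi}, but it does not follow from \eqref{eq:phicond}.

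You can repair this in either of two ways:
\begin{itemize}
\item Keep the correction multiplicative. For small $\Delta t$ you have $\varphi(\Delta t)\le\Delta t(1+c\,\Delta t)$, so the term is at most $\Delta t\,\zeta(1+c\,\Delta t)E$. In Lemma \ref{lemma5} the growth factor becomes $(1+\zeta(1+c\,\Delta t)C\Delta t)^n\le\exp\bigl(\zeta(1+c\,\Delta t)C(\hat t-t_0)\bigr)$, which has the same limit. This is all that lemma needs.
\item Establish boundedness of $E$ separately, by an induction (bootstrap) argument on the Lemma \ref{lemma5} recursion for sufficiently small $\Delta t$.
\end{itemize}
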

\begin{proof}[Proof of Lemma \ref{lemma4}]
    Since
    $$ g_{[n]}^i - \varphi(\Delta t) \sum_{j=1}^{s+1} b_{ij} f(G^j) - \sum_{j=1}^r V_{ij} g_{[n-1]}^j = 0 $$
    holds, and by inequality \eqref{eq:lemma3ineq3}, we get
    \begin{align*}
        & \left\Vert \widehat{g}_{[n]}^i - g_{[n]}^i - \sum_{j=1}^r V_{ij} \left( \widehat{g}_{[n]}^j - g_{[n]}^j \right) \right\Vert  \\
        & \hspace{20pt}\leq \varphi(\Delta t) \sum_{j=1}^{s+1} |b_{ij}| \left\Vert f(\widetilde{G}^j) - f(G^j) \right\Vert \\
        & \hspace{40pt} + (\Delta t)^2 L^2 M^2  \left( \dfrac{1}{2} |u_i| + |v_i| + \sum_{j=1}^{s+1} |b_{ij} c_j| + \tau_0 L \sum_{j=1}^{s+1} |b_{ij}| \varepsilon_j \right) + O\left((\Delta t)^2\right) 
    \end{align*}
    \begin{equation}\label{eq:lemma4proof}
    \begin{aligned}
    & \leq \Delta t L \sum_{j=1}^{s+1} |b_{ij}| \left\Vert \widetilde{G}^j - G^j \right\Vert \\
    & \hspace{20pt} + (\Delta t)^2 L^2 M^2  \left( \dfrac{1}{2} |u_i| + |v_i| + \sum_{j=1}^{s+1} |b_{ij} c_j| + \tau_0 L \sum_{j=1}^{s+1} |b_{ij}| \varepsilon_j \right) + O\left((\Delta t)^2\right).  
      \end{aligned}
    \end{equation}
    Since
    $$ \left\Vert \widetilde{G}^j - G^j - \sum_{k=1}^r U_{jk} \left( \widehat{g}_{[n-1]}^k - g_{[n-1]}^k \right)  \right\Vert \leq \varphi(\Delta t) L \sum_{k=1}^{s+1} |a_{jk}|\left\Vert \widetilde{G}^k - G^k \right\Vert, $$
    we can get an estimate for $\left\Vert \widetilde{G}^j - G^j  \right\Vert$:
    $$ \left\Vert \widetilde{G}^j - G^j  \right\Vert - \varphi(\Delta t) L \sum_{k=1}^{s+1} |a_{jk}|\left\Vert \widetilde{G}^k - G^k \right\Vert \leq \sum_{k=1}^r |U_{jk}| \left\Vert \widehat{g}_{[n-1]}^k - g_{[n-1]}^k \right\Vert. $$
    To make the notations easier, let us introduce $\mu_j := \left\Vert \widetilde{G}^j - G^j  \right\Vert$. Therefore, the previous bound can be rearranged as
    $$ (I - \tau_0 L \Vert A \Vert_{\infty}) \mu \leq |U| \left\Vert \widehat{g}_{[n-1]} - g_{[n-1]} \right\Vert, $$
    $$ \mu \leq (I - \tau_0 L \Vert A \Vert_{\infty})^{-1} |U| \left\Vert \widehat{g}_{[n-1]} - g_{[n-1]} \right\Vert, $$
    which means that (by the definition of $\overline{\varepsilon}$)
    $$ \left\Vert \widetilde{G}^j - G^j  \right\Vert \leq \overline{\varepsilon}_j \max_{k=1}^r \left\Vert \widehat{g}_{[n-1]}^k - g_{[n-1]}^k \right\Vert. $$
    By substituting this bound into \eqref{eq:lemma4proof}, we get the statement.
\end{proof}

\begin{lemma}\label{lemma5}
    Let us introduce the following notation for the accumulated errors we get at different points on interval $[t_0,\hat{t}]$ after taking $n$-many steps with stepsize $\Delta t = (\hat{t} - t_0)/n$:
    $$ E_{[i]} = \begin{pmatrix}
        \widehat{g}_{[i]}^1 - g_{[i]}^1\\[5pt]
        \widehat{g}_{[i]}^2 - g_{[i]}^2\\[5pt]
        \vdots \\[5pt]
        \widehat{g}_{[i]}^r - g_{[i]}^r
    \end{pmatrix}, \qquad i=0,1, 2, \dots n. $$
    Then, the following bound holds for the error at the final step:
    $$ \Vert E_{[n]} \Vert \leq \begin{cases}
        \exp \left( \zeta C (\hat{t} - t_0)\right) \Vert E_{[0]} \Vert + \frac{\nu \Delta t}{\zeta} \left( \exp(\zeta C(\hat{t}-t_0))-1 \right) + O\left((\Delta t)^2\right), \quad \text{if }\; \zeta >0,\\
        \Vert E_{[0]} \Vert + \nu C (\hat{t} - t_0) \Delta t + O\left((\Delta t)^2\right), \hspace{135pt} \text{if }\; \zeta=0, 
        \end{cases} $$
        where $C= \sup_{k \in \mathbb{N}} \Vert V^k \Vert_{\infty}$ and the norm of $E_{[n]}$ is thought of as the maximum of the norms of all of its elements.
\end{lemma}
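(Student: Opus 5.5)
The plan is to unroll the error recursion of Lemma \ref{lemma4}, treating it as a linear inhomogeneous difference inequality driven by the matrix $V$. This is the same argument as in \cite[Th. 515D]{butcherbook}. First I would restate Lemma \ref{lemma4} in vector form. Up to the index shift that the lemma's statement writes loosely, it gives
$$ E_{[m]} = V E_{[m-1]} + K_{[m]}, \qquad \Vert K_{[m]} \Vert \leq \Delta t\, \zeta \Vert E_{[m-1]} \Vert + (\Delta t)^2 \nu + O\left((\Delta t)^3\right). $$
Iterating from $m=n$ down to $m=1$ yields the variation-of-constants formula
$$ E_{[n]} = V^n E_{[0]} + \sum_{m=1}^{n} V^{n-m} K_{[m]}. $$
Taking norms and using stability, namely $\Vert V^j \Vert_\infty \leq C$ with $C\geq 1$ (which holds since $V^0=I$), I obtain
$$ \Vert E_{[n]} \Vert \leq C \Vert E_{[0]} \Vert + C \sum_{m=1}^n \left( \Delta t\, \zeta \Vert E_{[m-1]} \Vert + (\Delta t)^2 \nu \right) + \text{h.o.t.} $$

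The second step is a discrete Gronwall argument. Let $\theta_n$ denote the right-hand side majorant, defined recursively by $\theta_0 = \Vert E_{[0]}\Vert$ and $\theta_m = \theta_{m-1} + C\Delta t\,\zeta\,\theta_{m-1} + C(\Delta t)^2\nu$. By induction, $\Vert E_{[m]} \Vert \leq \theta_m$. Solving the recursion gives
$$ \theta_n = (1+C\zeta\Delta t)^n \theta_0 + C(\Delta t)^2 \nu \,\frac{(1+C\zeta\Delta t)^n - 1}{C\zeta\Delta t}. $$
Applying $(1+C\zeta\Delta t)^n \leq \exp(C\zeta n\Delta t) = \exp(\zeta C(\hat t - t_0))$ produces exactly the $\zeta>0$ bound $\exp(\zeta C(\hat t-t_0))\Vert E_{[0]}\Vert + \frac{\nu\Delta t}{\zeta}(\exp(\zeta C(\hat t - t_0))-1)$. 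The constant in front of $\nu\Delta t/\zeta$ comes out as $1$ because the factor $C$ cancels. In the case $\zeta=0$ the recursion is simply additive, so $\theta_n = \theta_0 + nC(\Delta t)^2\nu = \Vert E_{[0]}\Vert + \nu C(\hat t - t_0)\Delta t$.

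The remaining point is the factor in front of $\Vert E_{[0]}\Vert$. The naive unrolling gives $C\Vert E_{[0]}\Vert$, whereas the statement shows coefficient $1$ in the $\zeta=0$ case. I would handle this in one of two ways. The first option is to work in a norm adapted to $V$, chosen so that $\Vert V\Vert\leq 1$; such a norm exists because $V$ is power bounded. In that norm the recursion is $\Vert E_{[m]}\Vert \leq (1+\zeta\Delta t)\Vert E_{[m-1]}\Vert + \dots$, and the constant $C$ then reappears only through norm equivalence. The second option is to accept the extra factor $C$ on $\Vert E_{[0]}\Vert$, noting that it is harmless for convergence, and absorb it into the constants. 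I expect the main obstacle to be this bookkeeping, together with a careful treatment of the $O((\Delta t)^2)$ remainders inherited from Lemma \ref{lemma3}. Those remainders arise from replacing $\varphi(\Delta t)$ by $\Delta t$ using \eqref{eq:phicond}, and summed over $n=O(1/\Delta t)$ steps they must still be of the order claimed. For this I would make explicit that each per-step remainder is $O((\Delta t)^3)$, uniformly in $m$. The uniformity follows from (P2) and (P3) and from the boundedness of $\varphi(\Delta t)\leq\tau_0$.
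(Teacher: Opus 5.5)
Your route is the paper's. You unroll $E_{[m]}=VE_{[m-1]}+K_{[m]}$, bound all powers of $V$ by $C$, dominate $\Vert E_{[m]}\Vert$ by a recursively defined majorant, solve it, and use $(1+C\zeta\Delta t)^n\leq \exp(\zeta C(\hat t-t_0))$. Your worry about the factor in front of $\Vert E_{[0]}\Vert$ is justified. The majorant must start at $C\Vert E_{[0]}\Vert$, which is exactly the paper's $\xi_0$, so the argument delivers the bound with $C\Vert E_{[0]}\Vert$ (your second option). The adapted norm only moves the same constant into the norm equivalence. The factor is harmless for Theorem \ref{th:Lax_NS}.

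\textbf{The gap.} The step that fails is your plan to show every per-step remainder is $O((\Delta t)^3)$ uniformly in $m$; under \eqref{eq:phicond} alone this is false.

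\emph{Why.} Expand the residuals of Lemma \ref{lemma3} using $Uu=\mathbb{1}$, $Vu=u$, $B\mathbb{1}+Vv=u+v$ and $c=A\mathbb{1}+Uv$. Two terms survive: $c_i(\Delta t-\varphi(\Delta t))x'(t_{n-1})$ in the stage residual and $u_i(\Delta t-\varphi(\Delta t))x'(t_{n-1})$ in the output residual. These are of exact order $(\Delta t)^2$, e.g. whenever $\varphi''(0)\neq 0$. Over $n=(\hat t-t_0)/\Delta t$ steps they accumulate to $O(\Delta t)$, the same order as the $\nu$-term, so they cannot be hidden in an $O((\Delta t)^2)$ tail.

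\emph{Counterexample.} Take forward Euler: $A=0$, $U=B=V=u=1$, $v=0$, hence $c=0$, $C=1$, $\zeta=L$, $\nu=L^2M/2$. Apply it to $x'=-x$ with $S=[-1,1]$, $L=M=1$, $\tilde x=1$, $\psi\equiv 1$ (so $E_{[0]}=0$), $\hat t-t_0=1$, and $\varphi(h)=h-3h^2$ near $0$. Then $g_{[n]}=(1-\varphi(\Delta t))^n$ and $\vert E_{[n]}\vert=\frac{5}{2e}\Delta t+O((\Delta t)^2)$. This exceeds $\frac{\nu\Delta t}{\zeta}(e^{\zeta C}-1)=\frac{e-1}{2}\Delta t$. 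So the bound with the $\nu$ of Lemma \ref{lemma4} and an $O((\Delta t)^2)$ tail cannot be proved. The paper's own proof has the same defect: it sums $n$ per-step $O((\Delta t)^2)$ terms and still ends with $O((\Delta t)^2)$.

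\emph{Repair.} Put the $\varphi$-defect into the constant before running your recursion. With $\vert\varphi(h)-h\vert\leq K_\varphi h^2$ for $0<h\leq h_0$, you get $\Vert K_{[m]}\Vert\leq \Delta t\,\zeta\Vert E_{[m-1]}\Vert+(\Delta t)^2\nu'+O((\Delta t)^3)$. Here $\nu'$ exceeds $\nu$ by a multiple of $K_\varphi LM$, coming from $\vert u_i\vert$ and, through the $\varepsilon$-system, from $\vert c_i\vert$. Your Gronwall computation then gives the stated bound with $\nu'$ instead of $\nu$ and $C\Vert E_{[0]}\Vert$ instead of $\Vert E_{[0]}\Vert$. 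That is all convergence needs.
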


\begin{proof}[Proof of Lemma \ref{lemma5}]
    Let us rewrite the result of Lemma \ref{lemma4} as
    $$ E_{[i]} = (V^i \otimes I) E_{[i-1]} + K_{[i-1]}. $$
    Similarly to the proof in the standard case (see \cite[Lemma 515C]{butcherbook}), we get the bound
    $$ \Vert E_{[i]} \Vert \leq C \Vert E_{[0]} \Vert + \sum_{j=0}^{i-1} C \Vert K_{[i-j]}\Vert.$$
    By substituting the bounds of $K_{[i-j]}$ from Lemma \ref{lemma4}, the inequality can be rewritten as
    $$ \Vert E_{[i]} \Vert \leq C \Vert E_{[0]} \Vert + \Delta t \zeta C \sum_{j=0}^{i-1} \Vert E_{[j]} \Vert + C i \nu (\Delta t)^2 + O\left((\Delta t)^2\right),$$
    where $\zeta$ and $\nu$ are the same as in Lemma \ref{lemma4}. Let us define the values of $\xi_j$ as $\xi_0 = C \Vert E_{[0]} \Vert$ and
    $$ \xi_i = \xi_0 + \Delta t \zeta C \sum_{j=0}^{i-1} \xi_j + C i \nu (\Delta t)^2 + O\left((\Delta t)^2\right). $$
    Consequently,
    $$ \xi_i - \xi_{i-1} = \Delta t \zeta C \xi_{i-1} + C i \nu (\Delta t)^2 + O\left((\Delta t)^2\right), $$
    from which we get the formula
    $$ \xi_i = \xi_0 (1+\zeta \Delta t C)^i + \dfrac{\nu}{\zeta} \Delta t \left( (1+\zeta \Delta t C)^i - 1 \right) + O\left((\Delta t)^2\right), $$
    or, if $\zeta=0$,
    $$ \xi = \xi_0 + C i \nu (\Delta t)^2 + O\left((\Delta t)^2\right). $$
    If $i=n$, then we get the statement of the lemma.
\end{proof}
It is easy to see from Lemma \ref{lemma5} that if the method is consistent and stable (which were used during the proofs of the previous Lemmas), the method is convergent, which completes the proof of the theorem.
\end{proof}

\begin{proof}[Proof of Lemma \ref{lem:taylor}.]
    First, let us assume that (C1) is true. We prove the validity of (C2) by induction. The statement is trivially true if $p=0$. Let us assume that it is true for $p=n-1$. The key observation here is that
    $$ R_{n-1} = \dfrac{1}{n!} f^{(n)} (a) (\varphi(\delta), \dots, \varphi(\delta)) + R_n. $$
    (This is also true in the $n=1$ case.) The reason for this is that by partial integration, 
    \begin{align*}
        & \int_0^1 f^{(n)}(a + \xi \varphi(\delta)) (\varphi(\delta), \dots \varphi(\delta)) (1-\xi)^{n-1} d\xi =\\
        & \hspace{20pt} \dfrac{1}{n} f^{(n)}(a)(\varphi(\delta), \dots, \varphi(\delta)) + \dfrac{1}{n} \int_0^1 f^{(n+1)}(a + \xi \varphi(\delta)) (\varphi(\delta), \dots, \varphi(\delta)) d \xi.
    \end{align*}
    Consequently,
    $$ R_{n-1} = \dfrac{1}{(n-1)!} \left( \dfrac{1}{n} f^{(n)}(a)(\varphi(\delta), \dots, \varphi(\delta)) + \dfrac{1}{n} \int_0^1 f^{(n+1)}(a + \xi \varphi(\delta)) (\varphi(\delta), \dots, \varphi(\delta)) d \xi \right) = $$
    $$ = \dfrac{1}{n!} f^{(n)} (a) (\varphi(\delta), \dots, \varphi(\delta)) + R_n. $$
    This proves the validity of (C2).

    Now, let us assume that (C2) is true. Let us substitute the standard Taylor series of $\varphi$ around $0$ into formula (C2):
    \begin{align*}
        & f(a+\delta) = f(a) + f'(a)\left( \delta + \sum_{\ell=1}^p \dfrac{\varphi^{(\ell)}(0)}{\ell!} (\delta, \dots, \delta) + O(\Vert \delta \Vert^{p+1}) \right) + \dots +  \\
    & \dfrac{1}{p!} f^{(p)}(a) \left(\delta + \sum_{\ell=1}^p \dfrac{\varphi^{(\ell)}(0)}{\ell!} (\delta, \dots, \delta) + O(\Vert \delta \Vert^{p+1}) , \dots, \delta + \sum_{\ell=1}^p \dfrac{\varphi^{(\ell)}(0)}{\ell!} (\delta, \dots, \delta) + O(\Vert \delta \Vert^{p+1})\right) \\
    & \hspace{300pt} + R_{p+1}.
    \end{align*}
    This should coincide with the standard Taylor series of $f$ (with error $O(\Vert \delta \Vert^{p+1})$), which can only hold if (C1) is true.
\end{proof}

\begin{proof}[Proof of Theorem \ref{th:order}]
    Assume that for the error vector $\widetilde{E}_{[i]}$ (defined in Lemma \ref{lemma5}) of the standard method \eqref{eq:SMMM}, we have the following bound using its (standard) Taylor series:
    $$ \Vert \widetilde{E}_{[i]} \Vert \leq \sum_{k=1}^{\infty} d_k (\Delta t)^k.$$
    (For the exact forms of the coefficients $d_k$, see, for example, \cite[Th. 532A]{butcherbook}.) Since the order of the standard method is $p$, $d_k =0$ should hold for $k=1, 2, \dots, p-1$. If condition (C1) is true, then by Lemma \ref{lem:taylor}, (C2) is also true, meaning that we can construct a similar series for the error of the nonstandard method $E_{[i]}$ by the same steps:
    $$ \Vert E_{[i]} \Vert \leq \sum_{k=1}^{p} d_k (\varphi(\Delta t))^k + O((\Delta t)^{p+1}).$$
    Since (C1) holds, we get order $p$ for the nonstandard method too.

    If we assume that (C3) is true, i.e., the nonstandard method has an order of $p$, then it means that $\Vert E_{[i]} \Vert \leq O((\Delta t)^p)$. If we consider the standard Taylor series of the error of the nonstandard method, it will contain at least one term in the form $\varphi(\Delta t) d_0$, where $d_0$ does not depend on $\Delta t$. Therefore, for this method to have an order of $p$, (C1) should also hold.
\end{proof}

\begin{proof}[Proof of Theorem \ref{th:preserv}]
    We prove the statement by induction. Since the first step and an arbitrary step are the same, we only present the proof of an arbitrary step. Moreover, to simplify the notation, let us fix $\mathcal{R}$ as $\leq$ (all the other cases can be proven similarly). 

    Assume that $\mathcal{P} y_j \leq \mu$ holds for $j=1, \dots, n-1$ (and this is also true for all the inner stages), and now we would like to show that $\mathcal{P} y_n \leq \mu$ is also true ($n \leq \mathcal{N}$). Note that the nonstandard multistep multistage method \eqref{eq:NSMMM} can be written in the form 
    \begin{equation*}%\label{eq:NSMMM_alpha}
\begin{aligned}
    y_{[n-1]}^{(1)} = & \; y_{n-1},\\%[10pt]
   y_{[n-1]}^{(i)} = & \; \sum_{\ell=2}^k \left(\sum_{j=1}^{s} \alpha_{[n-\ell]}^{i,j} \left( y_{[n-\ell]}^{(j)} + \dfrac{\beta_{[n-\ell]}^{i,j}}{\alpha_{[n-\ell]}^{i,j}} \; \varphi(\Delta t) \; f\left(y_{[n-\ell]}^{(j)} \right)\right) \right) + \\%[10pt]
  & + \sum_{j=1}^{i-1} \alpha_{[n-1]}^{i,j} \left(y_{[n-1]}^{(j)} + \dfrac{\beta_{[n-1]}^{i,j}}{\alpha_{[n-1]}^{i,j}} \; \varphi(\Delta t) \; f\left(y_{[n-1]}^{(j)}\right)\right) \quad (i=2,\; 3, \; \dots,\; s+1),\\%[10pt]
  y_n \quad \;\; = & \; y_{[n-1]}^{(s+1)}.
  \end{aligned}
\end{equation*}
Note that the expressions multiplied by the $\alpha_{[n-\ell]}^{i,j}$ terms can be thought of as a forward Euler step with stepsizes $\frac{\beta_{[n-\ell]}^{i,j}}{\alpha_{[n-\ell]}^{i,j}} \; \varphi(\Delta t)$. The form of the bound of $\varphi$ guarantees that these forward Euler steps will preserve the desired properties. 

We prove the statement for every inner stage $y_{[n-1]}^{(i)}$ by induction. It is obvious that $\mathcal{P} y_{[n-1]}^{(1)} \leq \mu$ holds. Let us assume that the statement is true for $i=1, 2, \dots, m-1$. Then,
%\newpage
\begin{align*}
    & \mathcal{P} y_{[n-1]}^{(m)} = \sum_{\ell=2}^k \left(\sum_{j=1}^{s} \alpha_{[n-\ell]}^{m,j} \mathcal{P} \left( y_{[n-\ell]}^{(j)} + \dfrac{\beta_{[n-\ell]}^{m,j}}{\alpha_{[n-\ell]}^{m,j}} \; \varphi(\Delta t) \; f\left(y_{[n-\ell]}^{(j)} \right)\right) \right) \\
    & \hspace{120pt} + \sum_{j=1}^{m-1} \alpha_{[n-1]}^{m,j} \mathcal{P} \left(y_{[n-1]}^{(j)} + \dfrac{\beta_{[n-1]}^{m,j}}{\alpha_{[n-1]}^{m,j}} \; \varphi(\Delta t) \; f\left(y_{[n-1]}^{(j)}\right)\right) \leq
\end{align*}
$$ \leq \sum_{\ell=2}^k \left(\sum_{j=1}^{s} \alpha_{[n-\ell]}^{m,j} \mu \right) + \sum_{j=1}^{m-1} \alpha_{[n-1]}^{m,j} \mu = \mu \sum_{\ell=1}^k \left(\sum_{j=1}^{s} \alpha_{[n-\ell]}^{m,j} \right) = \mu, $$
where we used the fact that for a multistep multistage method to be consistent, $\sum_{\ell=1}^k \sum_{j=1}^{s} \alpha_{[n-\ell]}^{i,j} = 1$ should hold for every $i= 1, 2, \dots, s+1$ (see \cite{butcherbook}, or just consider the differential equation $x'(t)=0$, $x(0)=a$, $a \neq 0$). Therefore, if $m=s+1$, then $\mathcal{P} y_n \leq \mu$ is also true.
\end{proof}

\section{Numerical experiments}\label{sec:num}
In the following pages, we examine the performance of the nonstandard multistep multistage methods defined earlier. Before the specific applications, we list the general steps of the construction of a given method.
\begin{enumerate}
    \item Assume that the original differential equation \eqref{eq:ODE} attains some of the qualitative properties mentioned in Section \ref{sec:preserv}. The numerical method aims to preserve these properties.
    \item Apply the forward Euler method to the given equation with $\Delta t$ and calculate such a bound $B_{FE}$ for which the properties considered in Step 1 are preserved if $\Delta t \leq B_{FE}$ holds.
    \item Choose function $\varphi(x)$ in a way that $\varphi(x)\leq \mathcal{C} B_{FE}$. Moreover, to attain order $p$, the first $p$-many terms of the Taylor series around zero of $\varphi$ and $g(x)=x$ should coincide.
    \item For a method with $k$-many steps and $s+1$-many inner stages, the values $y_{[n]}^{(i)}$ ($n=0, \dots, k-1$, $i=1, \dots, s+1$) should be calculated. For this, a nonstandard Runge-Kutta method can be used with an appropriate function $\varphi(x)$ (see Remark \ref{rem:initialRK}.). 
\end{enumerate}

\begin{rem}\label{rem:phi}
    Up until this point, the choice of function $\varphi$ was not discussed. One possible choice is $\varphi_4(x) = \dfrac{\mathcal{C} B_{FE} x}{\left( \left( \mathcal{C} B_{FE} \right)^4 + x^4\right)^{1/4}}$. It is easy to see that $\varphi(x) = x + O(x^5)$ holds for small values of $x$, meaning that its application allows orders up to $4$, while also being bounded by $\mathcal{C} B_{FE}$. In general, the choice $\varphi_p(x) = \dfrac{\mathcal{C} B_{FE} x}{\left( \left( \mathcal{C} B_{FE} \right)^p + x^p\right)^{1/p}}$ works for order $p$. As it turned out in \cite{takacs26} (and as we will also see in the following numerical experiments), this choice is also recommended in the case of lower-order (i.e., second- or third-order) methods, since in some cases this choice allows convergence of high order, while still having the same computational cost as the choices $\varphi_2$ and $\varphi_3$. A list of other possible choices for $\varphi$ can be found in \cite{takacs26}. We should also note here that in \cite{dang}, Dang and Hoang demonstrated a different construction of functions fulfilling these conditions up to an arbitrary order.
\end{rem}

\subsection{One dimensional logistic equation}\label{sec:logistic}
Let us consider the one-dimensional ($N=1$) logistic growth equation with constant $c \in \mathbb{R}^+$ (and $t_0 = 0$)
\begin{equation}\label{eq:logistic}
   \begin{cases}
       x'(t) & = x(t) \left( c - x(t) \right),\\
       x(0) & = \tilde{x}.
   \end{cases}
\end{equation}
The solution of the equation is $x(t) = \dfrac{c e^{c t}\tilde{y}}{\tilde{y} (e^{ct}-1)+c}$, with equilibrium points at $x_1^*=0$ and $x_2^*=c$, and invariant sets $(-\infty, 0)$, $(0, c)$ and $(c, \infty)$, making the solution bounded by the values of the equilibrium points.

In accordance with the steps outlined at the beginning of this section, we first observe the behavior of the forward Euler method.

\begin{prop}(\cite[Prop. 1.]{takacs26})\label{prop:FE_logistic}
    The forward Euler method applied to equation \eqref{eq:logistic} preserves the boundedness of the solution in the $\tilde{x}\geq 0$ case, if $\Delta t \leq \min \left\{\dfrac{1}{c}, \dfrac{1}{\tilde{x}} \right\}$. If $\tilde{x}<0$, then the method preserves the properties unconditionally.
\end{prop}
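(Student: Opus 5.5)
Write one forward Euler step as $y_{n+1} = y_n + \Delta t\, y_n(c - y_n) = y_n\bigl(1 + \Delta t (c - y_n)\bigr)$ and split into cases by the position of $y_n$ relative to the equilibria $0$ and $c$. Boundedness here means that the step keeps the iterate in the invariant interval of the initial value: $[0,c]$ if $0 \le \tilde{x} \le c$, and $[c,\infty)$ if $\tilde{x} \ge c$. Monotonicity is not required. The argument is an induction on $n$: show that one step maps the relevant invariant set into itself under the stepsize restriction. The restriction $\Delta t \le \min\{1/c, 1/\tilde{x}\}$ has to be checked at every step, not only at the first one.

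\emph{Case $0 \le y_n \le c$.} Here $c - y_n \ge 0$, so $y_{n+1} \ge y_n \ge 0$. For the upper bound, write
\[
c - y_{n+1} = (c - y_n)(1 - \Delta t\, y_n).
\]
This is nonnegative as soon as $\Delta t\, y_n \le 1$. That holds because $y_n \le c$ and $\Delta t \le 1/c$.

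\emph{Case $y_n \ge c$.} The factorization above still applies. Now $c - y_n \le 0$, so $y_{n+1} \ge c$ requires $1 - \Delta t\, y_n \ge 0$, that is, $\Delta t \le 1/y_n$. Since $y_{n+1} - y_n = \Delta t\, y_n (c - y_n) \le 0$, the sequence is nonincreasing, so $y_n \le \tilde{x}$ for all $n$. Hence the condition $\Delta t \le 1/\tilde{x}$ gives $\Delta t \le 1/y_n$ at every step, and the induction closes. Together, the two cases cover $\tilde{x} \ge 0$ with the stated bound, where $1/\tilde{x}$ is only active when $\tilde{x} > c$.

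\emph{Case $\tilde{x} < 0$.} Here $c - y_n > 0$, so $1 + \Delta t (c - y_n) > 0$ and $y_{n+1}$ keeps the sign of $y_n$. Thus $y_{n+1} < 0$ for every $\Delta t > 0$, and the invariant set $(-\infty, 0)$ is preserved unconditionally. The equilibria $0$ and $c$ are fixed points of the step, which handles the boundary values.

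The step that needs the most care is the case $y_n > c$. There the bound depends on the current iterate rather than on a fixed constant. The argument only works once monotone decrease is established, so that $\tilde{x}$ dominates every later iterate.
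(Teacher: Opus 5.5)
Your proof is correct. The factorization $c-y_{n+1}=(c-y_n)(1-\Delta t\,y_n)$ gives the sharp one-step condition $\Delta t\,y_n\le 1$. The case $\tilde{x}>c$ is then closed by a joint induction on $c\le y_n\le \tilde{x}$. State that induction explicitly, because the decrease $y_{n+1}\le y_n$ you invoke itself relies on $y_n\ge c$.

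The paper gives no proof of this proposition, only the citation \cite[Prop. 1.]{takacs26}, so there is no in-paper argument to compare against. Your direct case analysis is a complete, self-contained verification.

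Two remarks are worth adding. First, your closed-interval reading of ``boundedness'' is the one under which the statement holds. For $\tilde{x}>c$ and $\Delta t=1/\tilde{x}$ exactly, the first step gives $y_1=c$. So the open invariant set $(c,\infty)$ named in the paper is not preserved at the endpoint of the allowed stepsize range. By contrast, $(0,c)$ is preserved strictly, because there $\Delta t\,y_n<\Delta t\,c\le 1$.

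Second, what you actually prove is that one Euler step maps $[0,c]$ and $[c,\tilde{x}]$ into themselves. This is precisely the form needed to feed the proposition into the forward Euler hypothesis of Theorem~\ref{th:preserv}, with $\mathcal{H}=[c,\tilde{x}]$ in the second case. No fixed positive bound on $\Delta t$ can work for all starting values in $(c,\infty)$.
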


Consequently, the choice $B_{FE} = \min \left\{ \dfrac{1}{c}, \dfrac{1}{\tilde{x}} \right\}$ is a proper one, meaning that in these cases the sets $(-\infty,0)$, $(0,c)$ and $(c,\infty)$ should be invariant for any choice of $\Delta t$. Since the exact solution is known, the starting procedure can be the evaluation of the exact solution at the given points. Later, in Section \ref{sec:SEIR}, we will use high-order nonstandard Runge-Kutta methods to approximate these values.

In the following, we run several numerical tests analyzing the performance of the aforementioned numerical methods. For this, we use the strongly stable forms of multistep multistage methods \cite{const} - the coefficients of the methods applied are listed in Appendix A.

\subsubsection{The non-stiff case of $c=10$}
Let us first consider equation \eqref{eq:logistic} with $c=10$. First, we test the orders of different nonstandard multistep multistage methods: we run the nonstandard versions of multistep multistage methods GLp2q2s3k3, GLp3q3s3k3, GLp3q2s3k2, GLp4q3s3k3, and GLp4q4s3k3 (denoted by the name of the standard version with an N added to the beginning) with final time $T=0.1$, initial condition $\tilde{x}=5$ and timesteps $\Delta t =0.01/2^k, \; k=0, 1,  \dots, 9$. We also plot the errors for the nonstandard Runge-Kutta and nonstandard multistep methods. The values of the errors (with the calculated slopes) can be found in Table \ref{tab:nonstiff_errors} in Appendix B. As we can see on the left panels of Figure \ref{fig:nonstiff_orders}, the nonstandard multistep multistage methods attain the same orders as their standard counterparts. Note that their errors are always between the errors of the Runge-Kutta and the multistep methods, sometimes even performing better than the Runge-Kutta ones (see the second-order methods on the upper left panel). As it will turn out later, the reason multistep multistage methods could be favored instead of the Runge-Kutta ones is their behavior in the case of larger timesteps; see the following experiments. One more thing to note is that while the Runge-Kutta NSSPMS(4,3) and multistep method NSSPRK(3,3) can have orders higher than $3$, this behavior is absent in the case of third-order multistep multistage methods NGLp3q3s3k3 and NGLp3q2s3k2.

%\FloatBarrier
\begin{figure}[!htbp]
    \centering
    \includegraphics[width=0.45\linewidth]{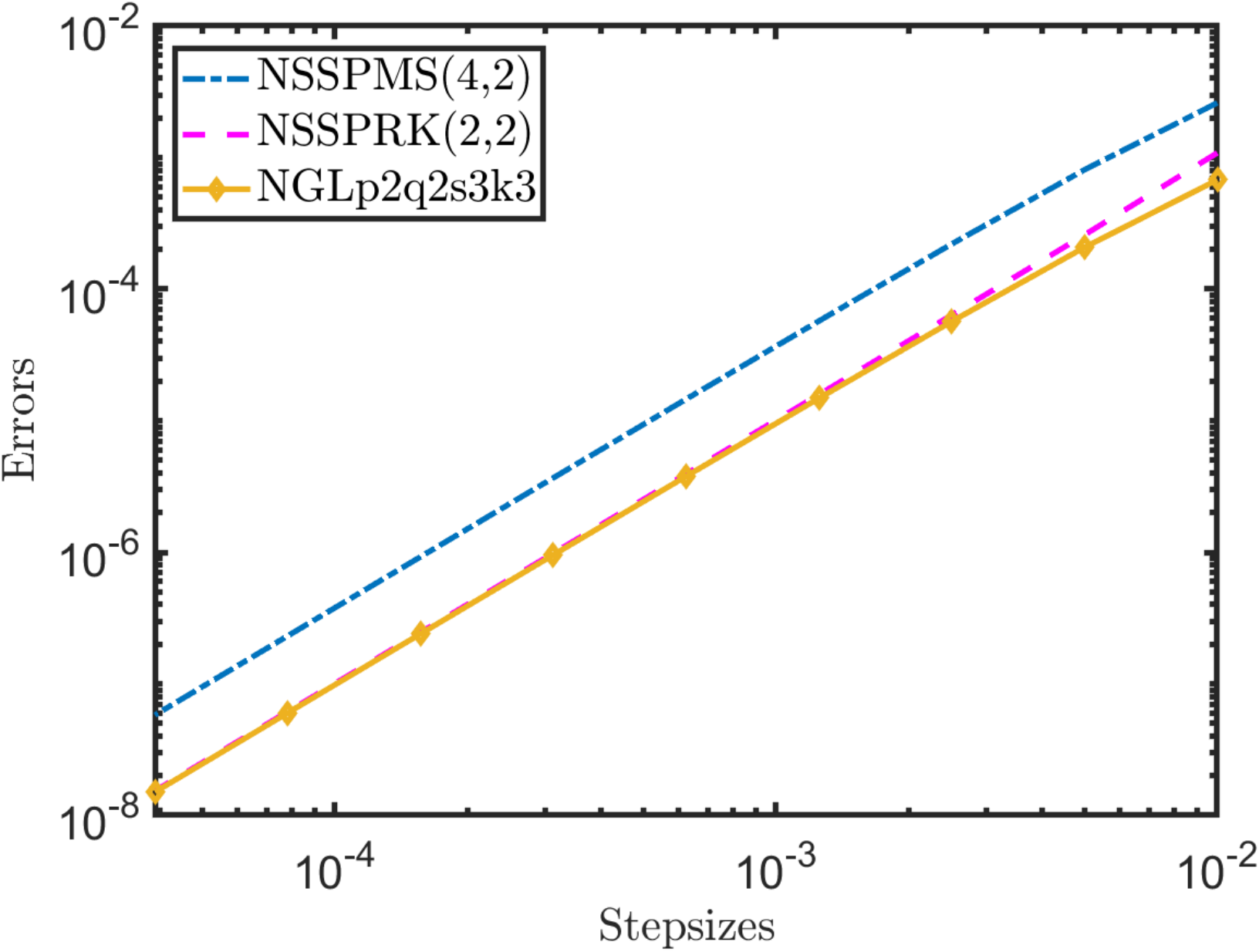}
    \includegraphics[width=0.45\linewidth]{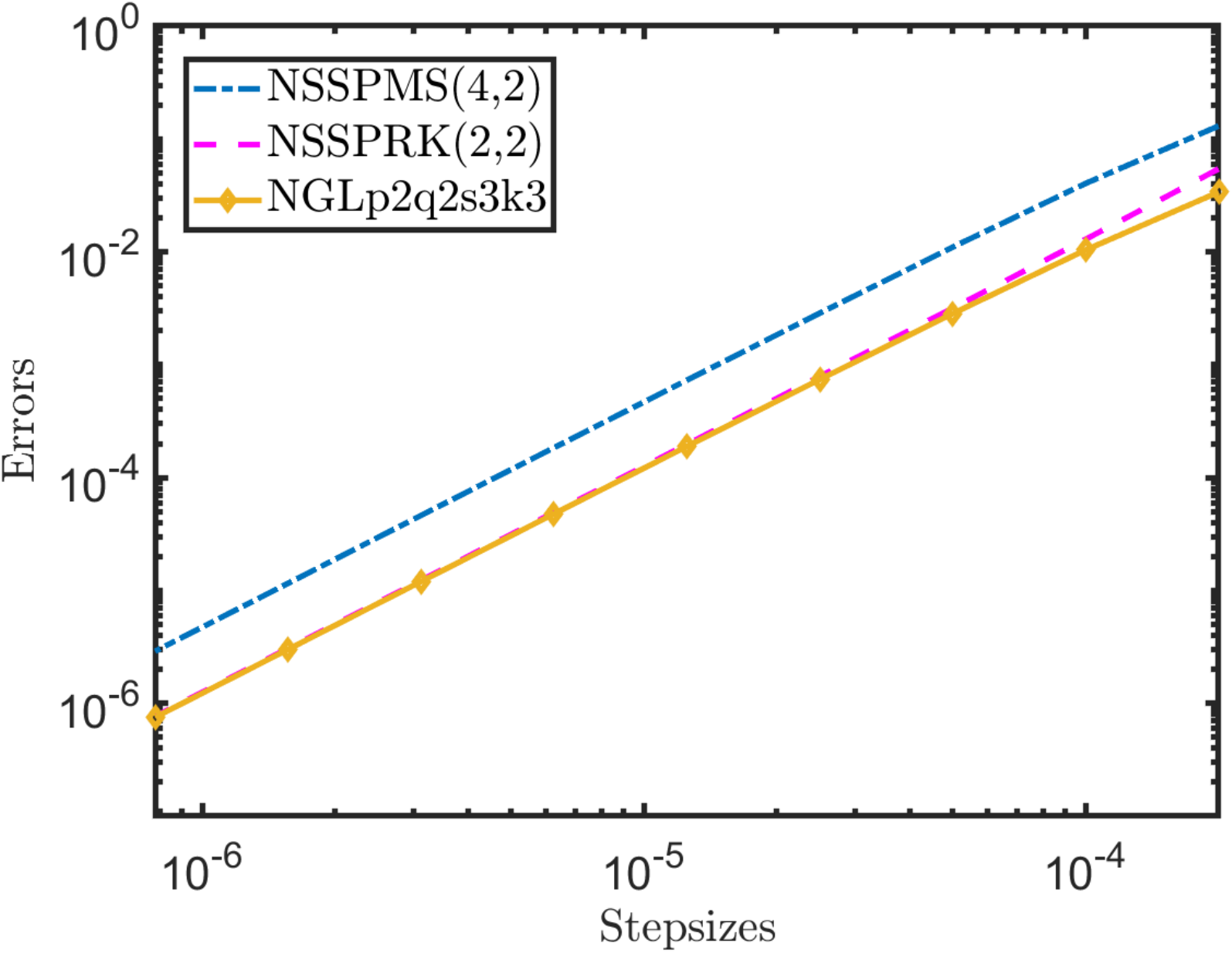}
    \includegraphics[width=0.45\linewidth]{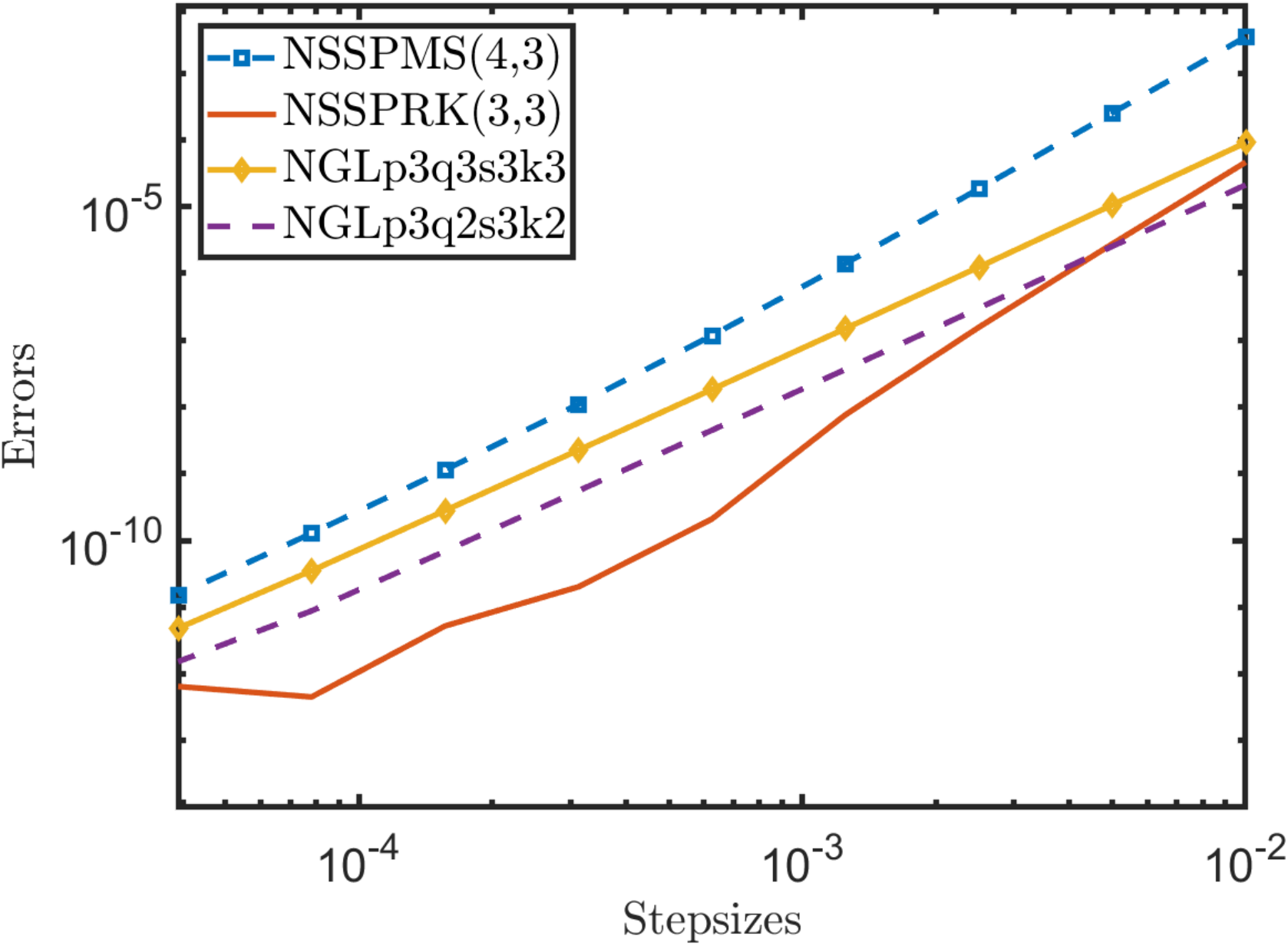}
    \includegraphics[width=0.45\linewidth]{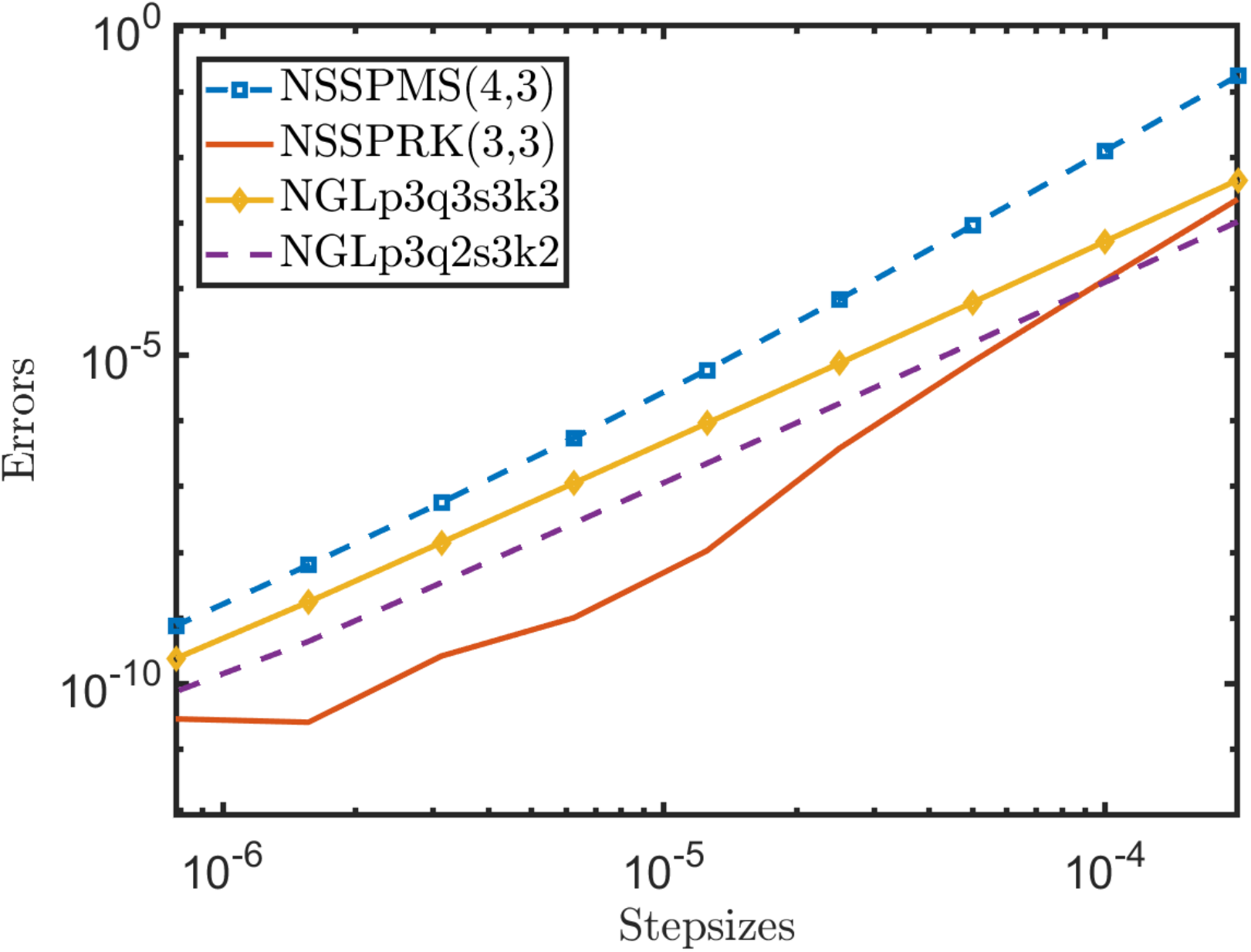}
    \includegraphics[width=0.45\linewidth]{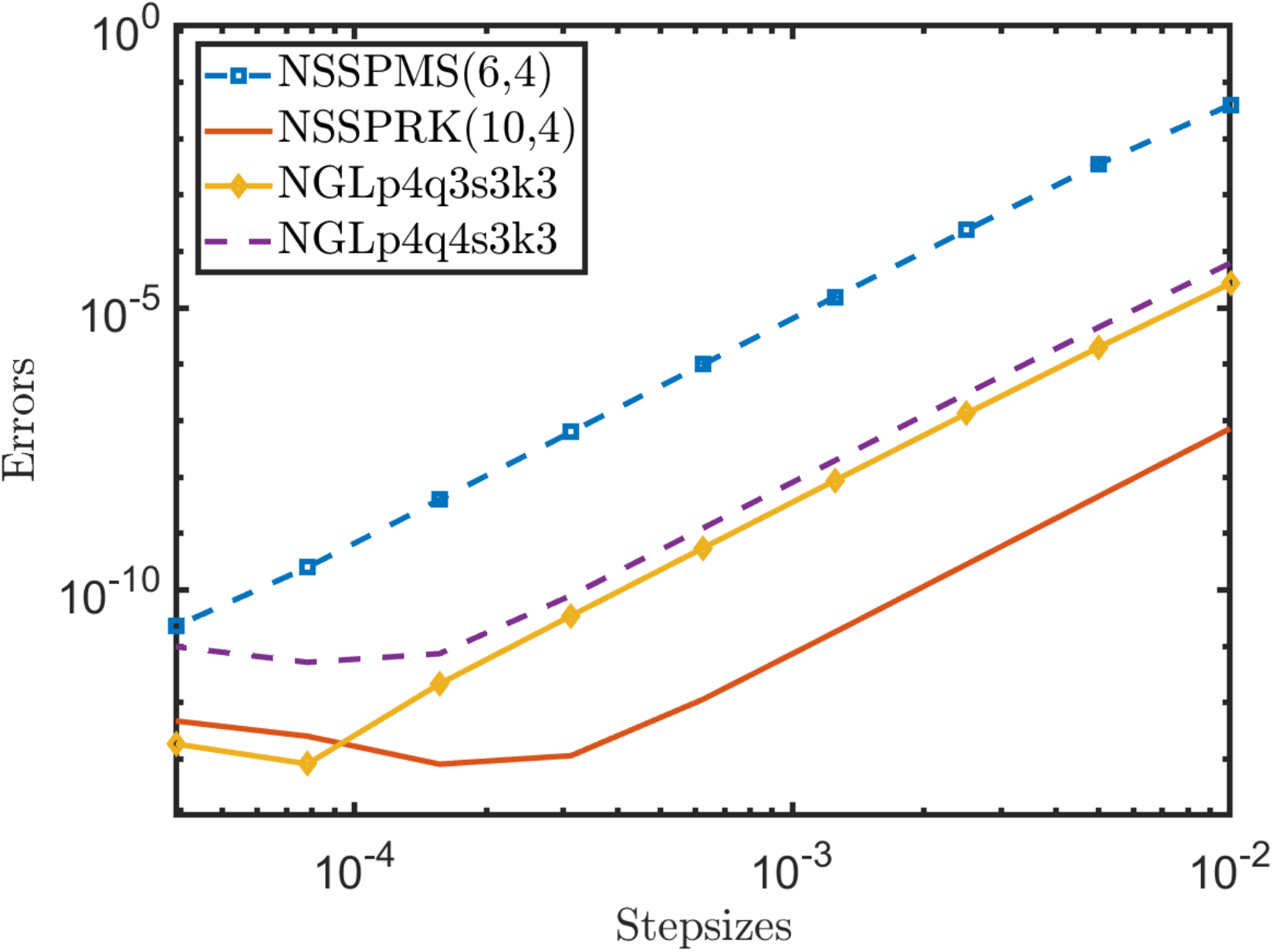}
    \includegraphics[width=0.45\linewidth]{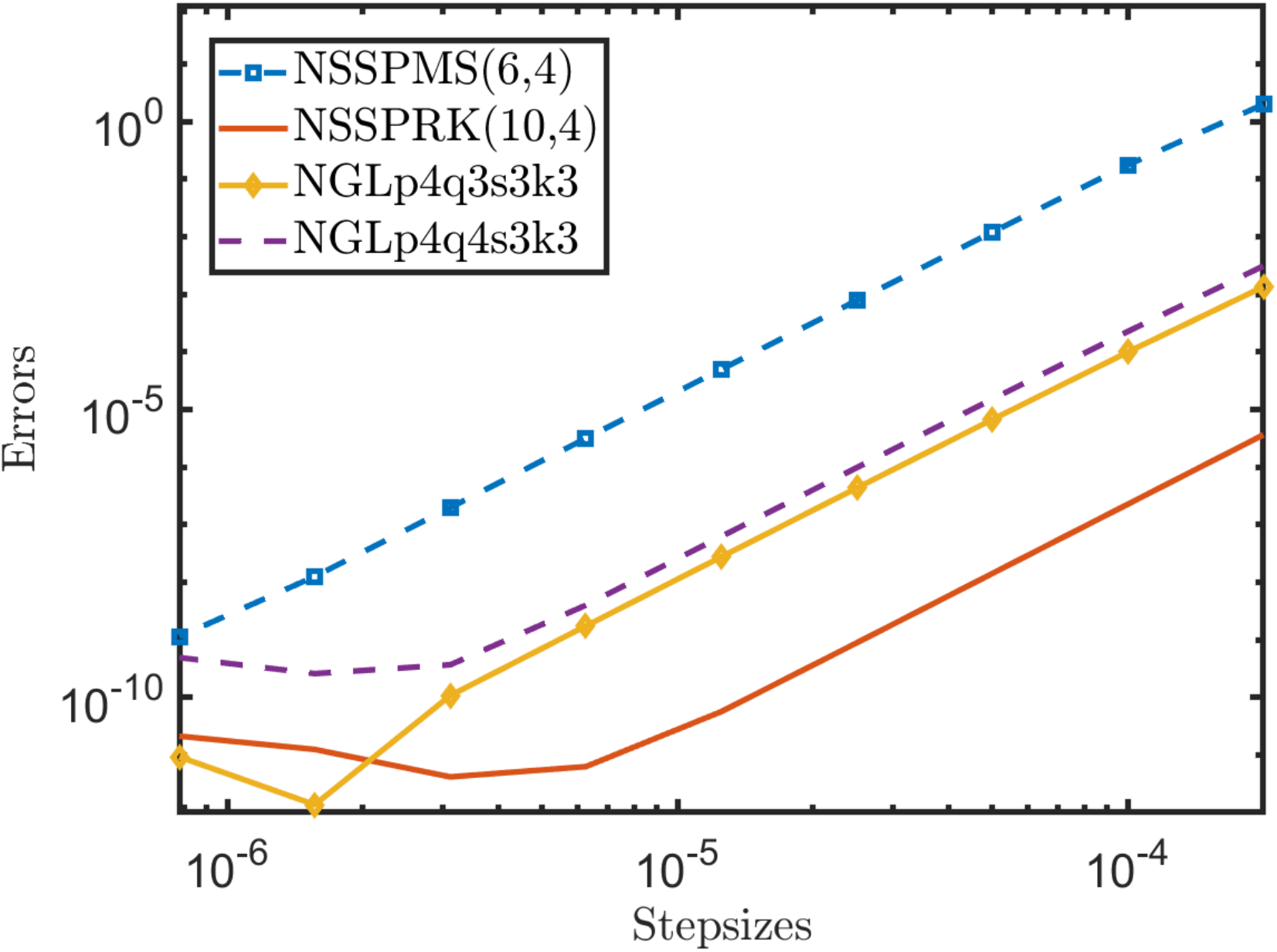}
    \caption{The errors of the different nonstandard methods for different values of $\Delta t$ with $c=10$ (left panels) and $c=500$ (right panels). The methods are grouped according to their expected orders.}
    \label{fig:nonstiff_orders}
\end{figure}
%\FloatBarrier

%\lipsum

%\iffalse
Next, we examine how these methods preserve the qualitative properties of the continuous model. We run the different schemes with $T=10$, $\tilde{x}=15$, and two different timesteps $\Delta t_1$ and $\Delta t_2=T/500$ (the value of $\Delta t_1$ varies across the different methods, see the corresponding figures). The behavior of the second-, third-, and fourth-order methods can be seen in Figures \ref{fig:nonstiff_compare2}, \ref{fig:nonstiff_compare3}, and \ref{fig:nonstiff_compare4}, respectively. In the case of second-order methods, the nonstandard multistep multistage method performs the best, while all nonstandard methods remain bounded. In this case, the standard multistep multistage method is also close to the exact solution for small values of $n$, but oscillations appear around $t=6$, and the scheme does not preserve the boundedness properties. 

%\FloatBarrier
\begin{figure}[!htbp]
    \centering
    \includegraphics[width=0.45\linewidth]{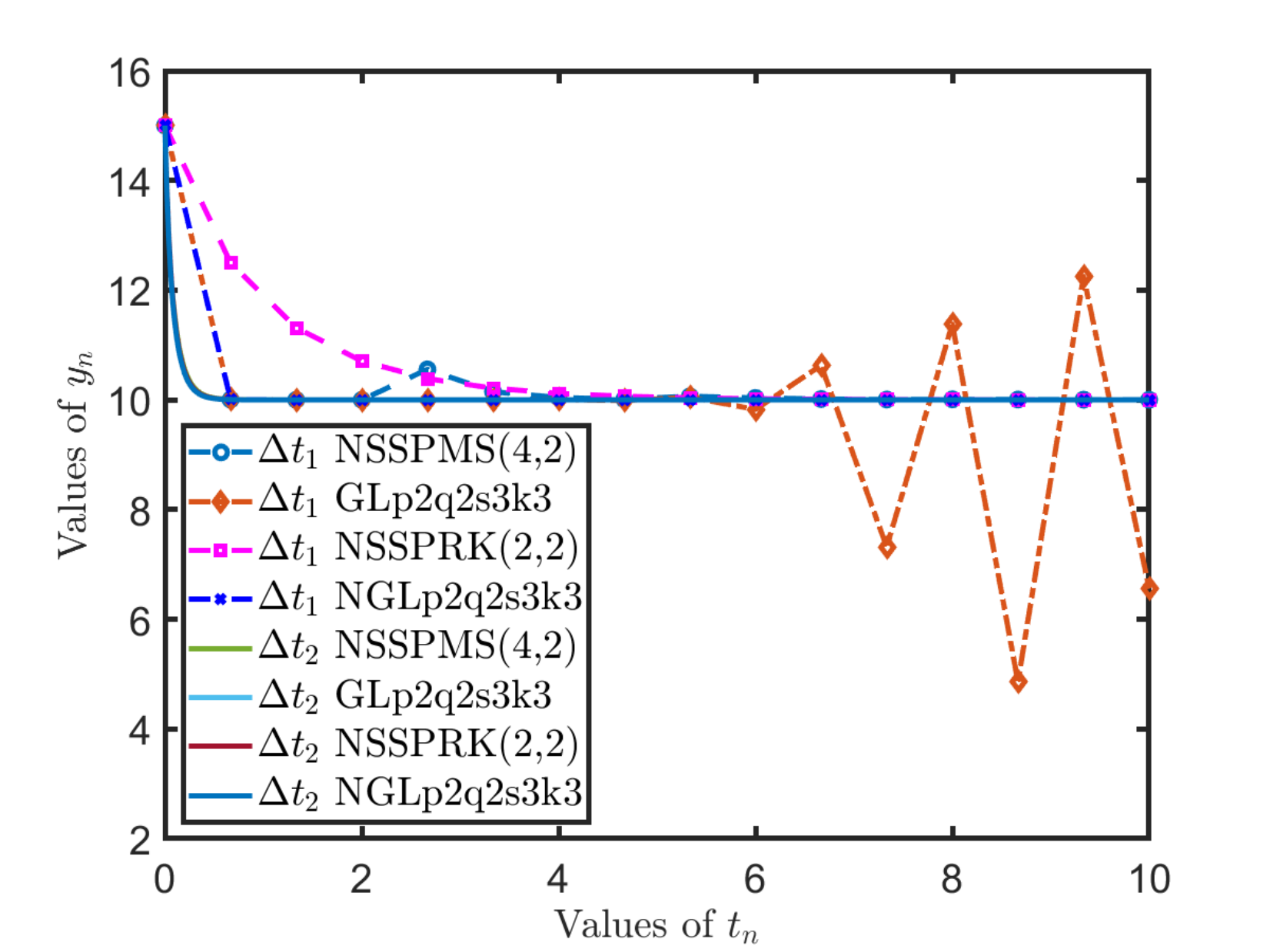}
    \caption{The plots of the different second-order methods solving the logistic equation with $\Delta t_1 = T/15$ and $\Delta t_2 = T/500$ for $c=10$. For $\Delta t_2$, all of the plots are very close to each other, making only one of them visible.}
    \label{fig:nonstiff_compare2}
\end{figure}

% \begin{figure}%[!h]
%     \centering
%     % \includegraphics[width=0.4\linewidth]{nonstiff_logistic_compare_order3_justnonstandard_GLdiff_c10_A.png}
%     % \includegraphics[width=0.408\linewidth]{nonstiff_logistic_compare_order3_justnonstandard_GLdiff_c10_B.png}
%     % \includegraphics[width=0.4\linewidth]{nonstiff_logistic_compare_order3_justmmm_both.png}
%     % \includegraphics[width=0.4\linewidth]{nonstiff_logistic_compare_order3_justmmm_juststable.png}
%     \includegraphics[width=0.4\linewidth]{Fig3A.pdf}
%     \includegraphics[width=0.4\linewidth]{Fig3B.pdf}
%     \includegraphics[width=0.4\linewidth]{Fig3C.pdf}
%     \includegraphics[width=0.4\linewidth]{Fig3D.pdf}
%     \caption{The plots of the different third-order methods with $\Delta t_2 = T/500$ and $\Delta t_1 = T/10$ (upper left), $\Delta t_1 = T/30$ (lower left) and $\Delta t_1 = T/15$ (lower right) for $c=10$. For $\Delta t_2$, all of the plots are very close to each other, making only one of them visible. In the upper right panel, the difference between the two third-order nonstandard multistep multistage methods from the upper left panel is plotted: since their difference is very small, they are indistinguishable on the different plots.}
%     \label{fig:nonstiff_compare3}
% \end{figure}
%\FloatBarrier

A similar behavior is present in the case of third-order methods in Figure \ref{fig:nonstiff_compare3}. In the upper left panel, we plot the nonstandard multistep, nonstandard Runge-Kutta, and nonstandard multistep multistage methods with $\Delta t_1 = T/10$. It is apparent that the multistep multistage methods are the ones closest to the exact solution. Since these are indistinguishable from each other, the difference between the methods is also plotted in the upper right panel. We also compare these nonstandard methods with standard ones on the lower panels. On the lower left one, the standard and nonstandard multistep multistage methods are plotted with $\Delta t_1 = T/30$. In this case, method GLp3q3s3k3 starts to behave in an unbounded way, while GLp3q2s3k2 exhibits the desired properties. If we increase the timestep to $\Delta t_1 = T/10$, method GLp3q2s3k2 also loses the boundedness property. Note that in both cases the nonstandard methods behave as expected. It is also worth mentioning that the reason the standard methods are not present in the upper left panel is that for $\Delta t_1 = T/10$, both of them are highly unstable.

\begin{figure}[!htbp]
    \centering
    \includegraphics[width=0.4\linewidth]{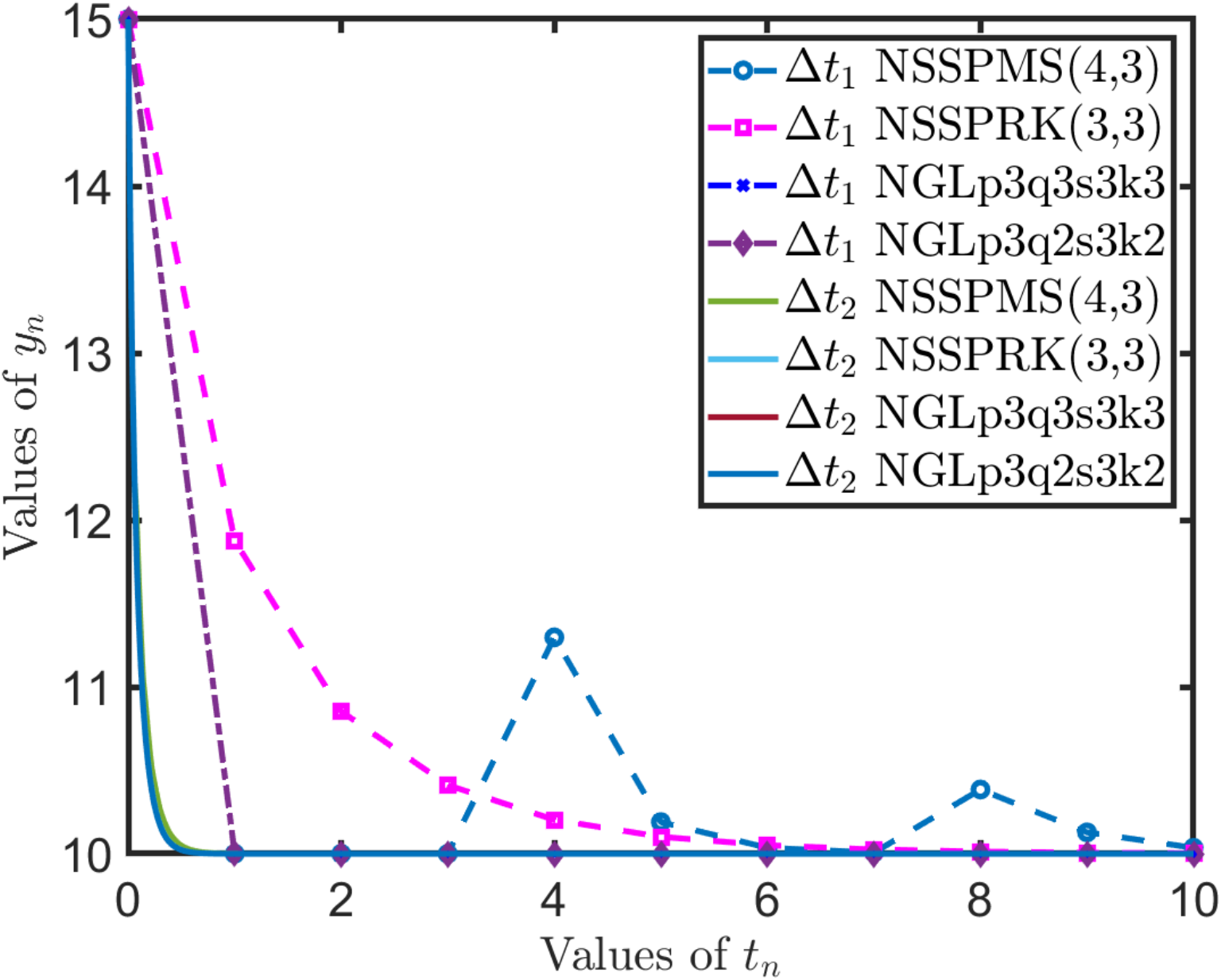}
    \includegraphics[width=0.4\linewidth]{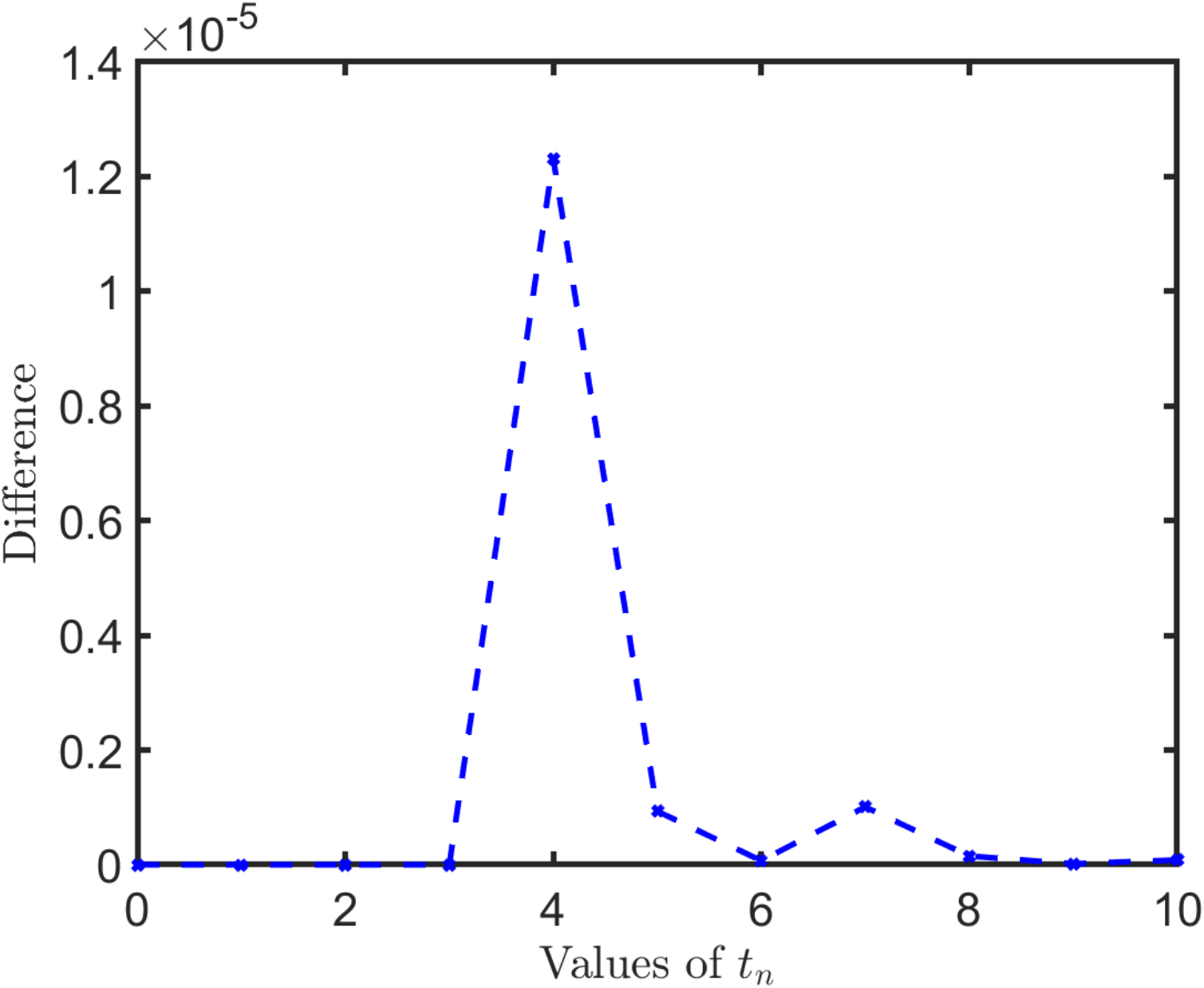}
    \includegraphics[width=0.4\linewidth]{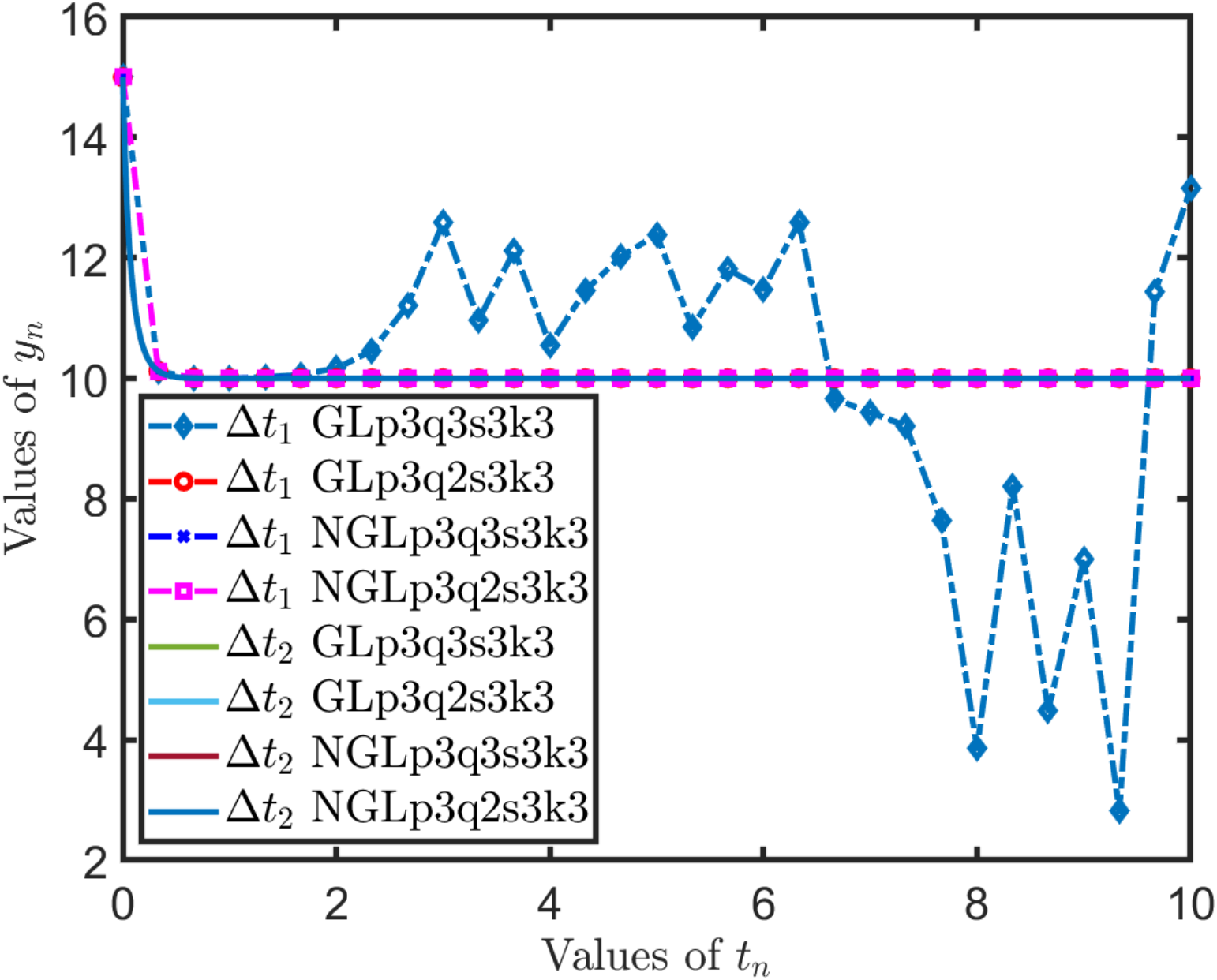}
    \includegraphics[width=0.4\linewidth]{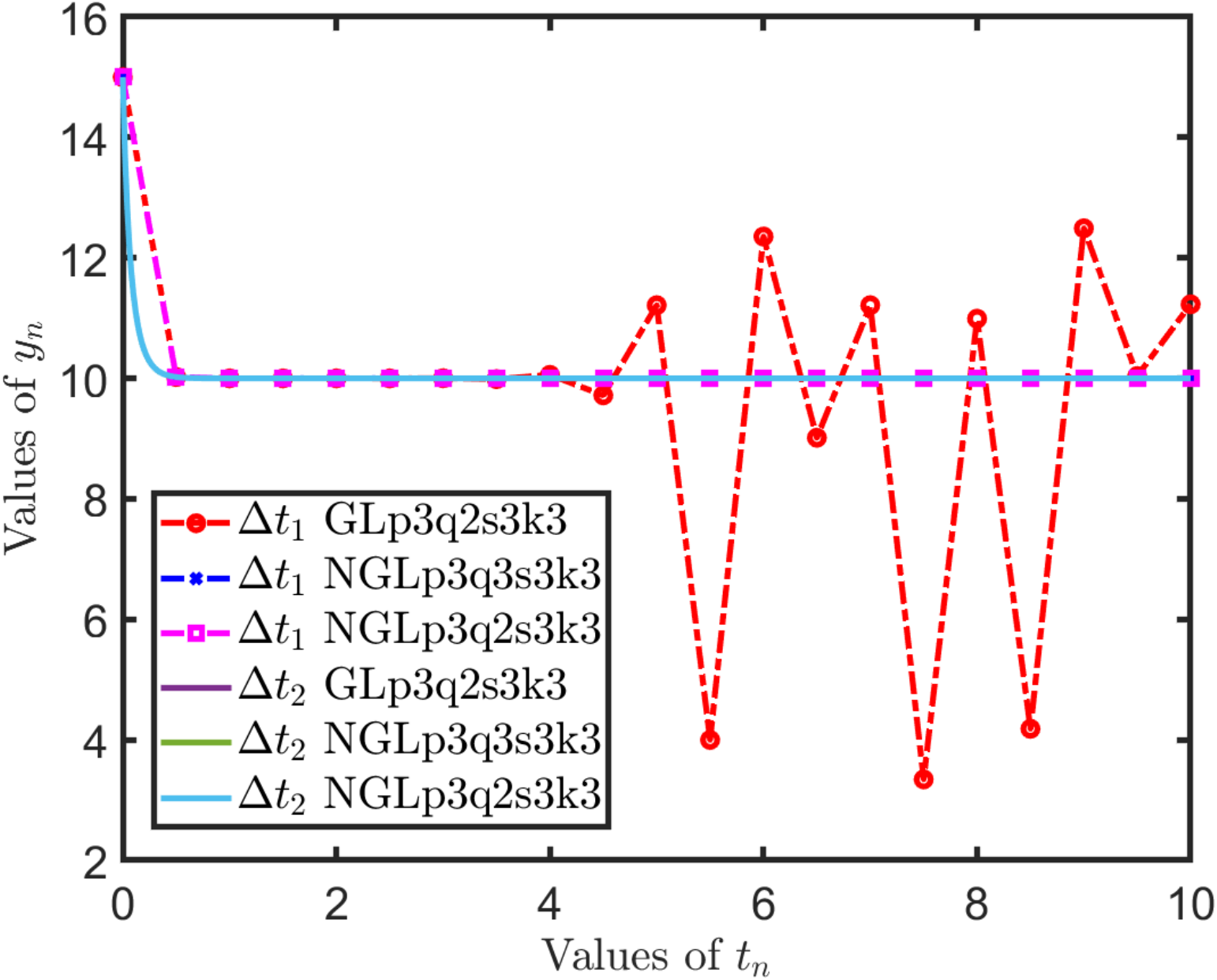}
    \caption{The plots of the different third-order methods with $\Delta t_2 = T/500$ and $\Delta t_1 = T/10$ (upper left), $\Delta t_1 = T/30$ (lower left) and $\Delta t_1 = T/15$ (lower right) for $c=10$. For $\Delta t_2$, all of the plots are very close to each other, making only one of them visible. In the upper right panel, the difference between the two third-order nonstandard multistep multistage methods from the upper left panel is plotted: since their difference is very small, they are indistinguishable on the different plots.}
    \label{fig:nonstiff_compare3}
\end{figure}

In Figure \ref{fig:nonstiff_compare4}, the results of the fourth-order methods are akin to the performance of the third-order ones: the nonstandard methods behave as expected, where the multistep multistage ones are the closest to the exact solution (although the Runge-Kutta method performs well too). Again, the difference between methods NGLp4q3s3k3 and NGLp4q4s3k3 cannot be seen on the figure, so we also plot the difference in the upper right panel. The nonstandard methods are also compared to the standard ones in the lower panel: the standard ones behave in an unstable way, but the errors in the case of GLp4q4s3k3 are smaller; the nonstandard ones remain bounded.

\begin{figure}[!htbp]
    \centering
    \includegraphics[height=0.35\linewidth]{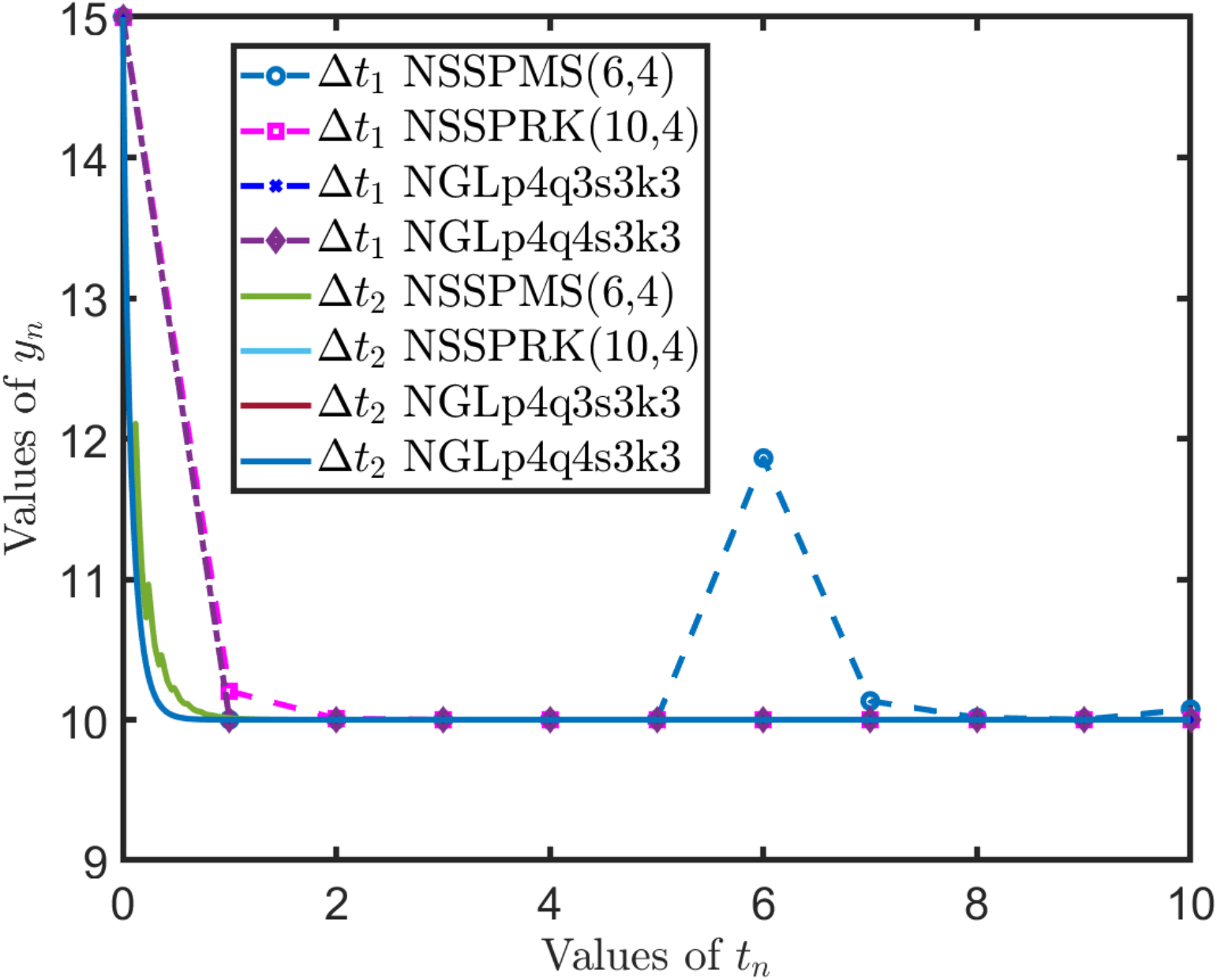}
    \includegraphics[height=0.35\linewidth]{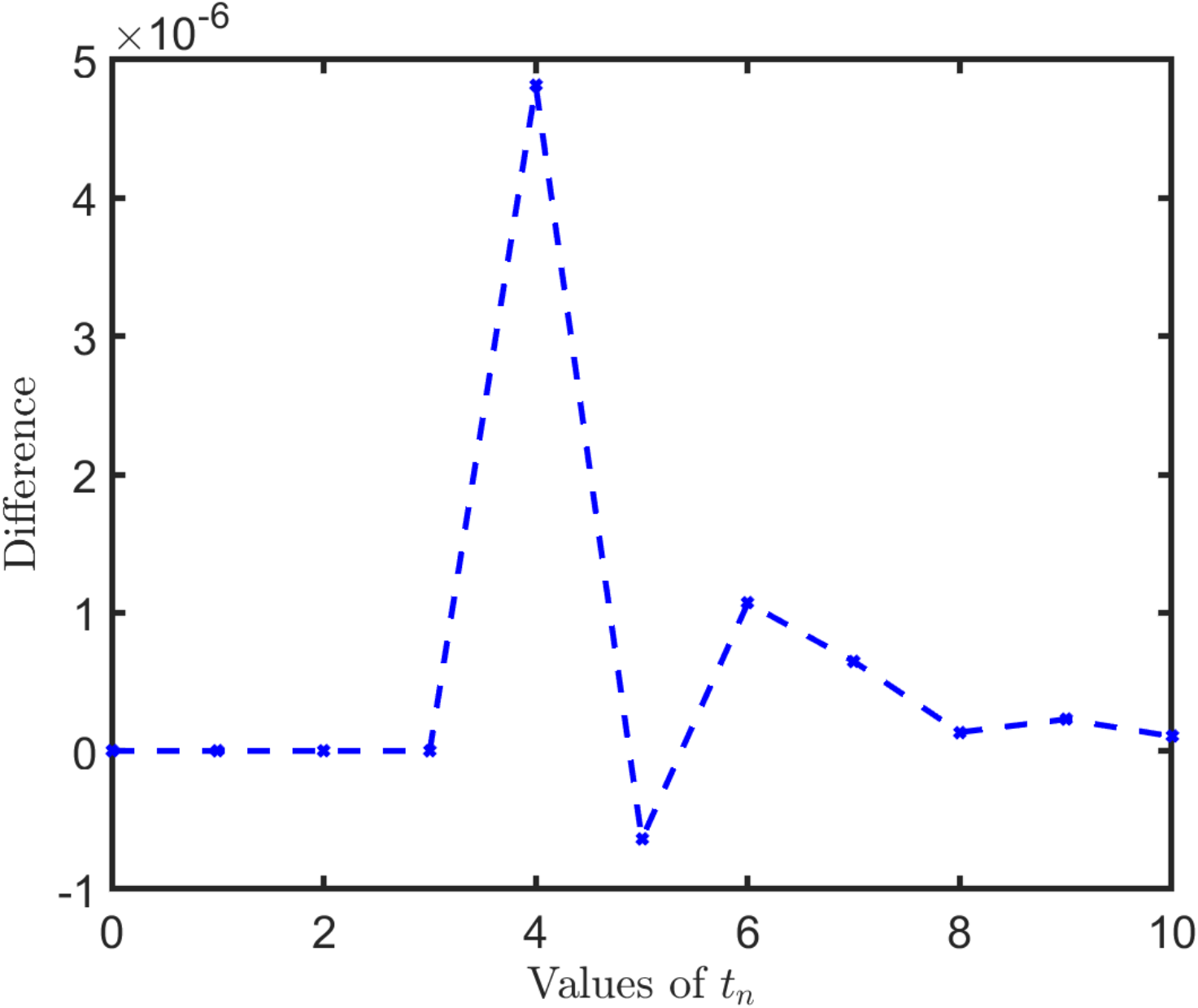}
    \includegraphics[height=0.35\linewidth]{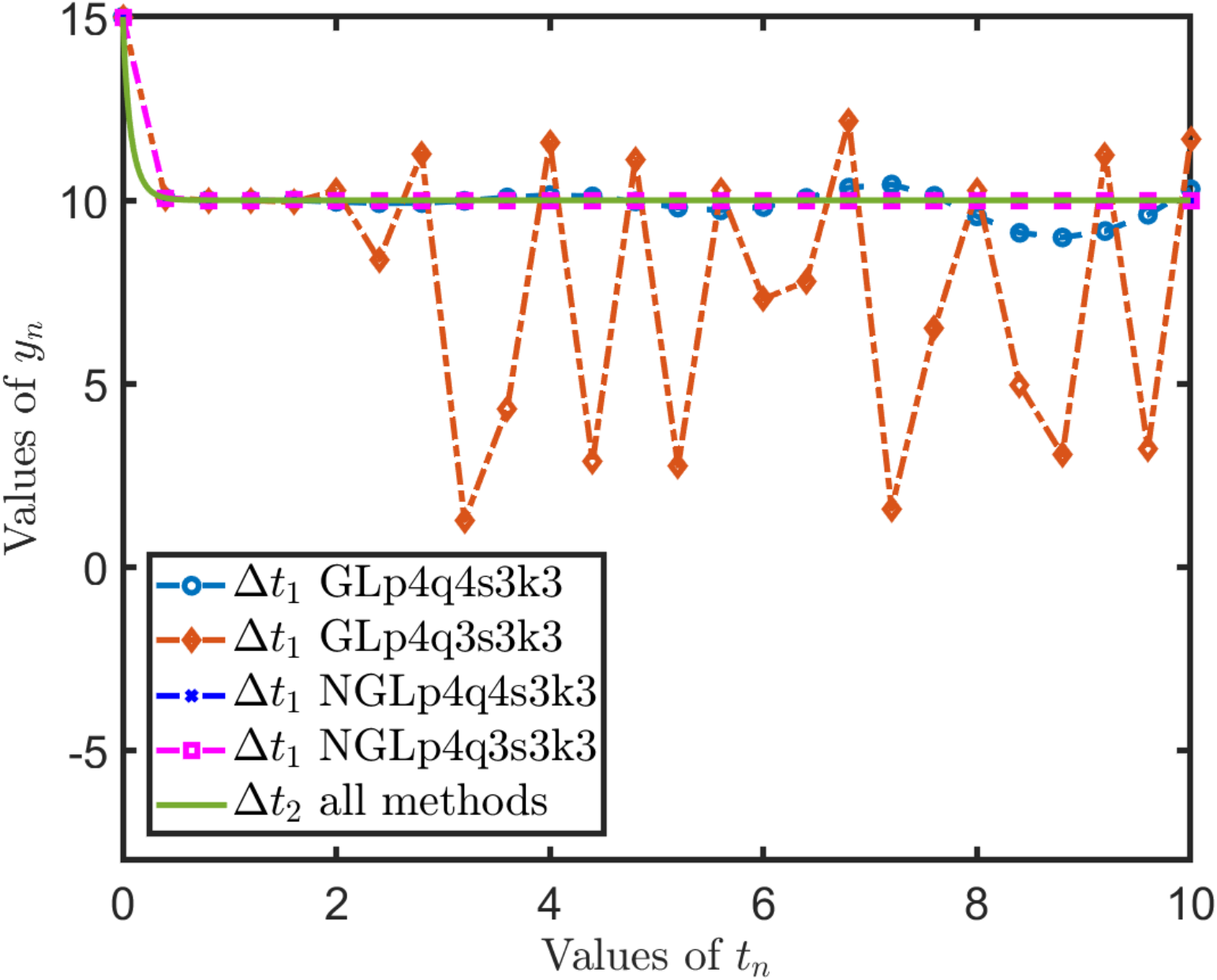}
    \caption{The plots of the different fourth-order methods with $\Delta t_2 = T/500$ and $\Delta t_1 = T/10$ (upper left), and $\Delta t_1 = T/25$ (lower). For $\Delta t_2$, all of the plots are very close to each other, making only one of them visible. In the upper right panel, the difference between the two fourth-order nonstandard multistep multistage methods from the upper left panel is plotted: since their difference is very small, they are indistinguishable on the different plots.}
    \label{fig:nonstiff_compare4}
\end{figure}

Finally, we observe the necessity of the bound given for $\varphi$ in Theorem \ref{th:preserv}. Namely, it was stated that the nonstandard multistep multistage method preserves the given property if $\varphi(x)\leq \mathcal{B}:=\mathcal{C} B_{FE}$ holds $\forall x \geq 0$. There might exist another, higher bound $\mathcal{B}^*\geq \mathcal{B}$ such that the property is preserved even when only the weaker assumption $\varphi(x)\leq \mathcal{B}^*$ holds. To test this behavior, we change the bound for $\varphi$ to an arbitrary positive constant, and run the methods with $1000$ values of $\tilde{x}$ from the interval $[10^{-3}, \; 5]$ and consider $1000$ possible values of timesteps in the interval $[0.5,\; 3]$. We say that the method behaves as expected for a fixed value of $\tilde{x}$ and with this given bound for $\varphi$, if the given property is preserved for every possible timestep on the interval $t \in [0,10]$. By a bisection method, we determine the value $\mathcal{B}^*$ where the the boundedness property is lost, i.e., the scheme does not preserve boundedness if $\varphi(x)>\mathcal{B}^*$ is allowed. The values of $\mathcal{B}^*$ for different values of $\tilde{x}$ can be seen in Figure \ref{fig:nonstiff_bound}. The bound seems to be strict for most methods in the case $0 < \tilde{x} < 10$, while still being relatively close to the real bound when $\tilde{x}>10$. The plots of Figure \ref{fig:nonstiff_bound} might indicate that a bigger bound $\mathcal{B}$, namely, $\mathcal{B}:=\mathcal{C} \dfrac{1}{c}$ seems to be sufficient too for these methods. However, in general this is not the case: it turned out in \cite{takacs26} that for some nonstandard multistep methods (more precisely, methods NSSPMS(4,2), NSSPMS(4,3) and NSSPMS(6,4)) this bound is not sufficient, and only $\mathcal{B}:=\mathcal{C} \min\left\{\frac{1}{c}, \frac{1}{\tilde{x}}\right\}$ would suffice. This is also apparent in the stiff case; see the next section for details.

\begin{figure}[!htbp]
    \centering
    \includegraphics[height=0.35\linewidth]{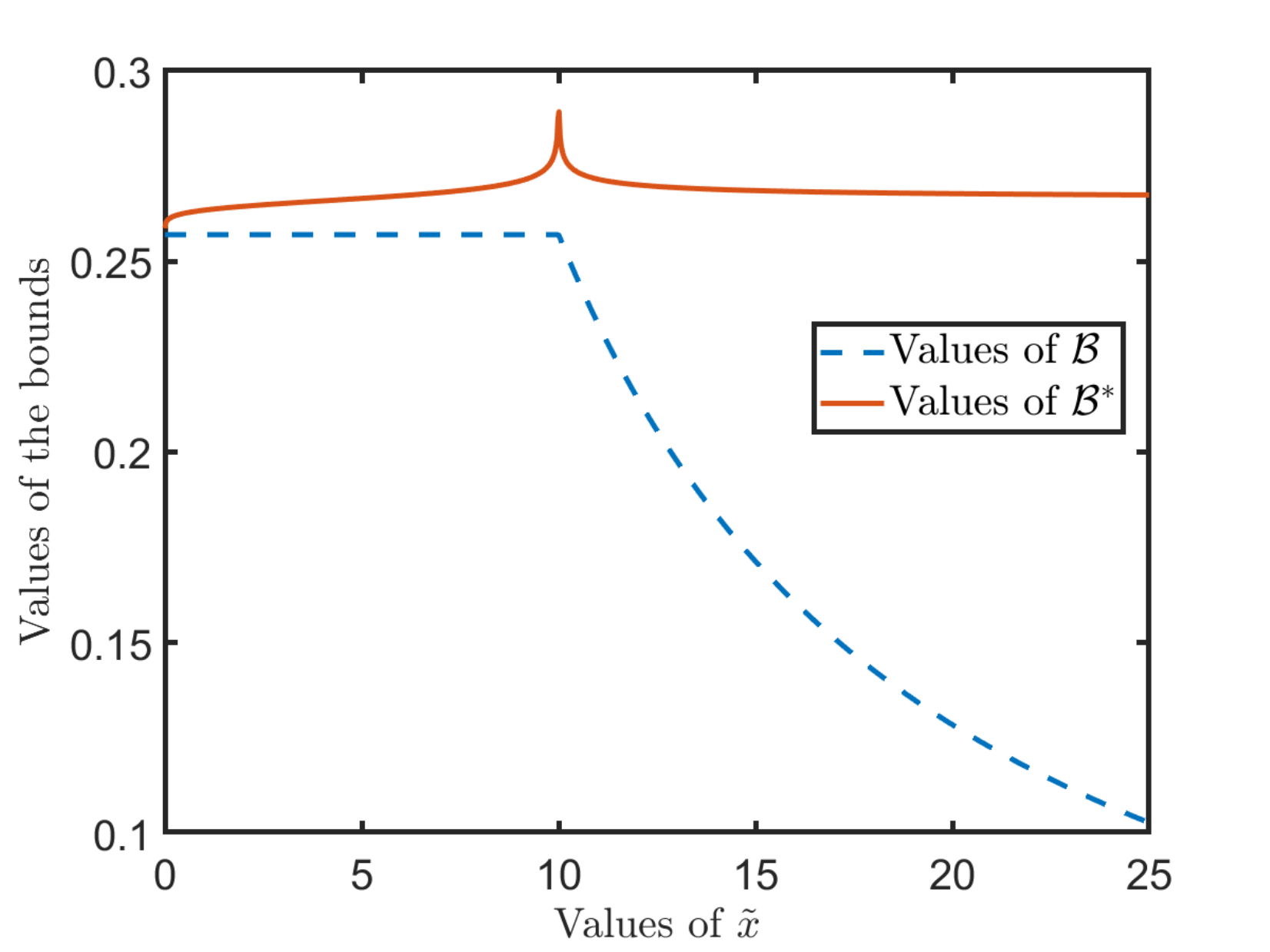}
    \includegraphics[height=0.35\linewidth]{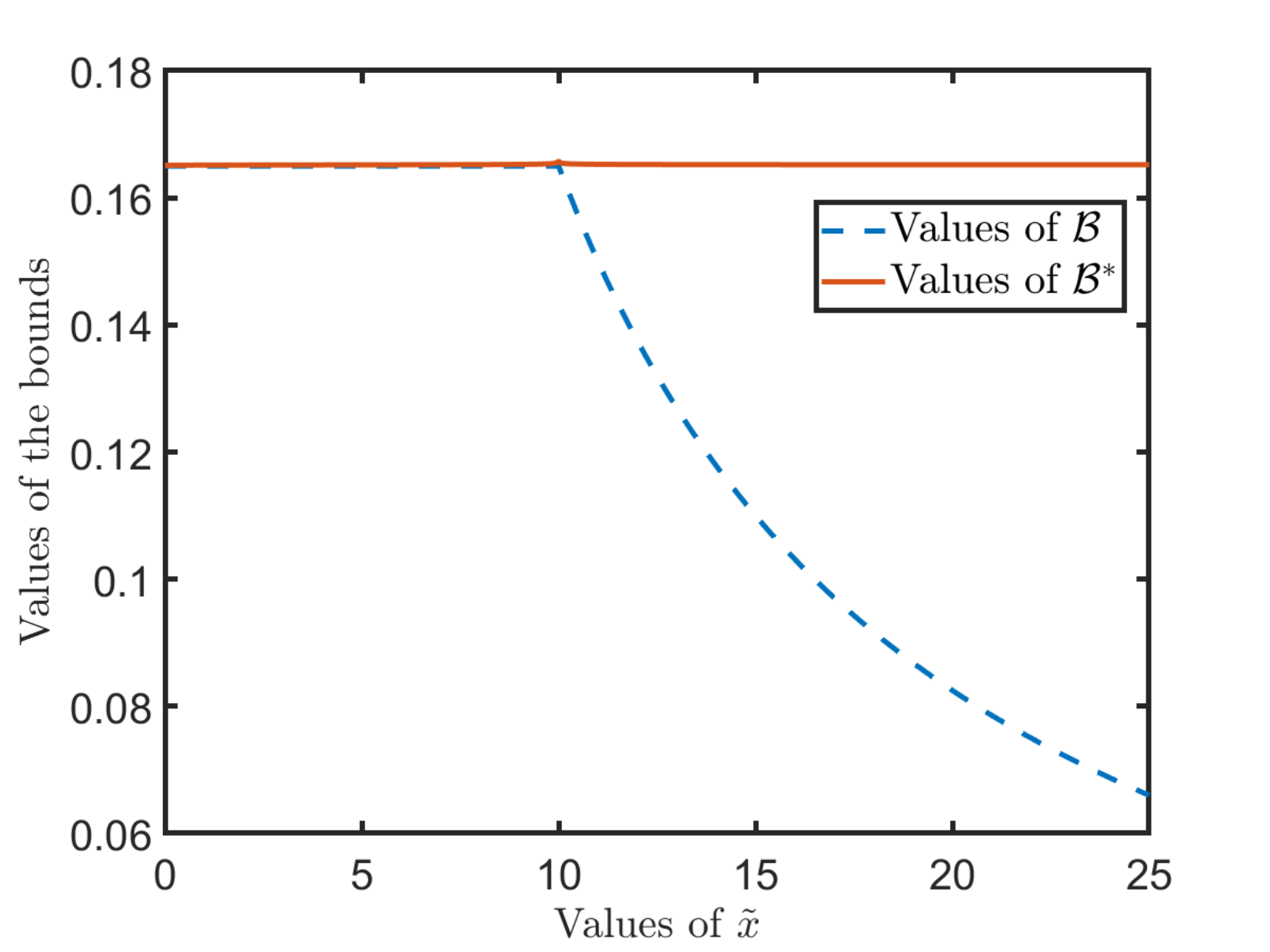}
    \includegraphics[height=0.35\linewidth]{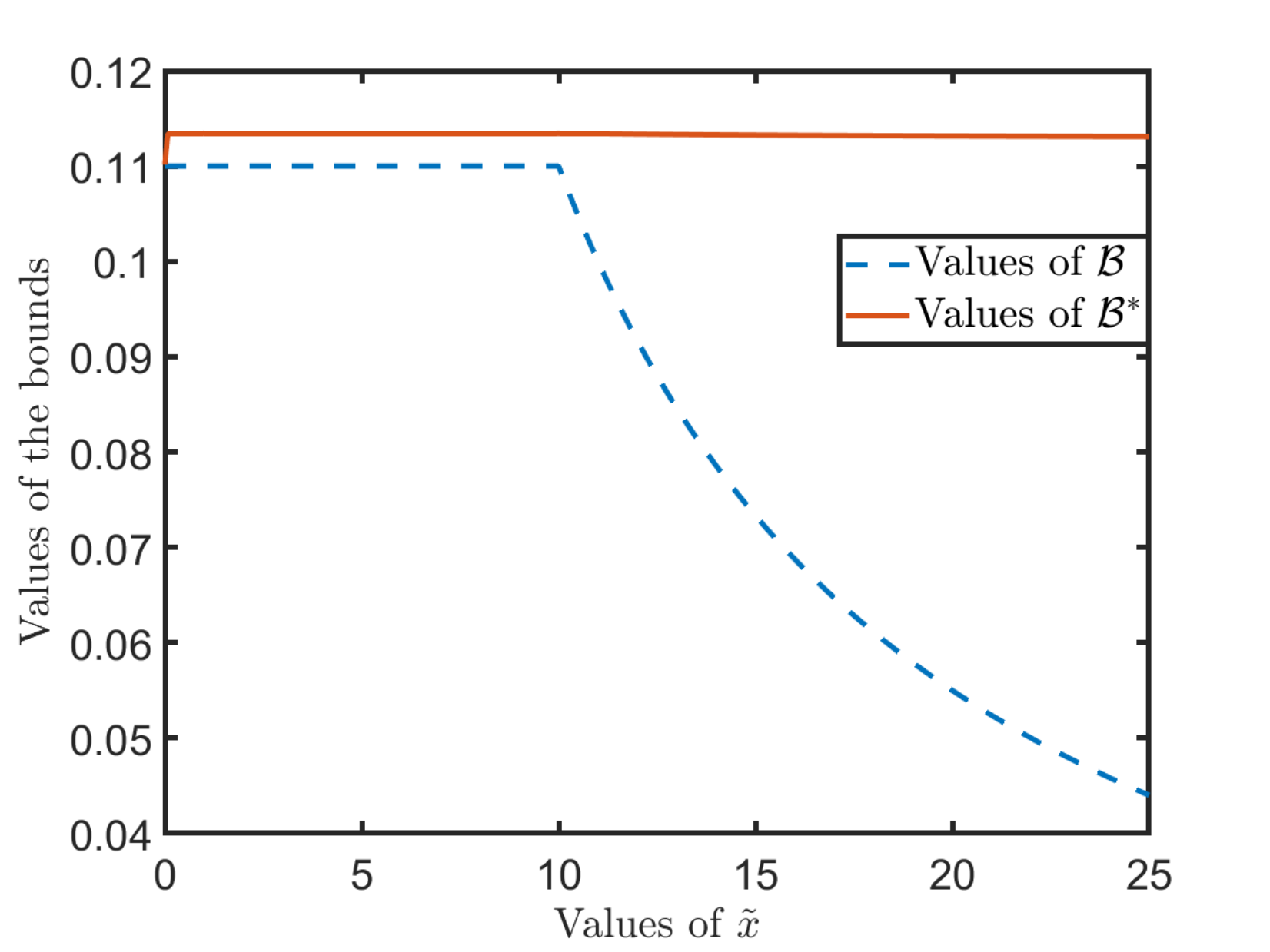}
    \includegraphics[height=0.35\linewidth]{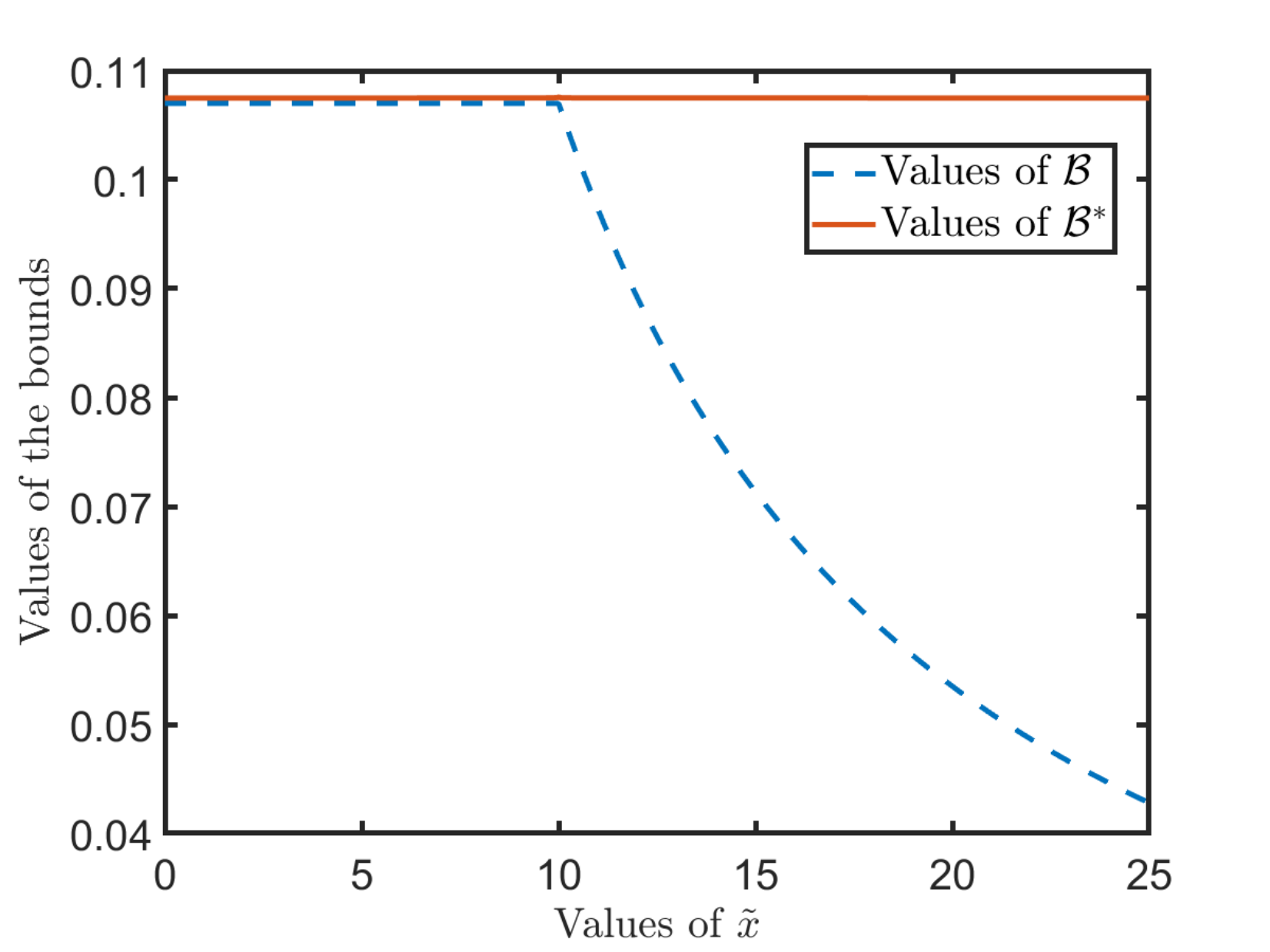}
    \includegraphics[height=0.35\linewidth]{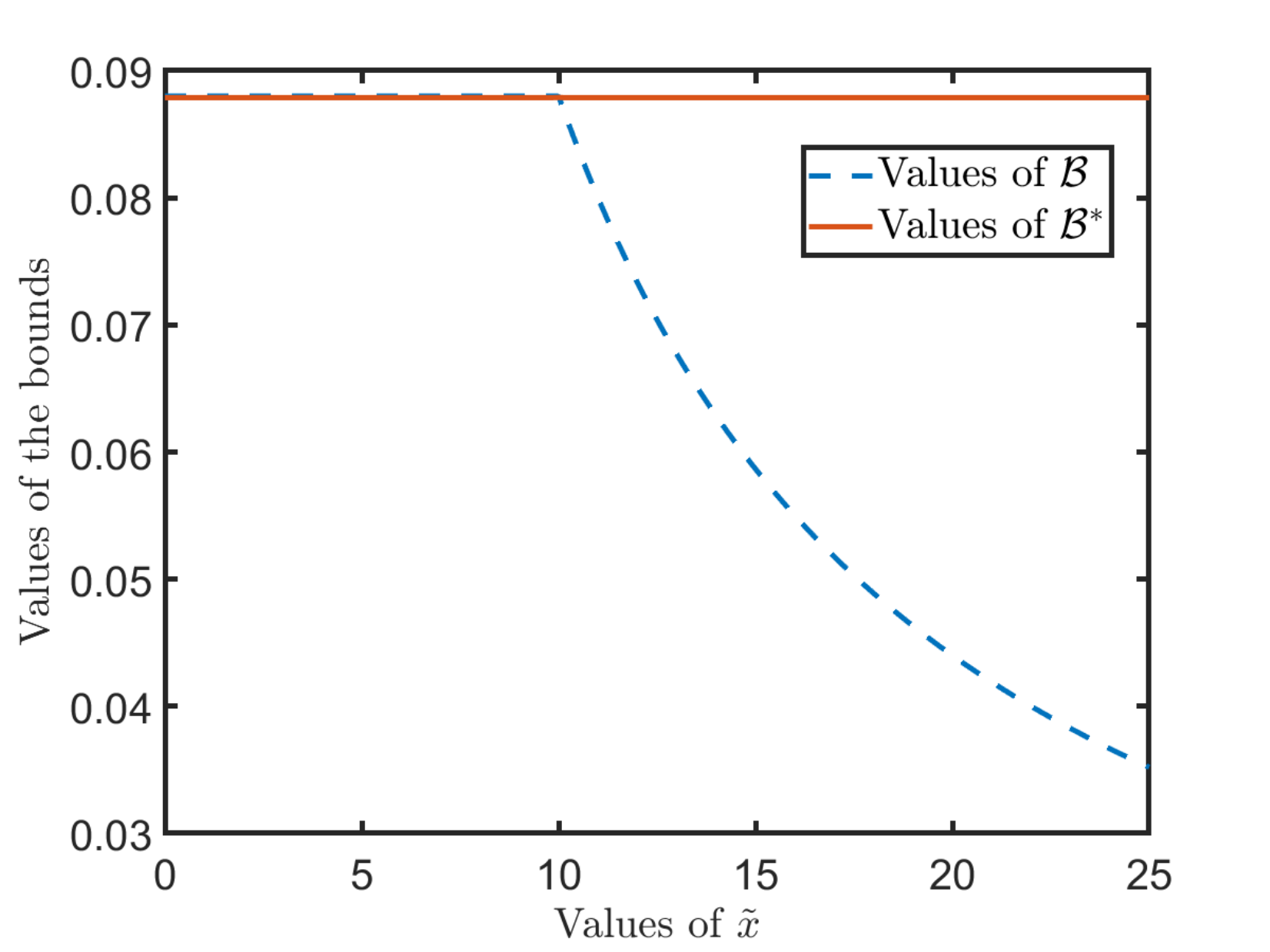}
    \caption{The sufficient bound $\mathcal{B}:=\mathcal{C}B_{FE}$ and the ``real'' bounds $\mathcal{B}^{*}$ for methods NGLp2q2s3k3 (top left), NGLp3q2s3k3 (top right), NGLp3q3s3k2 (middle left), NGLp4q3s3k3 (middle right), and NGLp4q4s3k3 (bottom) for the boundedness property with $c=10$.}
    \label{fig:nonstiff_bound}
\end{figure}

\subsubsection{The stiff case $c=500$}
Let us now consider the stiff case $c=500$. We run the same test about the orders of the methods as in the previous section, but now with $T=1/500$, initial condition $\tilde{x}=250$ and stepsizes $\Delta t =2\cdot 10^{-4}/2^k, \; k=0, 1,  \dots, 9$. (The reason for the seemingly small value of $T$ is that for bigger values of $t$, the change of the solution is smaller than machine precision.) In the right panels of Figure \ref{fig:nonstiff_orders}, the errors of the different methods can be seen, while the exact values and the errors (calculated from the slopes) can be found in Table \ref{tab:stiff_errors} in Appendix B. The orders of the $c=10$ and $c=500$ cases are very similar, the only difference being that the stiff case results in larger errors.

We also compare the way the nonstandard multistep multistage methods preserve the qualitative properties compared to the other nonstandard methods. In Figure \ref{fig:stiff_compare}, the nonstandard methods are plotted with $c=500$, $T=10$, $\tilde{x}=750$, $\Delta t_1=T/10$ and $\Delta t_2 = T/500$. As we can see, the ones closest to the exact solution are the multistep multistage ones: the Runge-Kutta ones are relatively far from the exact one for smaller values of $t_n$, while the multistep methods attain some jumps for larger values of $t_n$. Note that the standard versions of these methods weren't plotted because they are highly unstable for such large timesteps. We also do not plot the differences between the nonstandard multistep multistage methods, since they are smaller than machine precision (which is a result of the property of the continuous solution).

\begin{figure}[!htbp]
    \centering
    \includegraphics[width=0.45\linewidth]{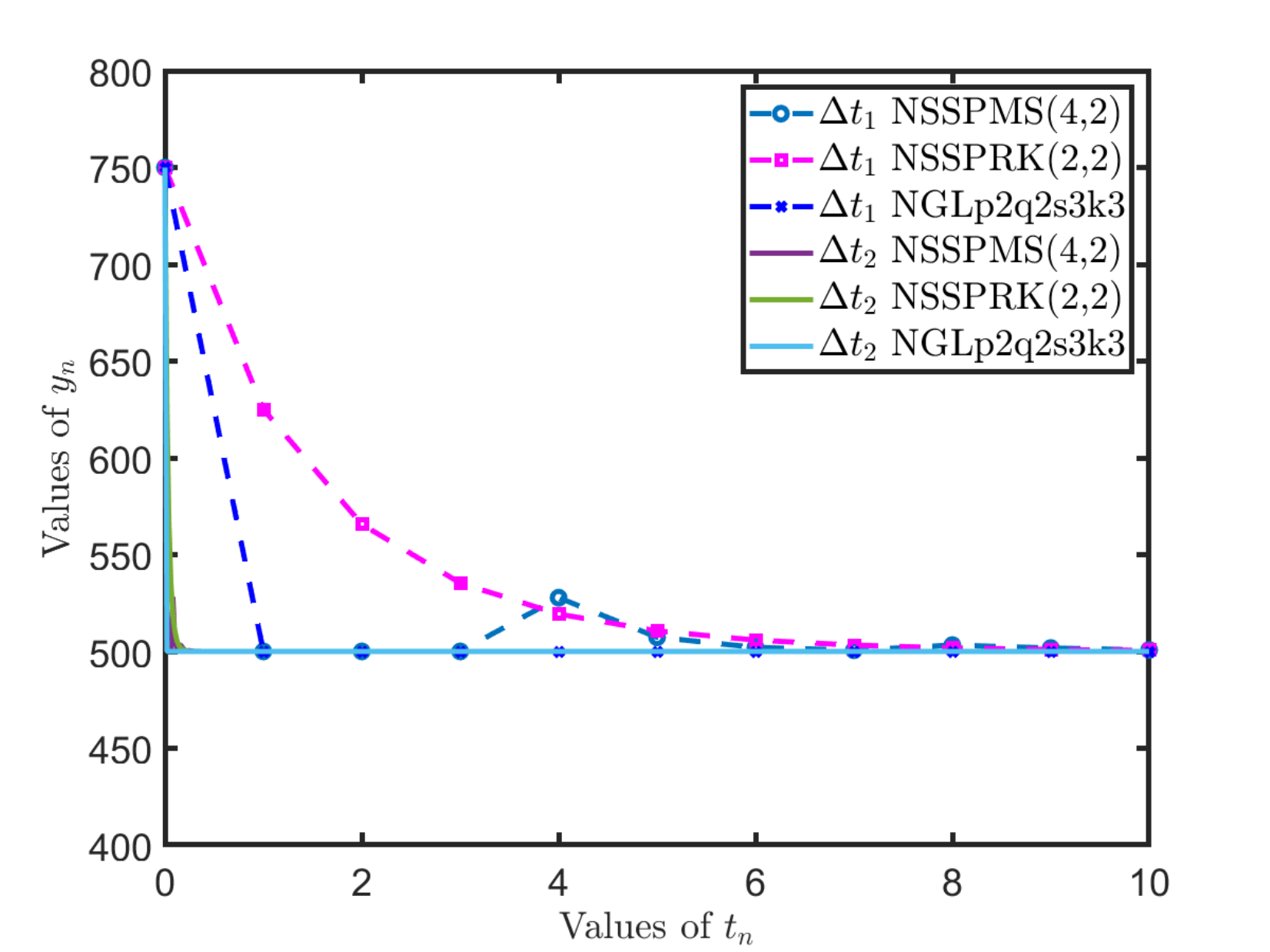}
    \includegraphics[width=0.45\linewidth]{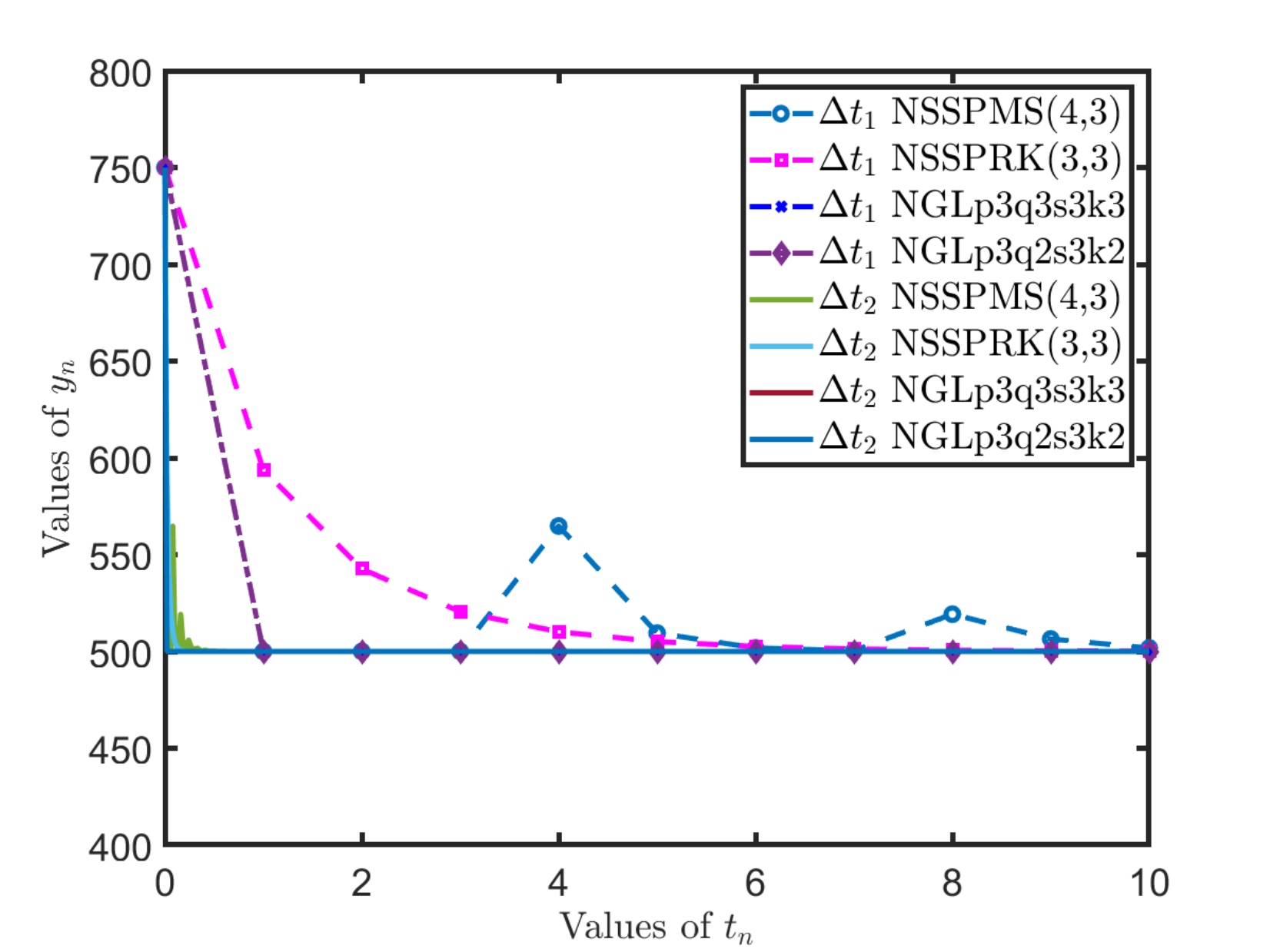}
    \includegraphics[width=0.45\linewidth]{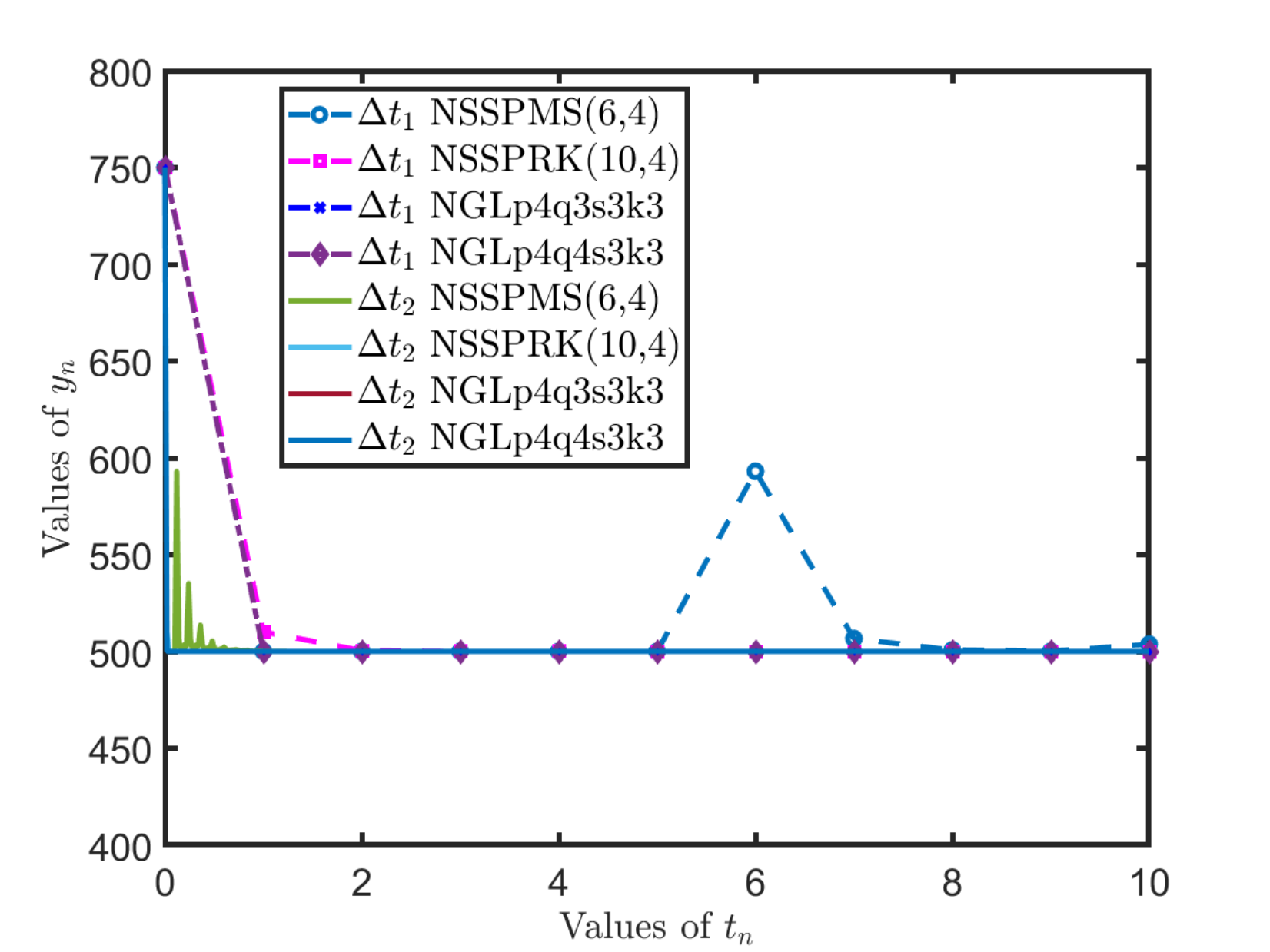}
    \caption{The plots of the different second-order (upper left), third-order (upper right), and fourth-order (bottom) methods with $\Delta t_2 = T/500$ and $\Delta t_1 = T/10$. For $\Delta t_2$, all of the plots are very close to each other, making only one of them visible. Also, the nonstandard multistep multistage methods are very close to each other too.}
    \label{fig:stiff_compare}
\end{figure}

Lastly, we test the necessity of bound $\mathcal{C} B_{FE}$ in the stiff case too: for this, 1000 values of $\tilde{x}$ from the interval $[10^{-3}, 1250]$ were used, and 1000 different timesteps from $[0.002, 0.012]$ in the case of methods NGLp2q2s3k3 and NGLp3q3s3k3, while 100-many values for methods NGLp3q2s3k2, NGLp4q3s3k3 and NGLp4q4s3k3 (the reason for the change was the high runtimes). As we can see in Figure \ref{fig:stiff_bound}, the sufficient bound is relatively close to the 'real', necessary one in most cases. Also note that the bound in the case of method NGLp3q3s3k2 highlights the reason the choice $\dfrac{1}{c}$ would not be sufficient, since the bound goes below these values for values of $\tilde{x}$ close to 1200. Similar behavior can also be seen in the case of nonstandard linear multistep methods, see \cite{takacs26}.

\begin{figure}[!htbp]
    \centering
    \includegraphics[height=0.35\linewidth]{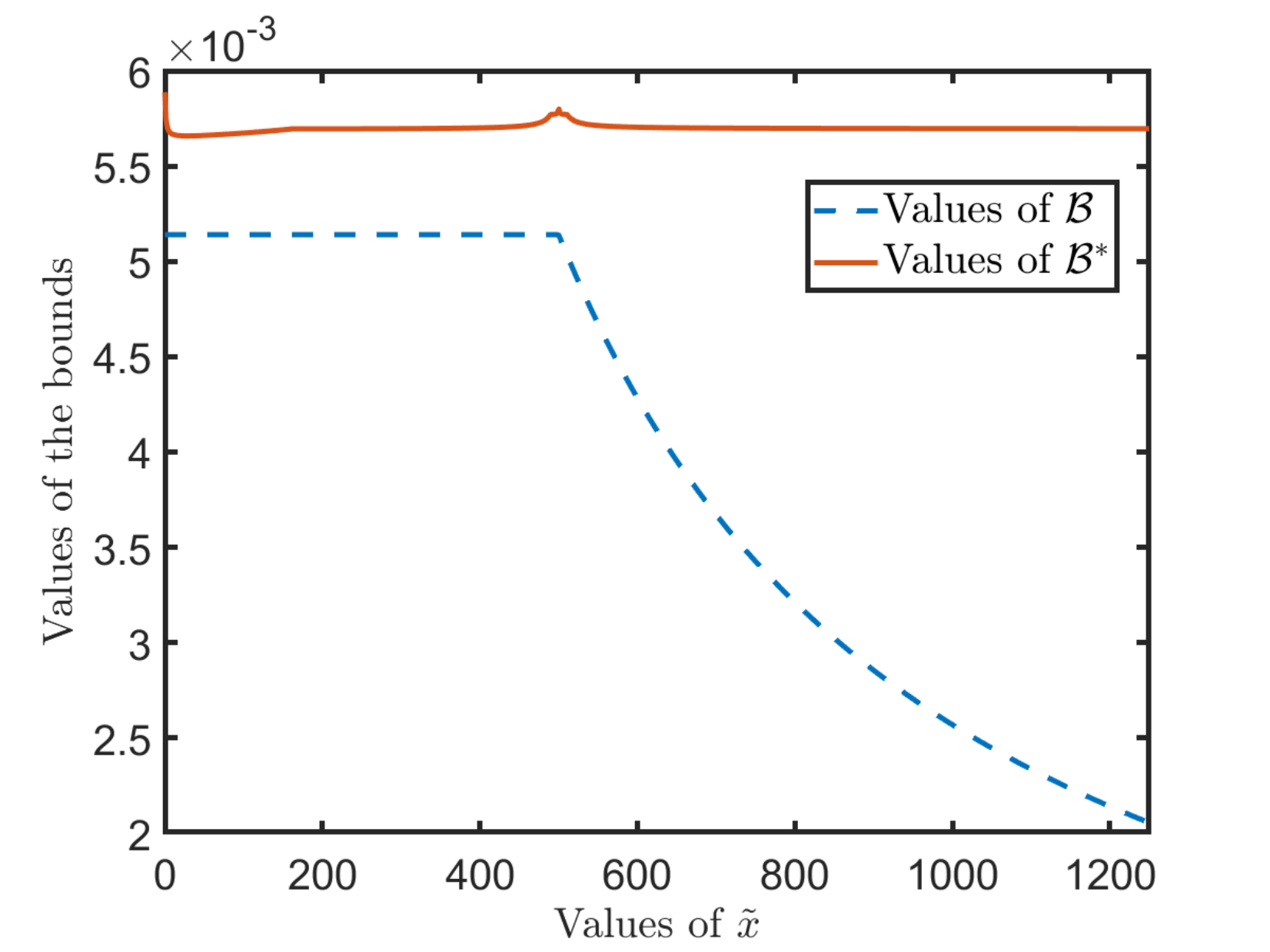}
    \includegraphics[height=0.35\linewidth]{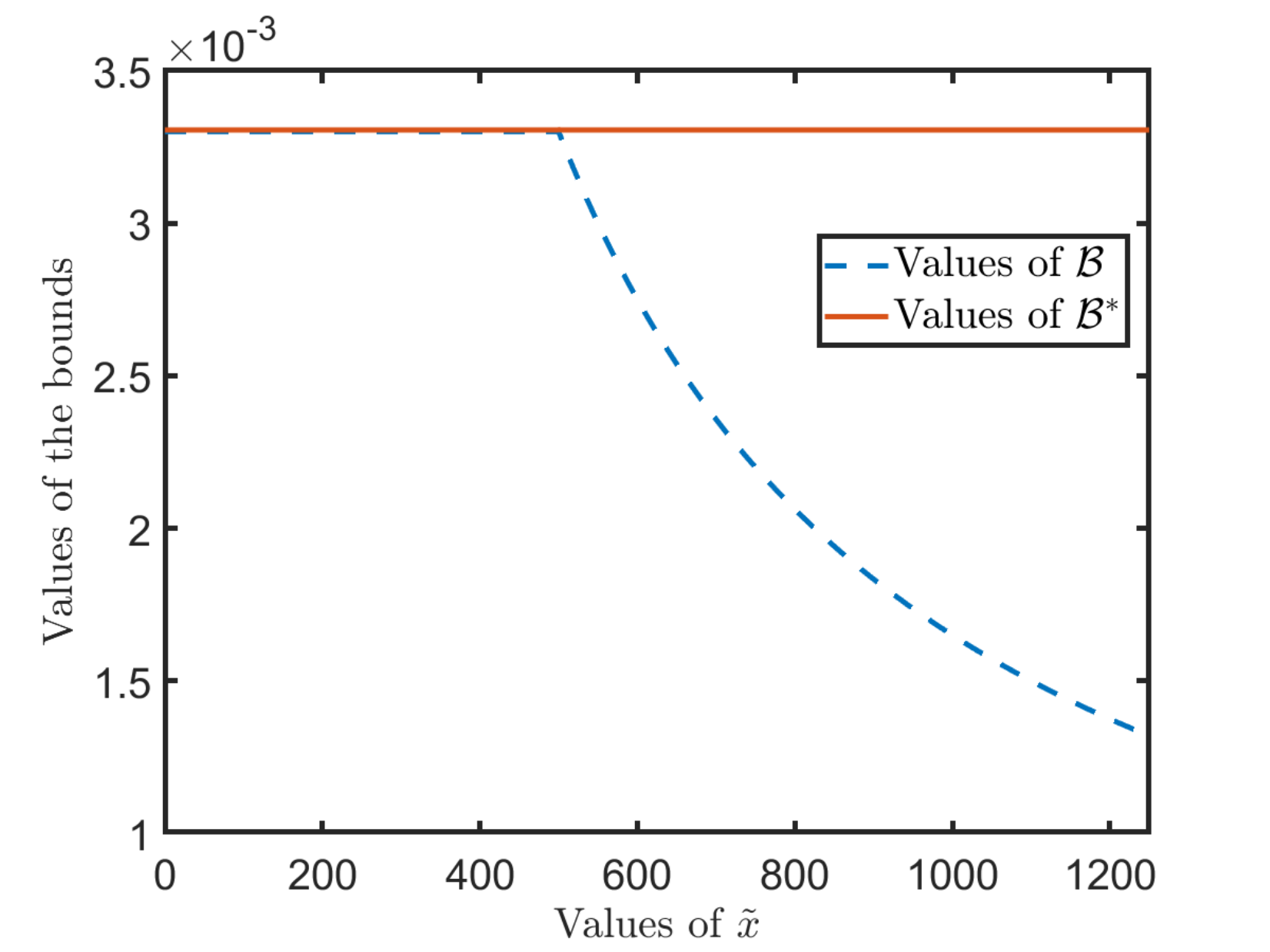}
    \includegraphics[height=0.35\linewidth]{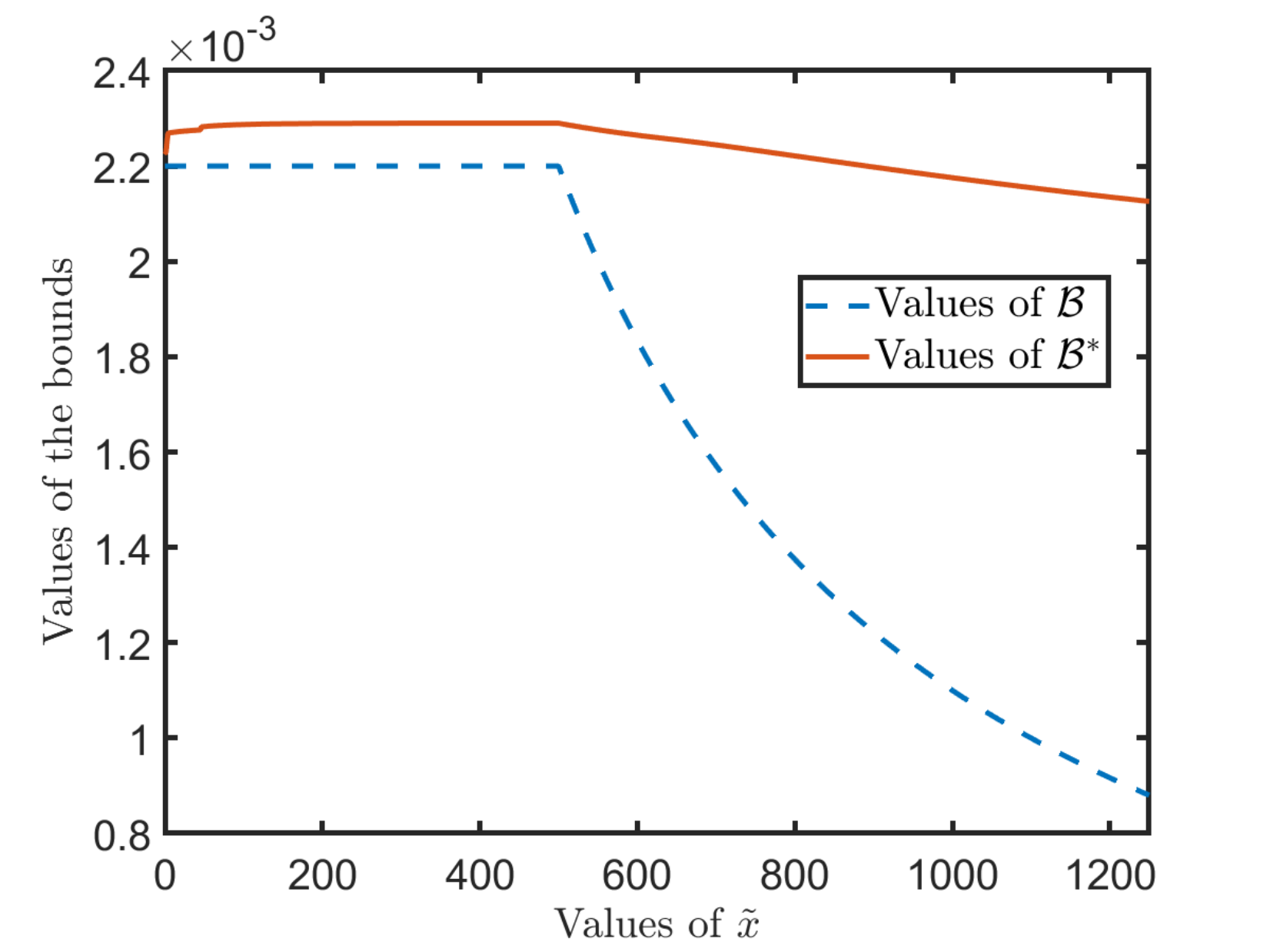}
    \includegraphics[height=0.35\linewidth]{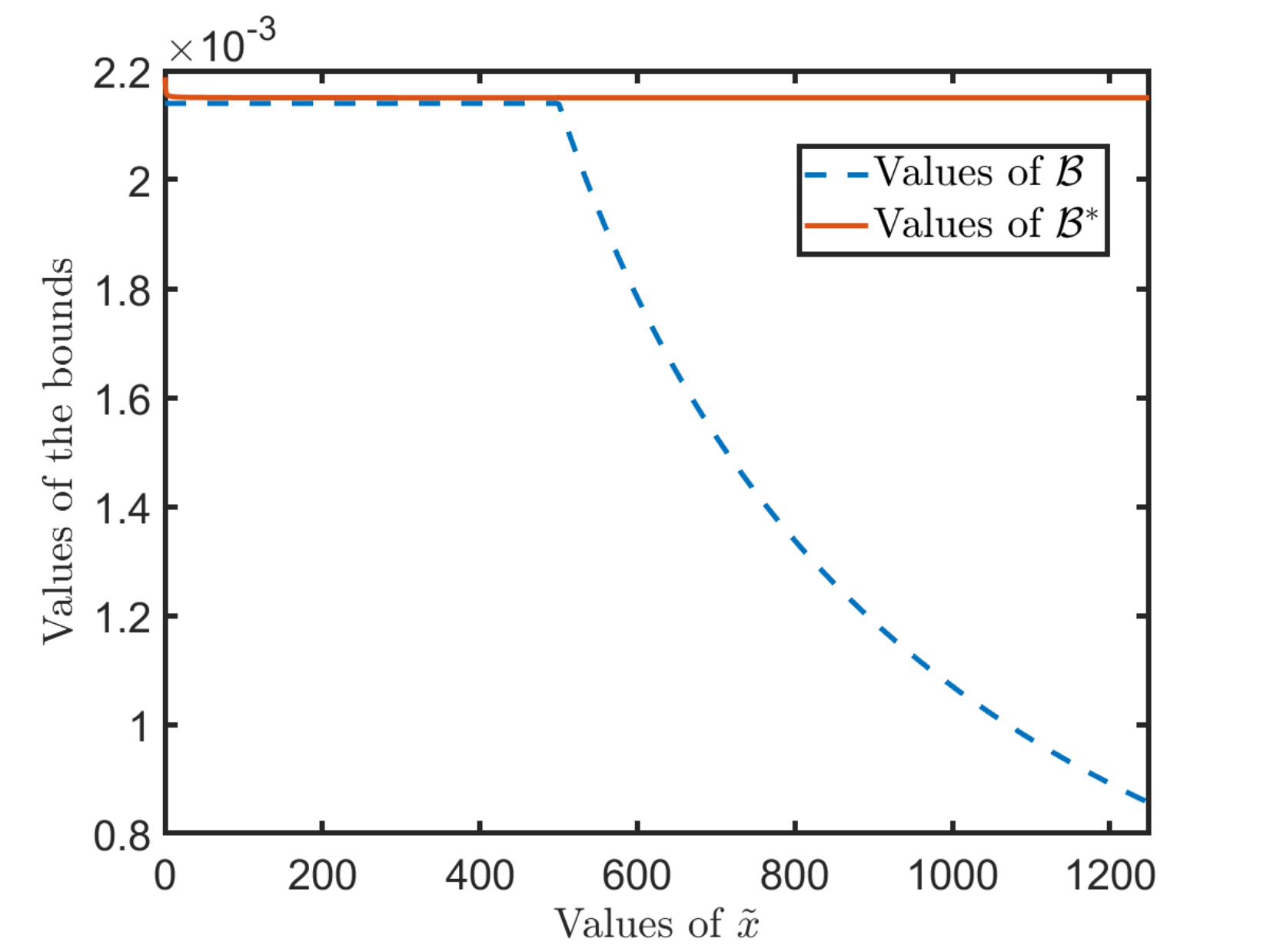}
    \includegraphics[height=0.35\linewidth]{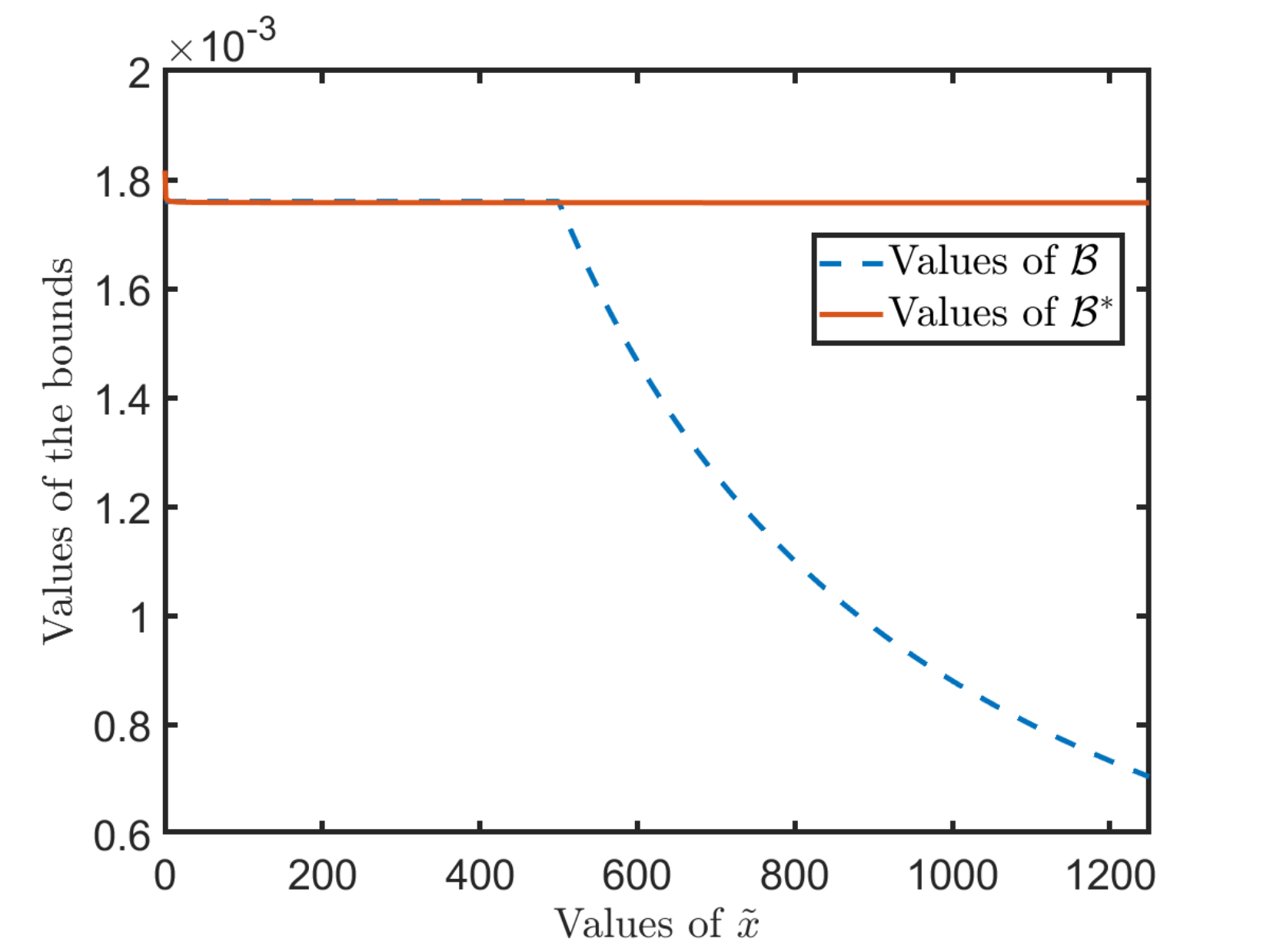}
    \caption{The sufficient bound $\mathcal{B}:=\mathcal{C}B_{FE}$ and the ``real'' bounds $\mathcal{B}^{*}$ for methods NGLp2q2s3k3 (top left), NGLp3q2s3k3 (top right), NGLp3q3s3k2 (middle left), NGLp4q3s3k3 (middle right), and NGLp4q4s3k3 (bottom) for the boundedness property with $c=500$.}
    \label{fig:stiff_bound}
\end{figure}

\subsection{An SEIR system}\label{sec:SEIR}
The following form of the Kermack-McKendrick model \cite{kermack} describes the propagation of an illness among a given population with a constant influx of healthy people $\Pi \in \mathbb{R}^+$ and a constant $\gamma \in \mathbb{R}^+$ describing the infectiousness of the process:
\begin{equation}\label{eq:sir}
\begin{aligned}
    S'(t) &= \Pi-\gamma S(t) I(t),\\
    E'(t) &= \gamma S(t) I(t) - E(t),\\
    I'(t) &= E(t) - I(t),\\
    R'(t) &= I(t),
\end{aligned} 
\end{equation}
where $S(t), E(t), I(t)$ and $R(t)$ describe the number of susceptible, exposed (infected but not yet infectious), ill (infected and infectious), and recovered people, respectively. It is easy to see that if $S(t_0), E(t_0), I(t_0), R(t_0)\geq 0$, then $S(t), E(t), I(t), R(t)\geq 0$ holds for every $t \in [t_0, T]$. Moreover, $S(t)+E(t)+I(t)+R(t) = \rev{ \Pi t + \mathcal{M}}$ is also true with constant $S(t_0) + E(t_0) + I(t_0) + R(t_0) = \mathcal{M} \in \mathbb{R}^+$ for every $t\in [t_0, T]$ (an interested reader might consult \cite{capasso}). From now on, we use $t_0=0$.

The value of $B_{FE}$ is given by the next proposition.

\begin{prop}(\cite[Prop. 2.]{takacs26})\label{prop:seir_bound}
    Let us apply the forward Euler method to equation \eqref{eq:sir} with stepsize $\Delta t_{FE}\leq \min\left\{ \rev{\dfrac{1}{\gamma \mathcal{M}}}, 1 \right\}$. Then, the forward Euler method applied to equation \eqref{eq:sir} preserves the non-negative property of the numerical solution, and if $\Pi = 0$, then $S^n+E^n+I^n+R^n=\mathcal{M}$ also holds.
\end{prop}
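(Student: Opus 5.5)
The plan is to show by a direct componentwise computation that one forward Euler step maps the nonnegative orthant into itself. Write one step as
$S^{n+1}=S^n+\Delta t(\Pi-\gamma S^nI^n)$, $E^{n+1}=E^n+\Delta t(\gamma S^nI^n-E^n)$, $I^{n+1}=I^n+\Delta t(E^n-I^n)$, $R^{n+1}=R^n+\Delta t\,I^n$,
and argue by induction on $n$. The inductive hypothesis will be that all four components at step $n$ are nonnegative and that $S^n+E^n+I^n+R^n\le \mathcal{M}$. This bound holds for the exact solution when $\Pi=0$, and it is the regime in which the bound $1/(\gamma\mathcal{M})$ is natural.

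First, I would sum the four update equations. The interaction terms $\pm\gamma S^nI^n$ cancel, and so do $\pm E^n$ and $\pm I^n$. This leaves
\[
S^{n+1}+E^{n+1}+I^{n+1}+R^{n+1}=S^n+E^n+I^n+R^n+\Delta t\,\Pi .
\]
For $\Pi=0$ the total is therefore exactly $\mathcal{M}$ at every step, which proves the conservation claim for any step size, with no restriction needed.

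Next, I would treat nonnegativity component by component. For $R$ it is immediate from $R^{n+1}=R^n+\Delta t I^n\ge0$. For $I$, write $I^{n+1}=(1-\Delta t)I^n+\Delta t E^n$, which is nonnegative as soon as $\Delta t\le1$. For $E$, write $E^{n+1}=(1-\Delta t)E^n+\Delta t\gamma S^nI^n$, which is again nonnegative for $\Delta t\le 1$. For $S$, write $S^{n+1}=S^n(1-\Delta t\gamma I^n)+\Delta t\Pi$. Using $0\le I^n\le \mathcal{M}$ from the inductive hypothesis and $\Delta t\le 1/(\gamma\mathcal{M})$, the factor $1-\Delta t\gamma I^n$ is nonnegative, so $S^{n+1}\ge0$.

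The main obstacle is the $S$ component when $\Pi>0$. The total then grows like $\mathcal{M}+n\Delta t\,\Pi$, so $I^n$ is bounded only by this larger quantity and not by $\mathcal{M}$. In that case I would either interpret $\mathcal{M}$ as the total population over the horizon, namely $\mathcal{M}+\Pi T$, or note that the cited result \cite[Prop. 2.]{takacs26} takes $\mathcal{M}$ as an upper bound of the population over $[0,T]$. Under that reading the same inequality $\Delta t\gamma I^n\le1$ closes the induction. With $\Pi=0$ the argument is complete exactly as stated.
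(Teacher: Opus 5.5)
Your argument for $\Pi=0$ is correct and complete. Summing the updates gives exact conservation for every $\Delta t$. Each component is a nonnegative combination once $\Delta t\le 1$ and $\Delta t\,\gamma I^n\le 1$, and the latter follows from $0\le I^n\le\mathcal{M}$. The paper gives no proof of its own here; it only refers to \cite{takacs26} and, for a more general case, \cite{takacs24}. So there is no alternative route to compare with, and your convex-combination argument is the natural one.

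For $\Pi>0$, however, your hedge needs to become a definite conclusion. The paper explicitly defines $\mathcal{M}=S(t_0)+E(t_0)+I(t_0)+R(t_0)$, and with that $\mathcal{M}$ the nonnegativity claim is false. So no argument can close your induction, and appealing to a guessed convention of \cite{takacs26} does not rescue it.

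Concretely, take $\gamma=1$, $\Pi=10$ and $(S^0,E^0,I^0,R^0)=(0,1,0,0)$. Then $\mathcal{M}=1$, and $\Delta t=1=\min\{1/(\gamma\mathcal{M}),1\}$ is admissible. Forward Euler reads $S^{n+1}=S^n+\Pi-\gamma S^nI^n$, $E^{n+1}=\gamma S^nI^n$, $I^{n+1}=E^n$, $R^{n+1}=R^n+I^n$. It produces
\[
(S^1,E^1,I^1,R^1)=(10,0,1,0),\quad (S^2,E^2,I^2,R^2)=(10,10,0,1),\quad (S^3,E^3,I^3,R^3)=(20,0,10,1),
\]
and then $S^4=20+10-200=-170<0$, on any horizon $T\ge 4$. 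The mechanism is exactly the one you identified: $I^3=10>\mathcal{M}$ because the total population has grown to $\mathcal{M}+3\Pi$.

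The proposition therefore has to be corrected for $\Pi>0$. One option is $\Delta t\le\min\{1/(\gamma(\mathcal{M}+\Pi T)),1\}$; more generally, replace $\mathcal{M}$ by any a priori bound on the total population over $[0,T]$. With that bound your induction goes through verbatim if you use the hypothesis $S^n+E^n+I^n+R^n=\mathcal{M}+t_n\Pi\le\mathcal{M}+\Pi T$ in place of $\le\mathcal{M}$. You should present this as a correction of the statement, not as a reading of it. The same corrected $B_{FE}$ would also be needed for the paper's $\Pi>0$ experiments to fall under Theorem \ref{th:preserv}.
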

The proof of a more general case can be found in \cite{takacs24}. Consequently, the choice $B_{FE}=\min\left\{ \rev{\dfrac{1}{\gamma \mathcal{M}}}, 1 \right\}$ is appropriate.

In the case of the logistic equation \eqref{eq:logistic}, since the exact solution was known, we could use those values as the starting procedure of our method. However, in the case of system \eqref{eq:sir}, we cannot take this approach. Because of this, two different starting procedures are applied:
\begin{itemize}
    \item[$\psi_1$:] Let us approximate the inner stage values in the interval $[t_{n-1}, t_{n}]$ ($n=1, \dots, k$) by $n$-many steps of a nonstandard Runge-Kutta method of the same order as the considered multistep multistage method.
    \item[$\psi_2$:] Let us approximate the inner stage values in the interval $[t_{n-1}, t_{n}]$ ($n=1, \dots, k$) by $10 n$-many steps of a nonstandard Runge-Kutta method of the same order as the considered multistep multistage method.
\end{itemize}
Since the nonstandard Runge-Kutta methods are special cases of the previously discussed nonstandard multistep multistage methods, the previous theorems about their orders and the preservation of qualitative properties also hold.

In the next two sections, we analyze the cases $\Pi=0$ and $\Pi>0$ separately.

\subsubsection{The case $\Pi=0$}
First, let us assume that $\Pi=0$ holds. To observe the orders of the methods, we calculate a reference solution using a fourth-order (standard) Runge-Kutta method with a very small timestep $\Delta t=0.1 \cdot 2^{-9} \cdot 10^{-3}$. Then, we plot the maximum norm of the difference between the reference solution and a given nonstandard method calculated with timesteps $\Delta t=\rev{0.05/2^k, \; k=0, 1, \dots, 9}$ at $T=1$ with initial condition $(S_0, E_0, I_0, R_0)=(0.8, 0, 0.2, 0)$, $\gamma=5$, and starting process $\psi_1$. In Figure \ref{fig:seir_order}, it is evident that for second- and third-order methods, the multistep multistage methods produce the smallest errors, while in the fourth-order case they are worse than the Runge-Kutta method, but better than the multistep ones. The increase of the order in the case of the third-order methods is also apparent, justifying the use of $\varphi_4$ (see Remark \ref{rem:phi}). The corresponding values and errors can be found in Table \ref{tab:seir_errors} in Appendix B.

\begin{figure}[!htbp]
    \centering
    \includegraphics[width=0.45\linewidth]{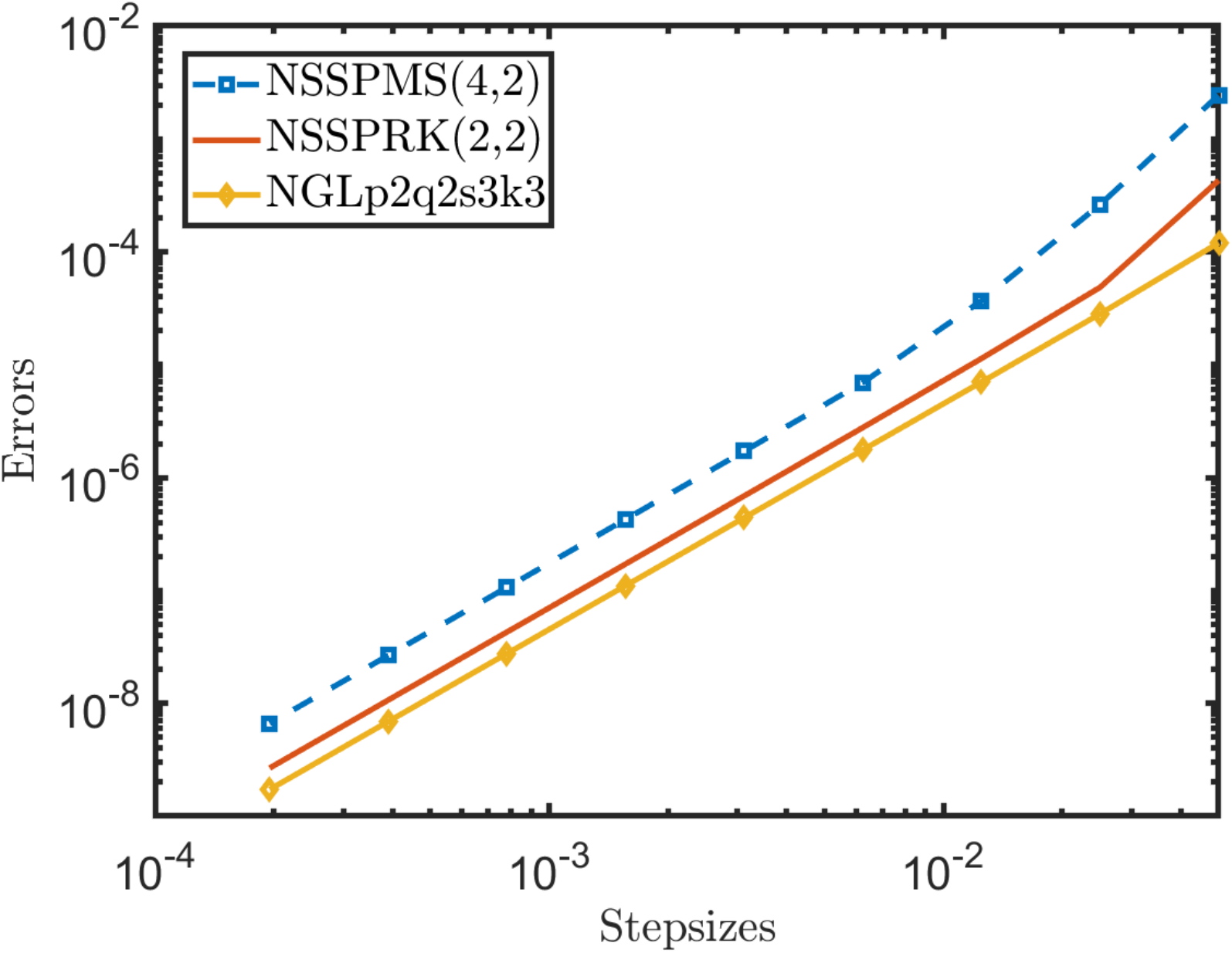}
    \includegraphics[width=0.45\linewidth]{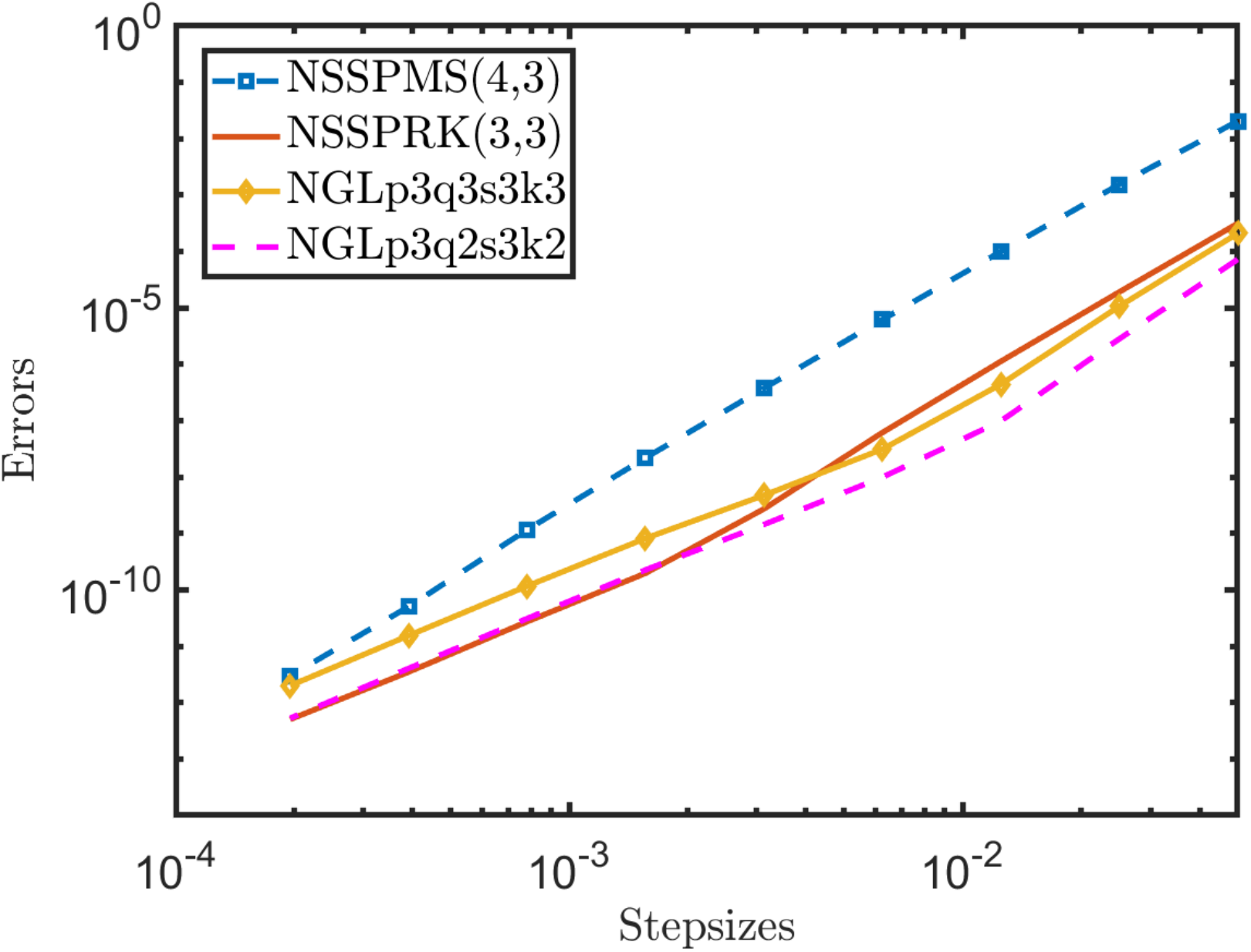}
    \includegraphics[width=0.45\linewidth]{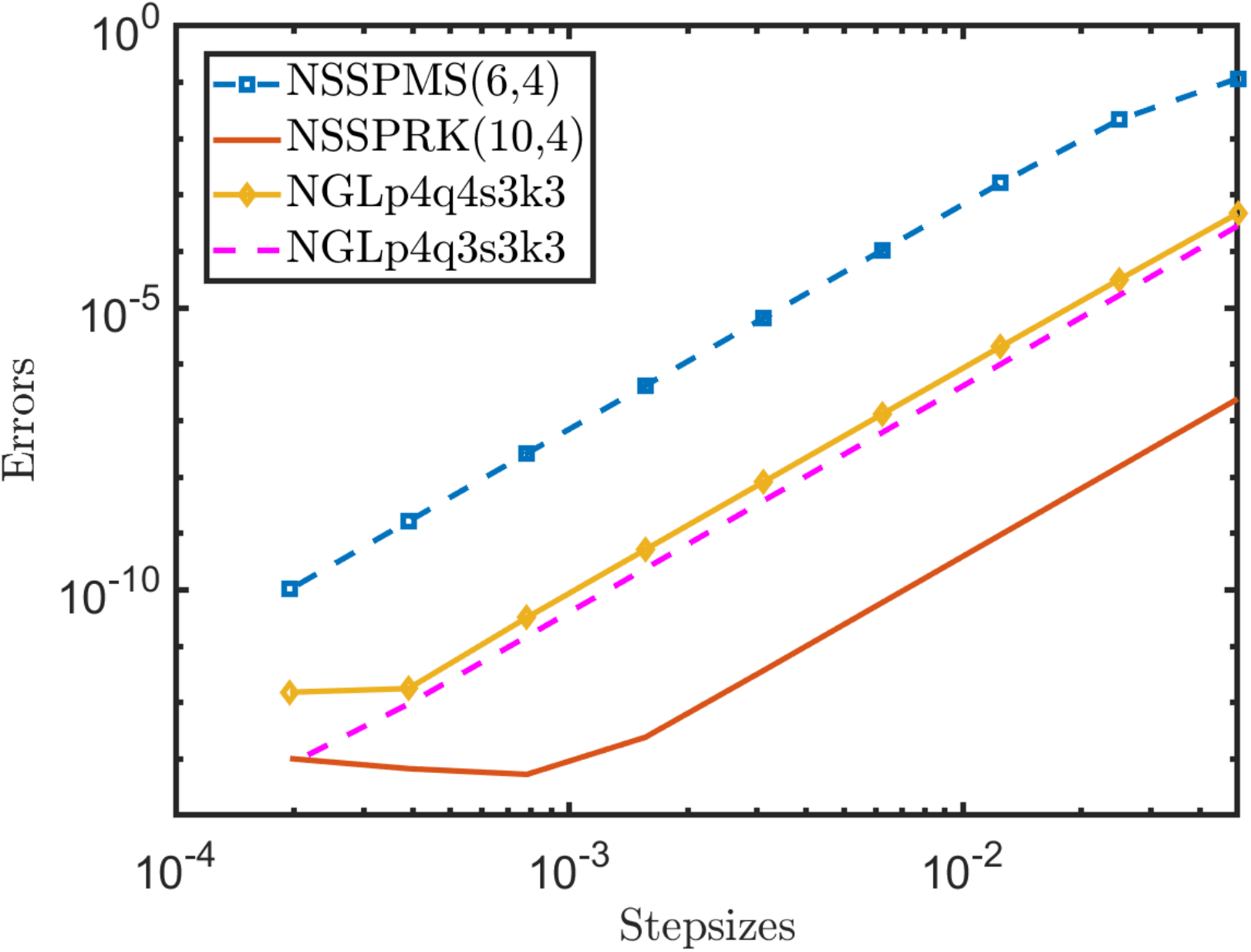}
    \caption{The errors of the different nonstandard methods for different values of $\Delta t$ applied to the SEIR model \eqref{eq:sir} with $\Pi=0$. The methods are grouped according to their expected orders.}
    \label{fig:seir_order}
\end{figure}

We also test the way the nonstandard multistep multistage methods preserve the properties of the continuous models. Second-order methods are run with $T=80$, initial condition $(S_0, E_0, I_0, R_0)=(0.95, 0, 0.05, 0)$, $\gamma=1.5$ and timesteps $\Delta t_1 = T/26$ and $\Delta t_2 = T/4000$. We only plot the curves of $I^n$, while the others attain similar behavior. As we can see in Figure \ref{fig:seir_compare2}, the standard method becomes unstable as $n$ increases, while the nonstandard multistep multistage methods behave as expected, although they produce large errors when $\psi_1$ is chosen, and perform much better when $\psi_2$ is applied. Both the Runge-Kutta and multistep methods are far from the exact solution, although they remain bounded.

\begin{figure}[!htbp]
    \centering
    \includegraphics[width=0.45\linewidth]{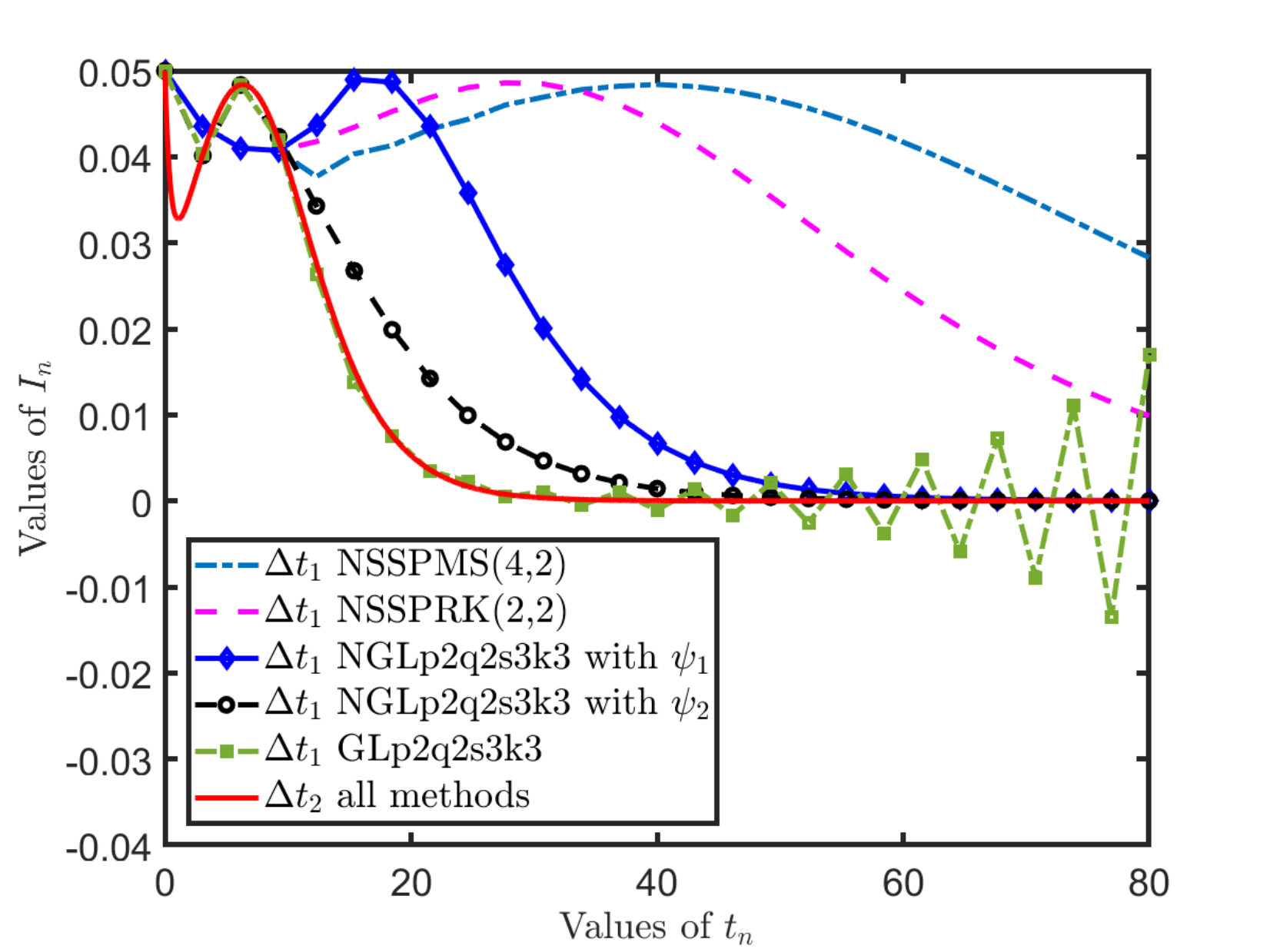}
    \caption{The plots of the different second-order methods solving the SEIR system with $\Delta t_1 = T/26$ and $\Delta t_2 = T/4000$. For $\Delta t_2$, all of the plots are very close to each other, making only one of them visible.}
    \label{fig:seir_compare2}
\end{figure}

The values of $I^n$ produced by third-order methods can be seen in Figure \ref{fig:seir_compare3}, where $T=40$, $(S_0, E_0, I_0, R_0)=(0.9, 0, 0.1, 0)$ and $\gamma=5$. In the upper panels, we used $\Delta t_1 = T/55$. On the left, all the nonstandard methods are plotted, while on the right, only the multistep multistage methods (along with standard counterparts) are present. In the lower panels, $\Delta t_1 = T/12$ is used, and out of the two standard multistep multistage methods, only GLp3q2s3k2 is present on the right. As we can see, as we increase the timestep, the standard methods start to oscillate, while the nonstandard ones remain bounded. Like in the case of second-order methods, the Runge-Kutta and multistep methods produce much bigger errors than the multistep multistage ones. Also, the use of $\psi_2$ should be favored over $\psi_1$.

\begin{figure}[!htbp]
    \centering
    \includegraphics[width=0.45\linewidth]{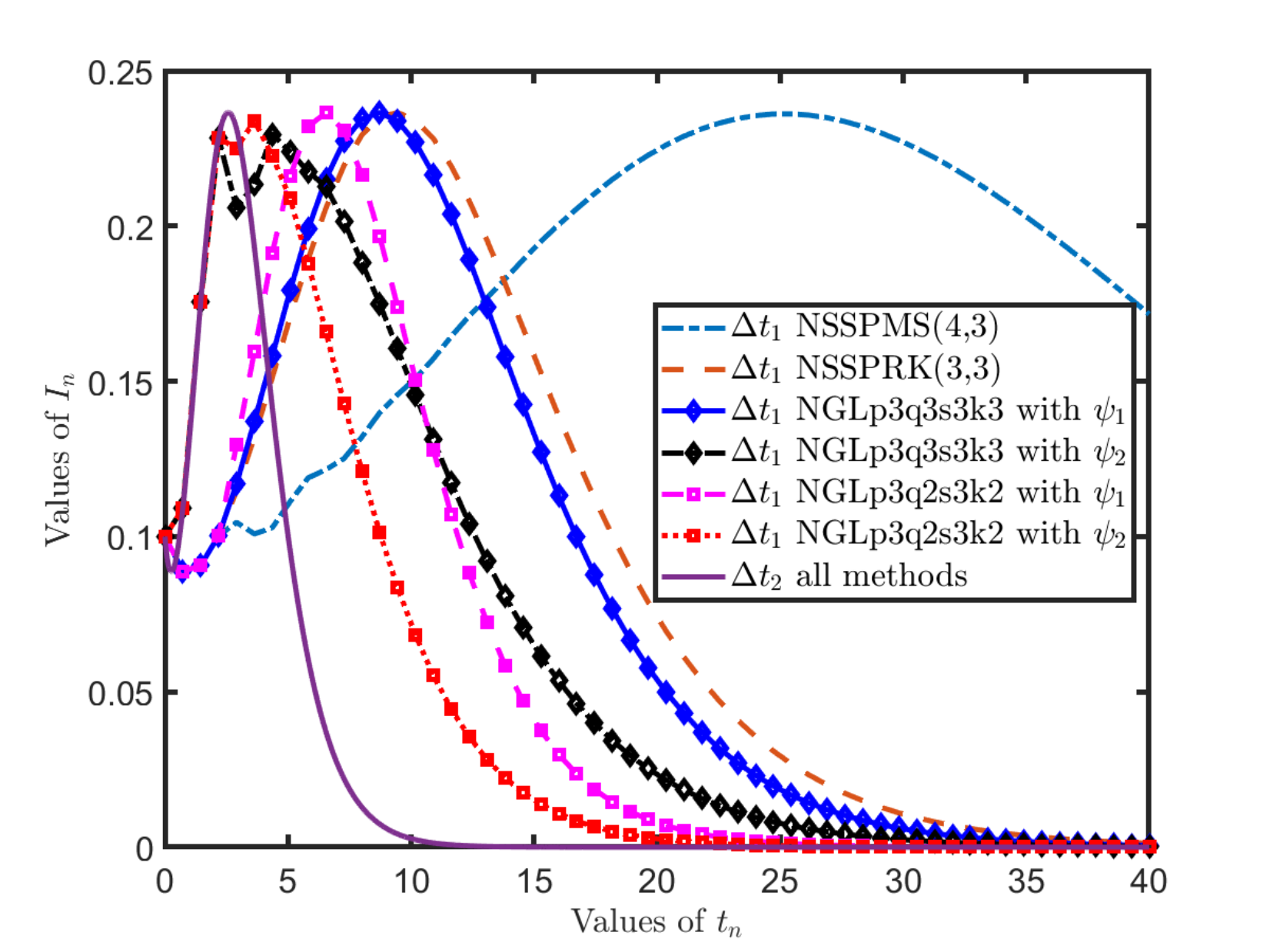}
    \includegraphics[width=0.45\linewidth]{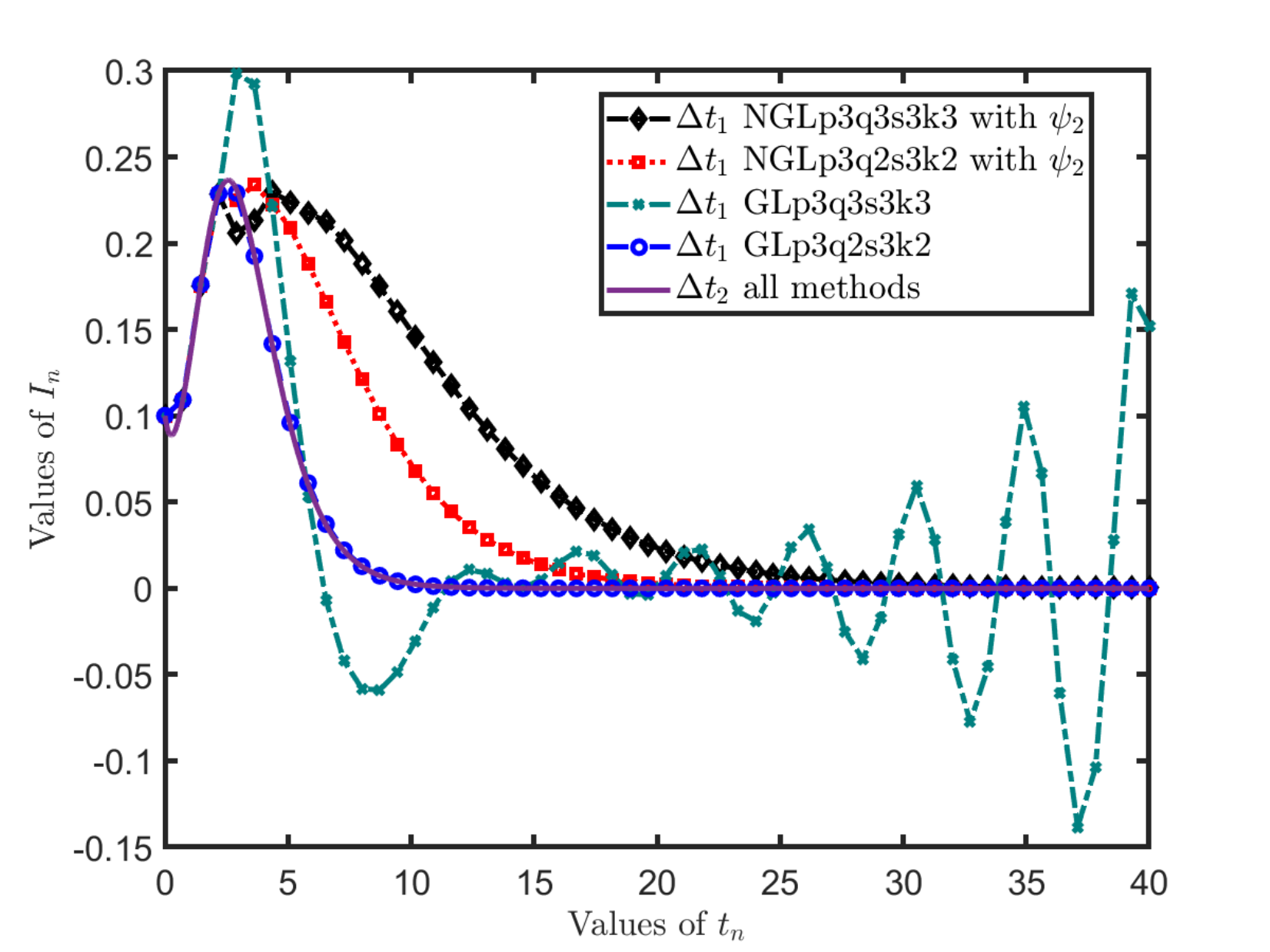}
    \includegraphics[width=0.45\linewidth]{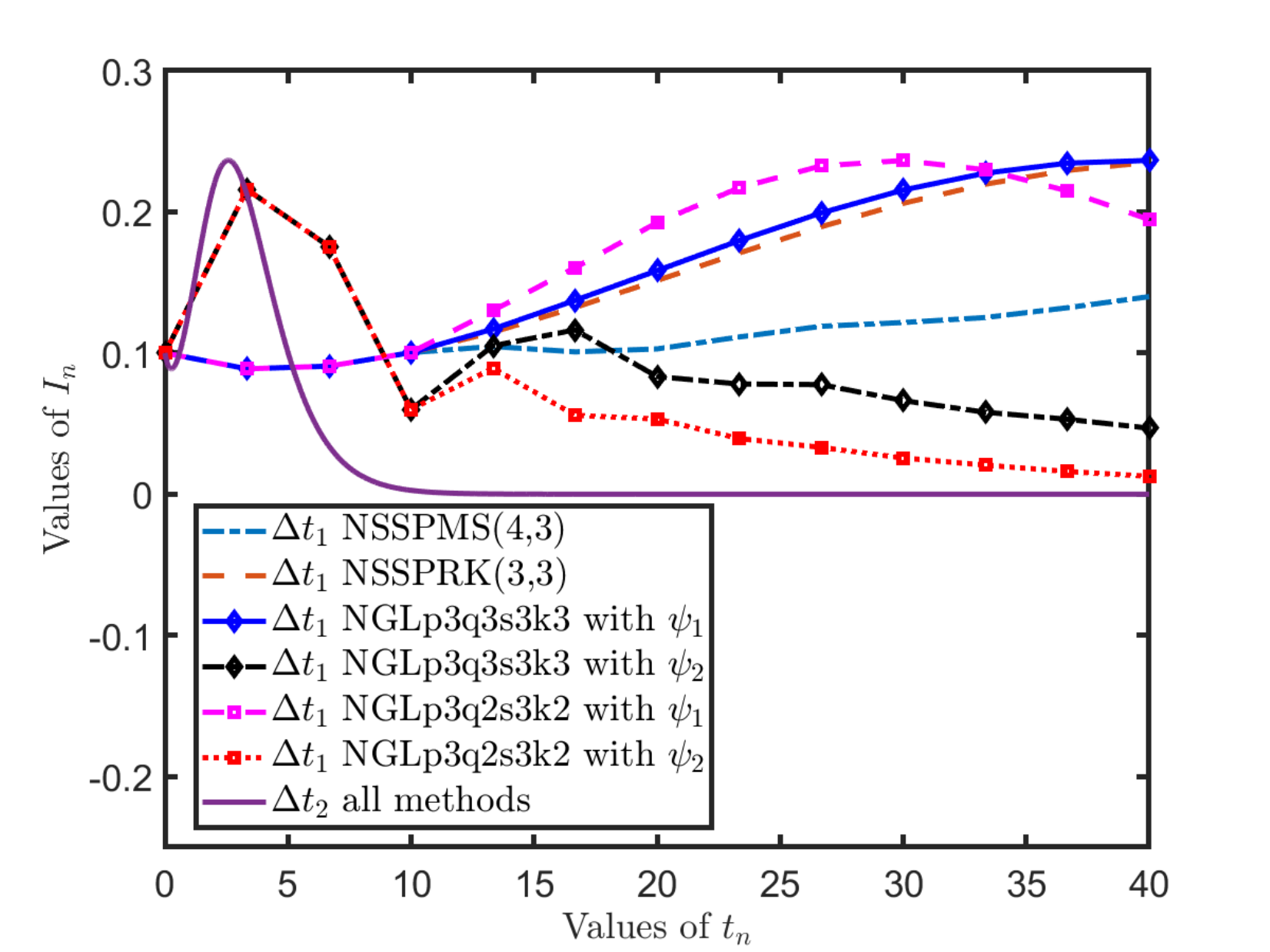}
    \includegraphics[width=0.45\linewidth]{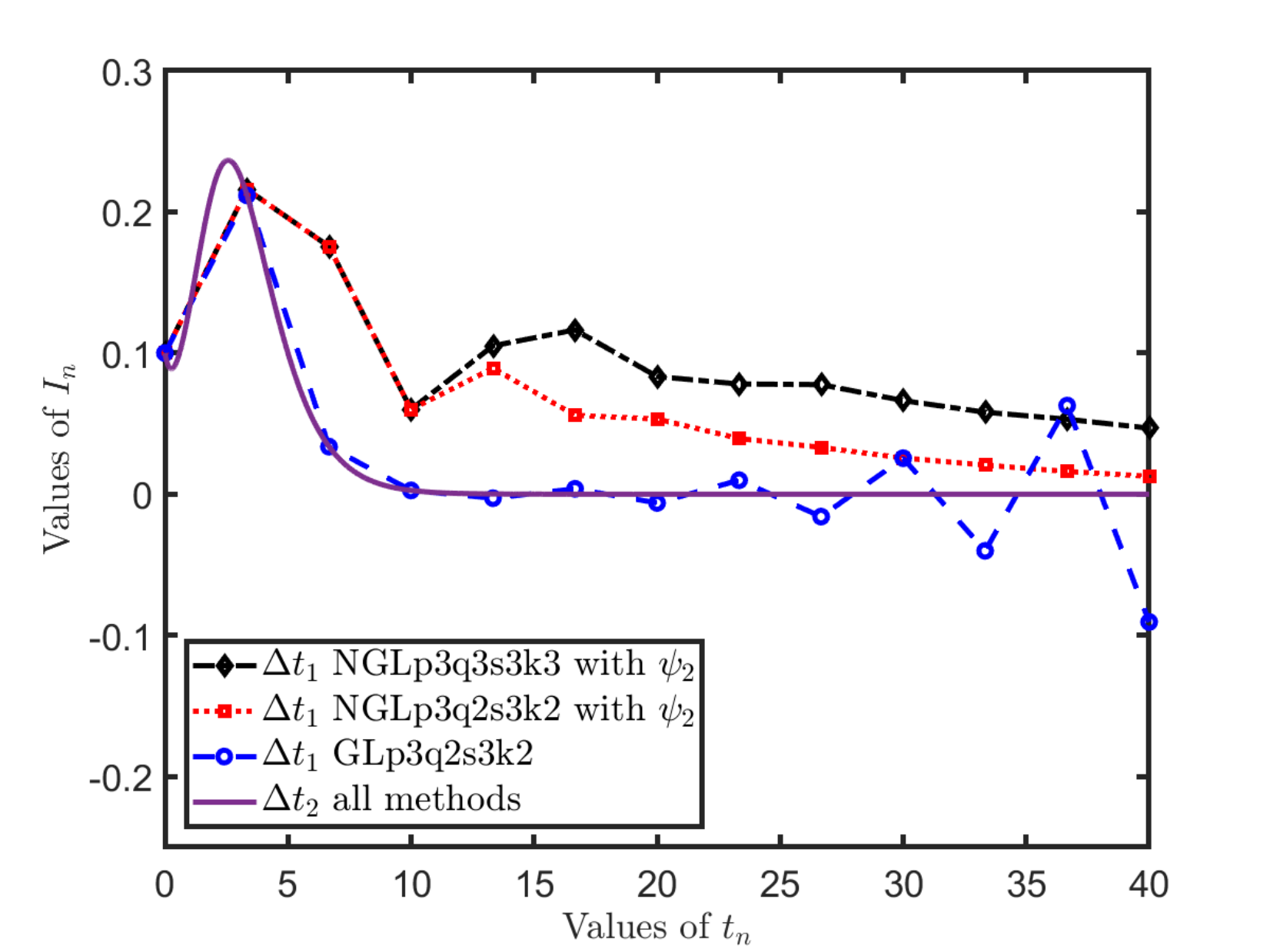}
    \caption{The plots of the different third-order methods solving the SEIR system with $\Delta t_2 = T/2000$ and $\Delta t_1 = T/55$ (upper panels) and $\Delta t_1 = T/12$ (lower panels). For $\Delta t_2$, all of the plots are very close to each other, making only one of them visible.}
    \label{fig:seir_compare3}
\end{figure}

Fourth-order methods can be seen in Figure \ref{fig:seir_compare4}, which are run with the same constants as the previous case, but with timesteps $\Delta t_1 = T/18$ and $\Delta t_2 = T/2000$. The methods behave similarly to the previous case. Here we only one value of $\Delta t_1$ since both of the standard multistep multistage methods become unstable around the same value of $\Delta t$.

\begin{figure}[!htbp]
    \centering
    \includegraphics[width=0.45\linewidth]{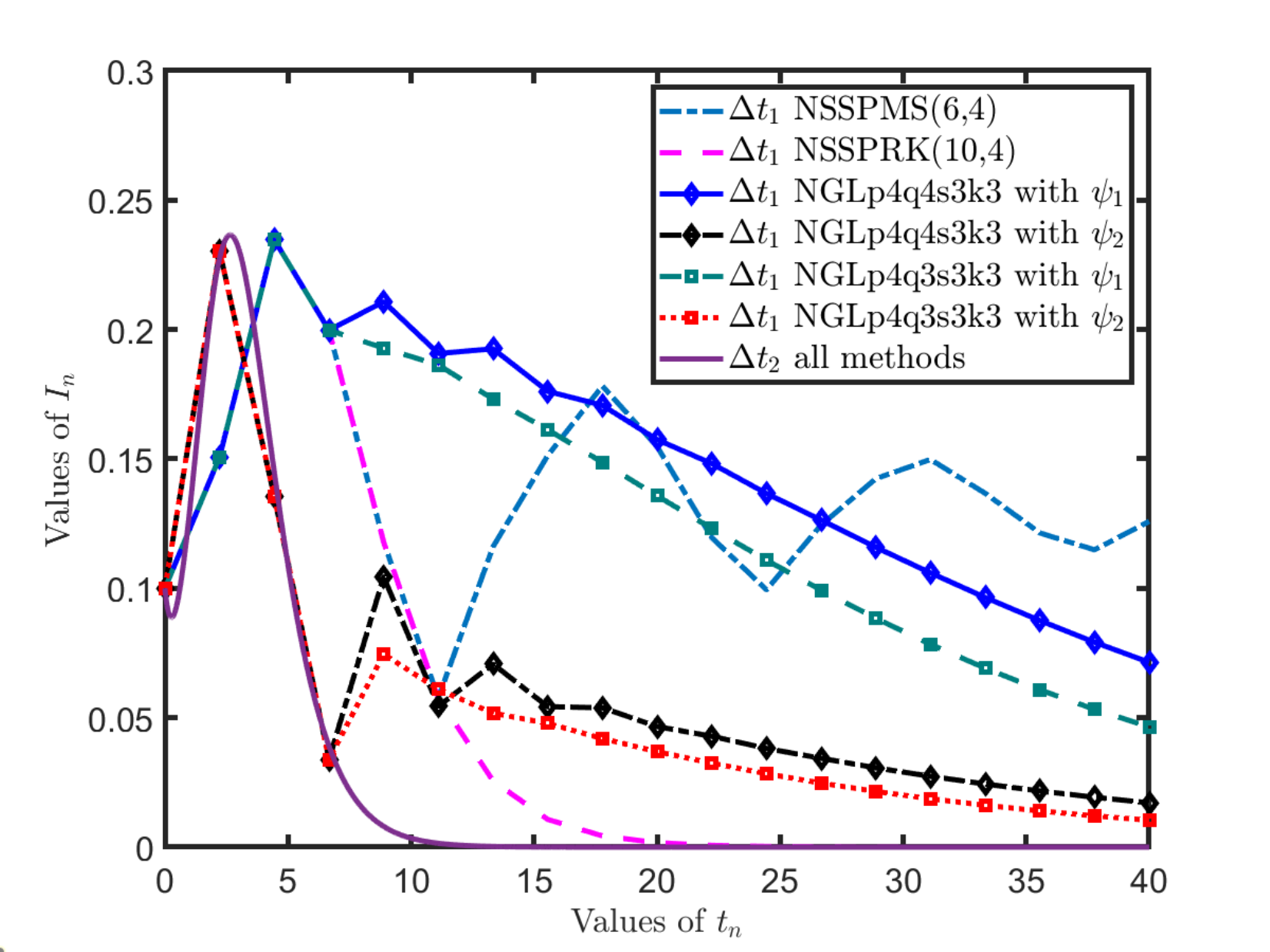}
    \includegraphics[width=0.45\linewidth]{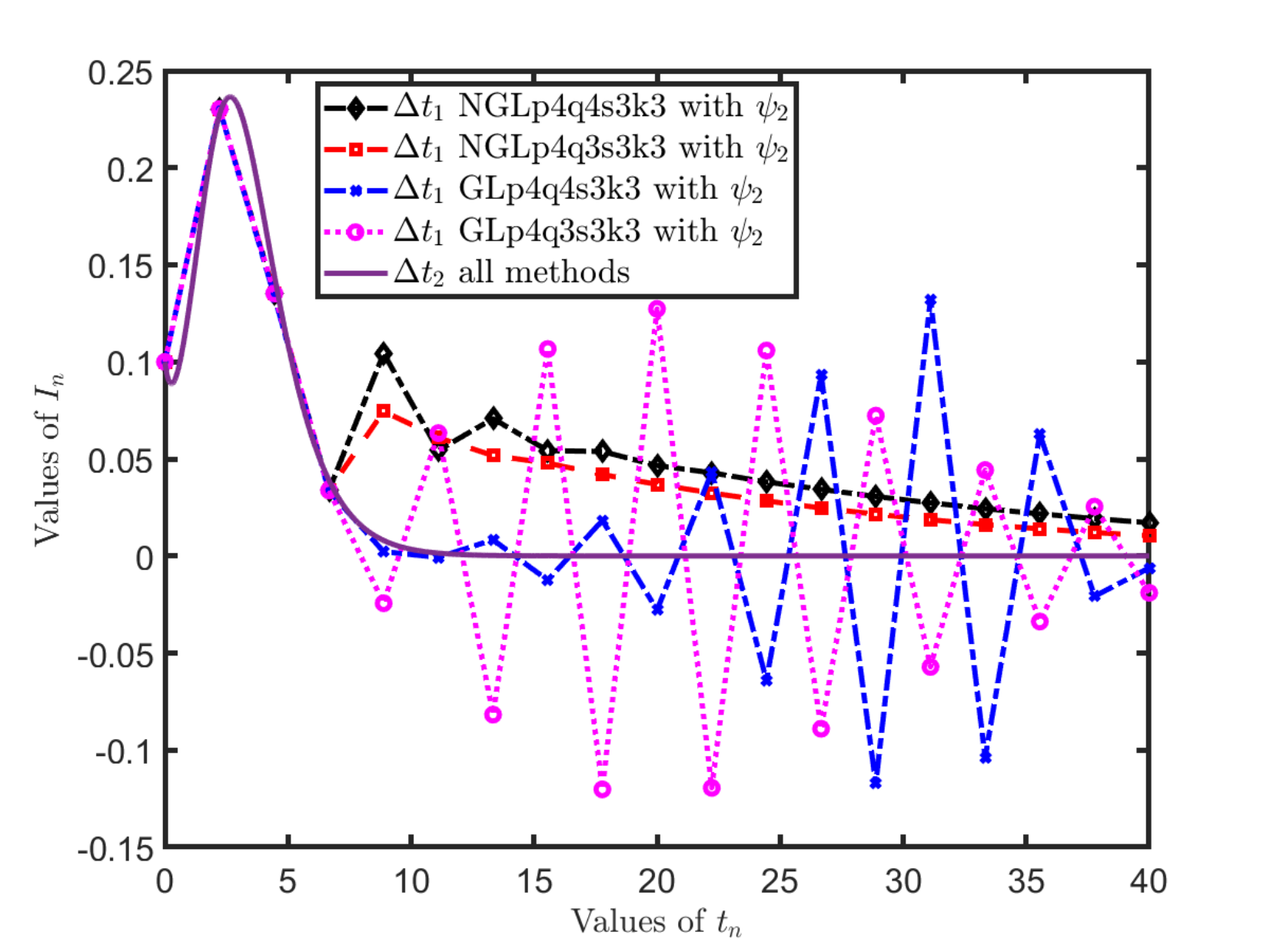}
    \caption{The plots of the different fourth-order methods solving the SEIR system with $\Delta t_1 = T/18$ and $\Delta t_2 = T/2000$. For $\Delta t_2$, all of the plots are very close to each other, making only one of them visible.}
    \label{fig:seir_compare4}
\end{figure}

Finally, we test how close the sufficient bound $\mathcal{C} B_{FE}$ is to the necessary one. Like in the case of the logistic equation, we change the bound for $\varphi$ to an arbitrary positive constant, and run the methods with $\gamma=5$, $T=100$, starting process $\psi_2$, initial condition $(S_0, E_0, I_0, R_0)=(1-I_0, 0, I_0, 0)$, where $I_0$ is chosen to be $1000$ different values from the interval $[10^{-3}, \; 1-10^{-3}]$ and consider $1000$ possible values of timesteps in the interval $[0.5,\; 10]$, except for method NGLp4q4s3k3 where we used only 100 values (because of the high computational runtime). We say that the method behaves as expected for a fixed value of $I_0$ and with this given bound for $\varphi$, if the given property is preserved for every possible timestep on the interval $t \in [0,100]$. By a bisection method, we determine this bound $B^*$ for the boundedness property (i.e., $S_n \geq 0$, $E_n \geq 0$, $I_n \geq 0$ and $R_n \geq 0$ for every $n$). The values of $B^*$ for different values of $\tilde{x}$ can be seen in Figure \ref{fig:seir_bound}. As we can see, the necessary bounds differ from method to method considerably, and in the case of third-order methods, they get relatively close to the sufficient bound given by Theorem \ref{th:preserv}.

\begin{figure}[!htbp]
    \centering
    \includegraphics[height=0.35\linewidth]{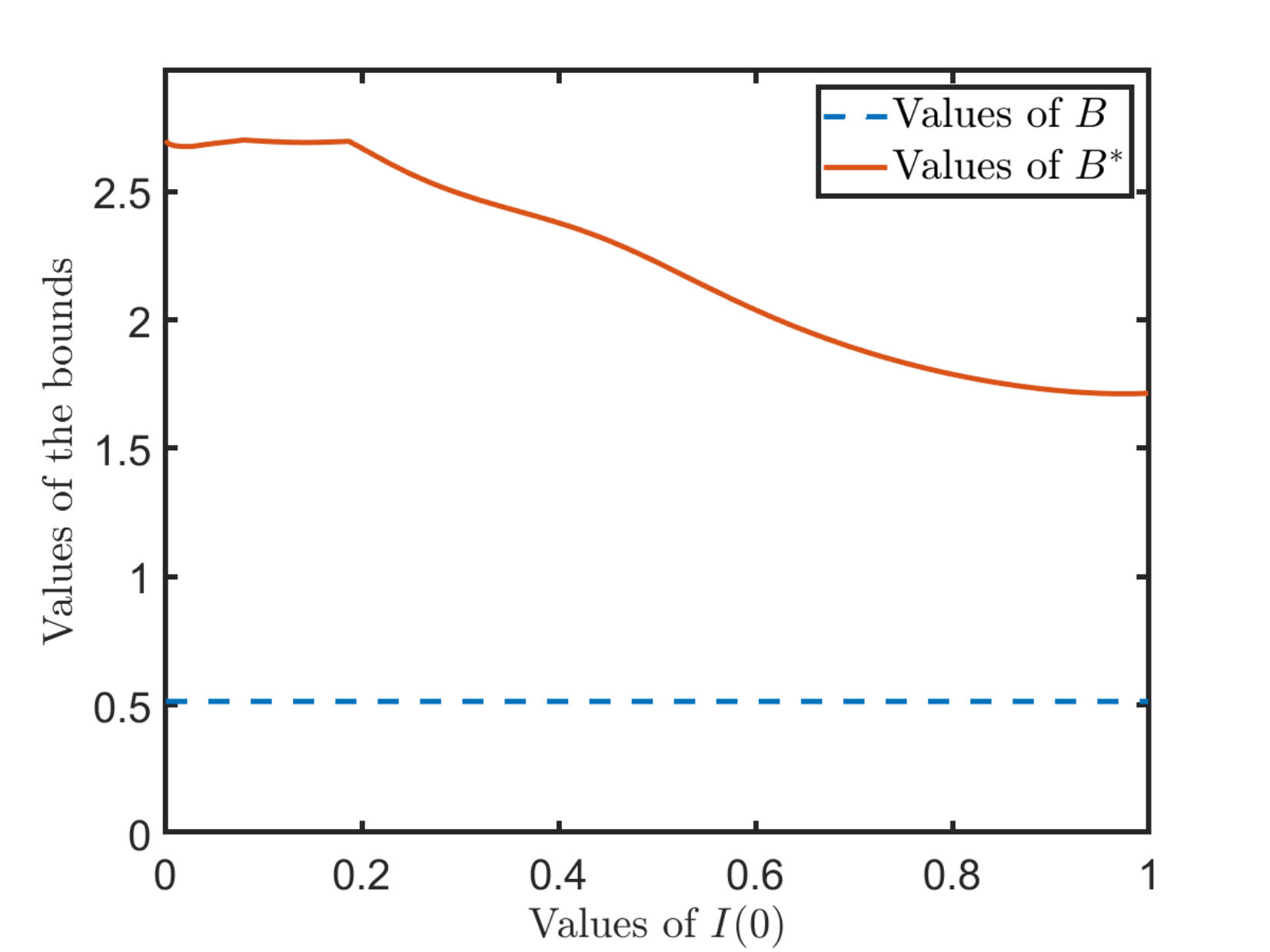}
    \includegraphics[height=0.35\linewidth]{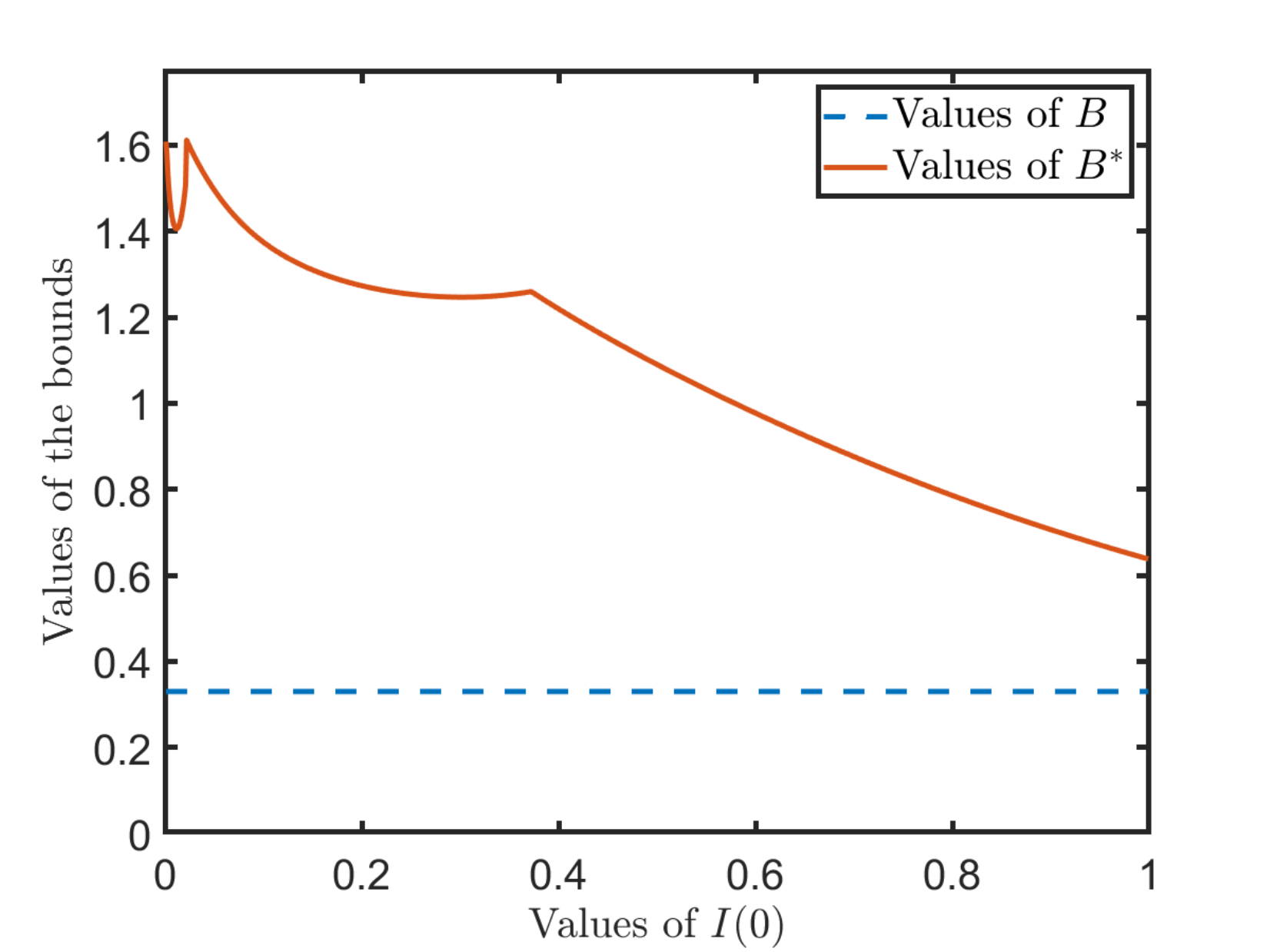}
    \includegraphics[height=0.35\linewidth]{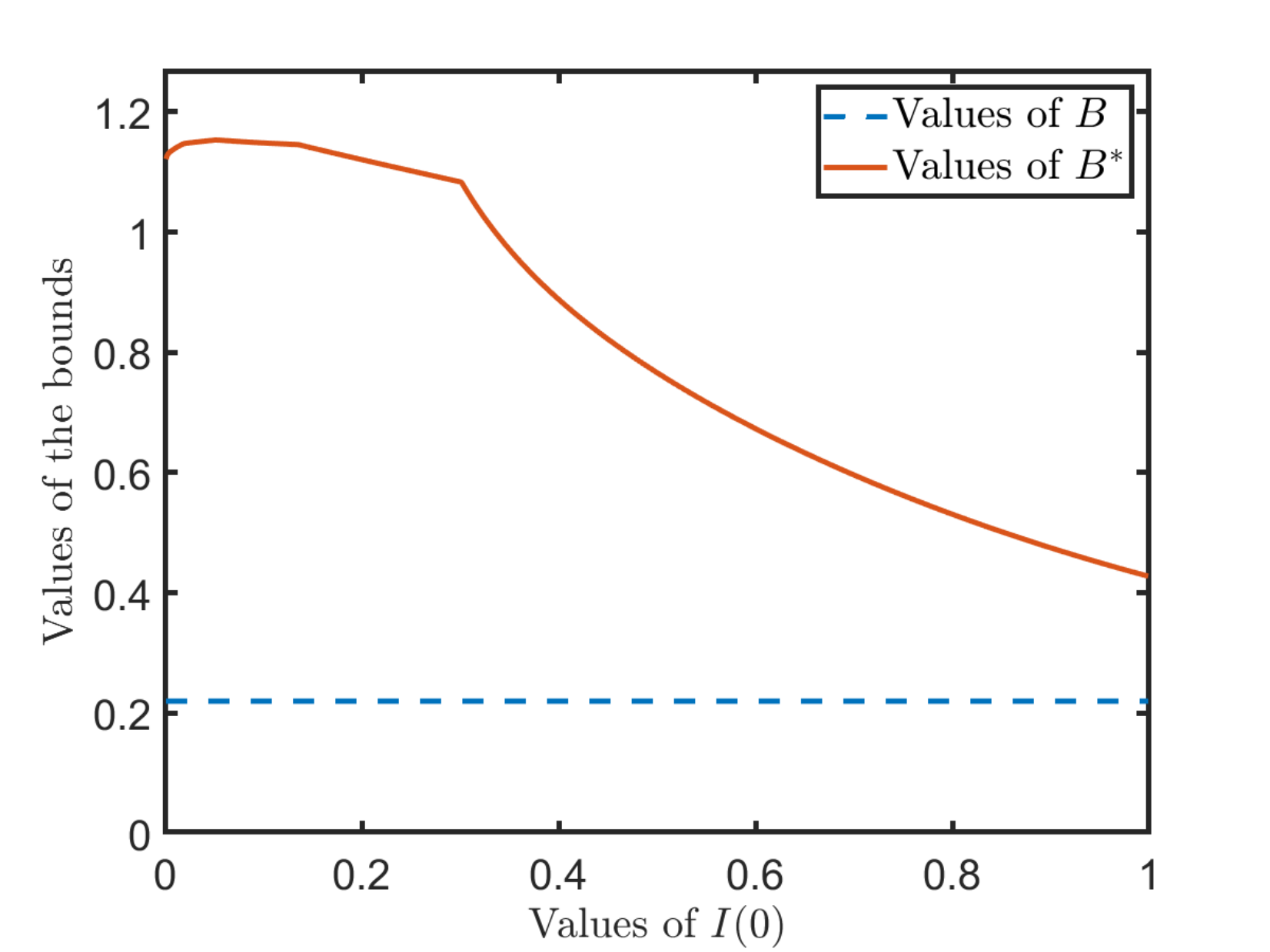}
    \includegraphics[height=0.35\linewidth]{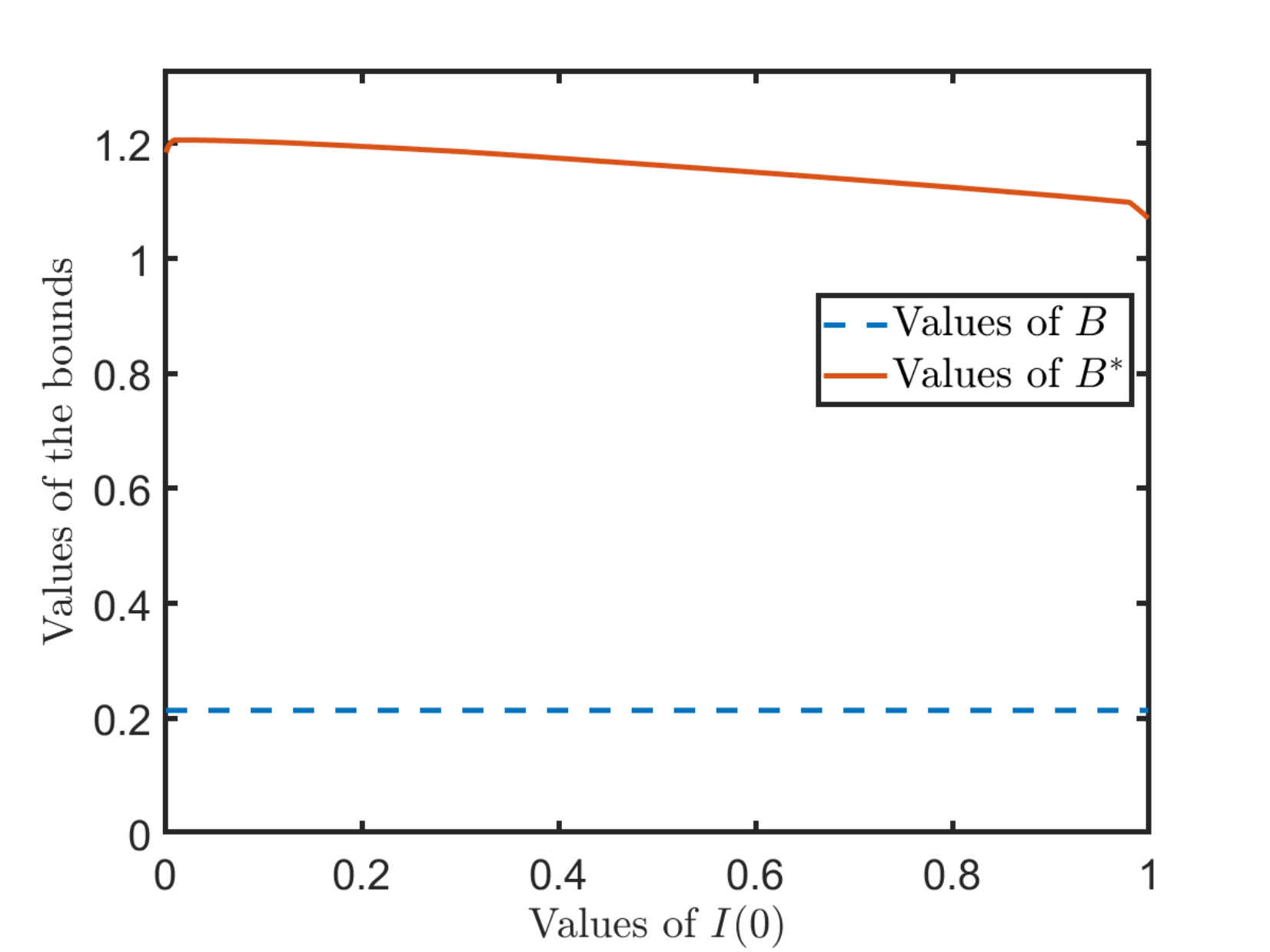}
    \includegraphics[height=0.35\linewidth]{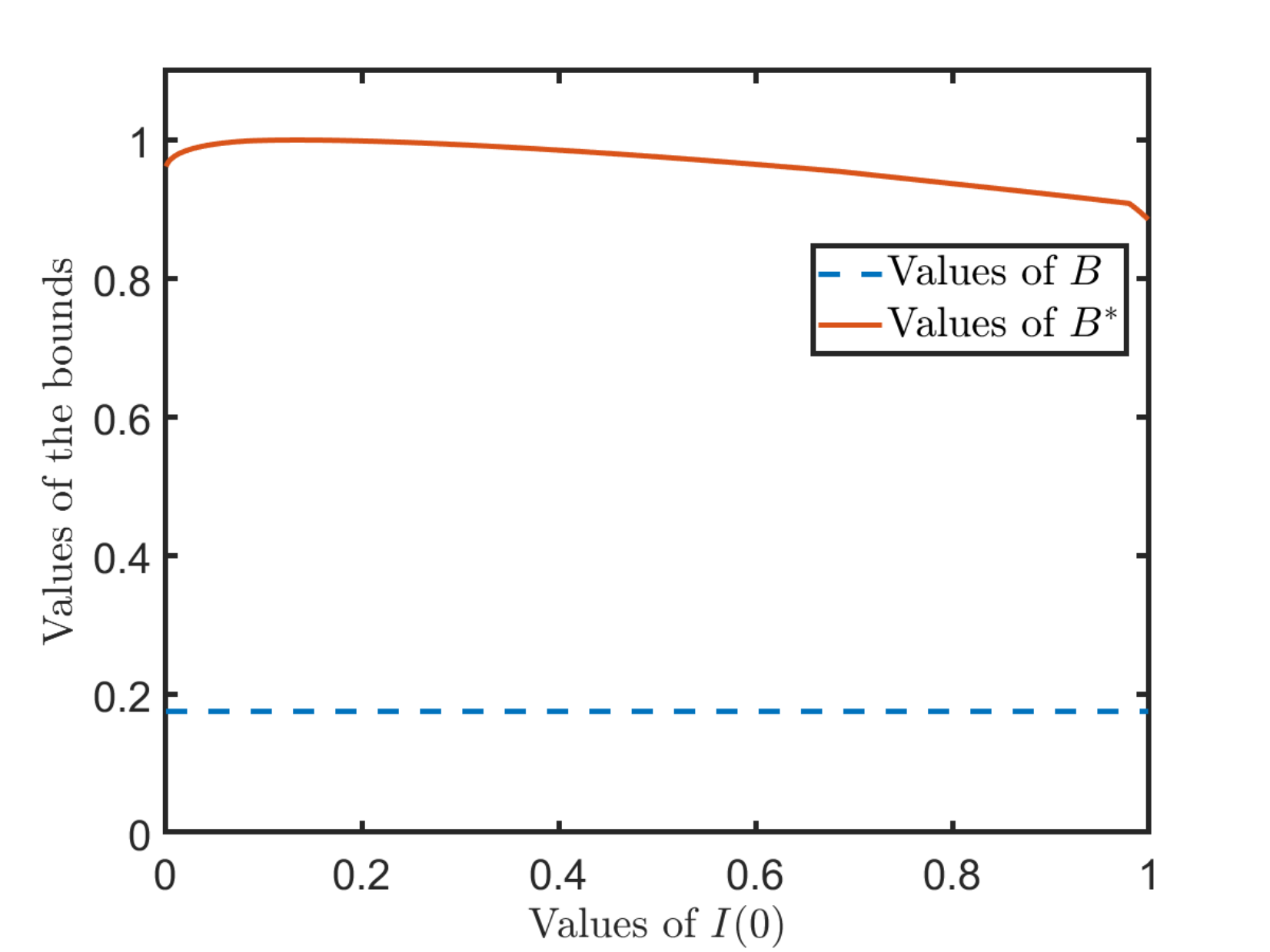}
    \caption{The sufficient bound $B:=\mathcal{C}B_{FE}$ and the ``real'' bounds $B^{*}$ for methods NGLp2q2s3k3 (top left), NGLp3q2s3k3 (top right), NGLp3q3s3k2 (middle left), NGLp4q3s3k3 (middle right), and NGLp4q4s3k3 (bottom) for the boundedness property with $\Pi=0$.}
    \label{fig:seir_bound}
\end{figure}

\subsubsection{The case $\Pi>0$}
Now let us consider the case $\Pi>0$. It turns out that the errors (and hence, the orders) of the methods along with the preservation of qualitative properties are similar to the case $\Pi=0$; therefore, we only focus on one specific property, mainly, the preservation of the sum $S^n + E^n + I^n + R^n$. From \eqref{eq:sir}, it is easy to see that for the exact solution, $S^n + E^n + I^n + R^n = \Pi t_n + \mathcal{M}$ should hold. In Figure \ref{fig:seir_sum}, we examine how well the different nonstandard methods approximate this value. In this case, the values $T=10$, $(S_0, E_0, I_0, R_0)=(0.8, 0, 0.2, 0)$, $\gamma=5$ were used, along with starting process $\psi_2$. Two different timesteps were observed, namely, $\Delta t=0.5$ for the left panels, and $\Delta t=0.1$ for the right ones. As we can see, in the case of the second- and third-order methods, the multistep multistage methods perform the best way, while for the fourth-order ones, the Runge-Kutta method is the best. For $\Delta t_2$, almost every method performs as expected, except for the multistep methods (but these also get close to the exact value for smaller values of $\Delta t$).

\begin{figure}[!htbp]
    \centering
    \includegraphics[width=0.45\linewidth]{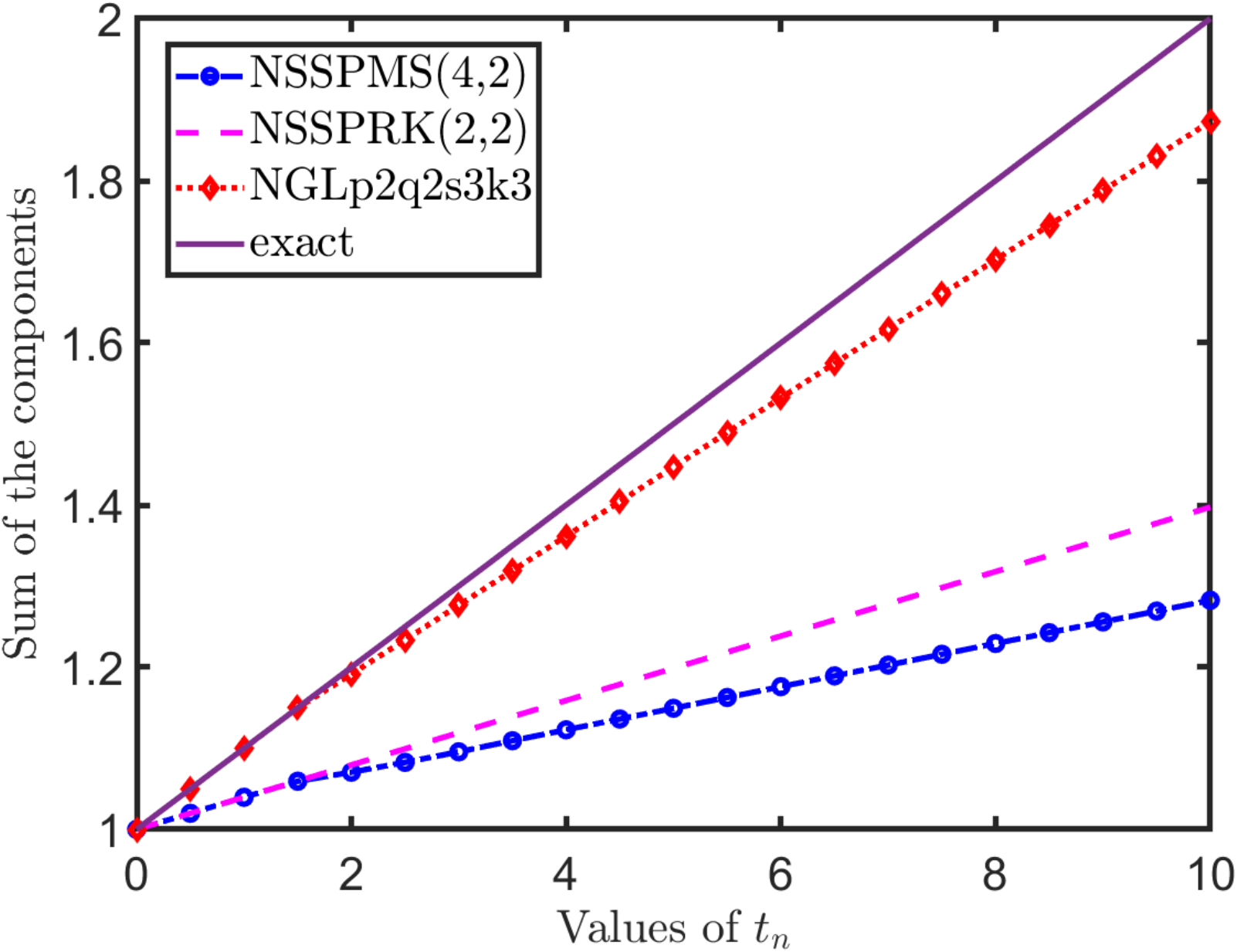}
    \includegraphics[width=0.45\linewidth]{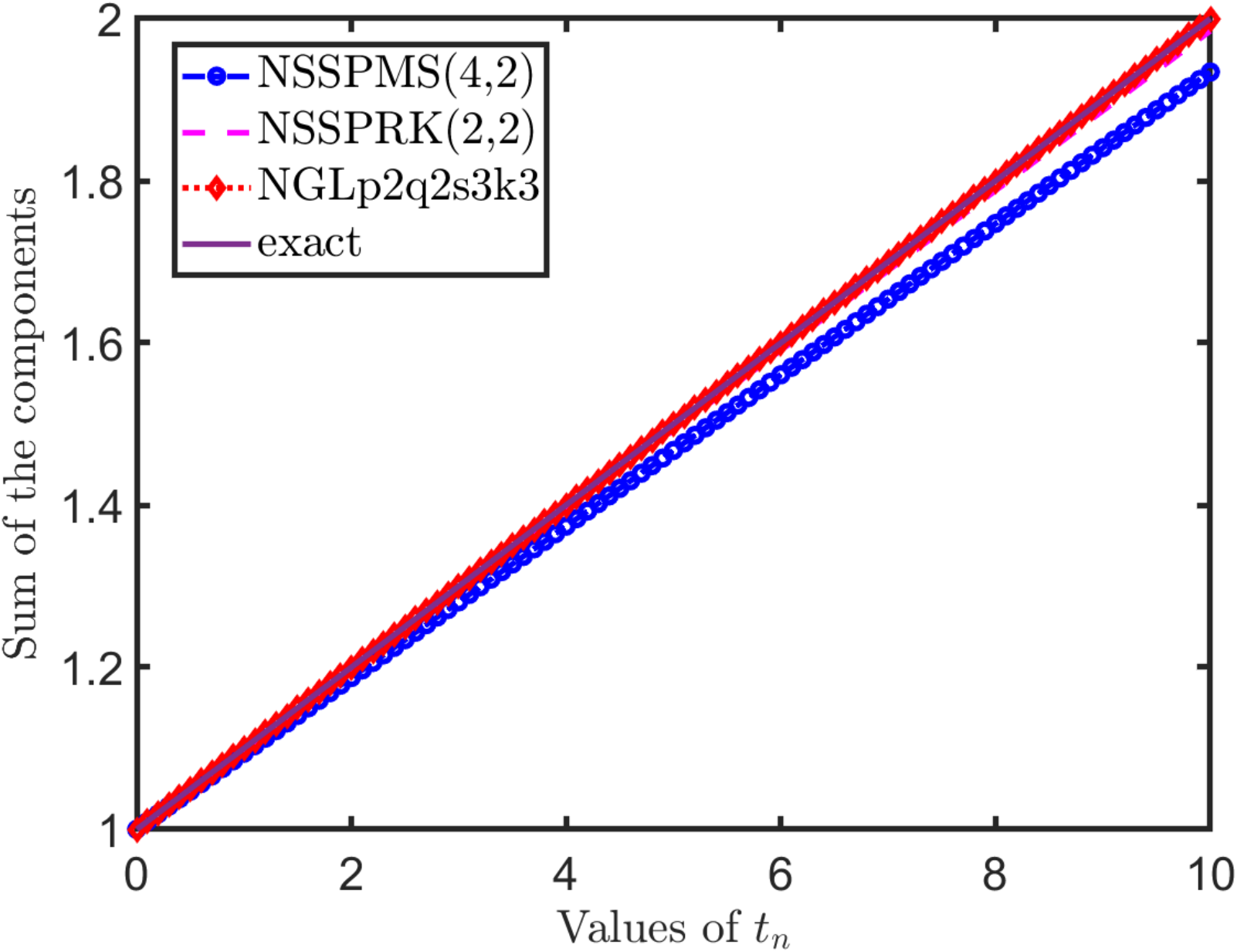}
    \includegraphics[width=0.45\linewidth]{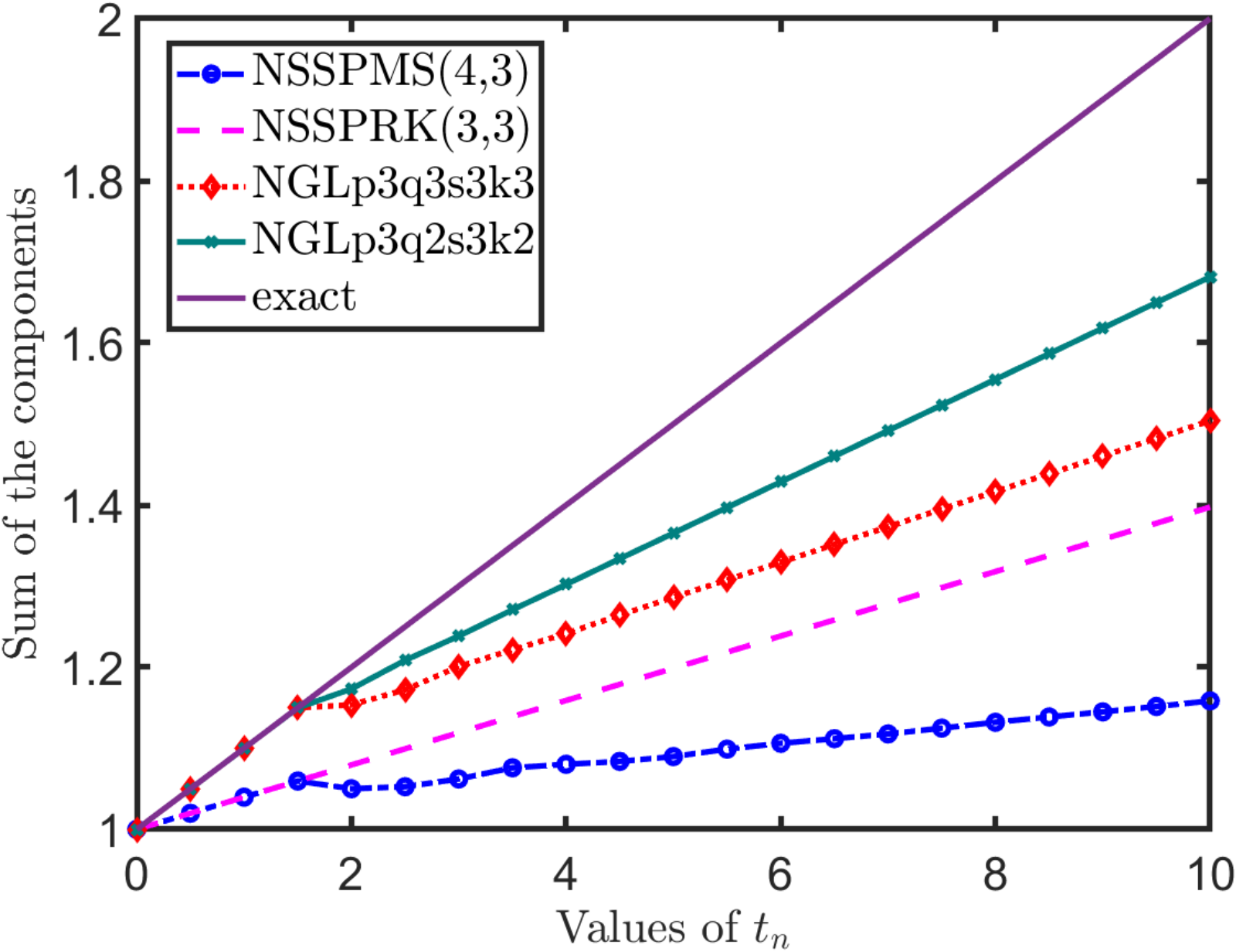}
    \includegraphics[width=0.45\linewidth]{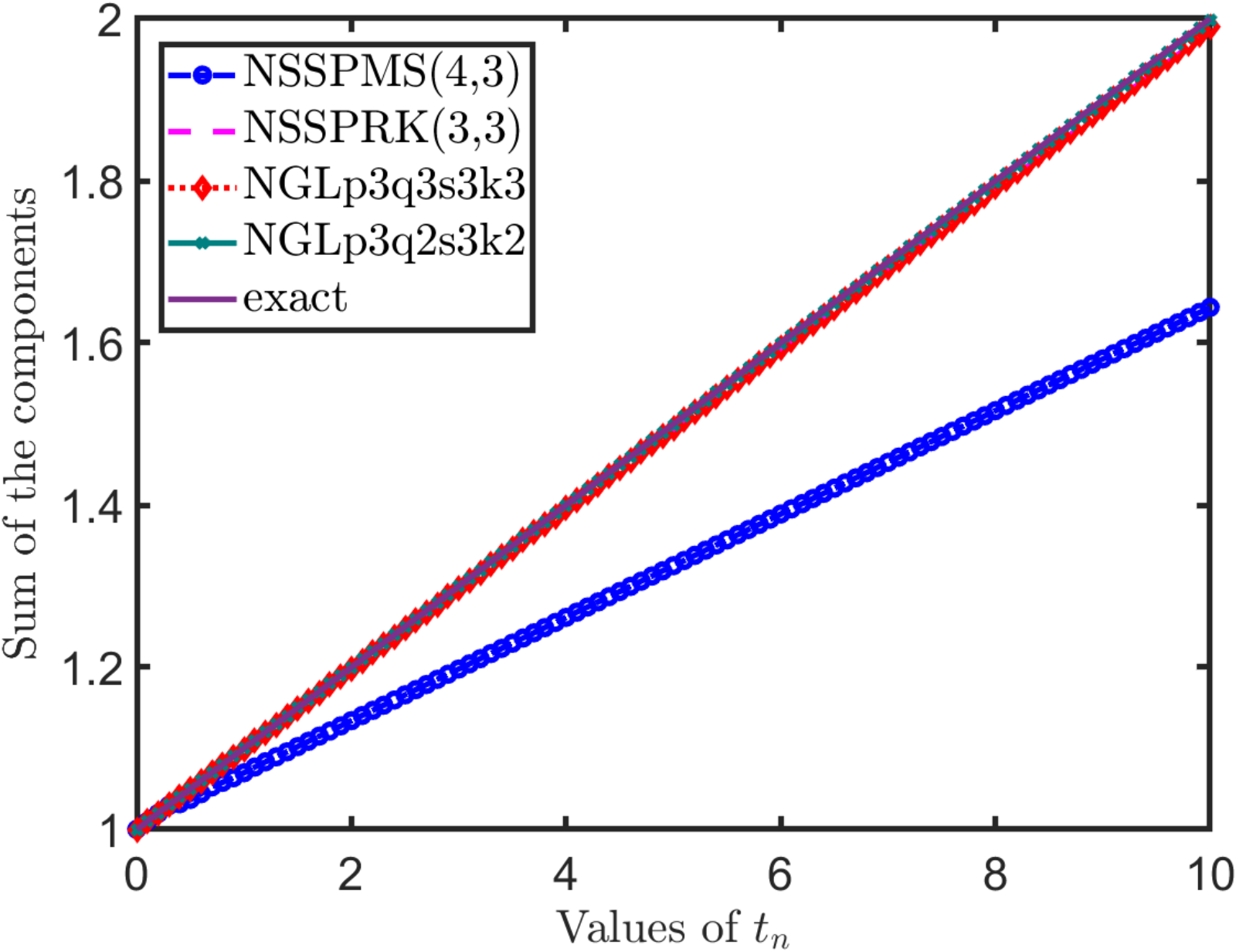}
    \includegraphics[width=0.45\linewidth]{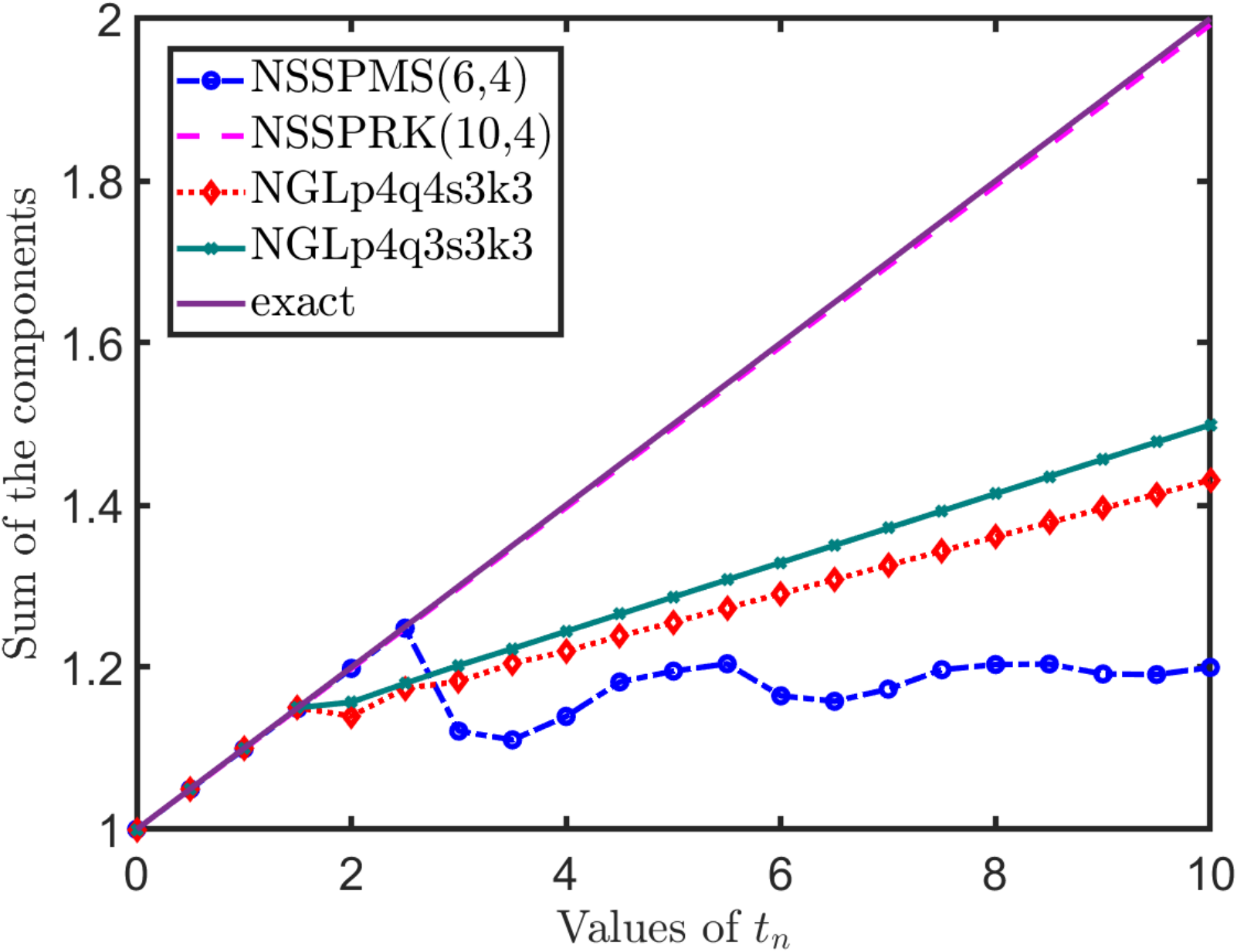}
    \includegraphics[width=0.45\linewidth]{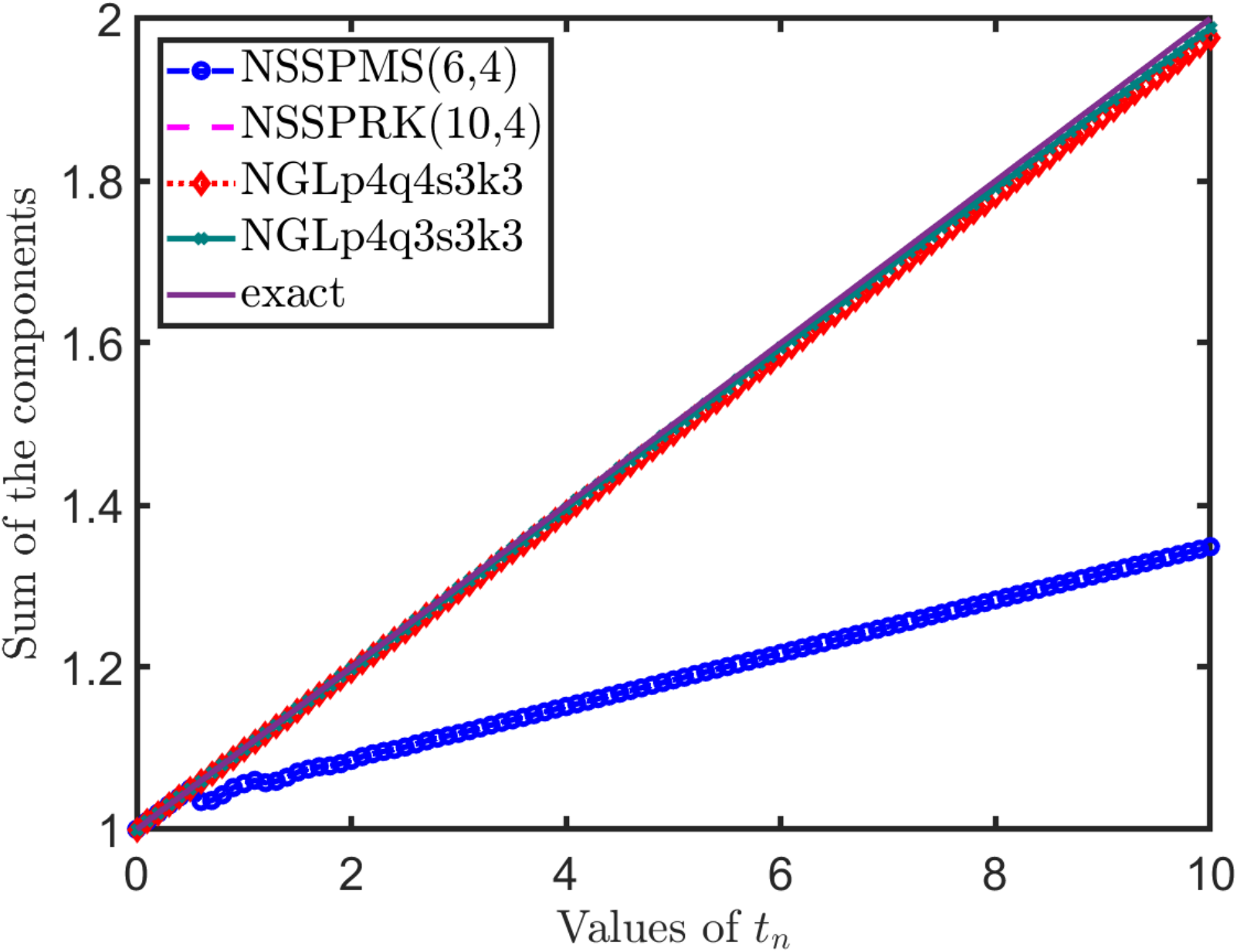}
    \caption{\rev{The sum $S^n+E^n+I^n+R^n$ for $\Pi=0.1$ with $\Delta t=0.5$ (left panels), and for $\Delta t=0.1$ (right panels). The methods are grouped according to their orders. Method NSSPRK(10,4) is always very close to the exact value, therefore not visible on the graphs.}}
    \label{fig:seir_sum}
\end{figure}

%\subsection{A partial differential equation}

\section{Conclusions and discussion}\label{sec:conc}
In the present work, we introduced nonstandard multistep multistage methods. As it turned out in Section \ref{sec:conv}, the convergence of these methods and their standard counterparts is equivalent. During the proof, the nonstandard versions of general linear methods were also defined. Moreover, in Section \ref{sec:order} it was shown that the order of these methods is also the same, if and only if the Taylor series of $\varphi$ and the function $f(x)=x$ coincide to a given number of terms. In Section \ref{sec:preserv}, it was also proven that if function $\varphi$ is bounded by some number which can be determined a priori, then the methods preserve some qualitative properties of the original, continuous model for every possible choice of $\Delta t$.

Furthermore, several numerical experiments were also conducted in Section \ref{sec:num}. These indicate that for small values of $\Delta t$, the errors of the nonstandard multistep multistage methods are between the errors of the nonstandard versions of Runge-Kutta and multistep methods, sometimes even performing better than the Runge-Kutta ones. However, their performance for larger values of $\Delta t$ seems to be the best out of these three. This favorable performance is evident in the second-order case, and it is less and less apparent as we increase the order of the method. One aspect that should be noted is that the computation of multistep multistage methods is more time-consuming than the others, which is the price one has to pay for the higher accuracies mentioned before. It was also clear that the starting process plays a crucial role in the performance of the scheme.

Another usual aspect of nonstandard methods (apart from the change of the denominator function) is the application of nonlocal approximations, which was omitted until this point. In a later work, it would be interesting to combine the approach of the present paper with some nonlocal ideas. It would also be interesting to utilize the more general denominator functions, i.e., those that can also depend on $y_n$ (see \cite{alal, alal23, gupta, hoang24, hoang24new, conte25}) in a later work, perhaps resulting in schemes with smaller errors or less restrictive bounds on $\varphi$.

It is also important to point out that the present methods are explicit, while implicit methods usually have better stability properties. It might be of interest to combine present concepts with those. Moreover, since in Section \ref{sec:conv} nonstandard general linear methods were also introduced, the use of such methods which do not fit into the framework of multistep multistage methods could also produce interesting results (the 'standard' forms of such methods can be found in e.g. \cite{butcherbook}). Finally, it should also be mentioned that the application of the present methods to semidiscretized partial differential equations might also be fruitful.

\bigskip

\noindent\textbf{Acknowledgements} 
The author would like to thank Prof. István Faragó for his insightful comments during the preparation of the article.

\bigskip

\noindent\textbf{Author Contributions} 
The author is the sole contributor to this paper.

\bigskip

\noindent\textbf{Funding:} This research has been supported by the National Research, Development and Innovation Office – NKFIH, grant no. K137699.
The research reported in this paper is also part of project no. BME-NVA-02, implemented with the support provided by the Ministry of Innovation and Technology of Hungary from the National Research, Development and Innovation Fund, financed under the TKP2021 funding scheme.

\bigskip

\noindent \textbf{Data Availability} The data used to support the findings of this study are available from the corresponding
author upon request.

%\section*{Acknowledgement}

\section*{Declarations}
\textbf{Conflict of interest:} The authors declare no competing interests.

%\begin{appendices}

\section*{Appendix A. The coefficients of the different methods}\label{secA1}
Here we list the coefficients of the different strongly-stable multistep and Runge-Kutta methods used in the paper, along with their SSP coefficients.
\begin{itemize}
    \item Explicit SSPMS(4,2) \cite{shu}, with $\mathcal{C}=2/3$:
    $$ \alpha_{[n-1]}^{2,1}=8/9,\quad \beta_{[n-1]}^{2,1}=4/3, \quad \alpha_{[n-4]}^{2,1}=1/9. $$
    \item Explicit SSPMS(4,3) \cite{shu}, with $\mathcal{C}=1/3$:
    $$ \alpha_{[n-1]}^{2,1}=16/27,\quad \beta_{[n-1]}^{2,1}=16/81, \quad \alpha_{[n-4]}^{2,1}=11/27, \quad \beta_{[n-4]}^{2,1}=4/9. $$
    \item Explicit SSPMS(6,4) \cite{ketcheson1,ketcheson2}, with $\mathcal{C}=0.1648$:
    $$ \alpha_{[n-1]}^{2,1}=0.342460855717007,\quad \beta_{[n-1]}^{2,1}=2.078553105578060,$$
    $$ \alpha_{[n-4]}^{2,1}=0.191798259434736,\quad \beta_{[n-4]}^{2,1}=1.164112222279710, $$
    $$ \alpha_{[n-5]}^{2,1}=0.093562124939008,\quad \beta_{[n-5]}^{2,1}=0.567871749748709, $$
    $$ \alpha_{[n-6]}^{2,1}=0.372178759909247. $$
    \item Explicit SSPRK(2,2) \cite{gottlieb98} with $\mathcal{C}=1$:
    $$ \alpha_{[n-1]}^{2,1}=1,\quad \beta_{[n-1]}^{2,1}=1,\quad \alpha_{[n-1]}^{3,1}=1/2, \quad \alpha_{[n-1]}^{3,2}=1/2,\quad \beta_{[n-1]}^{3,2}=1/2.$$
    %\begin{align*}
    %    u^{(1)} &= u^n + \Delta t f(u^n),\\
    %    u^{n+1} &= \dfrac{1}{2} u^n + \dfrac{1}{2} \left( u^{(1)} + \Delta t f(u^{(1)}) \right).
    %\end{align*}
    \item Explicit SSPRK(3,3) \cite{sspbook} with $\mathcal{C}=1$:
    $$ \alpha_{[n-1]}^{2,1}=\beta_{[n-1]}^{2,1}=1,\quad \alpha_{[n-1]}^{3,1}=3/4, \quad \alpha_{[n-1]}^{3,2}=\beta_{[n-1]}^{3,2}=1/4, $$
    $$ \alpha_{[n-1]}^{4,1}=1/3, \quad \alpha_{[n-1]}^{4,3}=\beta_{[n-1]}^{4,3}=2/3 .$$
    % \begin{align*}
    %     u^{(1)} &= u^n + \Delta t f(u^n),\\
    %     u^{(2)} &= \dfrac{3}{4} u^n + \dfrac{1}{4} u^{(1)} + \dfrac{1}{4} \Delta t f(u^{(1)}),\\
    %     u^{n+1} &= \dfrac{1}{3} u^n + \dfrac{2}{3} \left( u^{(2)} + \Delta t f(u^{(2)}) \right).
    % \end{align*}
    \item Explicit SSPRK(10,4) \cite{ketcheson2} with $\mathcal{C}=6$:
    $$ \alpha_{[n-1]}^{2,1}=\alpha_{[n-1]}^{3,2}=\alpha_{[n-1]}^{4,3}=\alpha_{[n-1]}^{5,4}=1,\quad \beta_{[n-1]}^{2,1}=\beta_{[n-1]}^{3,2}=\beta_{[n-1]}^{4,3}=\beta_{[n-1]}^{5,4}=1/6,$$
    $$ \alpha_{[n-1]}^{6,1} = 3/5, \quad \alpha_{[n-1]}^{6,5} = 2/5, \quad \beta_{[n-1]}^{6,5} = 1/15, $$
    $$ \alpha_{[n-1]}^{7,6}=\alpha_{[n-1]}^{8,7}=\alpha_{[n-1]}^{9,8}=\alpha_{[n-1]}^{10,9}=1,\quad \beta_{[n-1]}^{7,6}=\beta_{[n-1]}^{8,7}=\beta_{[n-1]}^{9,8}=\beta_{[n-1]}^{10,9}=1/6,$$
    $$ \alpha_{[n-1]}^{11,1}= 1/25, \quad \alpha_{[n-1]}^{11,5}= 9/25, \quad \beta_{[n-1]}^{11,5}= 3/50, \quad \alpha_{[n-1]}^{11,10}= 3/5, \quad \beta_{[n-1]}^{11,10}= 1/10. $$
    % \begin{align*}
    %     u^{(1)} &= u^n + \dfrac{1}{6}\Delta t f(u^n),\\
    %     u^{(j+1)} &= u^{(j)} + \dfrac{1}{6}\Delta t f(u^{(j)}), \qquad j=1, 2, 3, \\ 
    %     u^{(5)} &= \dfrac{3}{5} u^n + \dfrac{2}{5}u^{(4)} + \dfrac{1}{15} \Delta t f(u^{(4)}),\\
    %     u^{(j+1)} &= u^{(j)} + \dfrac{1}{6}\Delta t f(u^{(j)}), \qquad j=5,\dots, 8, \\ 
    %     u^{n+1} &= \dfrac{1}{25} u^n + \dfrac{9}{25} u^{(4)} + \dfrac{3}{5} u^{(9)} + \dfrac{3}{50} \Delta t f(u^{(4)}) + \dfrac{1}{10} \Delta t f(u^{(9)}) .
    % \end{align*}
     \item Explicit NGLp2q2s3k3 \cite{const} with $\mathcal{C} = 2.57$:
    $$ \alpha_{[n-1]}^{2,1} = 0.973398050642691, \quad \beta_{[n-1]}^{2,1} = 0.379405979378177, $$
    $$ \alpha_{[n-1]}^{3,2} = 0.979404360713112, \quad \beta_{[n-1]}^{3,2} = 0.381747087369108, $$
    $$ \alpha_{[n-1]}^{4,3} = 0.983666449265926 ,\quad \beta_{[n-1]}^{4,3} = 0.383408341858481, $$
    $$ \alpha_{[n-3]}^{2,1} = 0.026601949357309, \quad \alpha_{[n-3]}^{3,1} = 0.020595639286888, $$
    $$ \alpha_{[n-3]}^{4,1} = 0.016333550734074, $$
    $$ c = \left[ 0, \; 0.326202080663559, \; 0.660039549070913, \; 1\right]^T. $$
    \item Explicit NGLp3q2s3k2 \cite{const} with $\mathcal{C} = 1.65$:
    $$ \alpha_{[n-1]}^{2,1} = 0.857663370271785, \quad \beta_{[n-1]}^{2,1} = 0.519611900224726, $$
    $$ \alpha_{[n-1]}^{3,2} = 0.770413480757674, \quad \beta_{[n-1]}^{3,2} = 0.466751905900312, $$
    $$ \alpha_{[n-1]}^{4,3} = 0.841153332326449,\quad \beta_{[n-1]}^{4,3} = 0.509609360199215, $$
    $$ \alpha_{[n-2]}^{2,1} = 0.142336629728215,  $$
    $$ \alpha_{[n-2]}^{3,1} = 0.229586519242326, \quad \beta_{[n-2]}^{3,1} = 0.129608154625262, $$
    $$ \alpha_{[n-2]}^{4,1} = 0.158846667673551, \quad \beta_{[n-2]}^{3,1} = 0.096236614148583, $$
    $$ c = \left[ 0, \; 0.377275270496511, \; 0.657431495630257, \; 1\right]^T. $$
    \item Explicit NGLp3q3s2k3 \cite{const} with $\mathcal{C} = 1.10$:
    $$ \alpha_{[n-1]}^{2,1} = 0.803084592008657, \quad \beta_{[n-1]}^{2,1} = 0.729588628543267, $$
    $$ \alpha_{[n-1]}^{3,2} = 0.846696784194569, \quad \beta_{[n-1]}^{3,2} = 0.769209559888867, $$
    $$ \alpha_{[n-3]}^{2,1} = 0.196915407991343, \quad \beta_{[n-3]}^{2,1} = 0.140265790357552,  $$
    $$ \alpha_{[n-3]}^{3,1} = 0.153303215805431, \quad \beta_{[n-3]}^{3,1} = 0.134349217930499, $$
    $$ c = \left[ 0, \; 0.476023602918134, \;  1\right]^T. $$
    \item Explicit NGLp4q3s3k3 \cite{const} with $\mathcal{C} = 1.07$:
    $$ \alpha_{[n-1]}^{2,1} = 0.79779687008967, \quad \beta_{[n-1]}^{2,1} = 0.742235840146894, $$
    $$ \alpha_{[n-1]}^{3,2} = 0.685074051305928, \quad \beta_{[n-1]}^{3,2} = 0.637363385465199,, $$
    $$ \alpha_{[n-1]}^{4,1} = 0.39703332125451, \quad \beta_{[n-1]}^{4,1} = 0.369382698548981, $$
    $$ \alpha_{[n-1]}^{4,3} = 0.409097066488626, \quad \beta_{[n-1]}^{4,3} = 0.380606287428385, $$
    $$ \alpha_{[n-2]}^{3,1} = 0.267934431946272, \quad \beta_{[n-2]}^{3,1} = 0.249274653304665,  $$
    $$ \alpha_{[n-2]}^{4,1} = 0.149202105282063, \quad \beta_{[n-2]}^{4,1} = 0.138811211371724, $$
    $$ \alpha_{[n-3]}^{2,1} = 0.20220312991033, \quad \beta_{[n-3]}^{2,1} = 0.144131507391754, $$
    $$ \alpha_{[n-3]}^{3,1} = 0.0469915167478, \quad \alpha_{[n-3]}^{4,1} = 0.044667506974801, $$
    $$ c = \left[ 0, \; 0.481961087717987, \; 0.854899608262766, \;   1\right]^T. $$
    \item Explicit NGLp4q4s3k3 \cite{const} with $\mathcal{C} = 0.88$:
    $$ \alpha_{[n-1]}^{2,1} = 0.501452936754328, \quad \beta_{[n-1]}^{2,1} = 0.570650194053946, $$
    $$ \alpha_{[n-1]}^{3,2} = 0.571621756632096, \quad \beta_{[n-1]}^{3,2} = 0.65050185658275, $$
    $$ \alpha_{[n-1]}^{4,1} = 0.104408345813576, \quad \beta_{[n-1]}^{4,1} = 0.118816021270125, $$
    $$ \alpha_{[n-1]}^{4,3} = 0.555337610608053, \quad \beta_{[n-1]}^{4,3} = 0.631970603881811, $$
    $$ \alpha_{[n-2]}^{2,1} = 0.461766417377124, \quad \beta_{[n-2]}^{2,1} = 0.260645867579256,  $$
    $$ \alpha_{[n-2]}^{3,1} = 0.365441633624919, \quad \beta_{[n-2]}^{3,1} = 0.31755158184828, $$
    $$ \alpha_{[n-2]}^{4,1} = 0.267081022184514, \quad \beta_{[n-2]}^{4,1} = 0.303936473329277, $$
    $$ \alpha_{[n-3]}^{2,1} = 0.036780645868547, \quad \alpha_{[n-3]}^{3,1} = 0.062936609742985, $$
    $$ \alpha_{[n-3]}^{4,1} = 0.073173021393856,  $$
    $$ c = \left[ 0, \; 0.295968352518983, \; 0.645920534894549, \;   1\right]^T. $$
\end{itemize}

\section*{Appendix B. Tables of errors and orders}\label{sec:A2}
On the next pages, we list the errors and orders of the different methods discussed before.

\FloatBarrier
\begin{table}[!hbtp]
    %\ssmall
    \fontsize{6pt}{6pt}
    \centering
    \renewcommand{\arraystretch}{0.5}
    \begin{tabular}{c||c|c||c|c||c|c}
         & \multicolumn{2}{c||}{ \ssmall NSSPMS(4,2)} & \multicolumn{2}{c||}{ \ssmall NSSRK(2,2)} & \multicolumn{2}{c}{ \ssmall NGLp2q2s3k3}  \\ \hline
         %\\[-1em]
         $\Delta t$ & \ssmall errors & \ssmall orders & \ssmall errors & \ssmall orders & \ssmall errors & \ssmall orders  \\ \hline %\\[-1em]
        $0.01$ & $2.6078 \cdot 10^{-3}$ & - & $1.0868 \cdot 10^{-3}$ & -& $6.7697 \cdot 10^{-4}$ & - \\ \hline %\\[-1em]
        $0.01 \cdot 2^{-1}$ & $8.0817 \cdot 10^{-4}$ & $1.6901$ & $2.5850 \cdot 10^{-4}$ & $2.0719$ & $2.0773 \cdot 10^{-4}$ & $1.7044$ \\ \hline %\\[-1em]
        $0.01 \cdot 2^{-2}$ & $2.2041 \cdot 10^{-4}$ & $1.8745$ & $6.3542 \cdot 10^{-5}$ & $2.0244$ & $5.6810 \cdot 10^{-5}$ & $1.8704$ \\ \hline %\\[-1em]
        $0.01 \cdot 2^{-3}$ & $5.7273 \cdot 10^{-5}$ & $1.9442$ & $1.5786 \cdot 10^{-5}$ & $2.0091$ & $1.4813 \cdot 10^{-5}$ & $1.9392$ \\ \hline %\\[-1em]
        $0.01 \cdot 2^{-4}$ & $1.4580 \cdot 10^{-5}$ & $1.9738$ & $3.9362 \cdot 10^{-6}$ & $2.0037$ & $3.7799 \cdot 10^{-6}$ & $1.9705$ \\ \hline %\\[-1em]
        $0.01 \cdot 2^{-5}$ & $3.6773 \cdot 10^{-6}$ & $1.9873$ & $9.8290 \cdot 10^{-7}$ & $2.0017$ & $9.5451 \cdot 10^{-7}$ & $1.9854$ \\ \hline %\\[-1em]
        $0.01 \cdot 2^{-6}$ & $9.2331 \cdot 10^{-7}$ & $1.9938$ & $2.4559 \cdot 10^{-7}$ & $2.0008$ & $2.3982 \cdot 10^{-7}$ & $1.9928$ \\ \hline %\\[-1em]
        $0.01 \cdot 2^{-7}$ & $2.3132 \cdot 10^{-7}$ & $1.9969$ & $6.1381 \cdot 10^{-8}$ & $2.0004$ & $6.0104 \cdot 10^{-8}$ & $1.9964$ \\ \hline %\\[-1em]
        $0.01 \cdot 2^{-8}$ & $5.7891 \cdot 10^{-8}$ & $1.9985$ & $1.5343 \cdot 10^{-8}$ & $2.0002$ & $1.5044 \cdot 10^{-8}$ & $1.9982$ \\ \hline \hline
    \end{tabular}
    \begin{tabular}{c||c|c||c|c||c|c||c|c||}
         & \multicolumn{2}{c||}{\ssmall NSSPMS(4,3)} & \multicolumn{2}{c||}{ \ssmall NSSPRK(3,3)} & \multicolumn{2}{c||}{\ssmall NGLp3q3s3k3} & \multicolumn{2}{c}{\ssmall NGLp3q2s3k2}  \\ \hline %\\[-1.3em]
        $\Delta t$ & \ssmall errors & \ssmall orders & \ssmall errors & \ssmall orders & \ssmall errors & \ssmall orders & \ssmall errors & \ssmall orders \\ \hline %\\[-1.3em]
        $0.01$ & $3.5018 \cdot 10^{-3}$ & - & $4.6497 \cdot 10^{-5}$ & -& $9.1030 \cdot 10^{-5}$ & - & $2.1214 \cdot 10^{-5}$ & - \\ \hline %\\[-1.3em]
        $0.01 \cdot 2^{-1}$ & $2.5367 \cdot 10^{-4}$ & $3.7871$ & $2.7695 \cdot 10^{-6}$ & $4.0694$ & $1.0622 \cdot 10^{-5}$ & $3.0993$ & $2.5464 \cdot 10^{-6}$ & $3.0585$ \\ \hline %\\[-1.3em]
        $0.01 \cdot 2^{-2}$ & $1.8450 \cdot 10^{-5}$ & $3.7812$ & $1.5602 \cdot 10^{-7}$ & $4.1498$ & $1.2453 \cdot 10^{-6}$ & $3.0924$ & $3.0235 \cdot 10^{-7}$ & $3.0741$ \\ \hline %\\[-1.3em]
        $0.01 \cdot 2^{-3}$ & $1.4071 \cdot 10^{-6}$ & $3.7129$ & $7.6170 \cdot 10^{-9}$ & $4.3564$ & $1.4943 \cdot 10^{-7}$ & $3.0589$ & $3.6503 \cdot 10^{-8}$ & $3.0502$\\ \hline %\\[-1.3em]
        $0.01 \cdot 2^{-4}$ & $1.1750 \cdot 10^{-7}$ & $3.5820$ & $2.0933 \cdot 10^{-10}$ & $5.1853$ & $1.8256 \cdot 10^{-8}$ & $3.0331$ & $4.4728 \cdot 10^{-9}$ & $3.0287$\\ \hline %\\[-1.3em]
        $0.01 \cdot 2^{-5}$ & $1.0971 \cdot 10^{-8}$ & $3.4210$ & $2.0159 \cdot 10^{-11}$ & $3.3763$ & $2.2545 \cdot 10^{-9}$ & $3.0175$ & $5.5325 \cdot 10^{-10}$ & $3.0152$\\ \hline %\\[-1.3em]
        $0.01 \cdot 2^{-6}$ & $1.1369 \cdot 10^{-9}$ & $3.2704$ & $5.2145 \cdot 10^{-12}$ & $1.9508$ & $2.8017 \cdot 10^{-10}$ & $3.0084$ & $6.8869\cdot 10^{-11}$ & $3.0060$\\ \hline %\\[-1.3em]
        $0.01 \cdot 2^{-7}$ & $1.2756 \cdot 10^{-10}$ & $3.1559$ & $4.5119 \cdot 10^{-13}$ & $3.5307$ & $3.5137 \cdot 10^{-11}$ & $2.9952$ & $8.7903 \cdot 10^{-12}$ & $2.9699$\\ \hline %\\[-1.3em]
        $0.01 \cdot 2^{-8}$ & $1.5337 \cdot 10^{-11}$ & $3.0561$ & $6.4748 \cdot 10^{-13}$ & - & $4.8379 \cdot 10^{-12}$ & $2.8605$ & $1.5232 \cdot 10^{-12}$ & $2.5288$\\ \hline
    \end{tabular} %\linebreak
   \begin{tabular}{c||c|c||c|c||c|c||c|c||}
         & \multicolumn{2}{c||}{\ssmall NSSPRK(6,4)} & \multicolumn{2}{c||}{\ssmall NSSPRK(10,4)} & \multicolumn{2}{c||}{ \ssmall NGLp4q3s3k3} & \multicolumn{2}{c}{\ssmall NGLp4q4s3k3} \\ \hline %\\[-1.3em]
        $\Delta t$ & \ssmall errors & \ssmall orders & \ssmall errors & \ssmall orders & \ssmall errors & \ssmall orders & \ssmall errors & \ssmall orders \\ \hline %\\[-1.3em]
        $0.01$ & $3.9944 \cdot 10^{-2}$ & - & $7.2636 \cdot 10^{-8}$ & - & $2.7117 \cdot 10^{-5}$ & - & $6.2326 \cdot 10^{-5}$ & - \\ \hline %\\[-1.3em]
        $0.01 \cdot 2^{-1}$ & $3.5526 \cdot 10^{-3}$ & $3.4910$ & $4.5395 \cdot 10^{-9}$ & $4.0001$ & $1.9945 \cdot 10^{-6}$ & $3.7651$& $4.5357 \cdot 10^{-6}$ & $3.7805$ \\ \hline %\\[-1.3em]
        $0.01 \cdot 2^{-2}$ & $2.4080 \cdot 10^{-4}$ & $3.8830$ & $2.8371 \cdot 10^{-10}$ & $4.0001$ & $1.3406 \cdot 10^{-7}$ & $3.8951$& $3.0362 \cdot 10^{-7}$ & $3.9010$\\ \hline %\\[-1.3em]
        $0.01 \cdot 2^{-3}$ & $1.5667 \cdot 10^{-5}$ & $3.9421$ & $1.7750 \cdot 10^{-11}$ & $3.9985$ & $8.6731 \cdot 10^{-9}$ & $3.9502$& $1.9607 \cdot 10^{-8}$ & $3.9529$\\ \hline %\\[-1.3em]
        $0.01 \cdot 2^{-4}$ & $9.9818 \cdot 10^{-7}$ & $3.9723$ & $1.1421 \cdot 10^{-12}$ & $3.9580$ & $5.5125 \cdot 10^{-10}$ & $3.9758$& $1.2457 \cdot 10^{-9}$ & $3.9763$\\ \hline %\\[-1.3em]
        $0.01 \cdot 2^{-5}$ & $6.2981 \cdot 10^{-8}$ & $3.9863$  & $1.1457 \cdot 10^{-13}$ & $3.3174$ & $3.4720 \cdot 10^{-11}$ & $3.9889$& $7.9655 \cdot 10^{-11}$ & $3.9671$\\ \hline %\\[-1.3em]
        $0.01 \cdot 2^{-6}$ & $3.9566 \cdot 10^{-9}$ & $3.9926$ & $8.0824 \cdot 10^{-14}$ & $5.0343$ & $2.1414 \cdot 10^{-12}$ & $4.0191$& $7.3648 \cdot 10^{-12}$ & $3.4350$\\ \hline %\\[-1.3em]
        $0.01 \cdot 2^{-7}$ & $2.5133 \cdot 10^{-10}$ & $3.9766$  & $2.5313 \cdot 10^{-13}$ & -& $8.1712 \cdot 10^{-14}$ & $4.7119$& $5.1816 \cdot 10^{-12}$ & $5.0723$ \\ \hline %\\[-1.3em]
        $0.01\cdot 2^{-8}$ & $2.2678 \cdot 10^{-11}$ & $3.4702$ & $4.7251 \cdot 10^{-13}$ & - & $1.8474 \cdot 10^{-13}$ & - & $9.7993 \cdot 10^{-12}$ & -\\ \hline
    \end{tabular}
    \caption{The errors and the corresponding orders of the different nonstandard methods while solving the logistic equation with $c=10$, plotted in the left panels of Figure \ref{fig:nonstiff_orders}.}
    \label{tab:nonstiff_errors}
\end{table}
\FloatBarrier

%\FloatBarrier
\begin{table}[!htbp]
    %\ssmall
    \fontsize{6pt}{6pt}
    \centering
    \renewcommand{\arraystretch}{0.5}
    \begin{tabular}{c||c|c||c|c||c|c}
         & \multicolumn{2}{c||}{ \ssmall NSSPMS(4,2)} & \multicolumn{2}{c||}{\ssmall NSSRK(2,2)} & \multicolumn{2}{c}{\ssmall NGLp2q2s3k3}  \\ \hline %\\[-1.3em]
        $\Delta t$ & \ssmall errors & \ssmall orders & \ssmall errors & \ssmall orders & \ssmall errors & \ssmall orders  \\ \hline %\\[-1.3em]
        $0.01$ & $1.3039 \cdot 10^{-1}$ & - & $5.4342 \cdot 10^{-2}$ & -& $3.3848 \cdot 10^{-2}$ & - \\ \hline %\\[-1.3em]
        $0.01 \cdot 2^{-1}$ & $4.0408 \cdot 10^{-2}$ & $1.6901$ & $1.2925 \cdot 10^{-2}$ & $2.0719$ & $1.0386 \cdot 10^{-2}$ & $1.7044$ \\ \hline %\\[-1.3em]
        $0.01 \cdot 2^{-2}$ & $1.1020 \cdot 10^{-2}$ & $1.8745$ & $3.1771 \cdot 10^{-3}$ & $2.0244$ & $2.8405 \cdot 10^{-3}$ & $1.8705$ \\ \hline %\\[-1.3em]
        $0.01 \cdot 2^{-3}$ & $2.8637 \cdot 10^{-3}$ & $1.9442$ & $7.8928 \cdot 10^{-4}$ & $2.0091$ & $7.4069 \cdot 10^{-4}$ & $1.9392$ \\ \hline %\\[-1.3em]
        $0.01 \cdot 2^{-4}$ & $7.2904 \cdot 10^{-4}$ & $1.9738$ & $1.9681 \cdot 10^{-4}$ & $2.0037$ & $1.8899 \cdot 10^{-4}$ & $1.9705$ \\ \hline %\\[-1.3em]
        $0.01 \cdot 2^{-5}$ & $1.8387 \cdot 10^{-4}$ & $1.9873$ & $4.9145 \cdot 10^{-5}$ & $2.0017$ & $4.7726 \cdot 10^{-5}$ & $1.9855$ \\ \hline %\\[-1.3em]
        $0.01 \cdot 2^{-6}$ & $4.6166 \cdot 10^{-5}$ & $1.9938$ & $1.2280 \cdot 10^{-5}$ & $2.0008$ & $1.1991 \cdot 10^{-5}$ & $1.9928$ \\ \hline %\\[-1.3em]
        $0.01 \cdot 2^{-7}$ & $1.1566 \cdot 10^{-5}$ & $1.9969$ & $3.0691 \cdot 10^{-6}$ & $2.0004$ & $3.0052 \cdot 10^{-6}$ & $1.9964$ \\ \hline %\\[-1.3em]
        $0.01 \cdot 2^{-8}$ & $2.8946 \cdot 10^{-6}$ & $1.9985$ & $7.6718 \cdot 10^{-7}$ & $2.0002$ & $7.5221 \cdot 10^{-7}$ & $1.9983$ \\ \hline \hline 
    \end{tabular}
    \begin{tabular}{c||c|c||c|c||c|c||c|c||}
         & \multicolumn{2}{c||}{\ssmall NSSPMS(4,3)} & \multicolumn{2}{c||}{\ssmall NSSPRK(3,3)} & \multicolumn{2}{c||}{\ssmall NGLp3q3s3k3} & \multicolumn{2}{c}{\ssmall NGLp3q2s3k2}  \\ \hline %\\[-1.3em]
        $\Delta t$ & \ssmall errors & \ssmall orders & \ssmall errors & \ssmall orders & \ssmall errors & \ssmall orders & \ssmall errors & \ssmall orders \\ \hline %\\[-1.3em]
        $0.01$ & $1.7509 \cdot 10^{-1}$ & - & $2.3249 \cdot 10^{-3}$ & -& $9.1030 \cdot 10^{-5}$ & - & $1.0607 \cdot 10^{-3}$ & - \\ \hline %\\[-1.3em]
        $0.01 \cdot 2^{-1}$ & $1.2683 \cdot 10^{-2}$ & $3.7871$ & $1.3848 \cdot 10^{-4}$ & $4.0694$ & $1.0622 \cdot 10^{-5}$ & $3.0993$ & $1.2732 \cdot 10^{-4}$ & $3.0585$ \\ \hline %\\[-1.3em]
        $0.01 \cdot 2^{-2}$ & $9.2252 \cdot 10^{-4}$ & $3.7812$ & $7.8010 \cdot 10^{-6}$ & $4.1498$ & $1.2453 \cdot 10^{-6}$ & $3.0924$ & $1.5118 \cdot 10^{-5}$ & $3.0741$ \\ \hline %\\[-1.3em]
        $0.01 \cdot 2^{-3}$ & $7.0355 \cdot 10^{-5}$ & $3.7129$ & $3.8085 \cdot 10^{-7}$ & $4.3564$ & $1.4943 \cdot 10^{-7}$ & $3.0589$ & $1.8251 \cdot 10^{-6}$ & $3.0502$\\ \hline %\\[-1.3em]
        $0.01 \cdot 2^{-4}$ & $5.8752 \cdot 10^{-6}$ & $3.5820$ & $1.0467 \cdot 10^{-8}$ & $5.1853$ & $1.8256 \cdot 10^{-8}$ & $3.0331$ & $2.2364 \cdot 10^{-7}$ & $3.0287$\\ \hline %\\[-1.3em]
        $0.01 \cdot 2^{-5}$ & $5.4853 \cdot 10^{-7}$ & $3.4210$ & $1.0090 \cdot 10^{-9}$ & $3.3749$ & $2.2545 \cdot 10^{-9}$ & $3.0175$ & $2.7662 \cdot 10^{-8}$ & $3.0152$\\ \hline %\\[-1.3em]
        $0.01 \cdot 2^{-6}$ & $5.6847 \cdot 10^{-8}$ & $3.2704$ & $2.6370 \cdot 10^{-10}$ & $1.9359$ & $2.8017 \cdot 10^{-10}$ & $3.0084$ & $3.4440\cdot 10^{-9}$ & $3.0058$\\ \hline %\\[-1.3em]
        $0.01 \cdot 2^{-7}$ & $6.3780 \cdot 10^{-9}$ & $3.1559$ & $2.6034 \cdot 10^{-11}$ & $3.3404$ & $3.5137 \cdot 10^{-11}$ & $2.9952$ & $4.4059 \cdot 10^{-10}$ & $2.9666$\\ \hline %\\[-1.3em]
        $0.01 \cdot 2^{-8}$ & $7.6687 \cdot 10^{-10}$ & $3.0560$ & $2.9274 \cdot 10^{-11}$ & - & $4.8379 \cdot 10^{-12}$ & $2.8605$ & $7.6170 \cdot 10^{-11}$ & $2.5321$\\ \hline
    \end{tabular} %\linebreak
   \begin{tabular}{c||c|c||c|c||c|c||c|c||}
         & \multicolumn{2}{c||}{ \ssmall NSSPRK(6,4)} & \multicolumn{2}{c||}{\ssmall NSSPRK(10,4)} & \multicolumn{2}{c||}{\ssmall NGLp4q3s3k3} & \multicolumn{2}{c}{\ssmall NGLp4q4s3k3} \\ \hline %\\[-1.3em]
        $\Delta t$ & \ssmall errors & \ssmall orders & \ssmall errors & \ssmall orders & \ssmall errors & \ssmall orders & \ssmall errors & \ssmall orders \\ \hline %\\[-1.3em]
        $0.01$ & $1.9972 \cdot 10^{0}$ & - & $3.6318 \cdot 10^{-6}$ & - & $1.3558 \cdot 10^{-3}$ & - & $3.1163 \cdot 10^{-3}$ & - \\ \hline %\\[-1.3em]
        $0.01 \cdot 2^{-1}$ & $1.7763 \cdot 10^{-1}$ & $3.4910$ & $2.2698 \cdot 10^{-7}$ & $4.0001$ & $9.9726 \cdot 10^{-5}$ & $3.7651$& $2.2678 \cdot 10^{-4}$ & $3.7805$ \\ \hline %\\[-1.3em]
        $0.01 \cdot 2^{-2}$ & $1.2040 \cdot 10^{-2}$ & $3.8830$ & $1.4186 \cdot 10^{-8}$ & $4.0000$ & $6.7031 \cdot 10^{-6}$ & $3.8951$& $1.5181 \cdot 10^{-5}$ & $3.9010$\\ \hline %\\[-1.3em]
        $0.01 \cdot 2^{-3}$ & $7.8333 \cdot 10^{-4}$ & $3.9421$ & $8.8710 \cdot 10^{-10}$ & $3.9992$ & $4.3365 \cdot 10^{-7}$ & $3.9502$& $9.8034 \cdot 10^{-7}$ & $3.9529$\\ \hline %\\[-1.3em]
        $0.01 \cdot 2^{-4}$ & $4.9909 \cdot 10^{-5}$ & $3.9723$ & $5.6502 \cdot 10^{-11}$ & $3.9727$ & $2.7563 \cdot 10^{-8}$ & $3.9757$& $6.2285 \cdot 10^{-8}$ & $3.9763$\\ \hline %\\[-1.3em]
        $0.01 \cdot 2^{-5}$ & $3.1490 \cdot 10^{-6}$ & $3.9863$ & $6.2527 \cdot 10^{-12}$ & $3.1757$ & $1.7360 \cdot 10^{-9}$ & $3.9889$& $3.9822 \cdot 10^{-9}$ & $3.9672$\\ \hline %\\[-1.3em]
        $0.01 \cdot 2^{-6}$ & $1.9783 \cdot 10^{-7}$ & $3.9926$ & $4.1495 \cdot 10^{-12}$ & $5.9153$ & $1.0749 \cdot 10^{-10}$ & $4.0135$& $3.6761 \cdot 10^{-10}$ & $3.4373$\\ \hline %\\[-1.3em]
        $0.01 \cdot 2^{-7}$ & $1.2567 \cdot 10^{-8}$ & $3.9765$ & $1.2449 \cdot 10^{-11}$ & -& $1.3642 \cdot 10^{-12}$ & $6.3000$& $2.5824 \cdot 10^{-10}$ & $5.0945$ \\ \hline %\\[-1.3em]
        $0.01\cdot 2^{-8}$ & $1.1342 \cdot 10^{-9}$ & $3.4699$ & $2.1203 \cdot 10^{-11}$ & - & $9.2654 \cdot 10^{-12}$ & - & $4.9062 \cdot 10^{-10}$ & -\\ \hline
    \end{tabular}
    \caption{The errors and the corresponding orders of the different nonstandard methods while solving the logistic equation with $c=500$, plotted in the right panels of Figure \ref{fig:nonstiff_orders}.}
    \label{tab:stiff_errors}
\end{table}
%\FloatBarrier

%\enlargethispage{3cm}
%\FloatBarrier
\begin{table}[!htbp]
    %\ssmall
    \fontsize{6pt}{6pt}
    \centering
    \renewcommand{\arraystretch}{0.5}
    \begin{tabular}{c||c|c||c|c||c|c}
         & \multicolumn{2}{c||}{ \ssmall NSSPMS(4,2)} & \multicolumn{2}{c||}{\ssmall NSSRK(2,2)} & \multicolumn{2}{c}{\ssmall NGLp2q2s3k3}  \\ \hline %\\[-1.3em]
        $\Delta t$ & \ssmall errors & \ssmall orders & \ssmall errors & \ssmall orders & \ssmall errors & \ssmall orders  \\ \hline %\\[-1.3em]
        $0.05$ & $2.4440 \cdot 10^{-3}$ & - & $4.2992 \cdot 10^{-4}$ & -& $1.2024 \cdot 10^{-4}$ & - \\ \hline %\\[-1.3em]
        $0.05 \cdot 2^{-1}$ & $2.6209 \cdot 10^{-4}$ & $3.2211$ & $4.8459 \cdot 10^{-5}$ & $3.1492$ & $2.8221 \cdot 10^{-5}$ & $2.0910$ \\ \hline %\\[-1.3em]
        $0.05 \cdot 2^{-2}$ & $3.6849 \cdot 10^{-5}$ & $2.8303$ & $1.1345 \cdot 10^{-5}$ & $2.0947$ & $7.0190 \cdot 10^{-6}$ & $2.0075$ \\ \hline %\\[-1.3em]
        $0.05 \cdot 2^{-3}$ & $6.9473 \cdot 10^{-6}$ & $2.4071$ & $2.7744 \cdot 10^{-6}$ & $2.0318$ & $1.7588 \cdot 10^{-6}$ & $1.9967$ \\ \hline %\\[-1.3em]
        $0.05 \cdot 2^{-4}$ & $1.7148 \cdot 10^{-6}$ & $2.0184$ & $6.8803 \cdot 10^{-7}$ & $2.0116$ & $4.4062 \cdot 10^{-7}$ & $1.9970$ \\ \hline %\\[-1.3em]
        $0.05 \cdot 2^{-5}$ & $4.2660 \cdot 10^{-7}$ & $2.0071$ & $1.7145 \cdot 10^{-7}$ & $2.0047$ & $1.1029 \cdot 10^{-7}$ & $1.9981$ \\ \hline %\\[-1.3em]
        $0.05 \cdot 2^{-6}$ & $1.0643 \cdot 10^{-7}$ & $2.0030$ & $4.2800 \cdot 10^{-8}$ & $2.0021$ & $2.7592 \cdot 10^{-8}$ & $1.9990$ \\ \hline %\\[-1.3em]
        $0.05 \cdot 2^{-7}$ & $2.6581 \cdot 10^{-8}$ & $2.0014$ & $1.0693 \cdot 10^{-8}$ & $2.0010$ & $6.9006 \cdot 10^{-9}$ & $1.9995$ \\ \hline %\\[-1.3em]
        $0.05 \cdot 2^{-8}$ & $6.6425 \cdot 10^{-9}$ & $2.0006$ & $2.6723 \cdot 10^{-9}$ & $2.0005$ & $1.7255 \cdot 10^{-9}$ & $1.9997$ \\ \hline \hline 
    \end{tabular}
    \begin{tabular}{c||c|c||c|c||c|c||c|c||}
         & \multicolumn{2}{c||}{\ssmall NSSPMS(4,3)} & \multicolumn{2}{c||}{\ssmall NSSPRK(3,3)} & \multicolumn{2}{c||}{\ssmall NGLp3q3s3k3} & \multicolumn{2}{c}{\ssmall NGLp3q2s3k2}  \\ \hline %\\[-1.3em]
        $\Delta t$ & \ssmall errors & \ssmall orders & \ssmall errors & \ssmall orders & \ssmall errors & \ssmall orders & \ssmall errors & \ssmall orders \\ \hline %\\[-1.3em]
        $0.05$ & $2.0610 \cdot 10^{-2}$ & - & $3.1524 \cdot 10^{-4}$ & -& $2.0960 \cdot 10^{-4}$ & - & $7.2711 \cdot 10^{-5}$ & - \\ \hline %\\[-1.3em]
        $0.05 \cdot 2^{-1}$ & $1.5549 \cdot 10^{-3}$ & $3.7284$ & $1.9170 \cdot 10^{-5}$ & $4.0395$ & $1.0690 \cdot 10^{-5}$ & $4.2933$ & $2.8249 \cdot 10^{-6}$ & $4.6859$ \\ \hline %\\[-1.3em]
        $0.05 \cdot 2^{-2}$ & $9.9708 \cdot 10^{-5}$ & $3.9630$ & $1.1269 \cdot 10^{-6}$ & $4.0885$ & $4.4308 \cdot 10^{-7}$ & $4.5925$ & $9.8625 \cdot 10^{-8}$ & $4.8401$ \\ \hline %\\[-1.3em]
        $0.05 \cdot 2^{-3}$ & $6.2039 \cdot 10^{-6}$ & $4.0065$ & $6.1496 \cdot 10^{-8}$ & $4.1957$ & $3.1221 \cdot 10^{-8}$ & $3.8270$ & $9.9297 \cdot 10^{-9}$ & $3.3121$\\ \hline %\\[-1.3em]
        $0.05 \cdot 2^{-4}$ & $3.7664 \cdot 10^{-7}$ & $4.0419$ & $2.7269 \cdot 10^{-9}$ & $4.4952$ & $4.7503 \cdot 10^{-9}$ & $2.7164$ & $1.4435 \cdot 10^{-9}$ & $2.7822$\\ \hline %\\[-1.3em]
        $0.05 \cdot 2^{-5}$ & $2.1926 \cdot 10^{-8}$ & $4.1025$ & $1.9683 \cdot 10^{-10}$ & $3.7923$ & $8.1874 \cdot 10^{-10}$ & $2.5365$ & $2.2579 \cdot 10^{-10}$ & $2.6765$\\ \hline %\\[-1.3em]
        $0.05 \cdot 2^{-6}$ & $1.1617 \cdot 10^{-9}$ & $4.2384$ & $2.7238 \cdot 10^{-11}$ & $2.8532$ & $1.1638 \cdot 10^{-10}$ & $2.8146$ & $3.0952 \cdot 10^{-11}$ & $2.8669$\\ \hline %\\[-1.3em]
        $0.05 \cdot 2^{-7}$ & $5.0217 \cdot 10^{-11}$ & $4.5318$ & $3.4914 \cdot 10^{-12}$ & $2.9637$ & $1.5425 \cdot 10^{-11}$ & $2.9154$ & $4.0390 \cdot 10^{-12}$ & $2.9379$\\ \hline %\\[-1.3em]
        $0.05 \cdot 2^{-8}$ & $2.9863 \cdot 10^{-12}$ & $4.0717$ & $5.0260 \cdot 10^{-13}$ & $2.7963$ & $1.9964 \cdot 10^{-12}$ & $2.9498$ & $5.2758 \cdot 10^{-13}$ & $2.9366$\\ \hline
    \end{tabular} %\linebreak
   \begin{tabular}{c||c|c||c|c||c|c||c|c||}
         & \multicolumn{2}{c||}{ \ssmall NSSPRK(6,4)} & \multicolumn{2}{c||}{\ssmall NSSPRK(10,4)} & \multicolumn{2}{c||}{\ssmall NGLp4q3s3k3} & \multicolumn{2}{c}{\ssmall NGLp4q4s3k3} \\ \hline %\\[-1.3em]
        $\Delta t$ & \ssmall errors & \ssmall orders & \ssmall errors & \ssmall orders & \ssmall errors & \ssmall orders & \ssmall errors & \ssmall orders \\ \hline %\\[-1.3em]
        $0.05$ & $1.1739 \cdot 10^{-1}$ & - & $2.4392 \cdot 10^{-7}$ & - & $4.7556 \cdot 10^{-4}$ & - & $2.8627 \cdot 10^{-4}$ & - \\ \hline %\\[-1.3em]
        $0.05 \cdot 2^{-1}$ & $2.1800 \cdot 10^{-2}$ & $2.4289$ & $1.5246 \cdot 10^{-8}$ & $3.9999$ & $3.1925 \cdot 10^{-5}$ & $3.8969$& $1.6741 \cdot 10^{-5}$ & $4.0959$ \\ \hline %\\[-1.3em]
        $0.05 \cdot 2^{-2}$ & $1.6444 \cdot 10^{-3}$ & $3.7287$ & $9.5298 \cdot 10^{-10}$ & $3.9999$ & $2.0617 \cdot 10^{-6}$ & $3.9528$& $1.0093 \cdot 10^{-6}$ & $4.0520$\\ \hline %\\[-1.3em]
        $0.05 \cdot 2^{-3}$ & $1.0584 \cdot 10^{-4}$ & $3.9576$ & $5.9574 \cdot 10^{-11}$ & $3.9997$ & $1.3092 \cdot 10^{-7}$ & $3.9770$& $6.1920 \cdot 10^{-8}$ & $4.0268$\\ \hline %\\[-1.3em]
        $0.05 \cdot 2^{-4}$ & $6.6819 \cdot 10^{-6}$ & $3.9855$ & $3.7333 \cdot 10^{-12}$ & $3.9962$ & $8.2472 \cdot 10^{-9}$ & $3.9886$& $3.8337 \cdot 10^{-9}$ & $4.0136$\\ \hline %\\[-1.3em]
        $0.05 \cdot 2^{-5}$ & $4.1959 \cdot 10^{-7}$ & $3.9932$ & $2.4303 \cdot 10^{-13}$ & $3.9413$ & $5.1742 \cdot 10^{-10}$ & $3.9945$& $2.3848 \cdot 10^{-10}$ & $4.0068$\\ \hline %\\[-1.3em]
        $0.05 \cdot 2^{-6}$ & $2.6286 \cdot 10^{-8}$ & $3.9966$ & $5.3291 \cdot 10^{-14}$ & $2.1892$ & $3.2283 \cdot 10^{-11}$ & $4.0025$& $1.4883 \cdot 10^{-11}$ & $4.0022$\\ \hline %\\[-1.3em]
        $0.05 \cdot 2^{-7}$ & $1.6446 \cdot 10^{-9}$ & $3.9985$ & $6.7391 \cdot 10^{-14}$ & -& $1.7726 \cdot 10^{-12}$ & $4.1868$& $9.4275 \cdot 10^{-13}$ & $3.9806$ \\ \hline %\\[-1.3em]
        $0.05\cdot 2^{-8}$ & $1.0248 \cdot 10^{-10}$ & $4.0043$ & $1.0170 \cdot 10^{-13}$ & - & $1.5271 \cdot 10^{-12}$ & - & $8.1379 \cdot 10^{-14}$ & $3.5341$\\ \hline
    \end{tabular}
    \caption{The errors and the corresponding orders of the different nonstandard methods while solving the SEIR system with $\Pi=0$, plotted in Figure \ref{fig:seir_order}.}
    \label{tab:seir_errors}
\end{table}
%\FloatBarrier

%%=============================================%%
%% For submissions to Nature Portfolio Journals %%
%% please use the heading ``Extended Data''.   %%
%%=============================================%%

%%=============================================================%%
%% Sample for another appendix section			       %%
%%=============================================================%%

%% \section{Example of another appendix section}\label{secA2}%
%% Appendices may be used for helpful, supporting or essential material that would otherwise 
%% clutter, break up or be distracting to the text. Appendices can consist of sections, figures, 
%% tables and equations etc.

%\end{appendices}

%%===========================================================================================%%
%% If you are submitting to one of the Nature Portfolio journals, using the eJP submission   %%
%% system, please include the references within the manuscript file itself. You may do this  %%
%% by copying the reference list from your .bbl file, paste it into the main manuscript .tex %%
%% file, and delete the associated \verb+\bibliography+ commands.                            %%
%%===========================================================================================%%


\begin{thebibliography}{58}
\expandafter\ifx\csname natexlab\endcsname\relax\def\natexlab#1{#1}\fi
\providecommand{\bibinfo}[2]{#2}
\ifx\xfnm\relax \def\xfnm[#1]{\unskip,\space#1}\fi

%Type = Article
\bibitem[{Hadjimichael, et al. (2016)}]{hadjimichael}
\bibinfo{author}{Y. Hadjimichael}, \bibinfo{author}{D. Ketchenson}, \bibinfo{author}{L. Lóczi}, \bibinfo{author}{A. Németh},
\newblock \bibinfo{title}{Strong stability preserving explicit linear multistep methods with variable step size},
\newblock \bibinfo{journal}{SIAM J. Num. Anal.,} \bibinfo{volume}{54(5)} (\bibinfo{year}{2016}) \bibinfo{pages}{2799--2832}. \url{https://doi.org/10.1137/15M101717X}

%Type = Article
\bibitem[{Nüßlein, et al. (2021)}]{nusslein}
\bibinfo{author}{S. Nüßlein}, \bibinfo{author}{H. Ranocha}, \bibinfo{author}{D. Ketcheson},
\newblock \bibinfo{title}{Positivity-preserving adaptive Runge–Kutta methods},
\newblock \bibinfo{journal}{Com. Appl. Math. Comp. Sci.}, \bibinfo{volume}{16(2)} (\bibinfo{year}{2021}) \bibinfo{pages}{155--179}. \url{https://doi.org/10.2140/camcos.2021.16.155}

%\FloatBarrier



%\FloatBarrier

%Type = Article
\bibitem[{Arévalo, et al. (2020)}]{arevalo}
\bibinfo{author}{C. Arévalo}, \bibinfo{author}{G. Söderlind}, \bibinfo{author}{Y. Hadjimichael}, \bibinfo{author}{I. Fekete},
\newblock \bibinfo{title}{Local error estimation and step size control in adaptive linear multistep methods},
\newblock \bibinfo{journal}{Num. Alg.} \bibinfo{volume}{86(2)} (\bibinfo{year}{2021}) \bibinfo{pages}{537--563}. \url{https://doi.org/10.1007/s11075-020-00900-1}

%Type = Article
\bibitem[{Charous and Lermusiaux (2024)}]{charous}
\bibinfo{author}{A. Charous}, \bibinfo{author}{P. Lermusiaux}, 
\newblock \bibinfo{title}{Stable rank-adaptive dynamically orthogonal Runge–Kutta schemes},
\newblock \bibinfo{journal}{SIAM J. Sci. Comp.} \bibinfo{volume}{46(1)} (\bibinfo{year}{2025}) \bibinfo{pages}{A529--A560}. \url{https://doi.org/10.1137/22M1534948}

\needspace{4\baselineskip}
%Type = Article
\bibitem[{Morani, et al. (2025)}]{morani}
\bibinfo{author}{A. H. Morani}, \bibinfo{author}{M. M. Saeed}, \bibinfo{author}{M. Aslam}, \bibinfo{author}{A. Mehmoud}, \bibinfo{author}{A. Shokri}, \bibinfo{author}{H. Mukalazi},
\newblock \bibinfo{title}{Local and global stability analysis of HIV/AIDS by using a nonstandard finite difference scheme},
\newblock \bibinfo{journal}{Sci. Rep.} \bibinfo{volume}{15(1)} (\bibinfo{year}{2025}) \bibinfo{pages}{4502}. \nopagebreak \linebreak \url{https://doi.org/10.1038/s41598-024-82872-z}


%Type = Article
\bibitem[{Zinihi, et al. (2025)}]{zinihi}
\bibinfo{author}{A. Zinihi}, \bibinfo{author}{M. Ehrhardt}, \bibinfo{author}{M. R. S. Ammi}, 
\newblock \bibinfo{title}{A nonstandard finite difference scheme for an SEIQR epidemiological PDE model},
\newblock \bibinfo{journal}{Appl. Math. Comp.} \bibinfo{volume}{520} (\bibinfo{year}{2026}) \bibinfo{pages}{129953}. \url{https://doi.org/10.1016/j.amc.2026.129953}

%Type = Article
\bibitem[{Marime, et al. (2026)}]{marime}
\bibinfo{author}{C. B. Marime}, \bibinfo{author}{J. B. Munyakazi},
\newblock \bibinfo{title}{A second-order nonstandard finite difference method for a malaria propagation model with control.},
\newblock \bibinfo{journal}{AppliedMath} \bibinfo{volume}{6(3)} (\bibinfo{year}{2026}) \bibinfo{pages}{36}. \url{https://doi.org/10.3390/appliedmath6030036}

%Type = Article
\bibitem[{Tassé, et al. (2025)}]{tasse}
\bibinfo{author}{A. J. O. Tassé}, \bibinfo{author}{V. B. Kubalasa}, \bibinfo{author}{B. Tsanou},
\newblock \bibinfo{title}{Nonstandard finite difference schemes for some epidemic optimal control problems},
\newblock \bibinfo{journal}{Math. Comp. Sim.} \bibinfo{volume}{228} (\bibinfo{year}{2025}) \bibinfo{pages}{1--22}. \url{https://doi.org/10.1016/j.matcom.2024.08.028}

%%%%%%%%%%%%%%%%%%%%%%%%%%%%%%%%%%%%%%%%%%%%%%%%%%%%%%%%%%%%%%%%%%%%%%%%%%%%%%%%%%%%

%\FloatBarrier

%Type = Article
\bibitem[{Wacker (2025)}]{wacker}
\bibinfo{author}{B. Wacker}, 
\newblock \bibinfo{title}{Qualitative Study of a Dynamical System for Computer Virus Propagation—A Nonstandard Finite-Difference-Methodological View},
\newblock \bibinfo{journal}{Math. Meth. Appl. Sci.} \bibinfo{volume}{48(8)} (\bibinfo{year}{2025}) \bibinfo{pages}{9272--9291}. \url{https://doi.org/10.1002/mma.10798}

%Type = Article
\bibitem[{Hoang (2026)}]{hoang26}
\bibinfo{author}{M. T. Hoang}, 
\newblock \bibinfo{title}{Mathematical analysis and numerical simulation of a generalized epidemiological model for malware propagation},
\newblock \bibinfo{journal}{Nonlin. Dyn.} \bibinfo{volume}{114(1)} (\bibinfo{year}{2026}) \bibinfo{pages}{53}. \url{https://doi.org/10.1007/s11071-025-11912-8}


%%%%%%%%%%%%%%%%%%%%%%%%%%%%%%%%%%%%%%%%%%%%%%%%%%%%%%%%%%%%%%%%%%%%%%%%%%%%%%%%%%%%

%Type = Article
\bibitem[{Faheem and Ghosh (2025)}]{faheem}
\bibinfo{author}{M. Faheem}, \bibinfo{author}{B. Ghosh}, 
\newblock \bibinfo{title}{Dynamics of a delayed discrete-time predator prey model proposed from a nonstandard finite difference scheme},
\newblock \bibinfo{journal}{J. Comp. Appl. Math.} \bibinfo{volume}{458} (\bibinfo{year}{2025}) \bibinfo{pages}{116346}. \url{https://doi.org/10.1016/j.cam.2024.116346}

%Type = Article
\bibitem[{Eskandari et al (2025)}]{eskandari}
\bibinfo{author}{Z. Eskandari}, \bibinfo{author}{Z. Avazzadeh}, \bibinfo{author}{R. Khoshsiar Ghaziani}, \bibinfo{author}{B. Li},
\newblock \bibinfo{title}{Dynamics and bifurcations of a discrete‐time Lotka–Volterra model using nonstandard finite difference discretization method},
\newblock \bibinfo{journal}{Math. Meth. Appl. Sci.} \bibinfo{volume}{48(7)} (\bibinfo{year}{2025}) \bibinfo{pages}{7197--7212}. \url{https://doi.org/10.1002/mma.8859}

%Type = Article
\bibitem[{Özdoğan and Arslan (2026)}]{ozdogan}
\bibinfo{author}{N. Özdoğan}, \bibinfo{author}{B. Arslan}, 
\newblock \bibinfo{title}{ Nonstandard Finite Difference Theta Approaches to the Predator–Prey System},
\newblock \bibinfo{journal}{Math. Meth. Appl. Sci.} \bibinfo{volume}{49(8)} (\bibinfo{year}{2026}) \bibinfo{pages}{8548--8561}. \url{https://doi.org/10.1002/mma.70486}

%%%%%%%%%%%%%%%%%%%%%%%%%%%%%%%%%%%%%%%%%%%%%%%%%%%%%%%%%%%%%%%%%%%%%%%%%%%%%%%%%%%%

%Type = Article
\bibitem[{Mohye, et al (2025)}]{mohye}
\bibinfo{author}{M. A. Mohye}, \bibinfo{author}{J. B. Munyakazi}, \bibinfo{author}{T. G. Dinka}, \bibinfo{author}{Y. H. Haji}, \bibinfo{author}{A. N. Ware}, \bibinfo{author}{J. M. Ahmed}, 
\newblock \bibinfo{title}{A new parameter-convergent nonstandard finite difference method for two-parameter singularly perturbed problems},
\newblock \bibinfo{journal}{Disc. Appl. Sci.} \bibinfo{volume}{7(11)} (\bibinfo{year}{2025}) \bibinfo{pages}{1248}. \url{https://doi.org/10.1007/s42452-025-07721-8}


%Type = Article
\bibitem[{Fazayel, et al (2026)}]{fazayel}
\bibinfo{author}{M. Fazayel}, \bibinfo{author}{F. Fakhar-Izadi}, \bibinfo{author}{M. Dehghan}, \bibinfo{author}{M. Abbaszadeh}, 
\newblock \bibinfo{title}{Numerical simulation of Klein–Gordon–Zakharov equations using conservative nonstandard finite difference method combined with scalar auxiliary variable scheme},
\newblock \bibinfo{journal}{Comp. Appl. Math.} \bibinfo{volume}{45(3)} (\bibinfo{year}{2026}) \bibinfo{pages}{110}. \url{https://doi.org/10.1007/s40314-025-03518-y}

%Type = Article
\bibitem[{Rehman, et al (2025)}]{rehman}
\bibinfo{author}{B. Rehman}, \bibinfo{author}{M. A. B. Iqbal}, \bibinfo{author}{A. Khan}, \bibinfo{author}{D. K. Almutairi}, \bibinfo{author}{T. Abdeljawad}, 
\newblock \bibinfo{title}{Nonstandard Finite Difference Predictor Corrector Method for Quadratic Riccati Differential Equation},
\newblock \bibinfo{journal}{Eur. J. Pure Appl. Math.} \bibinfo{volume}{18(2)} (\bibinfo{year}{2025}) \bibinfo{pages}{5703}. \url{https://doi.org/10.29020/nybg.ejpam.v18i2.5703}

%%%%%%%%%%%%%%%%%%%%%%%%%%%%%%%%%%%%%%%%%%%%%%%%%%%%%%%%%%%%%%%%%%%%%%%%%%%%%%%%%%%%

%Type = Book
\bibitem[{Mickens(1993)}]{mickens}
\bibinfo{author}{R.~E. Mickens}, \bibinfo{title}{Nonstandard finite difference models of differential equations}, \bibinfo{publisher}{World Scientific}, \bibinfo{address}{Singapore}, \bibinfo{year}{1993}. \url{https://doi.org/10.1142/2081}

%Type = Article
\bibitem[{Dimitrov and Kojouharov(2005)}]{dimitrov}
\bibinfo{author}{D.~T. Dimitrov}, \bibinfo{author}{H.~V. Kojouharov},
\newblock \bibinfo{title}{Nonstandard finite-difference schemes for general two-dimensional autonomous dynamical systems},
\newblock \bibinfo{journal}{Appl. Math. Lett.} \bibinfo{volume}{18} (\bibinfo{year}{2005}) \bibinfo{pages}{769--774}. \url{https://doi.org/10.1016/j.aml.2004.08.011}

%Type = Article
\bibitem[{Dimitrov and Kojouharov(2006)}]{dimitrov2}
\bibinfo{author}{D.~T. Dimitrov}, \bibinfo{author}{H.~V. Kojouharov},
\newblock \bibinfo{title}{Positive and elementary stable nonstandard numerical methods with applications to predator--prey models},
\newblock \bibinfo{journal}{J. Comp. Appl. Math.} \bibinfo{volume}{189} (\bibinfo{year}{2006}) \bibinfo{pages}{98--108}. \url{https://doi.org/10.1016/j.cam.2005.04.003}

%Type = Article
\bibitem[{Gupta et~al.(2021)Gupta, Slezak, Alalhareth, Roy, and Kojouharov}]{gupta2}
\bibinfo{author}{M.~Gupta}, \bibinfo{author}{J.~M. Slezak}, \bibinfo{author}{F.~K. Alalhareth}, \bibinfo{author}{S.~Roy}, \bibinfo{author}{H.~V. Kojouharov},
\newblock \bibinfo{title}{Second-order modified nonstandard Runge-Kutta and theta methods for one-dimensional autonomous differential equations},
\newblock \bibinfo{journal}{Applic. Appl. Math. Intern. J. (AAM)} \bibinfo{volume}{16} (\bibinfo{year}{2021}) \bibinfo{pages}{1}. \bibinfo{note}{https://digitalcommons.pvamu.edu/aam/vol16/iss2/1}. (Accessed 8 July 2026)

%Type = Article
\bibitem[{Kojouharov et~al.(2021)Kojouharov, Roy, Gupta, Alalhareth, and Slezak}]{kojouharov}
\bibinfo{author}{H.~V. Kojouharov}, \bibinfo{author}{S.~Roy}, \bibinfo{author}{M.~Gupta}, \bibinfo{author}{F.~Alalhareth}, \bibinfo{author}{J.~M. Slezak},
\newblock \bibinfo{title}{A second-order modified nonstandard theta method for one-dimensional autonomous differential equations},
\newblock \bibinfo{journal}{Appl. Math. Lett.} \bibinfo{volume}{112} (\bibinfo{year}{2021}) \bibinfo{pages}{106775}. \url{https://doi.org/10.1016/j.aml.2020.106775}

%Type = Article
\bibitem[{Dimitrov and Kojouharov(2007)}]{dimitrov3}
\bibinfo{author}{D.~T. Dimitrov}, \bibinfo{author}{H.~V. Kojouharov},
\newblock \bibinfo{title}{Stability-preserving finite-difference methods for general multi-dimensional autonomous dynamical systems},
\newblock \bibinfo{journal}{Int. J. Numer. Anal. Model} \bibinfo{volume}{4} (\bibinfo{year}{2007}) \bibinfo{pages}{282--292}. \bibinfo{note}{https://www.math.ualberta.ca/ijnam/Volume-4-2007/No-2-07/2007-02-06.pdf}. \rev{(Accessed 8 July 2026)}

%Type = Inproceedings
\bibitem[{Anguelov et~al.(2003)Anguelov, Kama, and Lubuma}]{anguelov03}
\bibinfo{author}{R.~Anguelov}, \bibinfo{author}{P.~Kama}, \bibinfo{author}{J.~Lubuma},
\newblock \bibinfo{title}{Nonstandard theta method and related discrete schemes for the reaction--diffusion equation},
\newblock in: \bibinfo{booktitle}{Proceedings of the International Conference of Computational Methods in Sciences and Engineering, World Scientific, Singapore}, \rev{Vol.} \bibinfo{volume}{1}, \rev{\bibinfo{year}{2003}}, pp. \bibinfo{pages}{24--27}. \url{https://doi.org/10.1142/9789812704658_0008}

%Type = Article
\bibitem[{Anguelov et~al.(2005)Anguelov, Kama, and Lubuma}]{anguelov2}
\bibinfo{author}{R.~Anguelov}, \bibinfo{author}{P.~Kama}, \bibinfo{author}{J.-S. Lubuma},
\newblock \bibinfo{title}{On non-standard finite difference models of reaction--diffusion equations},
\newblock \bibinfo{journal}{J. Comp. Appl. Math.} \bibinfo{volume}{175} (\bibinfo{year}{2005}) \bibinfo{pages}{11--29}. \url{https://doi.org/10.1016/j.cam.2004.06.002}

%Type = Incollection
\bibitem[{Lubuma and Patidar(2005)}]{lubuma}
\bibinfo{author}{J.~M.-S. Lubuma}, \bibinfo{author}{K.~C. Patidar},
\newblock \bibinfo{title}{Contributions to the theory of non-standard finite difference methods and applications to singular perturbation problems},
\newblock in: \bibinfo{booktitle}{Advances in the Applications of Nonstandard Finite Difference Schemes}, \bibinfo{publisher}{World Scientific}, \bibinfo{address}{Singapore}, \bibinfo{year}{2005}, pp. \bibinfo{pages}{513--560}. \url{https://doi.org/10.1142/9789812703316_0012}

%Type = Inproceedings
\bibitem[{Wacker(2024)}]{wacker2}
\bibinfo{author}{B. Wacker},
\newblock \bibinfo{title}{Construction of High-Order Non-Standard Finite-Difference-Methods for Epidemiological Models},
\newblock in: \bibinfo{booktitle}{ International Conference on Mathematical Modeling in Physical Sciences}, \bibinfo{pages}{693--706}, \bibinfo{year}{2024}. \url{https://doi.org/10.1007/978-3-032-00914-2_46}

%Type = Article
\bibitem[{Bassenne et~al.(2021)Bassenne, Fu, and Mani}]{bassenne}
\bibinfo{author}{M.~Bassenne}, \bibinfo{author}{L.~Fu}, \bibinfo{author}{A.~Mani},
\newblock \bibinfo{title}{Time-accurate and highly-stable explicit operators for stiff differential equations},
\newblock \bibinfo{journal}{J. Comp. Phys.} \bibinfo{volume}{424} (\bibinfo{year}{2021}) \bibinfo{pages}{109847}. \url{https://doi.org/10.1016/j.jcp.2020.109847}

%Type = Article
\bibitem[{Gonz{\'a}lez-Parra et~al.(2010)Gonz{\'a}lez-Parra, Arenas, and Chen-Charpentier}]{gonzalez}
\bibinfo{author}{G.~Gonz{\'a}lez-Parra}, \bibinfo{author}{A.~J. Arenas}, \bibinfo{author}{B.~M. Chen-Charpentier},
\newblock \bibinfo{title}{Combination of nonstandard schemes and richardson’s extrapolation to improve the numerical solution of population models},
\newblock \bibinfo{journal}{Math. Comp. Model.} \bibinfo{volume}{52} (\bibinfo{year}{2010}) \bibinfo{pages}{1030--1036}. \url{https://doi.org/10.1016/j.mcm.2010.03.015}

%Type = Article
\bibitem[{Alalhareth et~al.(2024)Alalhareth, Gupta, Kojouharov, and Roy}]{alal}
\bibinfo{author}{F.~K. Alalhareth}, \bibinfo{author}{M.~Gupta}, \bibinfo{author}{H.~V. Kojouharov}, \bibinfo{author}{S.~Roy},
\newblock \bibinfo{title}{Second-order modified nonstandard explicit euler and explicit runge--kutta methods for n-dimensional autonomous differential equations},
\newblock \bibinfo{journal}{Comp.} \bibinfo{volume}{12} (\bibinfo{year}{2024}) \bibinfo{pages}{183}. \url{https://doi.org/10.3390/computation12090183}

%Type = Article
\bibitem[{Alalhareth et~al.(2023)Alalhareth, Gupta, Roy, and Kojouharov}]{alal23}
\bibinfo{author}{F.~K. Alalhareth}, \bibinfo{author}{M.~Gupta}, \bibinfo{author}{S.~Roy}, \bibinfo{author}{H.~V. Kojouharov},
\newblock \bibinfo{title}{Second-order modified positive and elementary stable nonstandard numerical methods for n-dimensional autonomous differential equations},
\newblock \bibinfo{journal}{Math. Meth. Appl. Sci.} \rev{\bibinfo{volume}{48.7}} (\bibinfo{year}{2025}) \rev{\bibinfo{pages}{8037--8057}}. \url{https://doi.org/10.1002/mma.9560}

%Type = Inproceedings
\bibitem[{Gupta et~al.(2020)Gupta, Slezak, Alalhareth, Roy, and Kojouharov}]{gupta}
\bibinfo{author}{M.~Gupta}, \bibinfo{author}{J.~Slezak}, \bibinfo{author}{F.~Alalhareth}, \bibinfo{author}{S.~Roy}, \bibinfo{author}{H.~Kojouharov},
\newblock \bibinfo{title}{Second-order nonstandard explicit euler method},
\newblock in: \bibinfo{booktitle}{AIP Conference Proceedings}, \rev{Vol.} \bibinfo{volume}{2302}, \bibinfo{organization}{AIP Publishing}, \rev{\bibinfo{year}{2020}}. \url{https://doi.org/10.1063/5.0033534}

%Type = Article
\bibitem[{Hoang(2024)}]{hoang24}
\bibinfo{author}{M.~T. Hoang},
\newblock \bibinfo{title}{High-order nonstandard finite difference methods preserving dynamical properties of one-dimensional dynamical systems},
\newblock \bibinfo{journal}{Num. Alg.}  (\bibinfo{year}{2024}) \bibinfo{pages}{1--31}. \url{https://doi.org/10.1007/s11075-024-01792-1}

%Type = Article
\rev{\bibitem[{Hoang et~al.(2024)}]{hoang24new}
\bibinfo{author}{M.~T. Hoang}, \bibinfo{author}{M. Ehrhardt},
\newblock \bibinfo{title}{A second-order nonstandard finite difference method for a general Rosenzweig–MacArthur predator–prey model.},
\newblock \bibinfo{journal}{J. Comp. Appl. Math.} \bibinfo{volume}{444} (\bibinfo{year}{2024}) \bibinfo{pages}{115752}. \url{https://doi.org/10.1016/j.cam.2024.115752}

%Type = Article
\bibitem[{Hoang et~al.(2024)}]{conte25}
\bibinfo{author}{D. Conte}, \bibinfo{author}{G. Pagano}, \bibinfo{author}{T. Roldán},
\newblock \bibinfo{title}{High order nonstandard finite-difference methods},
\newblock \bibinfo{journal}{Appl. Math. Comp.} \bibinfo{volume}{510} (\bibinfo{year}{2026}) \bibinfo{pages}{129681}.} \url{https://doi.org/10.1016/j.amc.2025.129681}

%Type = Article
\bibitem[{Anguelov and Lubuma(2022)}]{anguelov22}
\bibinfo{author}{R.~Anguelov}, \bibinfo{author}{J.~Lubuma},
\newblock \bibinfo{title}{Second-order nonstandard finite difference schemes for a class of models in bioscience},
\newblock \bibinfo{journal}{arXiv preprint arXiv:2207.11618}  (\bibinfo{year}{2022}). \url{https://doi.org/10.48550/arXiv.2207.11618}

%Type = Article
\bibitem[{Hoang(2022)}]{hoang22}
\bibinfo{author}{M.~T. Hoang},
\newblock \bibinfo{title}{A novel second-order nonstandard finite difference method for solving one-dimensional autonomous dynamical systems},
\newblock \bibinfo{journal}{Comm. Nonlin. Sci. Num. Sim.} \bibinfo{volume}{114} (\bibinfo{year}{2022}) \bibinfo{pages}{106654}. \url{https://doi.org/10.2139/ssrn.3958689}

%Type = Article
\bibitem[{Chen-Charpentier et~al.(2006)Chen-Charpentier, Dimitrov, and Kojouharov}]{chen}
\bibinfo{author}{B.~M. Chen-Charpentier}, \bibinfo{author}{D.~T. Dimitrov}, \bibinfo{author}{H.~V. Kojouharov},
\newblock \bibinfo{title}{Combined nonstandard numerical methods for ODEs with polynomial right-hand sides},
\newblock \bibinfo{journal}{Math. Comp. Sim.} \bibinfo{volume}{73} (\bibinfo{year}{2006}) \bibinfo{pages}{105--113}. \url{https://doi.org/10.1016/j.matcom.2006.06.008}

%Type = Article
\bibitem[{Kojouharov and Chen-Charpentier(2004)}]{kojo04}
\bibinfo{author}{H.~V. Kojouharov}, \bibinfo{author}{B.~M. Chen-Charpentier},
\newblock \bibinfo{title}{Nonstandard Eulerian--Lagrangian methods for multi-dimensional reactive transport problems},
\newblock \bibinfo{journal}{Appl. Num. Math.} \bibinfo{volume}{49} (\bibinfo{year}{2004}) \bibinfo{pages}{225--243}. \url{https://doi.org/10.1016/j.apnum.2002.04.001}

%Type = Article
\bibitem[{Mart{\'\i}n-Vaquero et~al.(2018)Mart{\'\i}n-Vaquero, Queiruga-Dios, del Rey, Encinas, Guillen, and Sanchez}]{martin}
\bibinfo{author}{J.~Mart{\'\i}n-Vaquero}, \bibinfo{author}{A.~Queiruga-Dios}, \bibinfo{author}{A.~M. del Rey}, \bibinfo{author}{A.~H. Encinas}, \bibinfo{author}{J.~H. Guillen}, \bibinfo{author}{G.~R. Sanchez},
\newblock \bibinfo{title}{Variable step length algorithms with high-order extrapolated non-standard finite difference schemes for a SEIR model},
\newblock \bibinfo{journal}{J. Comp. Appl. Math.} \bibinfo{volume}{330} (\bibinfo{year}{2018}) \bibinfo{pages}{848--854}. \url{https://doi.org/10.1016/j.cam.2017.03.031}

%Type = Article
\bibitem[{Ejere (2026)}]{ejere}
\bibinfo{author}{A. H. Ejere},
\newblock \bibinfo{title}{A nonstandard finite difference scheme for singularly perturbed reaction–diffusion differential equations with large negative and advance shifts},
\newblock \bibinfo{journal}{Math. Open} \bibinfo{volume}{5} (\bibinfo{year}{2026}) \bibinfo{pages}{2550021}. \url{https://doi.org/10.1142/s281100722550021x}

%Type = Article
\bibitem[{Dang and Hoang(2020)}]{dang}
\bibinfo{author}{Q.~A. Dang}, \bibinfo{author}{M.~T. Hoang},
\newblock \bibinfo{title}{Positive and elementary stable explicit nonstandard Runge-Kutta methods for a class of autonomous dynamical systems},
\newblock \bibinfo{journal}{Intern. J. Comp. Math.} \bibinfo{volume}{97} (\bibinfo{year}{2020}) \bibinfo{pages}{2036--2054}. \url{https://doi.org/10.1080/00207160.2019.1677895}

%Type = Inproceedings
\bibitem[{Faragó (2026) Faragó, Mosleh}]{mosleh26}
\bibinfo{author}{Faragó, I.}, \bibinfo{author}{Mosleh, R.},
\newblock \bibinfo{title}{Convergence Analysis of the Explicit Nonstandard Runge-Kutta Methods},
\newblock in: \bibinfo{booktitle}{hallenges in Design Methods, Numerical Tools and
Technologies for Sustainable Aviation, Transport and Industry: Commemorative
publication dedicated to the 80th Jubilee of Prof. Jacques Periau}, \bibinfo{organization}{Springer
Nature Switzerland}, \bibinfo{year}{2026}, \bibinfo{pages}{179--194}. \url{https://doi.org/10.1007/978-3-031-98675-8_12}

%Type = Article
\bibitem[{Anguelov and Lubuma(2001)}]{anguelov}
\bibinfo{author}{R.~Anguelov}, \bibinfo{author}{J.~M.-S. Lubuma},
\newblock \bibinfo{title}{Contributions to the mathematics of the nonstandard finite difference method and applications},
\newblock \bibinfo{journal}{Num. Meth. Part. Diff. Eq. Intern. J.} \bibinfo{volume}{17} (\bibinfo{year}{2001}) \bibinfo{pages}{518--543}. \url{https://doi.org/10.1002/num.1025}

%Type = Article
\bibitem[{Takacs, B. (2026)}]{takacs26}
\bibinfo{author}{B. Takacs}
\newblock \bibinfo{title}{An insight on some properties of high order nonstandard linear multistep methods},
\newblock \bibinfo{journal}{Math. Comp. Sim.} \bibinfo{volume}{245} (\bibinfo{year}{2026}) \bibinfo{pages}{337--365}. \url{https://doi.org/10.1016/j.matcom.2026.01.015}

%Type = Book
\bibitem[{Gottlieb et~al.(2011)Gottlieb, Ketcheson, and Shu}]{sspbook}
\bibinfo{author}{S. Gottlieb}, \bibinfo{author}{D. Ketcheson}, \bibinfo{author}{C. W. Shu}, \bibinfo{title}{Strong stability preserving Runge-Kutta and multistep time discretizations}, \bibinfo{publisher}{World Scientific}, \bibinfo{address}{Singapore}, \bibinfo{year}{2011}. \url{https://doi.org/10.1142/7498}

%Type = Article
\bibitem[{Anguelov et~al.(2010)Anguelov, Lubuma, and Minani}]{anguelov3}
\bibinfo{author}{R.~Anguelov}, \bibinfo{author}{J.~M.-S. Lubuma}, \bibinfo{author}{F.~Minani},
\newblock \bibinfo{title}{Total variation diminishing nonstandard finite difference schemes for conservation laws},
\newblock \bibinfo{journal}{Math. Comp. Model.} \bibinfo{volume}{51} (\bibinfo{year}{2010}) \bibinfo{pages}{160--166}. \url{https://doi.org/10.1016/j.mcm.2009.08.038}

% %Type = Article
% \bibitem[{Mehdizadeh~Khalsaraei and Khodadosti(2014)}]{khal14}
% \bibinfo{author}{M.~Mehdizadeh~Khalsaraei}, \bibinfo{author}{F.~Khodadosti},
% \newblock \bibinfo{title}{A new total variation diminishing implicit nonstandard finite difference scheme for conservation laws},
% \newblock \bibinfo{journal}{Com. Meth. Diff. Eq.} \bibinfo{volume}{2} (\bibinfo{year}{2014}) \bibinfo{pages}{91--98}. \url{https://dor.isc.ac/dor/20.1001.1.23453982.2014.2.2.5.4} (Accessed 8 July 2026)

%Type = Article
\bibitem[{Mehdizadeh~Khalsaraei(2017)}]{khal17}
\bibinfo{author}{M.~Mehdizadeh~Khalsaraei},
\newblock \bibinfo{title}{Nonstandard explicit third-order runge-kutta method with positivity property},
\newblock \bibinfo{journal}{Intern. J. Nonlin. Anal. Appl.} \bibinfo{volume}{8} (\bibinfo{year}{2017}) \bibinfo{pages}{37--46}. \url{https://ijnaa.semnan.ac.ir/article_480.html} (Accessed 8 July 2026)

%Type = Article
\bibitem[{Constantinescu, E. M.  and Sandu, A.(2010)}]{const}
\bibinfo{author}{E. M. Constantinescu}, \bibinfo{author}{A. Sandu},
\newblock \bibinfo{title}{Optimal explicit strong-stability-preserving general linear methods},
\newblock \bibinfo{journal}{SIAM J. Sci. Comp.} \bibinfo{volume}{32.5} (\bibinfo{year}{2010}) \bibinfo{pages}{3130--3150}. \url{https://doi.org/10.1137/090766206}

%Type = Book
\bibitem[{Agarwal and Lakshmikantham(1993)}]{agarwal}
\bibinfo{author}{R. P. Agarwal}, \bibinfo{author}{V. Lakshmikantham}, \bibinfo{title}{Uniqueness and nonuniqueness criteria for ordinary differential equations}, \bibinfo{publisher}{World Scientific}, \bibinfo{year}{1993}. \url{https://doi.org/10.1142/1988}

%Type = Book
\bibitem[{Butcher(2016)}]{butcherbook}
\bibinfo{author}{J. C. Butcher}, \bibinfo{title}{Numerical methods for ordinary differential equations}, 2nd ed., \bibinfo{publisher}{John Wiley \& Sons}, \bibinfo{year}{2016}. \url{https://doi.org/10.1002/9780470753767}

%Type = Article
\bibitem[{Kermack and McKendrick(1927)}]{kermack}
\bibinfo{author}{W.~O. Kermack}, \bibinfo{author}{A.~G. McKendrick},
\newblock \bibinfo{title}{A contribution to the mathematical theory of epidemics},
\newblock \bibinfo{journal}{Proc. Royal Soc. London. Series A.} \bibinfo{volume}{115} (\bibinfo{year}{1927}) \bibinfo{pages}{700--721}. \url{https://doi.org/10.1098/rspa.1927.0118}

%Type = Book
\bibitem[{Capasso(1993)}]{capasso}
\bibinfo{author}{V.~Capasso}, \bibinfo{title}{Mathematical structures of epidemic systems}, \rev{Vol.} \bibinfo{volume}{97}, \bibinfo{publisher}{Springer}, \bibinfo{address}{Berlin}, \bibinfo{year}{1993}. \url{https://doi.org/10.1007/978-3-540-70514-7}

%Type = Article
\bibitem[{Tak{\'a}cs et~al.(2024)Tak{\'a}cs, Sebesty{\'e}n, and Farag{\'o} (2024)}]{takacs24}
\bibinfo{author}{B.~M. Takacs}, \bibinfo{author}{G.~S. Sebestyen}, \bibinfo{author}{I.~Farago},
\newblock \bibinfo{title}{High-order reliable numerical methods for epidemic models with non-constant recruitment rate},
\newblock \bibinfo{journal}{Appl. Num. Math.} \bibinfo{volume}{206} (\bibinfo{year}{2024}) \bibinfo{pages}{75--93}. \url{https://doi.org/10.1016/j.apnum.2024.08.008}

%Type = Article
\bibitem[{Shu(1988)}]{shu}
\bibinfo{author}{C.-W. Shu},
\newblock \bibinfo{title}{Total-variation-diminishing time discretizations},
\newblock \bibinfo{journal}{SIAM J. Sci. Stat. Comp.} \bibinfo{volume}{9} (\bibinfo{year}{1988}) \bibinfo{pages}{1073--1084}. \url{https://doi.org/10.1137/0909073}

%Type = Article
\bibitem[{Ketcheson(2009{\natexlab{a}})}]{ketcheson1}
\bibinfo{author}{D.~Ketcheson},
\newblock \bibinfo{title}{Computation of optimal monotonicity preserving general linear methods},
\newblock \bibinfo{journal}{Math. Comp.} \bibinfo{volume}{78} (\bibinfo{year}{2009}{\natexlab{a}}) \bibinfo{pages}{1497--1513}. \url{https://doi.org/10.1090/s0025-5718-09-02209-1}

%Type = Book
\bibitem[{Ketcheson(2009{\natexlab{b}})}]{ketcheson2}
\bibinfo{author}{D.~Ketcheson}, \bibinfo{title}{High order strong stability preserving time integrators and numerical wave propagation methods for hyperbolic PDEs}, \bibinfo{publisher}{University of Washington}, \bibinfo{address}{Seattle}, \bibinfo{year}{2009}{\natexlab{b}}. \url{https://faculty.washington.edu/rjl/students/ketcheson/thesis.pdf} (Accessed 8 July 2026)

%Type = Article
\bibitem[{Gottlieb and Shu(1998)}]{gottlieb98}
\bibinfo{author}{S.~Gottlieb}, \bibinfo{author}{C.-W. Shu},
\newblock \bibinfo{title}{Total variation diminishing Runge-Kutta schemes},
\newblock \bibinfo{journal}{Math. Comp.} \bibinfo{volume}{67} (\bibinfo{year}{1998}) \bibinfo{pages}{73--85}. \url{https://doi.org/10.1090/s0025-5718-98-00913-2}


\end{thebibliography}
\end{document}